\documentclass[10pt]{article}

\usepackage[plainpages=false,pdfpagelabels,colorlinks=true,linkcolor=mypurple,urlcolor=mypurple,citecolor=mypurple]{hyperref}

\newtoggle{vaos}
\togglefalse{vaos}
\newcommand{\aosversion}[2]{\iftoggle{vaos}{#1}{#2}}

\usepackage{tptstyle}

\usepackage{fullpage}
\usepackage[font={footnotesize,it}]{caption}

\title{Unveiling Invariant and Transferable Latent Factors Across Heterogeneous Environments via ATLAS}
\author{Yihong Gu ~~~~ Katherine Liao ~~~~ Tianxi Cai\\Harvard University}
\date{}

\begin{document}

\begin{titlepage}
\clearpage\maketitle

\begin{abstract}
This paper considers a multi-environment factor model in which high-dimensional covariates are collected from heterogeneous environments, with auxiliary labels available in a subset of these environments. The joint distribution of the covariates may vary across environments, whereas the latent structure is decomposed into invariant factors with shared loadings and heterogeneous factors with environment-specific loadings. Such a model is motivated by transfer learning and latent factor regression, where one seeks stable low-dimensional representations for both interpretation and robust out-of-sample prediction of the response $Y$. Leveraging the invariance principle, we show that the invariant and heterogeneous factors are disentangled under a minimal structural condition. Based on this, we propose ATLAS, an Auxiliary-label and invariance-guided Transfer via Latent Alignment across heterogeneous environmentS. ATLAS is a unified procedure that leverages the invariance principle to separate aligned invariant and unaligned heterogeneous factors, and further exploits supervision from auxiliary labels to extract prediction-invariant and transferable factors from those unaligned heterogeneous factors. ATLAS yields near-oracle performance for downstream latent factor regression, enables transferable prediction in new environments through the full latent signal when auxiliary labels are available, and reduces to robust invariant-factor-only prediction otherwise. We establish sharp non-asymptotic error bounds for recovering invariant and heterogeneous factors, identifying all the response-invariant factors, and estimating the invariant signal in $Y$.
\end{abstract}
\textbf{Keywords}: Factor model, Invariance, Heterogeneity, Multiple environments, Transfer Learning, Principal component analysis
\thispagestyle{empty}
\end{titlepage}

\newpage

\section{Introduction}

A central challenge in deploying predictive models across heterogeneous environments is {\em transportability}: ensuring that a model trained in one setting performs reliably in another \citep{reps2022learning,lasko2024probabilistic}. This is especially challenging when the shift involves not only the covariate distribution but also the structure of the feature space itself: different environments may encode the same underlying clinical state in systematically different ways. In healthcare,  clinical prediction models trained at one institution often degrade when deployed elsewhere or later in time \citep{young2022empirical,xiong2026adversarial}. Beyond population differences, structural variation in clinical practices, electronic health record (EHR) systems, and administrative coding schemes (e.g., ICD-9 vs ICD-10) creates representation drift across sites and time \citep{torab2023interoperability,apathy2022early,molloy2023assessing}. As a result, two patients with similar trajectories from two different environments may have very different feature vectors, making the {\em feature-to-state mapping}, not just the state distribution, a key source of cross-environment heterogeneity.

Latent factor models \citep{forni2000generalized,bai2003inferential,stock2002forecasting} provide low-dimensional representations for high-dimensional features, supporting effective interpretation, dimension reduction, and prediction. In multi-source settings, a common strategy is to decompose multi-environment data into shared and source-specific latent components by imposing a common loading matrix (i.e., shared feature embeddings) across environments \citep{yang2025estimating,lock2013joint}. This implicitly assumes a shared semantic space and a common mapping from observed features to latent states. However, in the setting above, feature embeddings themselves can vary by environment due to institutional conventions or temporal drift. Forcing a common loading matrix can therefore mix truly invariant signals with source-specific artifacts. This motivates a more flexible framework that allows the representation map to be partly environment-specific while still recovering latent factors whose semantic meaning remains stable across sources. 

This motivates a latent-factor view of transportability: although the observed clinical states may be encoded differently across environments, source-specific maps may recover latent factors with shared semantic meaning, whose relationship to the outcome remains invariant after alignment. The main challenge is to learn these maps without relying primarily on outcome labels, which often require expert annotation and are scarce in multi-site EHR studies. We therefore leverage abundant unlabeled covariates whose cross-environment covariance structure contains information about both shared latent structure and source-specific encoding mechanisms. Our key insight is that representation heterogeneity can be informative: variation in source-specific loading patterns helps distinguish environment-specific factors from invariant factors with shared loadings. This leads to the central question of the paper:
\begin{quote}
\it
\begin{center}
How can we uncover invariant, transferable latent signals across heterogeneous environments?
\end{center}
\end{quote}
We answer this question through ATLAS ({\bf A}uxiliary-label and invariance-guided {\bf T}ransfer framework via {\bf L}atent {\bf A}lignment across heterogeneous environment{\bf S}), designed to enable transportable prediction from partially labeled multi-environment data.

\subsection{The problem under study}
We study latent factor regression across heterogeneous environments $\mathcal{E}$ with partially observed labels. For each $e \in \mathcal{E}$, high-dimensional covariates $X\supe \in \mathbb{R}^d$ are observed in every environment, while auxiliary labels $Z\supe \in \mathbb{R}^q$ and target responses $Y\supe$ are available only in subsets of environments, $\mathcal{E}_z \subseteq \mathcal{E}$ and $\mathcal{E}_y \subseteq \mathcal{E}_z$, respectively. Concretely, we observe $2n_x$ i.i.d samples of covariates $\mathcal{D}_x\supe = \{X_i\supe\}_{i=1}^{2n_x}$ for all $e\in \Ecal$, $n_z$ i.i.d. observations $\mathcal{D}_z\supe = \{(X_{i+2n_x}\supe, Z_{i+2n_x}\supe)\}_{i=1}^{n_z}$ in $\Ecal_z$, and an additional $n_y$ i.i.d samples $\mathcal{D}_y\supe = \{(X_{i+2n_x+n_z}\supe, Y_{i+2n_x+n_z}\supe)\}_{i=1}^{n_y}$ in $\mathcal{E}_y$. In the motivating regime, $n_x \gg n_z \gg n_y$. All the samples $\{\mathcal{D}_x^{(e)}\}_{e\in \mathcal{E}}, \{\mathcal{D}_z^{(e)}\}_{e\in \mathcal{E}_z}, \{\mathcal{D}_y^{(e)}\}_{e\in \mathcal{E}_y}$ are independent. We assume equal sample sizes across environments for notational ease, but this is not required by the methods or theory.

ATLAS leverages the abundant covariates to disentangle invariant from environment-specific latent structure, uses auxiliary labels to identify response-relevant transferable components, and estimates a predictor for $Y$ that can be transported to new environments.

\myparagraph{Multi-environment linear factor model.} For each environment $e\in \mathcal{E}$, $X\supe$ is explained by $r\supe$ latent factors $F\supe = (F_1\supe,\ldots, F_{r\supe}\supe)^\top$ and idiosyncratic errors $U\supe \in \mathbb{R}^d$, i.e., $X\supe$ admits the following decomposition:
\begin{align}
\label{eq:model-x}
    X\supe = \underbrace{\begin{bmatrix}
        B & A\supe
    \end{bmatrix}}_{B^{(e)}:=} \cdot \begin{bmatrix}F_{\tti}\supe \\ F_{\tth}\supe \end{bmatrix} + U\supe ~~\text{with}~~\mathbb{E}[F^{(e)} (U^{(e)})^\top] = 0.
\end{align} Here $F_{\tti}\supe = (F_1\supe,\ldots, F_{r_\tti}\supe)^\top \in \mathbb{R}^{r_\tti}$ represents the unobserved \emph{invariant factors} with an unknown common loading matrix $B\in \mathbb{R}^{d\times r_{\tti}}$ shared across $\mathcal{E}$; $F_{\tth}\supe = (F_{r_\tti+1}\supe,\ldots, F_{r\supe}\supe)^\top \in \mathbb{R}^{r\supe_\tth}$ with $r_\tth\supe := r\supe - r_\tti$ represents the unobserved \emph{heterogeneous factors} with environment-specific loadings $A\supe \in \mathbb{R}^{d\times r\supe_\tth}$; $U\supe \in \mathbb{R}^d$ represents the idiosyncratic errors. Throughout, we allow the joint distribution of $(F_\tti\supe, F_\tth\supe, U\supe)$ to vary with $e \in \mathcal{E}$.

\myparagraph{Invariant latent factor regression.} The decomposition in \eqref{eq:model-x} supports transportable prediction only through latent components whose relationship with the response $Y\supe$ is invariant across environments. The invariant factors $F_\tti\supe$  are naturally aligned through the shared loading matrix $B$ and therefore provide a stable predictive signal. However, relying only on $F_\tti\supe$ can be overly conservative: some heterogeneous factors may also predict $Y\supe$ through invariant effects, but they must first be aligned across environments before they can be transferred. Thus, properly aligning the prediction-invariant part of  $F_{\tth}\supe$ can expand the transferable signal and improve transportability. We model this stable response mechanism by the following generalized linear model (GLM) with invariant parameter $\beta^\star = ((\beta_\tti^\star)^\top, (\beta_\tth^\star)^\top)^\top $\footnote{Here we assume a GLM model for ease of presentation. One can extend both the method and theory to non-parametric models using the FAR-NN estimator in \cite{fan2024factor}; see \cref{sec:discuss}. }: 
\begin{align}
\label{eq:model-invariant-y}
    Y\supe = \sigma_y \left( (\beta_{\tti}^\star)^\top F_{S_\tti^\star}\supe + (\beta_{\tth}^\star)^\top F_{S_\tth^\star}\supe\right) + \varepsilon_y\supe \qquad \text{with} \qquad \mathbb{E}\left[\varepsilon_y\supe \mid F_{\tti}\supe, F_{S_\tth^\star}\supe\right] = 0.
\end{align}
Here $\sigma_y: \mathbb{R}\to \mathbb{R}$ is a known mean function, $S_\tti^\star \subseteq [r_\tti]$ represents the response-relevant invariant factors, and $F_{S_\tth^\star}\supe$ denotes the prediction-invariant heterogeneous factors with $S_\tth^\star \subseteq [\min_{e\in \mathcal{E}} r^{(e)}] \setminus [r_\tti]$ in environment $e$. The coefficients  $\beta_{\tti}^\star$ and $\beta_{\tth}^\star$ are shared across environments, so once $F_{S_\tth^*}\supe$ is aligned, its contribution to prediction can also be transported. As in the literature on invariant and quasi-causal variable selection \citep{peters2016causal, fan2024environment}, the exogeneity condition is imposed only on the union of invariant factors and truly relevant latent factors. In particular, we do not assume full covariate exogeneity, i.e., $\mathbb{E}\big[\varepsilon_y\supe \mid F\supe\big] \neq 0$; other heterogeneous factors may still carry spurious and environment-dependent predictive power for $Y\supe$.

\myparagraph{Factor selection and alignment by auxiliary labels.} To use heterogeneous factors for transportable prediction, we must identify
which components of $F_\tth\supe$ are prediction-invariant and align them across environments. This is not possible from $X\supe$ alone,
because $F_\tth\supe$ is identifiable only up to an environment-specific
invertible transformation. Consequently, the prediction-invariant component
$F_{S_\tth^\star}\supe$ cannot generally be selected, aligned, or transferred using covariates alone.
We resolve this ambiguity using auxiliary labels $Z\supe$, observed in some environments. These labels are noisy but informative proxies, designed using domain knowledge to depend on the latent factors relevant for prediction while excluding spurious environment-specific variation. 
Let $F_{S^\star}\supe := [F_{S_\tti^\star}\supe,\, F_{S_\tth^\star}\supe]$. We assume that $F_{S^\star}\supe$ is sufficient for predicting $Z\supe$, namely $Z\supe \indep \big(F_{(S^\star)^c}\supe, U\supe\big)\mid F_{S^\star}\supe$, and that the association between $F_{S^\star}\supe$ and the auxiliary-label mechanism is invariant across environments:
\begin{align}
\label{eq:model-invariant-z}
    Z_k\supe = \sigma_z \left( (\xi_k^\star)^\top F_{S^\star}\supe \right) + \varepsilon_k\supe \qquad \text{with} \qquad \mathbb{E}\left[\varepsilon_k\supe \cdot \begin{bmatrix}F\supe\\ U\supe\end{bmatrix} \right] = 0.
\end{align} Here $\sigma_z$ is a known mean function, and $\Xi^\star = [(\xi_1^\star)^\top,\ldots, (\xi_q^\star)^\top]^\top \in \mathbb{R}^{q\times |S^\star|}$ is the invariant coefficient matrix linking the selected latent factors to the auxiliary labels. This structure lets auxiliary labels provide a common coordinate system for the prediction-invariant heterogeneous factors, enabling their contribution to be transferred across environments.

\myparagraph{Estimation goal.} Our goal is threefold: (1) recover the invariant and heterogeneous latent factors from $X$ across all environments $\mathcal{E}$; (2) use the auxiliary labels $Z$ to identify and align the prediction-invariant heterogeneous factors across environments $\mathcal{E}_z$; and (3) estimate an invariant predictor for $Y$ using data from $\mathcal{E}_y$ and transfer the prediction to a new environment\footnote{Note any new environment $e$ can be incorporated into $\mathcal{E}$ (and into $\mathcal{E}_z$ when $Z$ is observed); post-fit adaptation without rerunning ATLAS is discussed in Section 6.} with $Z$, i.e., $e\in \mathcal{E}_z \setminus \mathcal{E}_y$, through the full latent signal, and without $Z$, i.e., $e\in \mathcal{E} \setminus \mathcal{E}_z$, through the invariant latent signal alone. 

\subsection{New contributions}
We introduce a statistical framework for a multi-environment factor model and latent factor regression. We offer a thorough and minimal identification characterization in linear models, with key ideas that naturally extend beyond linearity and provide a foundation for nonlinear extensions. 
\begin{itemize}
\item We extend the invariance principle to multi-environment latent factors in a fully unsupervised setting, where no auxiliary labels are available. Under the maximum invariant subspace assumption, analogous to the maximum invariant set \citep{fan2024environment, gu2024causality} assumption for identifying invariant and quasi-causal variables, together with the standard block-uncorrelatedness in factor models, i.e., 
\begin{align}
\label{eq:ident-intro}
\overbrace{\bigcap_{e\in\mathcal E}\mathrm{col}\big([B,A\supe]\big)=\mathrm{col}(B)}^{\text{maximum invariant subspace}} \qquad \text{and} \qquad \overbrace{\forall e\in \mathcal{E}, ~\mathbb{E}\!\left[F_{\tti}\supe(F_{\tth}\supe)^\top\right]=0}^{\text{block uncorrelatedness}},
\end{align}
we can identify the invariant factors $F_\tti$ up to a common ambiguity matrix across environments, and the heterogeneous factors $F_\tth$ up to environment-specific ambiguity matrices. Similar to variable-level invariant prediction \citep{fan2024environment, gu2024causality, gu2025fundamental}, this identification only requires a fixed number of environments independent of the latent dimension; in particular, two environments already suffice when the observed heterogeneity is exhaustive. We do not impose any distribution matching assumptions. Furthermore, the assumption  \eqref{eq:ident-intro} is minimal at the instance level in that including any single distribution violating this assumption will make it unidentifiable.
\item We show how auxiliary labels can be used to align the latent factors $F_{S_\tth^\star}$ that predict $Y$ invariantly, even when they induce heterogeneous effects on the covariates. This alleviates the predictive conservativeness of a fully unsupervised invariance procedure, which can recover stable but potentially incomplete predictive signals.
\item We provide robust transfer learning guarantees characterizing how the invariant signals based on the full factor $F_{S^\star}$ or the partial factor $F_{S_\tti^\star}$ minimize certain worst-case out-of-sample risks in \cref{prop:robust1} and \cref{prop:robust2}, respectively. As illustrated in \cref{prop:robust2}, without auxiliary labels $Z$ to align $F_{S_\tth^\star}$, prediction based only on $F_{S_\tti^\star}$ is the unique worst-case optimal prediction in a new environment. This highlights the necessity of auxiliary labels for exploiting transferable heterogeneous factors.
\end{itemize}

Based on the above statistical modeling of the multi-environment factor model, we propose ATLAS, a computationally efficient estimator for learning disentangled factors and aligning transferable latent signals across environments. Methodologically, ATLAS builds on and extends the idea of diversified projection (DP) \citep{fan2022learning}, which uses linear maps to extract factors and is flexible for downstream prediction, to a multi-environment setting. ATLAS first uses a computation-efficient spectral method, referred to as invariance-heterogeneity decomposition (IHD), to learn the diversified projection matrices that map $X$ to aligned invariant factors and unaligned heterogeneous factors. Building on the first-stage DP matrices, it further constructs diversified projection maps from the invariant and heterogeneous factors to those prediction-invariant factors for predicting $Y$. 

We establish a sharp non-asymptotic analysis for all stages of ATLAS. For the IHD estimator, we establish $\ell_\infty$-like dimension-free recovery guarantees for the invariant and heterogeneous factors under the learned diversified projections. A key technical novelty is that, unlike in standard factor models where consistent estimation of the loading subspace suffices for optimal factor recovery error, the loading estimation error is no longer negligible here because one needs to separate two latent blocks. We develop careful first-order approximations for both the intermediate PCA and partialling-out solutions and show that the loading estimation error enters the final factor estimation error quadratically. Building on this analysis, we derive non-asymptotic bounds for recovering the prediction-invariant factors $F_{S_\tti^\star}$ and $F_{S_\tth^\star}$ using auxiliary labels, as well as for estimating the invariant predictive signal for $Y$. The resulting rates make explicit how the number of environments, auxiliary labels, and latent dimensions jointly determine statistical efficiency, and how auxiliary labels further reduce variance beyond aiding identification.

\subsection{Related work and comparisons}

The linear factor model \citep{forni2000generalized,bai2003inferential,hallin2007determining} is a canonical framework for modeling dependence among high-dimensional covariates by explaining their correlated variation through a linear combination of latent factors. Factor regression, or factor-augmented regression, further uses these latent factors to predict the response through linear or nonlinear models \citep{stock2002forecasting,bai2006confidence,bair2006prediction,bai2008forecasting,fan2017sufficient,fan2024factor}, with factors typically estimated by spectral methods \citep{stock2002forecasting,bai2003inferential,agarwal2012noisy,fan2013large}. Our non-asymptotic analysis covers weak factor models \citep{bai2023approximate,jiang2023revisiting,choi2025high,fan2024can}, where the classical pervasiveness condition \citep{bai2002determining} need not hold; equivalently, the loading strength relative to the noise scale may be weaker than the usual $\sqrt{d}$ scaling. In the multi-environment setting, our formalization is closely connected to the idea of multi-study factor analysis \citep{de2019multi,de2021bayesian, grabski2023bayesian, moran2026nonlinear} in the Bayesian literature, where ``multi-study'' corresponds to our ``multi-environment'' setup. For the linear multi-study factor analysis, \cite{de2019multi, de2021bayesian} use independence between shared and environment-specific factors, but mainly focus on independent Gaussian factors and idiosyncratic errors. More importantly, as noted by \citet{chandra2025inferring}, independence alone does not fully identify the shared factors. \citet{chandra2025inferring} resolve this issue for a related model under stronger structural conditions. In contrast, our maximum invariant subspace assumption provides a sharp condition for identifying the shared loading space, and we show that it is minimal at the instance level.

There is a considerable literature extending principal component analysis (PCA) methods and factor analysis to multi-environment data. For example, Common Principal Components (CPC) \citep{flury1984common, hallin2014efficient} studies covariance matrices from multiple sources that share the same orthogonal eigenspaces, i.e., $\Sigma^{(e)} = Q\Lambda^{(e)}Q^\top$. Distributed PCA \citep{fan2019distributed} considers a similar common subspace model and proposes a distributed estimator that achieves the same statistical efficiency as if all data were stored at a single source. These works, however, focus on settings where the loading space is common across environments and therefore do not address heterogeneity in the loading matrices. Another line of work allows heterogeneous loading structures and develops PCA-type methods to recover stable \citep{ma2026optimal, baharav2025stacked, wang2025stablepca} or shared components. Among them, several methods, including Personalized PCA \citep{shi2024personalized}, AJIVE \citep{feng2018angle, yang2025estimating} ---a variant of JIVE \citep{lock2013joint, palzer2022sjive} designed for multi-view data---and Anchor PCA \citep{seiter2026anchor}, though motivated differently, share a similar geometric principle to identify the shared loading space: they identify a shared subspace through the intersection of the loading spaces across different environments. This is closely related to our maximum invariant subspace assumption, i.e., the first part of \eqref{eq:ident-intro}, for identifying the column space of the shared loading matrix. Beyond the sharper non-asymptotic guarantees we obtain for estimating this shared loading space, the main conceptual difference lies in how the environment-specific component is identified; we provide a detailed comparison with a geometric illustration in \cref{remark:cp}.

Our framework is also connected to invariant representation learning in the machine learning literature. Motivated by the invariance principle for selecting quasi-causal variables from multi-environment data \citep{peters2016causal, buhlmann2020invariance, fan2024environment, gu2024causality}, a broad line of work seeks a representation $\Phi(X)$, rather than a subset of observed variables,  such that the prediction rule based on $\Phi(X)$ is stable across environments; such representations are widely used for domain adaptation and domain generalization \citep{arjovsky2019invariant, zhao2019learning, stojanov2021domain, chen2021domain, wang2022causal}. Our framework shares a similar spirit but differs in two important ways. First, instead of using a shared map from $X$ to the latent factor, we allow the representation map to be environment-specific, which is natural when there are latent variables affecting $X$ differently. Second, under exhaustive heterogeneity, our invariant factors can be identified from a fixed number of environments, whereas several related identification frameworks require the number of environments to grow with the latent dimension \citep{arjovsky2019invariant, rosenfeld2021risks, chen2022iterative}. Our use of block-uncorrelatedness between invariant and heterogeneous factors is also related to independent component analysis (ICA) and nonlinear ICA \citep{pfister2019robustifying, khemakhem2020variational, schell2023nonlinear}. These methods use side-information variables analogous to the environment indicator in our setting to identify latent factors up to permutation and componentwise transformations. Related nonlinear ICA ideas have also been used to decompose shared and environment-specific latent factors for domain generalization \citep{kong2022partial, li2023subspace}. Our work develops a finite-sample theory for the corresponding linear factor model, providing a statistical benchmark for future nonlinear extensions.

\subsection{Road map and notation}

This paper is organized as follows. In \cref{sec:ident}, we first present the identification results in the noiseless setting $U^{(e)} = 0$ to illustrate the key idea and motivate the method. We then provide our ATLAS procedure in \cref{sec:method} and present simplified non-asymptotic results in \cref{sec:theory}. We illustrate the theoretical findings and evaluate the method through simulations and a real-data application in \cref{sec:numerical}. In \cref{sec:discuss}, we conclude with a high-level discussion of several extensions of our ATLAS framework, including predicting in a new environment with partial $Z$ labels, non-parametric associations between factors and labels, and handling the setting without $Z$ labels. A rigorous analyses of these extensions are deferred to future work to keep the present paper focused. The supplemental material contains additional identification results, fully explicit non-asymptotic bounds, implementation details for the numerical studies, and all technical proofs.

\myparagraph{Notation.} Let $[m] = \{1, \ldots, m\}$, $a\lor b = \max\{a, b\}$ and $a\land b = \min\{a, b\}$. We will use the superscript $\supe$ to denote the random variables, parameters, or estimates in environment $e\in \mathcal{E}$. We use $a(n) \lesssim b(n)$, $b(n) \gtrsim a(n)$, or $a(n) = O(b(n))$ if there exists some constant $C>0$ such that $a(n) \le Cb(n)$. Denote $a(n) \asymp b(n)$ if $a(n)\lesssim b(n)$ and $a(n) \gtrsim b(n)$. We use $a(n) = o(b(n))$, $a(n) \ll b(n)$, $b(n) \gg a(n)$ if $\lim_{n\to \infty} a(n)/b(n) = 0$. For a vector $x = (x_1,\ldots, x_m)^\top\in \mathbb{R}^m$, we let $\|x\|_p=(\sum_{j=1}^m |x_j|^p)^{1/p}$ and $\|x\|_\infty = \max_{j\in [m]} |x_j|$. For given index set $S=\{j_1,\ldots, j_{|S|}\}\subseteq [m]$ with $j_1<\cdots<j_{|S|}$, we denote $[x]_S=(x_{j_1},\ldots, x_{j_{|S|}})^\top \in \mathbb{R}^{|S|}$ and abbreviate it as $x_S$ if there is no ambiguity. For matrix $M = \{M_{i,j}\}_{i\in [m_1], j\in [m_2]} \in \mathbb{R}^{m_1\times m_2}$, we use $\|M\|_2 = \sup_{\|v\|_2=1} \|Mv\|_2$ to denote the induced $\ell_2$ norm, and $[M]_{S, T} = \{M_{i,j}\}_{i\in S, j\in T}$ to denote the sub-matrix with $S \subseteq [m_1]$ and $T\subseteq [m_2]$; we also abbreviate $S$ (resp. $T$) in the subscript as ``$:$'' if $S=[m_1]$ (resp. $T=[m_2]$). Because many symbols encode distinct semantic roles, \aosversion{Appendix A}{\cref{tb:notation}} summarizes the key notation. 

For $m_1\ge m_2$, we use $\mathcal{O}_{m_1\times m_2} = \{O \in \mathbb{R}^{m_1\times m_2}: O^\top O = I_{m_2}\}$ to denote the set of matrices with orthonormal columns and abbreviate it as $\mathcal{O}_{m_1}$ if $m_1=m_2$. We use $\mathrm{rank}(M)$ to denote the rank of a matrix, $\mathrm{col}(M)$ represents the column subspace of $M$ in $\mathbb{R}^{m_1}$. For a subspace $H$ in $\mathbb{R}^m$, we will use $[H]_\perp$ to represent the orthogonal complement of the subspace in $\mathbb{R}^m$ when $\mathbb{R}^m$ is clear from context. When $O\in \mathcal{O}_{m_1\times m_2}$ with $m_1 \ge m_2$, we also use $O_\perp \in \mathcal{O}_{m_1\times (m_1-m_2)}$ to denote an orthonormal matrix such that $\col(O_\perp) = [\col(O)]_\perp$. We use $\lambda_1(M), \ldots, \lambda_m(M)$ to represents the eigenvalues of a positive semi-definite matrix $M\in \mathbb{R}^{m \times m}$ in descending order, and let $\lambda_{\max}(M) = \lambda_1(M)$, $\lambda_{\min}(M) = \lambda_m(M)$. We use $\nu_1(M), \ldots, \nu_{m_1\land m_2}(M)$ to denote the singular values of a matrix $M\in \mathbb{R}^{m_1\times m_2}$ in descending order, and let $\nu_{\max}(M) = \nu_{1}(M)$, $\nu_{\min}(M) = \nu_{m_1\land m_2}(M)$; we let $\nu_{\min}(M) = \infty$ if $m_1\land m_2 = 0$. We also use $[M_1, M_2]$ to denote the column-wise concatenation of two matrices $M_1 \in \mathbb{R}^{m_1\times m_2}$ and $M_2 \in \mathbb{R}^{m_1 \times m_3}$, and $[v_1, v_2] \in \mathbb{R}^{m_1 + m_2}$ to denote the concatenation of two vectors with dimensions $m_1$ and $m_2$. For two $W_1, W_2 \in \mathcal{O}_{d \times r}$ with $r\le d$, let $\sigma_1,\ldots, \sigma_r$ be the singular values of $W_1^\top W_2$ in descending order, and define $\sin\Theta(W_1, W_2) = \mathrm{diag}([\sin(\arccos(\sigma_1)), \ldots, \sin(\arccos(\sigma_r))]) \in \mathbb{R}^{r\times r}$.

\section{Identification and transfer learning implications}
\label{sec:ident}

In this section, we establish identification results in the noiseless setting, where the idiosyncratic error $U\supe$ is zero. This idealized setting isolates the structural assumptions under which the latent components are identifiable from $X\supe$ and from the auxiliary labels $Z\supe$. These results provide the population-level foundation for the estimation procedure in \cref{sec:method}, where the full noisy setting is handled.

\subsection{Identifying the invariant factors and prediction-invariant factors}

We first collect the conditions needed for identification. The first regularity condition assumes the latent factors are zero-mean, and imposes a nondegeneracy condition that rules out rank deficiencies in the loading matrices and factor covariances. The invariant space condition identifies the maximal loading subspace shared across environments, while the block-uncorrelatedness condition separates invariant from heterogeneous factors. Finally, \aosversion{Condition B.1}{\cref{cond:ident-z}} on auxiliary labels, stated in \aosversion{Appendix B.2}{\cref{appendix:ident-z}}, ensures that $Z\supe$ carries enough information to identify the prediction-invariant components.

\begin{condition}[Regularity condition for loadings and factors]
\label{cond:ident-reg} For any $e\in \mathcal{E}$, $\mathbb{E}[F^{(e)}] = 0$, the factor loadings have full rank, i.e., $\rank(B) = r_{\tti}$, $\rank(A\supe) = r_{\tth}\supe$, $\rank(B\supe) = r\supe$, and the covariance matrices of factors are positive definite, i.e., $\lambda_{\min}(\mathbb{E}[F\supe \{F\supe\}^\top]) > 0$.
\end{condition}

The next assumption contains the two structural requirements for separating invariant and heterogeneous latent components.

\begin{assumption}[Identification for invariant and heterogeneous factors]
\label{cond:ident} The following conditions hold:
\begin{itemize}
\item[(a)] (maximum invariant subspace) $\cap_{e\in \mathcal{E}} \col([B, A\supe]) = \col(B)$.
\item[(b)] (Block-uncorrelated factors) $\mathbb{E}[F\supe_{\tti} (F\supe_{\tth})^\top] = 0$ for each $e\in \mathcal{E}$. 
\end{itemize}
\end{assumption}

\cref{cond:ident} is fundamental in the sense that it cannot be falsified from observational data alone, akin to assumptions like unconfoundedness \citep{rubin1974estimating, rosenbaum1983central} in causal inference. 
\cref{cond:ident}(a) is needed for the identification of $\col(B)$, and is minimal. Similar conditions can be found in related literature, e.g., \cite{wang2024joint, shi2024personalized, yang2025estimating}. If it is violated, it is impossible even to identify $r_\tti$ and $\col(B)$. \cref{cond:ident}(b) follows the widely adopted convention in factor modeling that factors with different semantic meanings are independent, or at least uncorrelated; here we require only uncorrelatedness between invariant and heterogeneous components, since we only need to disentangle these two parts. 

To identify the prediction-invariant factors, we further require that the auxiliary-label model satisfy \aosversion{Condition B.1}{\cref{cond:ident-z}} stated in \aosversion{Appendix B.2}{\cref{appendix:ident-z}}. Write $\Xi^\star = [\Xi^\star_\tti, \Xi^\star_\tth]$, this condition imposes regularity on the mean function $\sigma_z(\cdot)$ and requires $\Xi_c^\star \in \mathbb{R}^{q\times|S_c^\star|}$, the concatenated linear coefficients on $F_{S_c^\star}$, to have rank $|S_c^\star|$ for $c\in\{\tti,\tth\}$. In particular, it implicitly requires $q \ge |S_\tti^\star| \lor |S_\tth^\star|$.

Together, these conditions yield the two identification statements in \cref{thm:ident-1}. In the noiseless model, \cref{cond:ident-reg} and \cref{cond:ident} imply that the invariant factors $F_{\tti}\supe$ are identifiable up to a common invertible transformation across environments, whereas the heterogeneous factors $F_{\tth}\supe$ are identifiable up to environment-specific transformations. With the additional \aosversion{Condition B.1}{\cref{cond:ident-z}} on auxiliary-label, the prediction-invariant factors $F_{S_\tti^\star}\supe$ and $F_{S_\tth^\star}\supe$ are identifiable up to common transformations across environments in $\mathcal E_z$.

\begin{theorem}[Identification]
\label{thm:ident-1}
    {\sc (Invariant and heterogeneous factors)} If $X\supe$ is generated via \eqref{eq:model-x} satisfying \cref{cond:ident-reg} and \cref{cond:ident} with $U\supe = 0$ for any $e\in \mathcal{E}$, then there exists a collection of matrices $\{\Phi\supe_{\tti}, \Phi\supe_{\tth}\}_{e\in \mathcal{E}}$ with $\Phi_{\tti}\supe \in \mathbb{R}^{d\times r_{\tti}}$ and $\Phi_{\tth}\supe \in \mathbb{R}^{d\times r_{\tth}\supe}$ that can be constructed by the covariance matrices of $\{X^{(e)}\}_{e\in \mathcal{E}}$ such that
    \begin{align}
    \label{eq:phi-ih}
        \forall e\in \mathcal{E} \qquad (\Phi\supe_{\tti})^\top X\supe = Q_{\tti} F\supe_{\tti} \qquad \text{and} \qquad (\Phi\supe_{\tth})^\top X\supe = Q_{\tth}\supe F\supe_{\tth},
    \end{align} where $Q_{\tti} \in \mathbb{R}^{r_{\tti} \times r_{\tti}}$ is the invertible transform matrix that is the same across $\mathcal{E}$, and $Q_{\tth}\supe \in \mathbb{R}^{r_{\tth}\supe \times r_{\tth}\supe}$ is the invertible transform matrix that is environment-specific. 

    \noindent {\sc (Prediction-invariant factors) } Suppose further that \aosversion{Condition B.1}{\cref{cond:ident-z}} holds for the model \eqref{eq:model-invariant-z}, then there exist matrices $\Phi_{S_\tti^\star}\in \mathbb{R}^{r_\tti \times |S_\tti^\star|}$ and $\Phi_{S_\tth^\star}\supe \in \mathbb{R}^{r_\tth\supe \times |S_\tth^\star|}$ that can be constructed by the joint laws of $\{(X^{(e)}, Z^{(e)})\}_{e\in \mathcal{E}_z}$ such that 
    \begin{align}
    \label{eq:phi-sish}
        \forall e\in \mathcal{E}_z \qquad (\Phi_{S_\tti^\star})^\top (\Phi_{\tti}\supe)^\top X\supe = Q_{S_\tti^\star} F_{S^\star_\tti}\supe ~~ \text{and} ~~ (\Phi_{S_\tth^\star}\supe)^\top (\Phi_{\tth}\supe)^\top X\supe = Q_{S_\tth^\star} F_{S^\star_\tth}\supe,
    \end{align} where $Q_{S_\tti^\star} \in \mathbb{R}^{|S_\tti^\star| \times |S_\tti^\star|}$ and $Q_{S_\tth^\star} \in \mathbb{R}^{|S_\tth^\star| \times |S_\tth^\star|}$ are the same invertible matrices across $e\in \mathcal{E}_z$.
\end{theorem}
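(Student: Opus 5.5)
We argue at the population level, using only the covariances $\Sigma\supe := \mathbb{E}[X\supe (X\supe)^\top] = B\supe \mathbb{E}[F\supe (F\supe)^\top](B\supe)^\top$, which hold when $U\supe=0$.

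\textbf{Step 1: identify $\col(B)$.} By \cref{cond:ident-reg}, $B\supe$ has full column rank and the factor covariance is positive definite. Hence $\col(\Sigma\supe)=\col([B,A\supe])$. Intersecting over $e\in\mathcal{E}$ and applying \cref{cond:ident}(a) gives $\cap_e \col(\Sigma\supe)=\col(B)$. So $r_\tti=\dim$ of this intersection, and an orthonormal basis $V\in\mathcal{O}_{d\times r_\tti}$ of $\col(B)$ is a functional of the covariances. Write $V = BR$ with $R$ invertible.

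\textbf{Step 2: construct $\Phi_\tti\supe$ and $\Phi_\tth\supe$.} Every $x\in\col(\Sigma\supe)$ decomposes uniquely as $Bu+A\supe w$, because $\rank(B\supe)=r\supe$. The construction must therefore pick out the $B$-coordinate along $\col(A\supe)$. Since $A\supe$ itself is not identified, we use block-uncorrelatedness instead. Set $\Sigma_F\supe=\mathrm{diag}(\Sigma_{\tti}\supe,\Sigma_{\tth}\supe)$ by \cref{cond:ident}(b), so that $\Sigma\supe = B\Sigma_\tti\supe B^\top + A\supe\Sigma_\tth\supe (A\supe)^\top$.

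Let $(\Sigma\supe)^+$ be the pseudo-inverse and define $\Phi_\tti\supe := (\Sigma\supe)^+ V\,(V^\top(\Sigma\supe)^+V)^{-1}$. This is the generalized least squares projection. Writing $X\supe = B\supe F\supe$, we have $(\Sigma\supe)^+ = (B\supe{}^\top)^{+}(\Sigma_F\supe)^{-1}(B\supe)^{+}$, where $(B\supe)^+B\supe=I$. It follows that $V^\top(\Sigma\supe)^+ B\supe = R^\top[(\Sigma_\tti\supe)^{-1}, 0]$, using the block-diagonal structure. Hence $V^\top(\Sigma\supe)^+V = R^\top(\Sigma_\tti\supe)^{-1}R$, and
\[
(\Phi_\tti\supe)^\top X\supe = R^{-1}\Sigma_\tti\supe R^{-\top}R^\top(\Sigma_\tti\supe)^{-1}F_\tti\supe = R^{-1}F_\tti\supe,
\]
so $Q_\tti = R^{-1}$, which is common across $e$ because $V$ is common.

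For the heterogeneous part, take $W\supe$ to be an orthonormal basis of the orthogonal complement of $\col(V)$ within $\col(\Sigma\supe)$. Then $\Phi_\tth\supe := (I - V V^\top... )$ is replaced by the simpler choice $\Phi_\tth\supe := W\supe$. This gives $(W\supe)^\top X\supe = (W\supe)^\top A\supe F_\tth\supe$, and $(W\supe)^\top A\supe$ is an $r_\tth\supe\times r_\tth\supe$ matrix. It is invertible because $\col(A\supe)\cap\col(B)=\{0\}$ and the two spaces span $\col(\Sigma\supe)$. So $Q_\tth\supe=(W\supe)^\top A\supe$.

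\textbf{Step 3: prediction-invariant factors.} Set $G\supe:=Q_\tti F_\tti\supe$ and $H\supe := Q_\tth\supe F_\tth\supe$; both are observable from Step 2. Model \eqref{eq:model-invariant-z} gives $Z_k\supe=\sigma_z(a_k^\top G\supe + (b_k\supe)^\top H\supe)+\varepsilon_k\supe$. Here $a_k = Q_\tti^{-\top}[\xi_{k,\tti}^\star]$ is padded with zeros off $S_\tti^\star$, and $b_k\supe = (Q_\tth\supe)^{-\top}$ times the padded $\xi^\star_{k,\tth}$. Under \cref{cond:ident-z}, which supplies regularity of $\sigma_z$ such as strict monotonicity, and since $\varepsilon_k\supe$ is uncorrelated with $F\supe$, the population GLM/least-squares fit of $Z\supe$ on $(G\supe,H\supe)$ identifies the coefficient matrices $\mathcal{A}=[a_k]^\top$ and $\mathcal{B}\supe=[b_k\supe]^\top$. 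This holds because the factor covariance is positive definite.

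Next, $\mathcal{A}=\Xi_\tti^\star P_\tti Q_\tti^{-1}$, where $P_\tti$ selects $S_\tti^\star$. Since $\Xi_\tti^\star$ has full column rank, the row space of $\mathcal{A}$ equals $\col(Q_\tti^{-\top}P_\tti^\top)$. Moreover, $\mathcal{A}G\supe=\Xi_\tti^\star F_{S_\tti^\star}\supe$. Take $\Phi_{S_\tti^\star} := \mathcal{A}^\top (\Xi$-free$)$, concretely $\Phi_{S_\tti^\star}:=\mathcal{A}^\top M$, where $M\in\mathbb{R}^{q\times|S_\tti^\star|}$ is an orthonormal basis of $\col(\mathcal{A})=\col(\Xi_\tti^\star)$. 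Then $\Phi_{S_\tti^\star}^\top G\supe = M^\top\Xi_\tti^\star F_{S_\tti^\star}\supe$, with $Q_{S_\tti^\star}=M^\top\Xi_\tti^\star$ invertible and common.

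Similarly, $\mathcal{B}\supe H\supe = \Xi_\tth^\star F_{S_\tth^\star}\supe$, and $\col(\mathcal{B}\supe)=\col(\Xi_\tth^\star)$ does not depend on $e$. Choose a common orthonormal basis $N$ of this column space and set $\Phi_{S_\tth^\star}\supe := (\mathcal{B}\supe)^\top N$. This yields $Q_{S_\tth^\star}=N^\top\Xi_\tth^\star$, which is common across $e\in\mathcal{E}_z$.

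\textbf{Main obstacle.} The delicate step is Step 3's identification of the coefficients through the nonlinearity $\sigma_z$, given that only uncorrelatedness of $\varepsilon$ is available. I would lean on the precise content of \cref{cond:ident-z}; for instance, a strictly monotone link with the GLM score equation having a unique root. I would also check that the common-basis choice $M,N$ avoids residual environment dependence.
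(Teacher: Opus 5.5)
Your proof is correct and is essentially the paper's population-level construction. Specifically: $\cap_e\col(\Sigma\supe)$ is the eigenvalue-one eigenspace of the averaged projector $\bar P$; your $\Phi_\tth\supe$ is the paper's $\bar W_\tth\supe$; your GLS matrix $(\Sigma\supe)^+V(V^\top(\Sigma\supe)^+V)^{-1}$ is in fact identical to the partialling-out matrix $\bar W_\tti-\bar W_\tth\supe\bar R\supe$ with $\bar W_\tti=V$ (both have columns in $\col(\Sigma\supe)$ and satisfy $\Phi^\top[B,A\supe]=[V^\top B,\,0]$); and Step 3 is the population version of \eqref{eq:est-psi}--\eqref{eq:est-sh}. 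The step you flag closes with the canonical GLM score, not (nonlinear) least squares, whose score would need more than $\mathbb{E}[\varepsilon_k\supe F\supe]=0$: the true coefficients solve $\mathbb{E}[(\sigma_z(\theta^\top V\supe)-Z_k\supe)V\supe]=0$ for $V\supe=(G\supe,H\supe)$ by exogeneity, and this root is unique because $\mathbb{E}[\{\sigma_z(\theta_1^\top V\supe)-\sigma_z(\theta_2^\top V\supe)\}(\theta_1-\theta_2)^\top V\supe]>0$ whenever $\theta_1\neq\theta_2$, by $\sigma_z'>0$ and $\mathbb{E}[V\supe(V\supe)^\top]\succ 0$ (given these moments are finite).
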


\begin{remark}[Instance-level necessity for \cref{cond:ident}] Within the noiseless model class satisfying the regularity condition \cref{cond:ident-reg}, \cref{cond:ident} is necessary at the instance level -- for any data-generating process, the identification of latent structure is equivalent to the satisfaction of \cref{cond:ident}; see the rigorous statement in \aosversion{Theorem B.1}{\cref{thm:ident}}.
\end{remark}

\begin{remark}[Impossibility of using a common linear map] Though the invariant factors $F_\tti\supe$ produce the same effects on $X\supe$ via shared loading $B$, it is impossible to use the same linear map $\Phi_\tti$ to extract $F_\tti^{(e)}$ from $X^{(e)}$ in general; see a counterexample in \aosversion{Appendix B.1}{\cref{append:ident}}. 
\end{remark}

\subsection{Implications for robust transfer}

\myparagraph{Stable prediction across environments.} Even if directly regressing $Y\supe$ on $X\supe$ (or even on the full latent $F\supe$) yields different solutions across $\mathcal{E}_y$, the auxiliary labels $Z\supe$ provide enough supervision to extract a collection of \emph{aligned} maps $\{\Phi_{X\to S^\star}\supe\}_{e\in \mathcal{E}_z}$, where $\Phi_{X\to S^\star}\supe = [\Phi_{X\to S_\tti^\star}\supe, \Phi_{X\to S_\tth^\star}\supe]$ with $\Phi_{X\to S_\tti^\star}\supe=\Phi_{\tti}\supe \Phi_{S_\tti^\star}$ and $\Phi_{X\to S_\tth^\star}\supe=\Phi_{\tth}\supe \Phi_{S_\tth^\star}\supe$. These maps extract the prediction-invariant latent factors 
\begin{align*}
\bar{F}_{S^\star}\supe = (\Phi_{X\to S^\star}\supe)^\top X\supe= Q F_{S^\star}\supe 
\end{align*} 
on which regressing $Y\supe$ recovers the same coefficients $\bar{\beta} = Q^{-\top} \beta^\star$ across $e\in \mathcal{E}_y$, where $Q = \mbox{diag}\{Q_{S_\tti^\star}, Q_{S_\tth^\star}\}$. This identifies the invariant coefficients \eqref{eq:model-invariant-y} up to the same ambiguity matrix $Q$ across $e\in \mathcal{E}_y$ and crucially, the ambiguity cancels in the prediction signal: $(\bar{\beta})^\top \bar{F}_{S^\star}\supe = ({\beta}^\star)^\top Q^{-1} Q {F}_{S^\star}\supe  = ({\beta}^\star)^\top {F}_{S^\star}\supe$. 

\myparagraph{Transferability to new environments.} When moving to the new environment $t \in \mathcal{E}_z \setminus \mathcal{E}_y$ where only  $X^{(t)}$ and $Z^{(t)}$ are observed, the aligned maps  $\Phi_{X\to S^\star}^{(t)}$ from \cref{thm:ident-1} allow prediction of $Y^{(t)}$ via the invariant signals $\bar{\beta}^\top (\Phi_{X\to S^\star}^{(t)})^\top X^{(t)} = (\beta^\star)^\top F_{S^\star}^{(t)}$. Even without knowing the distribution of $Y \mid F, U$ in this new environment, the invariant predictor $g^\star=\sigma_y((\beta^\star)^\top F_{S^\star}^{(t)})$ is optimal in a worst-case sense: it minimizes the maximum prediction error over all plausible distributions of $Y$, where the uncertainty comes from spurious factors potentially correlating with $Y$ in unpredictable ways across environments. 

\begin{proposition}
\label{prop:robust1}
    Let $\nu$ be any fixed distribution of $(F, U)$ under which $U$ is not necessarily $0$, and $(B, A, \beta^\star, \sigma_y)$ be any fixed quantities with the same semantic meaning as before, $c>0$ be any fixed constant. Write $\theta = (\nu, B, A, \beta^\star, \sigma_y)$, and define the uncertainty set $\mathcal{U}_{\theta, c}$ as
    \begin{align*}
        \mathcal{U}_{\theta, c} := \Bigg\{\begin{split}(F, U, X, Y) \sim \mathbb{P}:&~\myblue{\mathbb{E}[Y|W]=\sigma_y(F_{S^\star}^\top\beta^\star)} ~\text{with}~ W=((F_{\tti})^\top, (F_{S_\tth^\star})^\top)^\top,  \\
        & (F,U)\sim \nu,~~ X=BF_\tti + AF_\tth + U, ~~\mathbb{E}[\myred{\mathrm{Var}(Y|W)}] \le c\end{split}\Bigg\},
    \end{align*} and worst-case out-of-sample $L_2$ risk $\mathsf{R}_{\mathtt{oos}}(g) = \sup_{\mathbb{P} \in \mathcal{U}_{\theta, c}} \mathbb{E}_{\mathbb{P}}[|g(X,F,U) - Y|^2]$. Then, under the law $(X, F, U) \sim \bar{\nu}$ induced by $\nu$ and $X=BF_\tti + AF_\tth + U$, 
    \begin{align*}
        \forall g \in L_2(\bar{\nu}), ~~~~ \mathsf{R}_{\mathtt{oos}}(g) - \mathsf{R}_{\mathtt{oos}}(g^\star) = \|g - g^\star\|_{L_2(\bar{\nu})}^2 + 2\sqrt{c} \|\Pi(g) - g\|_{L_2(\bar{\nu})}
    \end{align*} with $g^\star(x, f, u) = \sigma_y((\beta^\star)^\top f_{S^\star})$, and $\Pi(g) = \mathbb{E}[g(X,F,U)|F_{\tti}, F_{S_\tth^\star}]$. 
\end{proposition}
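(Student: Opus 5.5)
Throughout, write $W=(F_\tti,F_{S_\tth^\star})$, $m(W)=\sigma_y(F_{S^\star}^\top\beta^\star)$, and $\mathcal{G}=\sigma(X,F,U)=\sigma(F,U)$ (since $X$ is a deterministic function of $(F,U)$). Note $g^\star=m(W)$ is $W$-measurable, so $\Pi(g^\star)=g^\star$. The plan has three steps: first compute $\mathsf{R}_{\mathtt{oos}}(g)$ explicitly for arbitrary $g$, then specialize to $g=g^\star$, then subtract.

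\textbf{Decomposition.} For any $\mathbb{P}\in\mathcal{U}_{\theta,c}$, set $\eta=Y-m(W)$, so $\mathbb{E}[\eta\mid W]=0$ and $\mathbb{E}[\eta^2]=\mathbb{E}[\mathrm{Var}(Y\mid W)]\le c$. Then
\[
\mathbb{E}_{\mathbb{P}}|g-Y|^2=\|g-g^\star\|^2-2\mathbb{E}[(g-g^\star)\eta]+\mathbb{E}[\eta^2].
\]
The first term depends only on $\bar\nu$. In the cross term, $\mathbb{E}[(\Pi(g)-g^\star)\eta]=0$ because $\Pi(g)-g^\star$ is $W$-measurable and $\mathbb{E}[\eta\mid W]=0$. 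Hence the cross term equals $-2\mathbb{E}[(g-\Pi(g))\eta]$. Writing $h=g-\Pi(g)$, which satisfies $\mathbb{E}[h\mid W]=0$, we get
\[
\mathsf{R}_{\mathtt{oos}}(g)=\|g-g^\star\|^2+\sup_{\eta}\bigl\{\mathbb{E}[\eta^2]-2\mathbb{E}[h\eta]\bigr\},
\]
where the supremum is over admissible $\eta$.

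\textbf{Characterizing admissible $\eta$.} The admissible $\eta$ are exactly the random variables on an extension of $\bar\nu$ with $\mathbb{E}[\eta\mid W]=0$ and $\mathbb{E}\eta^2\le c$. Any such $\eta$ is realized by setting $Y=m(W)+\eta$, since the law of $(F,U)$ stays $\nu$.

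\textbf{Upper bound.} By Cauchy--Schwarz, $\mathbb{E}[\eta^2]-2\mathbb{E}[h\eta]\le c+2\sqrt{c}\,\|h\|$.

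\textbf{Matching lower bound.} There are two cases.
\begin{itemize}
\item If $\|h\|>0$, take $\eta=-\sqrt{c}\,h/\|h\|$. This is $\mathcal{G}$-measurable, has $\mathbb{E}[\eta\mid W]=0$ (since $\mathbb{E}[h\mid W]=0$) and $\mathbb{E}\eta^2=c$, and it attains $c+2\sqrt c\|h\|$.
\item If $h=0$, we still need to achieve $\mathbb{E}\eta^2=c$. Take $\eta=\sqrt{c}\,\epsilon$ with $\epsilon$ an independent Rademacher variable. This is allowed because the uncertainty set only fixes the law of $(F,U)$, not $Y$'s dependence on external noise.
\end{itemize}

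Therefore $\mathsf{R}_{\mathtt{oos}}(g)=\|g-g^\star\|^2+c+2\sqrt c\|g-\Pi(g)\|$. Specializing to $g=g^\star$ gives $\mathsf{R}_{\mathtt{oos}}(g^\star)=c$, and subtracting yields the claimed identity.

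\textbf{Main subtlety.} The delicate step is the attainment argument. First, the adversarial $\eta$ must satisfy $\mathbb{E}[\eta\mid W]=0$, which is exactly why only the component $g-\Pi(g)$ orthogonal to $W$-measurable functions is penalized. Second, the extra randomization needed when $h=0$ must be legitimate. I would make explicit that $Y$ may depend on auxiliary randomness beyond $(F,U)$; otherwise, in a degenerate case where $\mathcal{G}=\sigma(W)$, the constant $c$ would not be attained. Integrability is routine given $g\in L_2(\bar\nu)$ and $g^\star\in L_2$, which is implicitly assumed.
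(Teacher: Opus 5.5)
Your proof is correct and follows essentially the same route as the paper's argument. You write $Y=g^\star+\eta$ with $\mathbb{E}[\eta\mid W]=0$ and $\mathbb{E}\eta^2\le c$, observe that the cross term only sees $g-\Pi(g)$, bound it by Cauchy--Schwarz, and attain the bound with $\eta=-\sqrt{c}\,(g-\Pi(g))/\|g-\Pi(g)\|$, or with independent noise of variance $c$ when $g=\Pi(g)$, which yields $\mathsf{R}_{\mathtt{oos}}(g)=c+\|g-g^\star\|^2+2\sqrt{c}\|g-\Pi(g)\|$ and $\mathsf{R}_{\mathtt{oos}}(g^\star)=c$; the external randomization in the degenerate case is legitimate because $\mathcal{U}_{\theta,c}$ only fixes the law of $(F,U)$, the conditional mean $\mathbb{E}[Y\mid W]$, and the bound on $\mathbb{E}[\mathrm{Var}(Y\mid W)]$.
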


The joint distribution of $(F, U, X)$ is fixed, reflecting the practical setting of unsupervised learning with abundant unlabeled covariates. The perturbations arise from the potential adversarial association between $Y$ and the heterogeneous factors other than $F_{S_\tth^\star}^{(e)}$, which may explain $Y$ differently across environments $e\in \mathcal{E}$. 

\myparagraph{Generalization without alignment supervision. } When only $X^{(t)}$ is observed in the new environment $t \in \mathcal{E} \setminus \mathcal{E}_z$, \cref{thm:ident-1} provides the map $\Phi^{(t)}_{X\to S^\star_\tti} = \Phi^{(t)}_{\tti} \Phi_{S_\tti^\star}$, which extracts the invariant factors $\bar{F}_{S_\tti^\star}^{(t)} = (\Phi^{(t)}_{X\to S^\star_\tti})^\top X^{(t)} = Q_{S_\tti^\star} F_{S_\tti^\star}^{(t)}$. Without $Z$, the heterogeneous prediction-invariant factors are not identifiable under the current assumptions; \cref{prop:robust2} shows that relying solely on the invariant factors is nevertheless minimax-optimal over a rotation-based uncertainty class.

\begin{proposition}
\label{prop:robust2}
    Let $\nu$ be any fixed distribution of $\bar{F} = (F_\tti^\top, \bar{F}_\tth^\top)^\top$ such that $\mathbb{E}_\nu[\bar{F}_\tth \bar{F}_\tth^\top]=I_{r_\tth}$, $\mathbb{E}[F_\tti \bar{F}_\tth^\top] = 0$ and $F_\tti \indep \bar{F}_\tth$, $\beta^\star$ be any fixed parameter, $\sigma_y(\cdot)$ has strictly positive derivative $\sigma_y'(\cdot) > 0$, $c \in \mathbb{R}^+$. Under \aosversion{Condition B.2}{\cref{cond:fsi-worst-reg}}, denote $\mathcal{H} = \{h:  h~\text{increasing and measurable}\}$, and define the uncertainty set $\mathcal{U}_{\theta, c}$ for $\theta=(\nu, \beta^\star, \sigma_y)$ as 
    \begin{align*}
        \mathcal{U}_{\theta, c} := \Bigg\{\begin{split}&(F_\tti, F_\tth, Y) \sim \mathbb{P}: \myblue{\mathbb{E}[Y|F_{\tti}, F_{S_\tth^\star}]=\sigma_y(F_{S^\star}^\top\beta^\star)} \\
        &~~~~~~~~~ (F_\tti, \myred{OF_\tth}) = (F_\tti, \bar{F}_\tth)\sim \nu ~\text{with}~ O\in \mathcal{O}_{r_\tth}, \mathbb{E}[\myred{\mathrm{Var}(Y|F_{\tti})}] \le c\end{split}\Bigg\},
    \end{align*} and $\mathsf{R}_{\mathtt{oos}, \mathcal{H}}(\beta) := \sup_{\mathbb{P} \in \mathcal{U}_{\theta, c}} \inf_{h \in \mathcal{H}} \mathbb{E}_{\mathbb{P}}[|h(\beta^\top [F_\tti, \bar{F}_\tth]) - Y|^2]$, the worst-case calibrated out-of-sample $L_2$ risk. Let $\bar{\beta} \in \mathbb{R}^{r_\tti + r_\tth}$ with $\bar{\beta}_{S^\star_\tti} = \beta_\tti^\star$ and $\bar{\beta}_{(S_\tti^\star)^c} = 0$, then
    \begin{align}\label{eq:worst2}
        \{\bar{\beta} \cdot t: t>0\} = \argmin_{\beta} \mathsf{R}_{\mathtt{oos}, \mathcal{H}}(\beta).
    \end{align} 
\end{proposition}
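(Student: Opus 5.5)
We lower-bound $\mathsf{R}_{\mathtt{oos},\mathcal{H}}(\beta)$ for every $\beta$ by constructing an explicit adversarial $\mathbb{P}\in\mathcal{U}_{\theta,c}$. We then check that the lower bound is attained exactly when $\beta\propto\bar\beta$.

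\emph{Risk of $\bar\beta$.} Write $\beta=[\beta_\tti,\beta_\tth]$. For $\beta=\bar\beta t$ the index $\bar\beta^\top[F_\tti,\bar F_\tth]=(\beta^\star_\tti)^\top F_{S_\tti^\star}$ depends only on $F_\tti$, so the rotation $O$ plays no role. For any $\mathbb{P}\in\mathcal{U}_{\theta,c}$, decompose $Y=\mathbb{E}[Y|F_\tti]+(Y-\mathbb{E}[Y|F_\tti])$. This gives
\[
\inf_h \mathbb{E}|h(t(\beta_\tti^\star)^\top F_{S^\star_\tti})-Y|^2\le \mathbb{E}[\mathrm{Var}(Y|F_\tti)]+\inf_h\mathbb{E}|h(\cdot)-\mathbb{E}[Y|F_\tti]|^2 .
\]
Here $\mathbb{E}[Y|F_\tti]=\mathbb{E}[\sigma_y((\beta^\star_\tti)^\top F_{S_\tti^\star}+(\beta_\tth^\star)^\top F_{S^\star_\tth})|F_\tti]$. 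Since $F_\tti\indep \bar F_\tth$ and $F_\tth=O^\top\bar F_\tth$, this equals $m(s):=\mathbb{E}_\nu\sigma_y(s+\langle\beta_\tth^\star,[O^\top\bar F_\tth]_{S_\tth^\star}\rangle)$ evaluated at $s=(\beta^\star_\tti)^\top F_{S^\star_\tti}$. Because $\sigma_y'>0$, $m$ is increasing, so $h=m(\cdot/t)\in\mathcal{H}$ makes the second term vanish. Hence $\mathsf{R}_{\mathtt{oos},\mathcal{H}}(\bar\beta t)\le c$. Equality is attained by taking any $\mathbb{P}$ whose conditional variance budget is saturated, which gives $\mathsf{R}_{\mathtt{oos},\mathcal{H}}(\bar\beta t)= c$.

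\emph{Any other $\beta$ has strictly larger risk.} There are two cases.

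\textbf{Case (i): $\beta_\tth\neq0$.} Choose the rotation $O$ so that the direction $\beta_\tth$ on $\bar F_\tth$ is mapped to a heterogeneous coordinate outside $S^\star_\tth$, or one orthogonal to $\beta_\tth^\star$. This is presumably where \cref{cond:fsi-worst-reg} guarantees enough room, e.g.\ $r_\tth>|S^\star_\tth|$ or a nondegeneracy condition. Then add to $Y$ a spurious noise term $\varepsilon=\alpha\,\phi(F_\tti,F_\tth)$ satisfying $\mathbb{E}[\varepsilon|F_\tti,F_{S^\star_\tth}]=0$. Build $\varepsilon$ to be anti-aligned with $\beta^\top[F_\tti,\bar F_\tth]$ after calibration, for example $\varepsilon\propto-(\beta_\tth^\top\bar F_\tth-\text{its projection})$ combined with a sign function, and scale it so that $\mathbb{E}\mathrm{Var}(Y|F_\tti)$ stays at most $c$. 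Using the residual variance from the $F_{S^\star_\tth}$ part, one shows the calibrated risk exceeds $c$. The idea is that any index loading on $\bar F_\tth$ can be "fooled" into fitting noise in an increasing direction.

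\textbf{Case (ii): $\beta_\tth=0$ but $\beta_\tti\not\propto\beta^\star_\tti$ on $S^\star_\tti$, or $\beta_\tti$ loads on $(S^\star_\tti)^c$.} Take the $\mathbb{P}$ with maximal conditional variance $c$. The best increasing function of $\beta_\tti^\top F_\tti$ then cannot reproduce $m((\beta^\star_\tti)^\top F_{S^\star_\tti})$. The reason is that $m$ is strictly increasing and the two indices are not monotone transforms of one another under $\nu$ (\cref{cond:fsi-worst-reg} should give a nondegenerate law). So the additional approximation error is strictly positive, and the risk is $c+\delta>c$.

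\emph{Main obstacle.} The hardest step is the construction in Case (i): producing an admissible adversarial $\mathbb{P}$ that keeps $\mathbb{E}[Y|F_\tti,F_{S^\star_\tth}]$ fixed and respects the variance budget, yet provably increases the calibrated risk above $c$ for every increasing $h$. I would first try a sign-flip construction, $Y=g^\star+\sqrt{c'}\,\mathrm{sgn}(\beta_\tth^\top\bar F_\tth)\cdot\eta$ with $\eta$ chosen orthogonal to $F_{S^\star_\tth}$. I would then evaluate the isotonic-regression fit explicitly, using independence of $F_\tti$ and $\bar F_\tth$ to split the risk.
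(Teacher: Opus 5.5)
\textbf{Verdict: genuine gap in Case (i).} Your treatment of $\beta=t\bar\beta$ is right: for each fixed $\mathbb{P}$ the decomposition is an equality, and saturating the budget at $c$ is possible by condition (3) of \cref{cond:fsi-worst-reg}. Your Case (ii) mechanism is also right. When $\beta_\tth=0$ the index is $F_\tti$-measurable, the risk is $c+\inf_h\mathbb{E}|h(\beta^\top\bar F)-m(s)|^2$, and condition (4) together with strict monotonicity of $m$ makes the excess positive. You omit the sign-flipped $\beta=-t\bar\beta$, $t>0$. There the two indices \emph{are} monotone transforms of each other. Strictness instead comes from the fact that no nonincreasing function of $s$ can track the strictly increasing $m(s)$, which gives excess at least $\mathrm{Var}(m(s))>0$.

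\textbf{Why your Case (i) construction fails.} Your lever is spurious noise anti-aligned with the index, but the assumptions give no room for it. Suppose $S^\star_\tth$ exhausts the heterogeneous coordinates, e.g.\ $r_\tth=|S^\star_\tth|=1$, which \cref{cond:fsi-worst-reg} does not exclude.
\begin{itemize}
\item Then $\mathbb{E}[Y\mid F_\tti,F_\tth]=g^\star$ is pinned down, so every admissible $Y-g^\star$ is uncorrelated with every function of $(F_\tti,F_\tth)$, in particular with $h(\beta^\top\bar F)$. No perturbation can be anti-aligned with the index.
\item Rotating $\beta_\tth$ to a direction orthogonal to $\beta_\tth^\star$ does not rescue this. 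It is impossible when $r_\tth=1$.
\item More generally, $\bar F_\tth$ only has \emph{uncorrelated} coordinates. An orthogonal direction can therefore still carry information about $\beta_\tth^{\star\top}F_{S^\star_\tth}$ and lower the risk.
\item The sign-flip proposal adds conditionally mean-zero noise and does not address this case.
\end{itemize}

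\textbf{The missing idea: make the rotation itself the adversary.} This is exactly what condition (3) provides: ``full ambiguity on the orthogonal transform''. Take $Y=g^\star_O+\xi$ with $\xi$ independent noise filling the budget, so $\mathbb{E}[\mathrm{Var}(Y\mid F_\tti)]=c$. Write $s=\bar\beta^\top\bar F$, $I=\beta^\top\bar F$, and $m_O(s)=\mathbb{E}[g^\star_O\mid F_\tti]$; the latter is strictly increasing because $F_\tti$ is independent of $\bar F_\tth$. Then for every nondecreasing $h$,
\[
\mathbb{E}|h(I)-Y|^2=c+\mathbb{E}|h(I)-m_O(s)|^2-2\,\mathbb{E}\bigl[\mathrm{Cov}\bigl(h(I),g^\star_O\mid F_\tti\bigr)\bigr].
\]
\begin{itemize}
\item If $\beta_\tth\neq0$ and $\beta_\tth^\star\neq0$, choose $O$ with $\beta_\tth^{\star\top}F_{S^\star_\tth}=-\|\beta^\star_\tth\|_2\,u^\top\bar F_\tth$, where $u=\beta_\tth/\|\beta_\tth\|_2$.
\item Conditionally on $F_\tti$, $h(I)$ is nondecreasing and $g^\star_O$ is decreasing in the single scalar $u^\top\bar F_\tth$. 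Chebyshev's association inequality then makes the conditional covariance $\le 0$.
\item If $\beta_\tth=0$ or $\beta^\star_\tth=0$, the conditional covariance is exactly $0$.
\end{itemize}
Hence $\mathsf{R}_{\mathtt{oos},\mathcal{H}}(\beta)\ge c+\mathbb{E}[\mathrm{Var}(m_O(s)\mid I)]$. This is $>c$ for every $\beta\notin\{t\bar\beta:t\neq0\}$ by condition (4) and injectivity of $m_O$. Together with the sign-flip remark above, this handles your two cases uniformly and needs no spurious coordinates.
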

\noindent \cref{prop:robust2} has two takeaways. First, the invariant signal from $F_\tti$ remains the best achievable under worst-case uncertainty, with a monotone calibration head $h$ to account for the nonlinearity of $\sigma_y(\cdot)$. Depending on the choice of $\sigma_y(\cdot)$, this corresponds to minimizing the worst-case $L_2$ risk (linear regression, $\sigma_y(t) = t$) or maximizing worst-case AUC (logistic regression $\sigma_y(t)=e^t/(e^t+1)$), since AUC is invariant under monotone transformation.

Moreover, the uncertainty in $\mathcal{U}$ has two sources: the unknown and varying distribution of $Y$ given the latent factors, and the unidentified rotation of $F_\tth$, since without $Z$ we only observe $\bar{F}_\tth = O F_\tth$ for some unknown $O \in \mathcal{O}_{r_\tth}$. This also indicates the necessity of $Z$ in yielding robust prediction of $Y$: without its supervision to identify $F_{S^\star_\tth}$ from $F_\tth$, incorporating any information from $F_\tth$ yields strictly worse predictions than just using $F_\tti$ alone.

\section{Method}
\label{sec:method}
We now describe the ATLAS estimation procedure, which builds on the idea of diversified projection (DP) \citep{fan2022learning, fan2024factor}. In a standard factor model with observed covariates $X=BF+U$, loading matrix $B\in\Rbb^{d\times r}$, latent factor $F\in\Rbb^r$, and idiosyncratic error $U$, DP constructs a matrix $\Phi\in\Rbb^{d\times \bar r}$, with $\bar r\ge r$, such that $\nu_{\min}(\Phi^\top B)\gg \nu_{\max}(\Phi)$. Hence, when the idiosyncratic errors are weakly dependent, 
\begin{align*}
\tilde F(X)=\Phi^\top X=(\Phi^\top B)F+\Phi^\top U,
\qquad
\|(\Phi^\top B)F\|_2\gg \|\Phi^\top U\|_2,
\end{align*}
so $\tilde F(X)$ can be used as a proxy for $F$ in downstream tasks such as latent factor regression. Throughout this section, we use $\tilde F$ and $\tilde F(X)$ interchangeably. Conditional on the independently estimated DP matrices, the projected factors can be treated as i.i.d. whenever the corresponding $X$ are i.i.d.
ATLAS adapts DP to the multi-environment setting in three stages. In \cref{sec:method1}, we use $\mathcal D_x\supe$ to construct separate DP matrices $\hat\Phi_\tti\supe$ and $\hat\Phi_\tth\supe$, yielding factor proxies $\tilde F_\tti\supe=(\hat\Phi_\tti\supe)^\top X\supe$ and $\tilde F_\tth\supe=(\hat\Phi_\tth\supe)^\top X\supe$ for the invariant and heterogeneous factors. In \cref{sec:method2}, we use $\mathcal D_z\supe$ to align the prediction-invariant components across environments. In \cref{sec:method3}, we use $\mathcal D_y\supe$ to estimate the invariant prediction rule and transfer it to new environments with or without auxiliary labels.

\subsection{Estimating the invariant and heterogeneous factors}
\label{sec:method1}

We first estimate the invariant loading space $\col(B)$. Throughout this subsection, for a generic object $T$, $T$ denotes an unobserved quantity, $\bar{T}$ an observed population functional, and $\hat{T}$ its empirical estimator. Under \cref{cond:ident-reg} and \cref{cond:ident}, we can write the loading $B\supe$ as 
\begin{align}
\label{eq:loading}
    B\supe := \begin{bmatrix} B & A\supe \end{bmatrix} =  \begin{bmatrix}
        W_{\tti} & W_{\tth}\supe 
    \end{bmatrix} \begin{bmatrix}
        \kii & \khie \\
         0 & \khhe
    \end{bmatrix} =: W\supe \cdot K\supe , 
\end{align} 
where $W_{\tti}\in \mathbb{R}^{d\times r_{\tti}}$ and $W_{\tth}\supe\in \mathbb{R}^{d\times r_{\tth}\supe}$ have orthonormal columns and are mutually orthogonal, $\kii \in \Rbb^{r_{\tti}\times r_{\tti}}$ and $\khhe\in \mathbb{R}^{r_{\tth}\supe\times r_{\tth}\supe}$ are invertible, while $\khie$ captures possible non-orthogonality between the shared loading $B$ and the environment-specific loading $A\supe$. 

To motivate the estimator, consider the population-level regime with known $r\supe$ and $r_\tti$. For exposition, suppose $U\supe \sim \mathcal{N}(0, I_d)$ for $d\times d$ identity matrix $I_d$; our estimator below does not rely on Gaussianity or on knowing $r_\tti$. The covariance of $X\supe$ can be written as
\begin{align}
\label{eq:method-decom-sigma-x}
    \bar{\Sigma}_X\supe = I_d + W\supe \left\{K\supe \Sigma_F\supe (K\supe)^\top \right\} (W\supe)^\top,
\end{align} 
where $\Sigma_F^{(e)}$ is the covariance matrix of $F^{(e)}$.
Therefore, for any orthonormal basis $\bar W\supe\in\mathcal{O}_{d\times r\supe}$ of the leading $r\supe$ eigenspace of $\bar{\Sigma}_X\supe$, we have $\bar W\supe=W\supe O\supe$ for some $O\supe\in\mathcal O_{r\supe}$. Averaging the projection matrices across environments gives
\begin{align*}
    \bar{P} = \frac{1}{|\mathcal{E}|} \sum_{e\in \mathcal{E}} \bar{W}\supe (\bar{W}\supe)^\top = W_{\tti} & W_{\tti}^\top + \frac{1}{|\mathcal{E}|}\sum_{e\in \mathcal{E}} W_{\tth}\supe (W_{\tth}\supe)^\top.
\end{align*} 
Under \cref{cond:ident}(a), the eigenspace of $\bar P$ associated with eigenvalue one is exactly $\col(W_{\tti})=\col(B)$, whereas all remaining eigenvalues are strictly smaller than one. Thus, the leading $r_\tti$ eigenvectors of $\bar P$ recover the invariant loading space. For each environment, the heterogeneous loading space can then be recovered from the residual projection $\bar W\supe(\bar W\supe)^\top-\bar W_\tti\bar W_\tti^\top$. Consequently, $\bar W_\tti=W_\tti O_\tti$ and $\bar W_\tth\supe=W_\tth\supe O_\tth\supe$ for orthonormal matrices $O_\tti$ and $O_\tth\supe$.

We next construct DP matrices $\bar{\Phi}_{\tti}\supe$ and $\bar{\Phi}_{\tth}\supe$ for $F_{\tti}\supe$ and $F_{\tth}\supe$, respectively. The DP matrix for $F_{\tth}\supe$ is relatively easy. Given the model \eqref{eq:model-x} and the decomposition \eqref{eq:loading} that
\begin{align*}
    (\bar{W}_{\tth}\supe)^\top X\supe 
    &= (O_{\tth}\supe)^\top \khhe F_{\tth}\supe + V^{(e)} ~~~\text{with}~~~ V^{(e)} \sim \mathcal{N}(0, I_{r_{\tth}\supe}),
\end{align*} one thus can adopt $\bar{\Phi}_{\tth}\supe = \bar{W}_{\tth}\supe/\sqrt{d}$. On the other hand, using $\bar{W}_{\tti}$ as the projection cannot screen out the effects of $F_{\tth}\supe$. Particularly, the orthogonality between $W_{\tti}$ and $W\supe_{\mathtt{H}}$ yields
\begin{align*}
    (\bar{W}_{\tti})^\top X\supe = O_{\tti}^\top \left[\kii F_{\tti}\supe + \khie F_{\tth}\supe \right] + V^{(e)} ~~~\text{with}~~~ V^{(e)} \sim \mathcal{N}(0, I_{r_{\tti}}).
\end{align*} We thus need to explicitly partial out the effects of $F_{\tth}\supe$ using the following diversified projection $\bar{\Phi}_{\tti} = [\bar{W}_{\tti} - \bar{W}_{\tth}\supe \bar{R}\supe]/\sqrt{d}$, where $\bar{R}\supe$ can be interpreted as a least-square solution
\begin{align}
\label{eq:method-partialing-out}
    \bar{R}\supe = \left[ (\bar{W}_{\tth}\supe)^\top \bar{\Sigma}_{X}\supe \bar{W}_{\tth}\supe\right]^{-1} \left[(\bar{W}_{\tth}\supe)^\top \bar{\Sigma}_{X}\supe  \bar{W}_{\tti} \right].
\end{align} 
This construction separates invariant and heterogeneous factor scores without imposing orthogonality between $B$ and $A\supe$.

\begin{algorithm}[!t]
\caption{Invariance-Heterogeneity Decomposition}
\algnewcommand{\IIf}[1]{\State\algorithmicif\ #1\ \algorithmicthen}
\algnewcommand{\EndIIf}{\unskip\ \algorithmicend\ \algorithmicif}
\begin{algorithmic}[1]
\State \textbf{Input:} $\mathcal{D}=\{X_i\supe\}_{e\in \mathcal{E}, i\in [2n_x]}$, factor dimension $\{r\supe\}_{e\in \mathcal{E}}$, hyper-parameter $\lambda_{\mathsf{ihd}} \in (0, 1)$.
\State Sample split $\mathcal{D} = \mathcal{D}_1 \cup \mathcal{D}_2$ with $\mathcal{D}_1 = \{X_i\supe\}_{e\in \mathcal{E}, i\in [n_x]}$ and $\mathcal{D}_2=\{X_i\supe\}_{e\in \mathcal{E}, i\in [2n_x] \setminus [n_x]}$. 
\State \Comment{{\sc Step 1. PCA on each $e\in \mathcal{E}$.}}
\For{$e \in \mathcal{E}$} 
    \State Calculate covariance matrix $\hat{\Sigma}\supe \gets \frac{1}{n_x} \sum_{i=1}^{n_x} X_i\supe [X_i\supe]^\top$ using data from $\mathcal{D}_1$.
    \State Compute the truncated top-$r\supe$ PCA $\hat{V}\hat{\Lambda}\hat{V}^\top$ of $\hat{\Sigma}\supe$, set $\hat{W}\supe \gets \hat{V} \in \mathbb{R}^{d\times r\supe}$.
\EndFor
\State \Comment{{\sc Step 2. Estimate Invariant Loading.}}
\State $\hat{P} \gets \frac{1}{|\mathcal{E}|} \sum_{e\in \mathcal{E}} \hat{W}\supe (\hat{W}\supe)^\top$.
\State Compute the truncated top-$\hat{r}_\tti$ PCA $\hat{V}\hat{\Lambda}\hat{V}^\top$ of $\hat{P}$ with $\hat{\Lambda}_{\hat{r}_\tti, \hat{r}_\tti} \ge 1-\lambda_{\mathsf{ihd}}$, set $\hat{W}_{\tti} \gets \hat{V} \in \mathbb{R}^{d\times \hat{r}_\tti}$.
\For{$e \in \mathcal{E}=\{e_1, \ldots, e_{|\mathcal{E}|}\}$} \Comment{{\sc Step 3. Estimate DP Matrices.}}
    \State $\hat{S}_\tth \gets \hat{W}\supe (\hat{W}\supe)^\top - \hat{W}_\tti \hat{W}_\tti^\top$, $\hat{r}_\tth\supe \gets r\supe - \hat{r}_\tti$.
    \State Compute the truncated top-$\hat{r}_\tth\supe$ PCA $\hat{V}\hat{\Lambda}\hat{V}^\top$ of $\hat{S}_\tth$, set $\hat{W}_{\tth}\supe \gets \hat{V} \in \mathbb{R}^{d\times \hat{r}_\tth\supe}$.
    \State Calculate covariance matrix $\hat{\Sigma}\supe_{\dagger} \gets \frac{1}{n_x} \sum_{i=n_x+1}^{2n_x} X_i\supe [X_i\supe]^\top$ using data from $\mathcal{D}_2$.
    \State $\hat{\Phi}\supe_{\tth} \gets \hat{W}_{\tth}\supe \left\{ (\hat{W}_{\tth}\supe)^\top \hat{\Sigma}\supe_\dagger \hat{W}_{\tth}\supe\right\}^{-1/2}$.
    \State $M \gets \hat{W}_{\tti} - \hat{W}_{\tth}\supe \left\{ (\hat{W}_{\tth}\supe)^\top \hat{\Sigma}\supe_\dagger \hat{W}_{\tth}\supe\right\}^{-1} \left\{(\hat{W}_{\tth}\supe)^\top \hat{\Sigma}\supe_\dagger \hat{W}_{\tti} \right\}$. \Comment{{\sc Partialling out $F_\tth\supe$}}
    \IIf{$e=e_1$} $G\gets M^\top \hat{\Sigma}_\dagger^{(e_1)} M$ \EndIIf 
    \State $\hat{\Phi}_\tti\supe \gets M G^{-1/2}$ \Comment{{\sc Fix a reference $e_1$ to pin down the scaling of the $F_\tti$ estimate.}}
\EndFor
\State \textbf{Output:} The diversified projection matrices $\{\hat{\Phi}_{\tti}\supe, \hat{\Phi}_{\tth}\supe\}_{e\in \mathcal{E}}$.
\end{algorithmic}
\label{algo:dp}
\end{algorithm}

\cref{algo:dp} gives the empirical implementation. For simplicity, the algorithm treats $r\supe$ as known and selects $r_\tti$ through the hyperparameter $\lambda_{\mathsf{ihd}}$; in practice, $r\supe$ may be selected by standard factor-number estimators \citep{bai2002determining, ahn2013eigenvalue}. We also explicitly standardize the factor $\tilde{F}_\tth\supe$ (line 16) for any $e\in \mathcal{E}$ and $\tilde{F}_\tti^{(e_1)}$ (line 18-21) for the first environment $e_1$ rather than the $1/\sqrt{d}$ scaling to avoid large condition numbers in their covariance matrices. 

\begin{remark}[Comparison to other multi-environment dimension reduction methods] \label{remark:cp} Our method differs from approaches that directly use the estimated shared loading space for downstream prediction. To see the distinction geometrically, consider two environments 
\begin{align*}
    X^{(1)} = b \cdot F_1^{(1)} + a^{(1)} F_2^{(1)} + U^{(1)} ,\qquad X^{(2)} = b \cdot F_1^{(2)} + a^{(2)} F_2^{(2)} + U^{(2)}, 
\end{align*} 
Let $w_1,w_2,w_3$ be orthonormal vectors in $\mathbb{R}^d$ with $d=1024$. For each environment $e\in\{1,2\}$, let $F_1^{(e)}$ and $F_2^{(e)}$ be independent and uniformly distributed on $[-1,1]$, and let $U^{(e)}\sim\mathcal N(0,I_d)$ be independent of $F^{(e)}$, and set $b=w_1$, $a^{(e)}=0.5w_1+0.9w_{1+e}$ for $e\in [2]$. 

In \cref{fig:vis}, the \myyellow{yellow} and \mygreen{green} planes represent the loading spaces $\mathrm{span}(b,\myyellow{a^{(1)}})$ and $\mathrm{span}(b,\mygreen{a^{(2)}})$, respectively, with points showing observations of $X$ from environments \myyellow{$e=1$} and \mygreen{$e=2$}. Panel (a) shows that our method estimates the shared loading direction $\hat w_\tti$ as the intersection of the two environment-specific planes, thereby recovering the shared loading subspace. In contrast, PoolPCA runs PCA on the pooled data and finds the direction that minimizes the overall projection error, which need not coincide with the shared direction. Panel (b) highlights a further distinction from methods\footnote{Examples include AJIVE \citep{feng2018angle, yang2025estimating}, Personalized PCA \citep{shi2024personalized}, Anchor PCA \citep{seiter2026anchor}, and JICO \citep{wang2024joint}. Although motivated differently, these methods share the idea of estimating the intersection of the leading principal subspaces, equivalently loading spaces in factor models, and using it directly for downstream prediction.}, exemplified by AJIVE, that use the estimated shared loading direction directly for dimension reduction. Such an approach is appropriate when the environment-specific loading space is orthogonal to the shared loading space. ATLAS instead allows $A\supe$ to be non-orthogonal to $B$ and separates invariant from heterogeneous factors using only the block-uncorrelatedness of $F_\tti$ and $F_\tth$. This matters for prediction: JICO, for example, imposes orthogonality on both loadings and factors to identify shared response signals, whereas our uncorrelatedness-based formulation is closer to standard factor-model assumptions and extends more naturally to nonlinear factor models, where uncorrelatedness can be strengthened to independence but loading-space orthogonality has no direct analogue.
\end{remark}

\begin{figure}[!t]
\begin{center}
\begin{tikzpicture}
    \node[anchor=west] at (0, 0) {\includegraphics[width=6cm]{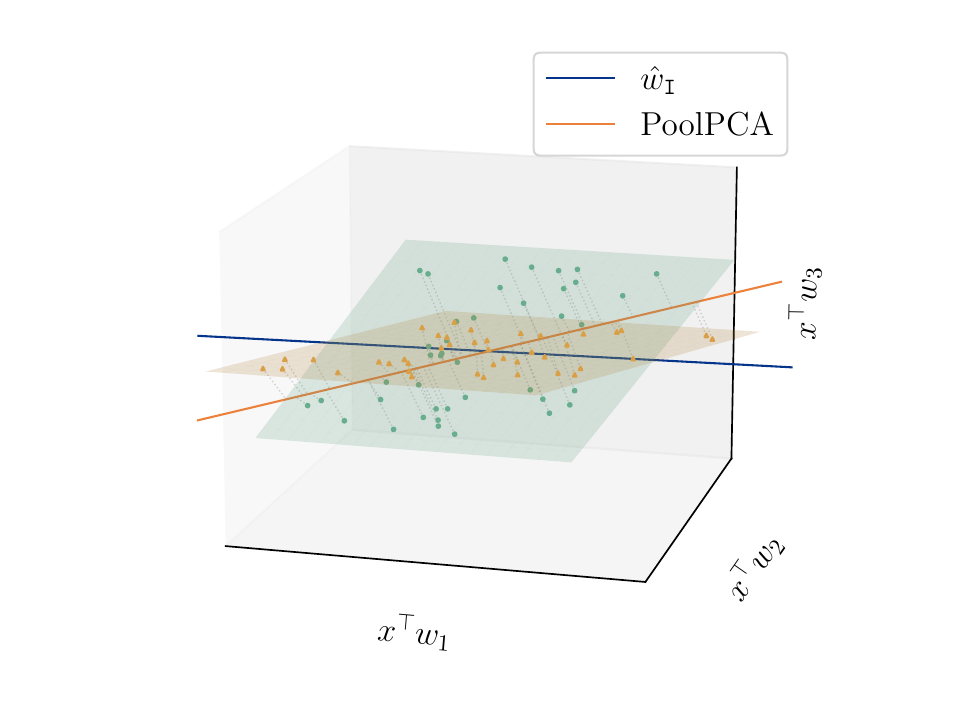}};
    \node[anchor=west] at (3.5, -2.6) {(a)};
    \node[anchor=west] at (7.5, -0.) {\includegraphics[width=6.2cm]{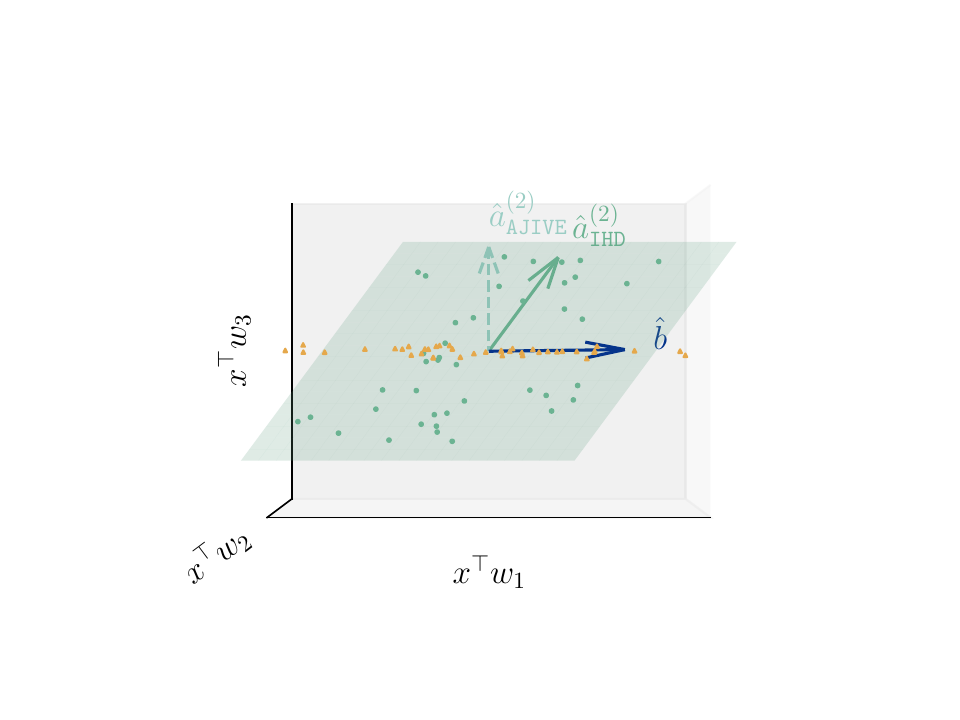}};
    \node[anchor=west] at (7.5+3.5, -2.6) {(b)};
\end{tikzpicture}
\end{center}
\caption{A visualization of the solutions yielded by our proposed invariance-heterogeneity decomposition (IHD), PCA on all the data (PoolPCA), and AJIVE. Panel (a) compares the shared direction identified by \myblue{IHD/AJIVE} with that obtained by \myorange{PoolPCA}; Panel (b) compares the environment-specific loading direction estimated by \mylightblue{IHD} with that estimated by \mygreen{AJIVE} in environment $e=2$. }
\label{fig:vis}
\end{figure}

\subsection{Estimating the prediction-invariant factors using auxiliary labels}
\label{sec:method2}

From \cref{sec:method1}, we obtain factor proxies  $\tilde{F}_\tti\supe = (\hat{\Phi}_\tti\supe)^\top X\supe$ and $\tilde{F}_\tth\supe = (\hat{\Phi}_\tth\supe)^\top X\supe$, which estimate $F_\tti\supe$ and $F_\tth\supe$ up to invertible transformations. We next use the auxiliary labels $Z$ to extract the prediction-invariant components
$F_{S_\tti^\star}\supe$ and $F_{S_\tth^\star}\supe$ from these proxies and align them across environments in $\mathcal E_z$.
A natural approach is to minimize the quasi-likelihood objective for $\Xi_\tti \in \mathbb{R}^{q\times |S_\tti^\star|}, \Phi_{S_\tti} \in \mathbb{R}^{\hat{r}_\tti \times |S_\tti^\star|}$, $\Xi_\tth \in \mathbb{R}^{q\times |S_\tth^\star|}, \Phi_{S_\tth}^{(e)} \in \mathbb{R}^{\hat{r}_\tth^{(e)} \times |S_\tth^\star|}$:
\begin{align}
\label{eq:method-mle}
    \hat{\Xi}_\tti, \hat{\Xi}_\tth,\hat{\Phi}_{S_\tti}, \{\hat{\Phi}_{S_\tth}\supe\}_{e\in \Ecal_z} = \argmin_{\substack{\Xi_\tti, \Xi_\tth, \Phi_{S_\tti}, \\  
    \{{\Phi}_{S_\tth}\supe\}_{e\in \Ecal_z}}} \sum_{\substack{e\in \mathcal{E}_z\\i-2n_x \in [n_z]}} L\left(\Xi_\tti \Phi_{S_\tti}^\top \tilde{F}_{\tti, i}\supe + \Xi_\tth (\Phi_{S_\tth}\supe)^\top \tilde{F}_{\tth, i}\supe, Z_i\supe\right).
\end{align} 
Here $L(v, z) = \sum_{k=1}^q \ell_z(v_k, z_k)$,  $\ell_z(v, z) = b_z(v) - v z$, $b_z'(\cdot) = \sigma_z(\cdot)$, and $\tilde{F}_{c, i}^{(e)} = \tilde{F}_c^{(e)}(X^{(e)}_i)$ for $c\in \{\tti, \tth\}$. Under the identification conditions, the resulting $\hat{\Phi}_{S_\tti}^\top \tilde{F}_\tti\supe$ and $(\hat{\Phi}_{S_\tth}\supe)^\top \tilde{F}_\tth\supe$ estimate $F_{S_\tti^\star}\supe$ and $F_{S_\tth^\star}\supe$ up to common transformation across $\mathcal{E}_z$, respectively. However, \eqref{eq:method-mle} is generally non-convex and computationally difficult. We therefore use a spectral approximation. 

We first fit the generalized linear model:
\begin{align}
\label{eq:est-psi}
    \hat{\Psi}_\tti, \{\hat{\Psi}_\tth\supe\}_{e\in \Ecal_z} = \argmin_{\Psi_\tti, \{\Psi_\tth\supe\}_{e\in \mathcal{E}_z}} \frac{1}{n_z \cdot |\mathcal{E}_z|}  \sum_{\substack{e\in \mathcal{E}_z\\ i-2n_x \in [n_z]}}
    L\left(\Psi_\tti \tilde{F}_{\tti,i}\supe + \Psi_\tth\supe \tilde{F}_{\tth,i}\supe, Z_i\supe\right).
\end{align} 
The coefficient $\Psi_{\tti} \in \mathbb{R}^{q \times \hat{r}_i}$ is shared across $\mathcal{E}_z$ because $\tilde F_\tti\supe$ is already aligned, whereas $\Psi_\tth\supe \in \mathbb{R}^{q \times \hat{r}_\tth^{(e)}}$ is environment-specific because $\tilde F_\tth\supe$ remains unaligned after the first stage.

For the invariant block, the right singular space of $\hat{\Psi}_\tti$ is approximately low rank, and its leading right singular subspace is close to $\col(\tilde{\Phi}_{S_\tti})$, where $\tilde{\Phi}_{S_\tti} \in \mathbb{R}^{\hat{r}_\tti \times |S_\tti^\star|}$ such that $\tilde{\Phi}_{S_\tti}^\top \tilde{F}_\tti \approx F_{S_\tti^\star}$. Let $U_{\hat{\Psi}_\tti} \Sigma_{\hat{\Psi}_\tti} V_{\hat{\Psi}_\tti}^\top$ be the truncated top-$\hat{s}_\tti$ SVD of $\hat{\Psi}_\tti$, i.e., $[\Sigma_{\hat{\Psi}_\tti}]_{\hat{s}_\tti, \hat{s}_\tti} \ge \lambda_{\mathsf{sel}}$ for some threshold hyperparameter $\lambda_{\mathsf{sel}}>0$, we use the map to extract $F_{S_\tti^\star}$ from $\tilde{F}_\tti$:
\begin{align}
\label{eq:est-si}
    \hat{\Phi}_{S_\tti} = V_{\hat{\Psi}_\tti} \in \mathbb{R}^{\hat{r}_\tti \times \hat{s}_\tti}
\end{align} 

For the heterogeneous block, the right singular spaces of $\hat\Psi_\tth\supe$ are environment-specific and must first be aligned. Let $U_{\hat{\Psi}_\tth\supe} \Sigma_{\hat{\Psi}_\tth\supe} V_{\hat{\Psi}_\tth\supe}^\top$ be the truncated top-$\hat{s}_\tth$ SVD of $\hat{\Psi}_\tth\supe$ in \eqref{eq:est-psi} with threshold $\lambda_{\mathsf{sel}}$. We estimate the column space of $\Xi_\tth^\star = [(\xi_{1,\tth}^\star)^\top, \ldots, (\xi_{q, \tth}^\star)^\top]^\top$, where $\xi_{k,\tth}^\star$ is the coefficient of $F_{S_\tth^\star}$ in the GLM on $Z_k$, by
\begin{align}
\label{eq:est-xi-h}
    \hat{\Xi}_\tth = V_{M} , \quad \mbox{where} \quad V_{M} \Lambda_M V_M^\top \text{ is top-}\hat{s}_\tth ~\text{PCA of } ~M = \frac{1}{|\mathcal{E}_z|} \sum_{e\in \mathcal{E}_z} U_{\hat{\Psi}_\tth\supe} (U_{\hat{\Psi}_\tth\supe})^\top .
\end{align} 
We then define the heterogeneous prediction-invariant projection as:
\begin{align}
\label{eq:est-sh}
    \hat{\Phi}_{S_\tth}\supe = (\hat{\Psi}_\tth\supe)^\top \hat{\Xi}_\tth \in \mathbb{R}^{\hat{r}_\tth\supe \times \hat{s}_\tth}.
\end{align} 

\subsection{Estimation of invariant signals and prediction in new environments}
\label{sec:method3}
We now estimate the prediction rule using environments in 
$\mathcal{E}_y$, where both $Y$ and $Z$ are observed. When the dimensions $|S_\tti^\star|$ and $|S_\tth^\star|$ are correctly specified by $\tilde{F}_{S_\tti}^{(e)}$ and $\tilde{F}_{S_\tth}^{(e)}$, for each $e \in \Ecal_y$, define the aligned factor representation
\begin{align*}
    \tilde{F}\supe_S(X^{(e)}) := \begin{bmatrix}
        \tilde{F}_{S_\tti}\supe \\
        \tilde{F}_{S_\tth}\supe
    \end{bmatrix} \approx \begin{bmatrix}
        Q_{S_\tti} & 0 \\
        0 & Q_{S_\tth}
    \end{bmatrix} \begin{bmatrix}
        F_{S_\tti^\star}\supe \\
        F_{S_\tth^\star}\supe
    \end{bmatrix},
\end{align*} 
Because the ambiguity matrix $Q\in\mathbb{R}^{|S^\star|\times |S^\star|}$ is shared across environments, we can pool all data in $\mathcal{E}_y$ and regress $Y\supe$ on $\tilde{F}\supe_S$ to estimate the invariant prediction parameter $\beta^\star$, up to the common transformation $Q$. Thus, we may estimate $\beta = [\beta_\tti, \beta_\tth]$
\begin{align}
\label{eq:est-y}
    \hat{\beta} = \left[\hat{\beta}_\tti, \hat{\beta}_\tth\right] = \argmin_{\beta =[\beta_\tti,\beta_\tth]} \frac{1}{|\mathcal{E}_y| \cdot n_y} \sum_{\substack{e\in \mathcal{E}_y \\ i-2n_x-n_z \in [n_y]}} \ell_y(\beta_\tti^\top \tilde{F}_{S_\tti,i}\supe + \beta_\tth^\top\tilde{F}_{S_\tth,i}\supe, Y\supe_i)
\end{align} where $\ell_y(v, y) = b_y(v) - v \cdot y$ with $b_y(\cdot)$ being such that $b_y'(\cdot) = \sigma_y(\cdot)$. Because the GLM parameter $\beta^\star$ between $Y\supe$ and $F_{S^\star}\supe$ as in \eqref{eq:model-invariant-y} and the ambiguity transform $[\tilde{F}_{S_\tti^\star}\supe, \tilde{F}_{S_\tth^\star}\supe] \approx [Q_{S_\tti} F_{S_\tti^\star}\supe, Q_{S_\tth} F_{S^\star_\tth}\supe]$ are both invariant across $\mathcal{E}_y$, we can expect $\hat{\beta}_\tti \approx Q_{S_\tti}^{-\top} \beta_\tti^\star$ and $\hat{\beta}_\tth \approx Q_{S_\tth}^{-\top} \beta_\tth^\star$. 

\myparagraph{Predictions in new environments.} For the environment $e \in \mathcal{E}_z \setminus \mathcal{E}_y$ such that one can use the auxiliary labels $Z^{(e)}$ to construct DP matrix that extract $F_{S_\tth^\star}\supe$ from $F_\tth\supe$, we can use the score 
\begin{align}
    \hat{g}^{(e)}(X^{(e)}) = \hat{\beta}^\top_\tti \tilde{F}_{S_\tti}^{(e)}(X^{(e)}) + \hat{\beta}^\top_\tth \tilde{F}_{S_\tth}^{(e)}(X^{(e)})
\end{align} to predict $Y^{(e)}$. For the environment $e \in \mathcal{E} \setminus \mathcal{E}_z$ without auxiliary labels so that $F_{S_\tth^\star}^{(e)}$ is not identifiable, we can turn to conservative predictions of the score that only use $F_{S_\tti^\star}^{(e)}$, i.e.,
\begin{align}
    \hat{g}^{(e)}(X^{(e)}) = \hat{\beta}^\top_\tti \tilde{F}_{S_\tti}^{(e)} (X^{(e)}). 
\end{align}

\section{Theory}
\label{sec:theory}
Throughout the theoretical analysis, we assume $(F\supe, U\supe)$ are zero-mean for ease of presentation, and present main results under a common signal-to-noise (SNR) ratio across environments,
\begin{align}
\label{eq:def-lambda}
    \forall e\in \mathcal{E}, \qquad C^{-1} \snrmain \le \frac{\lambda_{\min}\left[(B\supe)^\top B\supe\right]}{\|\Sigma_U\supe\|_2} \le C \snrmain,  \quad
    \mbox{for some universal constant $C$,}
\end{align} where $\Sigma_{U}\supe := \mathbb{E}[U^{(e)}(U^{(e)})^\top]$. 
The results with varying SNR ratios are deferred to \aosversion{Appendix C.1}{\cref{appendix:factor-est}}. We also use the following convention: a random vector $W \in \mathbb{R}^m$ is \emph{jointly sub-Gaussian} with parameter $s$ if $\mathbb{E} [\exp\left(v^\top W\right)] \le e^{s^2 \|v\|_2^2 / 2}$ for any $v\in \mathbb{R}^m$. This definition implies that $W$ has mean zero.  

\subsection{Single-environment benchmark for weak factor recovery}

\begin{condition}
\label{cond:x-reg}
    There exists a constant $c_1\ge 1$ such that for any $e\in \mathcal{E}$:
    \begin{itemize}[itemsep=0pt]
        \item[(a)] The observed $X\supe$ follows model \eqref{eq:model-x} with $\nu_{\min}( B^{(e)}) > 0$ and $\mathbb{E}[U\supe|F\supe] = 0$. 
        \item[(b)] The covariance matrices $\Sigma_{F}\supe$ have eigenvalues ranging from $c_1^{-1}$ to $c_1$, $F\supe$ is jointly sub-Gaussian with parameter $\sqrt{c_1}$.
        \item[(c)] $\Sigma_{U}\supe = \mathbb{E}[U^{(e)}(U^{(e)})^\top]$ is invertible and $(\Sigma_{U}\supe)^{-1/2} \cdot U\supe| F\supe$ is sub-Gaussian with parameter $\sqrt{c_1}$ almost surely.
    \end{itemize}
\end{condition}

\cref{cond:x-reg} collects standard conditions for estimating factor loading spaces from $X$ \citep{chen2021spectral} and directly implies \cref{cond:ident-reg}. Condition (a) imposes full conditional exogeneity of $U\supe$ on $F\supe$. This condition, or a stronger version like independence, is also imposed in other analyses of weak factors \citep{jiang2023revisiting, choi2025high, fan2024can}, and is slightly stronger than the marginal exogeneity in \eqref{eq:model-x}. This is imposed to get tighter non-asymptotic error bounds, whereas a consistent but relatively crude error bound can also be attained under $\mathbb{E}[F^{(e)} (U^{(e)})^\top] = 0$. Condition (b) assumes well-conditioned and sub-Gaussian latent factors, (c) allows dependence across coordinates of $U\supe$ while requiring the standardized error vector to be jointly sub-Gaussian. 

Recall $W\supe\in \mathcal{O}^{d\times r\supe}$ in \eqref{eq:loading} satisfies that $\col(W\supe) = \col(B\supe)$. We first record a single-environment benchmark for estimating the factor loading and factor. Our result is based on the prerequisite that $\col(W\supe) = \col(B\supe)$ can be consistently estimated well in each environment $e\in \mathcal{E}$.

\begin{lemma}[Single environment loading and factor estimation error]
\label{lemma:preliminary}
    Under \cref{cond:x-reg}, if $n_x\ge \tilde{C}_1 \cdot \sup_{e\in \mathcal{E}} [r^{(e)} + \log(|\mathcal{E}|\cdot n_x)]$ for some constant $\tilde{C}_1$, then with probability at least $1-n_x^{-100}$
    \begin{align}
    \label{eq:loading-error}
        \forall e\in \mathcal{E}, \qquad \left\|\sin \Theta(\hat{W}\supe, W\supe) \right\|_2 \le \tilde{C}_1 \left[\sqrt{\frac{d + \log(n_x\cdot |\mathcal{E}|)}{\snrmain \cdot n_x}} + \frac{1}{\snrmain} \right] =: \tilde{C}_1 \delta_{W} .
    \end{align} Moreover, as long as the $\delta_{W} \le 1/[\tilde{C}_1]^2$, under the same event that \eqref{eq:loading-error} holds, if we use the $\hat{\Phi}\supe = d^{-1/2} \cdot \hat{W}\supe$ as the diversified projection matrix for $F\supe$, then for any $e\in \mathcal{E}$ and $\mathcal{D}_x^{(e)} = \{X_i^{(e)}\}_{i=1}^{n_x}$,
    \begin{align}
    \label{eq:factor-error}
        \inf_{Q\supe \in \mathbb{R}^{r\supe\times r\supe}} \left\{\mathbb{E} \left[ \left\| Q\supe (\hat{\Phi}\supe)^\top X\supe - F\supe \right\|_2^2 \mid \mathcal{D}_x^{(e)}\right] \right\}^{1/2} \le \tilde{C}_1 \cdot \sqrt{\frac{r\supe}{\snrmain}}.
    \end{align}
\end{lemma}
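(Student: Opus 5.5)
The proof has two parts. The loading bound \eqref{eq:loading-error} follows from a whitened Davis--Kahan argument applied environment by environment, with a union bound over $\mathcal{E}$. The factor bound \eqref{eq:factor-error} then follows by choosing $Q\supe$ explicitly and splitting the error into a loading-misalignment part and a noise part.

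\textbf{Loading bound.} Fix $e$ and write the population covariance as
\[
\Sigma\supe = B\supe\Sigma_F\supe (B\supe)^\top + \Sigma_U\supe ,
\]
which has no cross term because $\mathbb{E}[U\supe\mid F\supe]=0$. The spike $B\supe\Sigma_F\supe (B\supe)^\top$ has rank $r\supe$, column space $\col(W\supe)$, and its $r\supe$-th eigenvalue is at least $c_1^{-1}\lambda_{\min}[(B\supe)^\top B\supe]\gtrsim \snrmain\|\Sigma_U\supe\|_2$ by \eqref{eq:def-lambda}. I treat $\Sigma_U\supe$ as a perturbation of size $\|\Sigma_U\supe\|_2$, so this is the source of the $1/\snrmain$ bias term. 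Normalizing by $\|\Sigma_U\supe\|_2$, Davis--Kahan (Wedin) gives
\[
\|\sin\Theta(\hat W\supe,W\supe)\|_2 \lesssim \frac{\|\Sigma_U\supe\|_2 + \|(\hat\Sigma\supe-\Sigma\supe)W\supe\|_2 + \|W_\perp^\top(\hat\Sigma\supe-\Sigma\supe)W_\perp\|_2}{\snrmain\|\Sigma_U\supe\|_2}.
\]
Here $W_\perp$ denotes an orthonormal basis of $[\col(W\supe)]_\perp$. To get the sharp $\sqrt{d/(\snrmain n_x)}$ rate rather than the crude $\sqrt{d/n_x}$ rate, I decompose $\hat\Sigma\supe-\Sigma\supe$ into four blocks:
\begin{itemize}
\item The factor block $B\supe(\hat\Sigma_F-\Sigma_F)(B\supe)^\top$ is of size $\snrmain\|\Sigma_U\supe\|_2\sqrt{(r+\log)/n_x}$ and lies entirely inside $\col(W\supe)$.
\item The cross block $B\supe\frac1{n_x}\sum_i F_iU_i^\top$ is of size $\sqrt{\snrmain}\,\|\Sigma_U\supe\|_2\sqrt{(d+\log)/n_x}$. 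I control it using conditional sub-Gaussianity of $(\Sigma_U\supe)^{-1/2}U\supe\mid F\supe$ together with a net argument.
\item The noise block $\hat\Sigma_U-\Sigma_U$ is of size $\|\Sigma_U\supe\|_2\big(\sqrt{d/n_x}+d/n_x\big)$.
\end{itemize}
The in-span factor block does not rotate the subspace; it only perturbs the eigenvalues, which stay of order $\snrmain\|\Sigma_U\supe\|_2$ as long as $n_x\gtrsim r+\log$. Dividing the remaining blocks by $\snrmain\|\Sigma_U\supe\|_2$ gives $\sqrt{(d+\log)/(\snrmain n_x)}$ from the cross term. The noise term contributes $\sqrt{d/n_x}/\snrmain + d/(n_x\snrmain)$, and both pieces are dominated by $\delta_W^2+1/\snrmain$ (the first via $ab\le a^2+b^2$). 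Since the statement presumes $\delta_W$ is small, these are absorbed. Taking failure probability $n_x^{-100}/|\mathcal{E}|$ in each environment yields the $\log(n_x|\mathcal{E}|)$ factor. This block-wise control of the cross and noise terms, needed for the sharp weak-factor rate, is the step I expect to be the main obstacle.

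\textbf{Factor bound.} Conditional on $\mathcal{D}_x^{(e)}$, a fresh draw satisfies
\[
\hat W^\top X = (\hat W^\top B\supe)F + \hat W^\top U .
\]
Let $H=\hat W^\top W\supe$. On the event $\delta_W\le \tilde C_1^{-2}$ we have $\nu_{\min}(H)\ge 1/2$. Writing $B\supe = W\supe K\supe$,
\[
\hat W^\top B\supe = HK\supe + \hat W^\top W_\perp W_\perp^\top B\supe = HK\supe ,
\]
since $W_\perp^\top B\supe = 0$. So $\nu_{\min}(\hat W^\top B\supe)\gtrsim \sqrt{\snrmain\|\Sigma_U\supe\|_2}$. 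Choose $Q\supe=\sqrt d\,(\hat W^\top B\supe)^{-1}$; then
\[
Q\supe\hat\Phi^\top X - F = (\hat W^\top B\supe)^{-1}\hat W^\top U .
\]
Because $\hat W$ is independent of the fresh sample and $\mathbb{E}[U\supe U^{(e)\top}]=\Sigma_U\supe$, the conditional second moment is
\[
\mathrm{tr}\big((\hat W^\top B\supe)^{-1}\hat W^\top\Sigma_U\supe\hat W(\hat W^\top B\supe)^{-\top}\big) \le r\supe\|\Sigma_U\supe\|_2\Big/\big(\snrmain\|\Sigma_U\supe\|_2/4\big) \lesssim r\supe/\snrmain .
\]
This gives \eqref{eq:factor-error}.
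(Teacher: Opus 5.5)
Your factor-error step is the paper's argument essentially verbatim: the choice $Q^{(e)}=\sqrt d\,[(\hat W^{(e)})^\top B^{(e)}]^{-1}$ and the trace bound $4r^{(e)}/\lambda^{(e)}$. The loading step, however, has a genuine gap: your block sizes silently assume a bounded condition number.

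\textbf{The gap.} Write $\hat\Sigma_{UF}=n_x^{-1}\sum_i U_i^{(e)}(F_i^{(e)})^\top$, so the cross block you need is $\hat\Sigma_{UF}(B^{(e)})^\top W^{(e)}=\hat\Sigma_{UF}(K^{(e)})^\top$. Its size is
\[
\|K^{(e)}\|_2\,\|\hat\Sigma_{UF}\|_2\asymp\sqrt{\kappa^{(e)}\lambda_\star}\,\|\Sigma_U^{(e)}\|_2\sqrt{(d+\log)/n_x},
\]
not $\sqrt{\lambda_\star}\,\|\Sigma_U^{(e)}\|_2\sqrt{(d+\log)/n_x}$. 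Note that \eqref{eq:def-lambda} and \cref{cond:x-reg} control only $\lambda_{\min}[(B^{(e)})^\top B^{(e)}]$. The paper deliberately leaves $\kappa^{(e)}$ free (it appears explicitly in \cref{thm:factor-est}), while the lemma's constant is $\mathrm{poly}(c_1)$. Dividing your cross block by the eigengap $\asymp\lambda_\star\|\Sigma_U^{(e)}\|_2$ therefore gives only
\[
\sqrt{\kappa^{(e)}}\sqrt{(d+\log)/(\lambda_\star n_x)}.
\]
This is intrinsic to the single-gap Davis--Kahan route, not loose bookkeeping. Take two factors with loading strengths $L\gg\ell$. The noise tilts the strong direction by only about $\sqrt{d/(n_xL)}$. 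Davis--Kahan instead charges that direction's perturbation, of size $\sqrt{Ld/n_x}$, against the weakest eigenvalue $\ell$.

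\textbf{The missing idea.} You need to scale each signal direction by its own strength rather than by the smallest one. The paper does this with the deterministic first-order SVD expansion (\cref{thm:first-order-svd}) applied to the data matrix $\mathbf{X}=\mathbf{L}+\mathbf{U}$, where $\mathbf{L}=\mathbf{F}(B^{(e)})^\top=U_\star\Sigma_\star V_\star^\top$ and $\mathrm{col}(V_\star)=\mathrm{col}(W^{(e)})$. This gives
\[
\hat W\hat W^\top-WW^\top=G+G^\top+R,\qquad G=W_\perp W_\perp^\top\mathbf{U}^\top U_\star\Sigma_\star^{-1}V_\star^\top,
\]
with remainder
\[
\|R\|_2\le 16\|\mathbf{U}\|_2^2/\sigma_r(\mathbf{L})^2\lesssim (d+n_x+t)/(n_x\lambda_\star).
\]
Because $U_\star$ depends only on $\mathbf{F}$, conditional sub-Gaussianity of $\mathbf{U}$ applies directly and yields
\[
\|G\|_2\le\|\mathbf{U}^\top U_\star\|_2/\sigma_r(\mathbf{L})\lesssim\sqrt{(d+\log)/(n_x\lambda_\star)}.
\]
The remainder supplies the $1/\lambda_\star$ bias, and neither term involves $\sigma_1(\mathbf{L})$. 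If you want to stay in your covariance framework, you need the same mechanism: invert the signal block $K^{(e)}\hat\Sigma_F(K^{(e)})^\top$ in the eigen-identity for $\hat W_\perp^\top W^{(e)}$, rather than bounding $\|EW^{(e)}\|_2$ and dividing by its smallest eigenvalue. This is most cleanly carried out on the data matrix as above.
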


The quantity $\delta_{W}$ in \eqref{eq:loading-error} measures the error in estimating the loading space $\col(B\supe)$. Its dependence on $(n_x, d, \snrmain)$ is optimal up to a constant depending on $c_1$, and $\delta_{W} = o(1)$ is the minimal requirement for consistent loading-space estimation \citep{yan2024inference}.
The second part of \cref{lemma:preliminary} shows that once $\col(W\supe)$ is consistently estimated, the resulting factor estimates achieve the optimal weak factor recovery rate. The lower bound of $\lambda_\star^{-1/2}$ is due to the existence of idiosyncratic error. Thus, once $n_x \gg d/\snrmain$,  increasing $n_x$ no longer improves factor recovery beyond this noise-limited rate of $(r\supe / \lambda_\star)^{1/2}$; see Lemma 1 of \cite{fan2024factor}. 

\subsection{Estimation errors for invariant and heterogeneous factors}

We next analyze the factor estimates produced by the invariance-heterogeneity decomposition. Recall from \eqref{eq:loading} that $W_\tti \in \mathcal{O}^{d\times r_\tti}$ spans the invariant loading space, i.e., $\col(W_\tti)=\col(B)$, while $W_\tth\supe\in \mathcal{O}^{d\times r_\tth\supe}$ represents the projection of the heterogeneous loadings into the subspace $(W_\tti)_\perp$, i.e., $\col(W_\tth\supe) = \col((I_d - \mathsf{P}_{B}) A\supe)$, where $\mathsf{P}_M = M (M^\top M)^{-1} M^\top$ is the projection matrix. The following \cref{cond:separate} is a quantitative analogue of \cref{cond:ident}.
\begin{condition}
\label{cond:separate}
We have $\mathbb{E}[F_\tti^{(e)} (F_\tth^{(e)})^\top] = 0$ for any $e\in\mathcal{E}$, and
\begin{align}
    \lambda_{\max}\left(\frac{1}{|\mathcal{E}|} \sum_{e\in \mathcal{E}} W\supe_{\mathtt{H}} (W_{\mathtt{H}}\supe)^\top\right) \le 1-\epsilon_A ~~\text{for some constant } \epsilon_A \in (0, 1].
\end{align}
\end{condition}
The parameter $\epsilon_A$ measures the separation between the invariant loading space and the average heterogeneous loading spaces. A larger $\epsilon_A$ corresponds to an easier problem. This condition implies \cref{cond:ident} and provides the eigen-gap that appears in the rates below. 

Denote the condition number and error rate for estimating the invariant loading space by
\[
\kappa\supe = \frac{\lambda_{\max}[(B\supe)^\top B\supe]}{\lambda_{\min}[(B\supe)^\top B\supe]} \quad \mbox{and} \quad
    \delta_{W_\tti} = \frac{1}{\epsilon_A} \left[\frac{\delta_W}{\sqrt{|\mathcal{E}|}} + \delta_W^2 + \frac{1}{\lambda_\star}\right],
\] 
respectively. The term $\delta_W/\sqrt{|\mathcal{E}|}$ reflects variance reduction from averaging loading-space estimates across environments, while $\delta_W^2 + \lambda_\star^{-1}$ is the bias from estimating each environment-specific space. When all environments share the same loading space, this reduces to the setting of \cite{fan2019distributed}. Unlike the error bounds in \cite{fan2019distributed,he2025transpca, yang2025estimating}, our bound of $\delta_{W_\tti}$ is independent of the condition number $\kappa\supe$. This follows from the first-order characterization of the SVD solution, which is a deterministic result that is of independent interest; see \aosversion{Theorem D.1}{\cref{thm:first-order-svd}}. 

\begin{proposition}
\label{prop:loading-est}
Suppose all the conditions in \cref{thm:factor-est} are satisfied. Under the choice of $\lambda$ and the same event defined therein, we have $\hat{r}_\tti = r_\tti$, and
    \begin{align}
        \left\|\sin \Theta(\hat{W}_\tti, W_\tti) \right\|_2 \le \tilde{C}_2 \delta_{W_\tti}, \qquad \left\|\sin \Theta(\hat{W}_\tth\supe, W_\tth\supe) \right\|_2 \le \tilde{C}_2 \left(\delta_{W} + \delta_{W_\tti}\right)
    \end{align}
\end{proposition}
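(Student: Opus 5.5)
The plan is to work on the event of \cref{lemma:preliminary}, where $\|\sin\Theta(\hat W\supe,W\supe)\|_2\le \tilde C_1\delta_W$ holds simultaneously for all $e\in\mathcal E$. We then treat $\hat P$ as a perturbation of the population projector $P=W_\tti W_\tti^\top+|\mathcal E|^{-1}\sum_e W_\tth\supe(W_\tth\supe)^\top$.

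\textbf{Step 1 (first-order decomposition).} By \cref{cond:separate}, $P$ has eigenvalue one with eigenspace $\col(W_\tti)$, and all its remaining eigenvalues are at most $1-\epsilon_A$. Write $\hat W\supe(\hat W\supe)^\top - W\supe(W\supe)^\top = L\supe + R\supe$. Here $L\supe$ is the first-order term from the deterministic first-order SVD expansion (\cref{thm:first-order-svd}), applied to $\hat\Sigma\supe = \Sigma\supe + E\supe$. This gives $L\supe = W\supe_\perp(W\supe_\perp)^\top E\supe W\supe (\Lambda\supe)^{-1}(W\supe)^\top + \text{transpose}$, plus a bias piece of order $1/\snrmain$ coming from $\Sigma_U\supe$ not being isotropic. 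The remainder satisfies $\|R\supe\|_2\lesssim \delta_W^2$. Because the expansion is deterministic and is expressed through $W\supe$ and $\Lambda\supe$ rather than through powers of $K\supe$, no condition number $\kappa\supe$ appears.

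\textbf{Step 2 (averaging).} Only the off-diagonal block $W_\tti^\top(\hat P-P)(W_\tti)_\perp$ matters for the $\sin\Theta$ bound. Using Davis--Kahan/Wedin with eigengap $\epsilon_A$, we get
\[
\|\sin\Theta(\hat W_\tti,W_\tti)\|_2\lesssim \epsilon_A^{-1}\Big\|\tfrac{1}{|\mathcal E|}\textstyle\sum_e L\supe\Big\|_2+\epsilon_A^{-1}(\delta_W^2+\snrmain^{-1}).
\]
The terms $L\supe$ are independent across environments, since the samples are independent. Each has mean-zero stochastic part of operator norm of order $\delta_W$ (with $\log(n_x|\mathcal E|)$ already built into $\delta_W$). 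A matrix Bernstein bound, or a sub-Gaussian chaos bound after conditioning on the sample covariances, applied to the average then yields the rate $\delta_W/\sqrt{|\mathcal E|}$. Together these give $\tilde C_2\delta_{W_\tti}$. This is the step I expect to be the main obstacle. The difficulty is that $L\supe$ involves $E\supe$ sandwiched between $W\supe_\perp$ and $W\supe$, which produces a $d\times r$ sub-Gaussian random matrix whose norm must concentrate at scale $\sqrt{d/(\snrmain n_x)}$ after averaging. Showing the $\sqrt{|\mathcal E|}$ gain in operator norm, rather than only in Frobenius norm, requires a careful matrix concentration argument with a net over the $r_\tti$-dimensional side.

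\textbf{Step 3 (rank selection).} Weyl's inequality gives $|\lambda_j(\hat P)-\lambda_j(P)|\lesssim \delta_W$. So the top $r_\tti$ eigenvalues of $\hat P$ exceed $1-\lambda_{\mathsf{ihd}}$, and the next ones fall below it, provided $\lambda_{\mathsf{ihd}}$ is chosen as in \cref{thm:factor-est} (for example $\epsilon_A/2$) and $\delta_W\ll\epsilon_A$. This shows $\hat r_\tti=r_\tti$.

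\textbf{Step 4 (heterogeneous block).} Write
\[
\hat S_\tth = W\supe(W\supe)^\top - W_\tti W_\tti^\top + \Delta\supe,
\qquad
\|\Delta\supe\|_2\le \|\hat W\supe(\hat W\supe)^\top-W\supe(W\supe)^\top\|_2+\|\hat W_\tti\hat W_\tti^\top-W_\tti W_\tti^\top\|_2\lesssim \delta_W+\delta_{W_\tti}.
\]
The population matrix $W\supe(W\supe)^\top - W_\tti W_\tti^\top$ equals $W_\tth\supe(W_\tth\supe)^\top$, a projector with eigengap one. Davis--Kahan then gives $\|\sin\Theta(\hat W_\tth\supe,W_\tth\supe)\|_2\le\tilde C_2(\delta_W+\delta_{W_\tti})$, which completes the proof.
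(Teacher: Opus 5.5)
\textbf{Main gap: Step~1 does not give a condition-number-free remainder.} Your architecture is the paper's route through \cref{prop:first-order-approx-e}, \cref{lemma:concentrate-2} and \cref{prop:wi-wh}: expand each $\hat W^{(e)}(\hat W^{(e)})^\top$ to first order, average the mean-zero first-order terms to gain $\sqrt{|\mathcal{E}|}$, apply Davis--Kahan with gap $\epsilon_A$, then Davis--Kahan with gap $1$ on $\hat S_\tth$. The problem is where you apply the expansion. You expand around a covariance-level $\Sigma^{(e)}$ with $E^{(e)}=\hat\Sigma^{(e)}-\Sigma^{(e)}$, and this perturbation contains two terms that scale with the loadings:
\begin{itemize}
\item the cross term $B^{(e)}\hat\Sigma^{(e)}_{F,U}+(\cdot)^\top$, of norm about $\nu_{\max}(B^{(e)})\sqrt{\|\Sigma_U^{(e)}\|_2\, d/n_x}$;
\item unless you put $\hat\Sigma^{(e)}_F$ into the signal, the in-span fluctuation $B^{(e)}(\hat\Sigma^{(e)}_F-\Sigma^{(e)}_F)(B^{(e)})^\top$, of norm about $\lambda_{\max}[(B^{(e)})^\top B^{(e)}]\sqrt{r^{(e)}/n_x}$.
\end{itemize}
The gap is only of order $\lambda_{\min}[(B^{(e)})^\top B^{(e)}]$. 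So the remainder $16\lambda^{-2}$ from \cref{thm:first-order-svd} (or from \cref{thm:first-order-pca}) is of order $\kappa^{(e)}d/(\lambda_\star n_x)+(\kappa^{(e)})^2 r^{(e)}/n_x$, not $\delta_W^2$. The SNR precondition would also need roughly $\kappa^{(e)}\delta_W^2\lesssim 1$, which is not assumed. Your first-order term is indeed $\kappa$-free, but $\kappa^{(e)}$ enters through $\|E^{(e)}\|_2$ whatever form that term takes. You therefore would not obtain the $\kappa$-free $\delta_{W_\tti}$ that the proposition asserts.

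\textbf{The fix.} Apply \cref{thm:first-order-svd} to the $n_x\times d$ data matrix $\mathbf{X}^{(e)}=\mathbf{L}^{(e)}+\mathbf{U}^{(e)}$, where $\mathbf{L}^{(e)}$ has rows $(B^{(e)}F_i^{(e)})^\top$. Its top-$r^{(e)}$ right singular subspace is $\mathrm{col}(\hat W^{(e)})$. In this form:
\begin{itemize}
\item the factor fluctuations and cross terms sit inside the exactly rank-$r^{(e)}$ signal;
\item the SNR satisfies $\nu_{r^{(e)}}(\mathbf{L}^{(e)})/\|\mathbf{U}^{(e)}\|_2\gtrsim\sqrt{\lambda_\star n_x/(d+n_x+t)}$, with no $\kappa^{(e)}$;
\item the remainder is $\lesssim(d+n_x+t)/(n_x\lambda_\star)$, which already contains your $1/\lambda_\star$ bias;
\item the first-order term $G^{(e)}=W^{(e)}_\perp(W^{(e)}_\perp)^\top(\mathbf{U}^{(e)})^\top\mathbf{L}^{(e)}\{(\mathbf{L}^{(e)})^\top\mathbf{L}^{(e)}\}^{+}$ is conditionally mean zero given the factors and independent across $e$.
\end{itemize}
The step you flagged as the main obstacle is then routine. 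For fixed unit $u,v\in\mathbb{R}^d$, $|\mathcal{E}|^{-1}\sum_e u^\top G^{(e)}v$ is sub-Gaussian with variance proxy $\lesssim(|\mathcal{E}|\lambda_\star n_x)^{-1}$. A net over $d$-dimensional spheres on both sides gives the rate $\sqrt{(d+t)/(|\mathcal{E}|n_x\lambda_\star)}$. An $r_\tti$-dimensional net would not suffice for the full matrix, because the spaces $\mathrm{col}(W^{(e)})$ vary with $e$.

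\textbf{Smaller gap: Step~3.} Weyl combined with the crude bound $\|\hat P-P\|_2\lesssim\delta_W$ certifies $\hat r_\tti=r_\tti$ only for $\lambda_{\mathsf{ihd}}\in[C\delta_W,\epsilon_A-C\delta_W]$. The proposition must hold for every $\lambda_{\mathsf{ihd}}\in[\tilde C_2\delta_{W_\tti},\epsilon_A-\tilde C_2\delta_{W_\tti}]$. Since $\epsilon_A\delta_{W_\tti}=\delta_W/\sqrt{|\mathcal{E}|}+\delta_W^2+\lambda_\star^{-1}$, this can be far below $\delta_W$ when $|\mathcal{E}|$ is large. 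You need to feed the averaged bound $\|\hat P-P\|_2\lesssim\epsilon_A\delta_{W_\tti}$ into Weyl. That bound is the first-order average plus the remainders, i.e.\ what your Step~2 already computes for the full matrix and what \cref{lemma:concentrate-2} supplies. With these two repairs, Step~4 goes through as you wrote it.
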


We next state the main result on invariant and heterogeneous factor estimation errors. 

\begin{theorem}
\label{thm:factor-est}
    Suppose Conditions \ref{cond:x-reg} and \ref{cond:separate} hold. Denote $\tilde{r}\supe = r\supe + \log(|\mathcal{E}| \cdot n_x)$, $C_A = [\epsilon_A \min\{1, |\mathcal{E}| \cdot \epsilon_A\}]^{-1}$. There exist constants $\tilde{C}_1, \tilde{C}_2, \tilde{C}_3 = \poly(c_1)$ such that if 
    \begin{align}
    \label{eq:factor-est-cond-n}
        \max\left\{n_x^{-1}\sup_{e\in \mathcal{E}} \tilde{r}\supe, 2\delta_{W_\tti}/\epsilon_A, \delta_W\right\} \le \tilde{C}_1^{-1}
    \end{align}
    then by picking $\lambda_{\mathsf{ihd}} \in [\tilde{C}_2 \delta_{W_\tti}, \epsilon_A - \tilde{C}_2 \delta_{W_\tti}]$, the following event occurs with probability at least $1-n_x^{-10}$: \cref{algo:dp} returns DP matrices for which there exists ambiguity matrices $Q_\tti, \{Q_{\tth}\supe\}_{e\in \mathcal{E}}$ such that all singular values lie in $[1/(2\sqrt{c_1}), 2\sqrt{c_1}]$; and for any $e\in \mathcal{E}$, the two error vectors
    \begin{align*}
    \Delta_{F_\tth}\supe := Q_\tth\supe (\hat{\Phi}_\tth\supe)^\top X\supe - F_\tth\supe \qquad \text{ and } \qquad \Delta_{F_\tti}\supe := Q_\tti (\hat{\Phi}_\tti\supe)^\top X\supe - F_\tti\supe
    \end{align*} are jointly sub-Gaussian with parameters $\delta_{F_\tth}^{(e)}$ and $\delta_{F_\tti}^{(e)}$, respective, where
    \begin{align}
        \delta_{F_\tth}\supe &= \tilde{C}_3  \left[\frac{1}{\sqrt{\snrmain}} + \sqrt{\kappa\supe} \left( \sqrt{\frac{\tilde{r}\supe}{\snrmain \cdot n_x}} + C_A (\delta_W)^2 \right) \right], \label{eq:delta-f-h}\\
        \delta_{F_\tti}\supe &= \tilde{C}_3  \left[\frac{1}{\sqrt{\snrmain}} + \sqrt{\frac{\tilde{r}\supe}{n_x}} + \sqrt{\kappa\supe} \left( \sqrt{\frac{\tilde{r}\supe}{\snrmain \cdot n_x}} + C_A (\delta_W)^2 \right) \left(1 + \sqrt{\frac{\kappa\supe\tilde{r}\supe}{n_x}}\right)\right]. \label{eq:delta-f-i}
    \end{align} 
\end{theorem}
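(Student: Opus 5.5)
The plan is to work on the intersection of the high-probability events from \cref{lemma:preliminary} (on $\mathcal{D}_1$) and from concentration of $\hat\Sigma_\dagger\supe$ restricted to low-dimensional subspaces (on $\mathcal{D}_2$), and to proceed in three layers. First, the loading layer. Write $\hat P-\bar P=\frac{1}{|\mathcal{E}|}\sum_e(\hat W\supe\hat W^{(e)\top}-W\supe W^{(e)\top})$. Use the deterministic first-order SVD expansion (\cref{thm:first-order-svd}) to decompose each summand into three pieces: a linear term that is mean-zero given $F$ and is independent across $e$; a quadratic remainder of order $\delta_W^2$; and a bias of order $\lambda_\star^{-1}$ coming from $\Sigma_U\supe$. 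Averaging the linear terms gives the $\delta_W/\sqrt{|\mathcal{E}|}$ gain, and this uses only the independence across environments. Davis--Kahan on $\bar P$, with eigengap $\epsilon_A$ from \cref{cond:separate}, then gives $\|\sin\Theta(\hat W_\tti,W_\tti)\|\lesssim\delta_{W_\tti}$. The restriction on $\lambda_{\mathsf{ihd}}$ and Weyl's inequality give $\hat r_\tti=r_\tti$. A second Davis--Kahan step on $\hat S_\tth$ gives the heterogeneous bound. This establishes \cref{prop:loading-est}, which we prove jointly with the theorem.

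Second, the factor layer for $F_\tth$. Conditional on $\mathcal{D}_1$, set $\hat\Phi_\tth\supe=\hat W_\tth\supe \hat G_\tth^{-1/2}$. We then have $(\hat\Phi_\tth\supe)^\top X\supe=\hat G_\tth^{-1/2}[\hat W_\tth^{(e)\top}W_\tti K_{\tti\tti}F_\tti+\hat W_\tth^{(e)\top}W\supe(\cdot)F_\tth+\hat W_\tth^{(e)\top}U]$. We choose $Q_\tth\supe$ as the inverse of the coefficient on $F_\tth$. The error then splits into two parts. The noise part has sub-Gaussian parameter $\lambda_\star^{-1/2}$ after normalization. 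The leakage part is $\hat W_\tth^{(e)\top}W_\tti$ times $F_\tti$. The key point is that the leakage must be shown to be \emph{second order}. We expand $\hat W_\tth\supe$ to first order as $W_\tth\supe$ plus perturbations. The first-order perturbation from PCA in environment $e$ pushes $W_\tth\supe$ toward $(W\supe)_\perp$, so it contributes nothing against $W_\tti$ at first order. The first-order perturbation of $\hat W_\tti$ also cancels, because $\hat S_\tth$ subtracts exactly $\hat W_\tti\hat W_\tti^\top$. What remains is the $C_A\delta_W^2$ term. The factor $C_A$ enters when we invert $\epsilon_A$ or $|\mathcal{E}|\epsilon_A$ in the resolvent. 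The terms $\sqrt{\tilde r/(\lambda_\star n_x)}$ come from replacing the population $\Sigma$ by $\hat\Sigma_\dagger$ inside $\hat G_\tth$. Finally, $\sqrt{\kappa\supe}$ appears because the leakage is multiplied by loading magnitudes relative to $\lambda_{\min}$.

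Third, the partialling-out step for $F_\tti$. We write $M=\hat W_\tti-\hat W_\tth\supe\hat R\supe$ and compare $\hat R\supe$ with the oracle $\bar R\supe$ of \eqref{eq:method-partialing-out}. The oracle satisfies $M^\top B\supe_{\cdot,\tth}\approx0$ by block-uncorrelatedness, since it is the population regression coefficient. The empirical error $\hat R-\bar R$ is of order $\sqrt{\tilde r/n_x}$ from $\hat\Sigma_\dagger$. This produces the extra additive term and the multiplicative factor $(1+\sqrt{\kappa\tilde r/n_x})$. The normalization by $G$ from $e_1$ makes $Q_\tti$ common across environments. To see this, note that $M^\top B$ is approximately $\hat W_\tti^\top W_\tti K_{\tti\tti}$ for every $e$, and this quantity does not depend on $e$ to first order. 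Any $e$-dependent discrepancy is again absorbed into the quadratic term. The singular values of the $Q$'s lie in $[1/(2\sqrt{c_1}),2\sqrt{c_1}]$ because the normalized proxies have near-identity covariance, while $\Sigma_F$ is $c_1$-conditioned.

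The main obstacle is the second-order cancellation, namely showing that the cross-loading leakage $\hat W_\tth^{(e)\top}W_\tti$ and its analogue in the $\tti$-block are $O(C_A\delta_W^2)$ rather than $O(\delta_W)$. I would attack it by writing explicit first-order expansions for both $\hat W\supe\hat W^{(e)\top}$ and $\hat W_\tti\hat W_\tti^\top$ via \cref{thm:first-order-svd}. I would then show that their first-order terms cancel exactly within $\hat S_\tth$ when projected onto $W_\tti$, and bound the resolvent remainder using $C_A$.
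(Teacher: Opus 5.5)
There is a genuine gap, and it sits exactly at the step you identify as the main obstacle.

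\textbf{The cancellation claim is false.} You are right that the within-environment PCA term $G^{(e)}+G^{(e)\top}$ has no $W_{\mathtt{I}}$--$W^{(e)}_{\mathtt{H}}$ block. The problem is the other half. Subtracting $\hat W_{\mathtt{I}}\hat W_{\mathtt{I}}^\top$ in $\hat S_{\mathtt{H}}$ does not cancel the first-order error of $\hat W_{\mathtt{I}}$; it transfers it into the leakage. Up to a rotation,
\[
W_{\mathtt{I}}^\top\hat W^{(e)}_{\mathtt{H}}\approx -\bigl(W^{(e)\top}_{\mathtt{H}}(I-\bar P)^{+}\hat\Delta_{P,1}W_{\mathtt{I}}\bigr)^\top ,
\qquad
\hat\Delta_{P,1}W_{\mathtt{I}}=|\mathcal{E}|^{-1}\sum_{e'}G^{(e')}W_{\mathtt{I}} .
\]
The sum contains terms from environments $e'\neq e$, and their ranges $\mathrm{col}(W^{(e')}_\perp)$ are not orthogonal to $W^{(e)}_{\mathtt{H}}$.

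A concrete case: take $W_{\mathtt{I}}=w_1$, $W^{(1)}_{\mathtt{H}}=w_2$, $W^{(2)}_{\mathtt{H}}=w_3$. Then $w_1^\top\hat W^{(1)}_{\mathtt{H}}\approx -w_2^\top G^{(2)}w_1$. This is a nondegenerate linear functional of the noise in environment $2$, of size $\asymp (n_x\lambda_\star)^{-1/2}$. Now take $\lambda_\star\asymp d$ and $d\ll n_x\ll d^3$, which is allowed by \eqref{eq:factor-est-cond-n}. There this term is much larger than $C_A\delta_W^2\asymp n_x^{-1}+d^{-2}$. So the bound $\|W_{\mathtt{I}}^\top\hat W^{(e)}_{\mathtt{H}}\|_2=O(C_A\delta_W^2)$ that your plan targets is false in general. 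The same objection applies to your claim that the $e$-dependent discrepancy in $M^\top B$ is absorbed into a quadratic term. That discrepancy is driven by the same leakage and by $\hat W_{\mathtt{I}}^\top W^{(e)}_{\mathtt{H}}$, both of which have a non-vanishing first-order part.

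\textbf{The missing idea is dimension-free concentration, not cancellation.} The surviving first-order term is the averaged noise $\hat\Delta_{P,1}$, independent across environments, sandwiched between fixed low-rank bases. One side is $W^{(e)}_{\mathtt{H}}$, reached through the resolvent of $\bar P$ at a cost of $\epsilon_A^{-1}$; the other side is $W_{\mathtt{I}}$. Its norm is therefore of order
\[
\epsilon_A^{-1}\sqrt{(r^{(e)}+\log|\mathcal{E}|+t)/(|\mathcal{E}|\,n_x\bar\lambda)} ,
\]
roughly $\delta_W\sqrt{\tilde r^{(e)}/d}$ rather than $\delta_W$. Adding the quadratic remainder gives the leakage bounds \eqref{eq:wihat-wh}--\eqref{eq:whhat-wi} in \cref{prop:wi-wh}. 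This term is exactly the source of $\sqrt{\kappa^{(e)}}\sqrt{\tilde r^{(e)}/(\lambda_\star n_x)}$ in \eqref{eq:delta-f-h}.

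Your attribution of that term to $\hat\Sigma_\dagger$ inside the normalization cannot be right. The factor $\{\hat W^{(e)\top}_{\mathtt{H}}\hat\Sigma^{(e)}_\dagger\hat W^{(e)}_{\mathtt{H}}\}^{-1/2}$ is invertible and is absorbed into $Q^{(e)}_{\mathtt{H}}$. With $Q^{(e)}_{\mathtt{H}}$ chosen as you propose, the error vector is exactly $(\hat W^{(e)\top}_{\mathtt{H}}A^{(e)})^{-1}\hat W^{(e)\top}_{\mathtt{H}}(BF^{(e)}_{\mathtt{I}}+U^{(e)})$, which does not involve $\hat\Sigma_\dagger$ at all. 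In the $\mathtt{H}$-block, $\hat\Sigma_\dagger$ only controls the conditioning of $Q^{(e)}_{\mathtt{H}}$. It does matter in the $\mathtt{I}$-block, where it produces the $\sqrt{\tilde r^{(e)}/n_x}$ terms, as you say.

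The rest of your outline matches the paper's three-layer argument: averaging plus eigengap $\epsilon_A$ for $\hat W_{\mathtt{I}}$, Weyl's inequality for $\hat r_{\mathtt{I}}$, and partialling out via block-uncorrelatedness with a common $Q_{\mathtt{I}}$.
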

The sub-Gaussian error bound provides isometric control: 
$\mathbb{E}[|u^\top \Delta_{F_\tti}\supe|^2]\le (\delta_{F_\tti}\supe)^2$ for any unit vector $u$.
Thus, the estimation error is uniformly small in every direction, rather than only in aggregate. This is stronger than a full $\ell_2$ bound, which scales as
$\mathbb{E}[\|\Delta_{F_\tti}\supe\|_2^2]\le r_\tti(\delta_{F_\tti}\supe)^2$; see \cref{coro:fifh}. Such direction-wise control prevents the downstream error from accumulating over all $r_\tti$ factors, yielding sharper bounds for estimating the prediction-invariant factors and $\beta^\star$, especially when $r_\tti$ diverges. When $\epsilon_A\asymp 1$, the heterogeneous factor error consists of the intrinsic weak-factor recovery term $1/\sqrt{\snrmain}$ and a higher-order term,
$\sqrt{\kappa\supe}\delta_W\{\delta_W+\sqrt{\tilde r\supe/d}\}$,
arising from estimating and separating the heterogeneous loading space. The invariant factor error has the same intrinsic term, an additional first-order term $\sqrt{\tilde r\supe/n_x}$ from separating ${F}_\tti^{(e)}$ and $F_\tth^{(e)}$ via uncorrelatedness in each environment $e\in \mathcal{E}$, plus higher-order terms from the partialling-out step:
$\sqrt{\kappa\supe}\delta_W\{\delta_W+\sqrt{\tilde r\supe/d}\}
(1+\sqrt{\kappa\supe\tilde r\supe/n_x}).$
These higher-order terms simplify under the mild condition $n_x\ge \kappa\supe\tilde r\supe$, leading to the cleaner rates in the following corollary. In the standard regime where $\lambda_{\max}[(B\supe)^\top B\supe] \asymp d$ and $\tilde{r}\supe \asymp 1$ up to poly-log factors, this condition matches the condition for consistent estimation of loading space: $n_x \gtrsim d/\snrmain$.  

\begin{corollary}
\label{coro:fifh}
    Under the setting of \cref{thm:factor-est}, further assume $n_x \ge \kappa\supe \tilde{r}\supe$, then the factor estimation errors can be simplified as
    \begin{align*}
        \delta_{F_\tth}\supe \lesssim \sqrt{ \frac{1}{\snrmain} +  \kappa\supe\left(C_A \delta_W^2\right)^2 } \qquad \text{and} \qquad \delta_{F_\tti}\supe \lesssim \sqrt{ \frac{1}{\snrmain} + \frac{\tilde{r}\supe}{n_x} + \kappa\supe  \left(C_A \delta_W^2\right)^2 },
    \end{align*}
    and $\mathbb{E}[\|\Delta_{F_\tth}\supe\|_2^2] \le r_\tth\supe (\delta_{F_\tth}\supe)^2$ and $\mathbb{E}[\|\Delta_{\tti}\supe\|_2^2] \le r_\tti (\delta_{F_\tti}\supe)^2$.
\end{corollary}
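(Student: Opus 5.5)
This corollary follows directly from \cref{thm:factor-est}, and the plan is to simplify \eqref{eq:delta-f-h}--\eqref{eq:delta-f-i} term by term under the extra assumption $n_x \ge \kappa\supe \tilde{r}\supe$. No new probabilistic argument is needed: we work on the same event of probability at least $1-n_x^{-10}$ and only use algebra together with the relation $(a+b+c)\le\sqrt{3(a^2+b^2+c^2)}$.

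\emph{Step 1.} Start with the heterogeneous block. The assumption gives $\sqrt{\kappa\supe}\sqrt{\tilde{r}\supe/(\snrmain n_x)} \le \sqrt{1/\snrmain}$. This middle term is therefore absorbed into the intrinsic term $1/\sqrt{\snrmain}$. What remains is $\delta_{F_\tth}\supe \lesssim 1/\sqrt{\snrmain} + \sqrt{\kappa\supe}\, C_A\delta_W^2$. Converting the sum of two nonnegative terms into a square root of a sum of squares yields the stated form.

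\emph{Step 2.} The invariant block is handled the same way. The assumption gives $\sqrt{\kappa\supe\tilde{r}\supe/n_x}\le 1$, so the factor $(1+\sqrt{\kappa\supe\tilde{r}\supe/n_x})$ is at most $2$. Repeating the absorption from Step 1 leaves $\delta_{F_\tti}\supe \lesssim 1/\sqrt{\snrmain} + \sqrt{\tilde r\supe/n_x} + \sqrt{\kappa\supe}\, C_A\delta_W^2$. Converting this three-term sum to the root of a sum of squares gives the stated rate. The constants are $\poly(c_1)$ and are hidden in $\lesssim$.

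\emph{Step 3.} The $\ell_2$ bounds use the sub-Gaussian conclusion of \cref{thm:factor-est}. Since $\Delta_{F_\tti}\supe$ is jointly sub-Gaussian with parameter $\delta_{F_\tti}\supe$, for every unit vector $u$ the scalar $u^\top\Delta_{F_\tti}\supe$ is sub-Gaussian with parameter $\delta_{F_\tti}\supe$. Expanding the moment generating function to second order, or using the standard variance bound for sub-Gaussian variables, gives $\mathbb{E}[(u^\top \Delta)^2]\le (\delta_{F_\tti}\supe)^2$. Summing over the standard basis vectors $u=e_1,\dots,e_{r_\tti}$ then gives $\mathbb{E}\|\Delta_{F_\tti}\supe\|_2^2 \le r_\tti(\delta_{F_\tti}\supe)^2$. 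The same argument with $r_\tth\supe$ coordinates handles $\Delta_{F_\tth}\supe$.

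The only point needing care is this variance step. It requires the precise convention that sub-Gaussian with parameter $s$ means $\mathbb{E}e^{v^\top W}\le e^{s^2\|v\|^2/2}$. Comparing second-order Taylor coefficients as $v\to 0$ gives $\mathrm{Var}(v^\top W)\le s^2\|v\|^2$, which is exactly what is needed without an extra constant. The expectations are conditional on the DP matrices, i.e., on the same event used in \cref{thm:factor-est}.
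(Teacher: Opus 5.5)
Your proposal is correct and follows the paper's own route. The condition $n_x \ge \kappa^{(e)}\tilde r^{(e)}$ absorbs $\sqrt{\kappa^{(e)}\tilde r^{(e)}/(\lambda_\star n_x)}$ into $1/\sqrt{\lambda_\star}$ and caps $1+\sqrt{\kappa^{(e)}\tilde r^{(e)}/n_x}$ at $2$; the $\ell_2$ bounds then come from summing the direction-wise bound $\mathbb{E}[|u^\top\Delta|^2]\le\delta^2$, which follows from joint sub-Gaussianity, over the coordinate directions.
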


\begin{remark}[Technical challenge and novelty of analysis] The main technical challenge, beyond the single environment \cref{lemma:preliminary} benchmark, is that the DP matrix $\hat{\Phi}_\tth\supe$ needs to diversify both the effects of the idiosyncratic errors $U\supe$ and leakage from the invariant factors $F_\tti\supe$. To see this, using $X\supe = A\supe F_\tth\supe + B F_\tti\supe + U\supe$ with $A\supe = W_\tth\supe \khhe + W_\tti \khie$ and $B = W_\tti \kii$, we have
\begin{align*}
    (\hat{W}_\tth\supe)^\top X\supe &= \hat{H}\supe F\supe_\tth + (\hat{W}_\tth\supe)^\top W_\tti \kii F_\tti\supe + (\hat{W}_\tth\supe)^\top U\supe \\
    &= \hat{H}\supe F\supe_\tth + O_{\mathbb{P}}\left(\| W_\tti^\top \hat{W}_\tth\supe\|_2 \nu_{\max}(\kii)\right) + O_{\mathbb{P}}\left(\|\Sigma_U\supe\|_2^{1/2}\right)
\end{align*} 
for some matrix $\hat{H}\supe := (\hat{W}_\tth^{(e)})^\top A^{(e)}$ with $\nu_{\min}(\hat{H}\supe) \gtrsim \nu_{\min}(\khhe)$. 
Thus, unlike in standard factor estimation, the error in $\hat{W}_\tth\supe$ is not negligible when $W_\tth\supe$ is consistently estimated. 
Under $\epsilon_A\asymp 1$, a crude bound would control this leakage by $\| W_\tti^\top \hat{W}_\tth\supe\|_2 \le \|\sin\Theta(\hat{W}_\tth\supe, W_\tth\supe)\|_2 \lesssim \delta_W$. We obtain a sharper, higher-order bound by using a first-order expansion of the PCA solution:
\begin{align*}
    \hat{W}_\tth\supe = \left[{W}_\tth\supe + [{W}_\tth\supe]_\perp f(\hat{\Delta}) \right] \hat{Q} + O(\|\hat{\Delta}\|_2^2)
\end{align*} 
where $\hat{\Delta}:= [\hat{W}\supe (\hat{W}\supe)^\top - \hat{W}_\tti \hat{W}_\tti^\top] - W_\tth\supe (W_\tth\supe)^\top$ satisfying $\|\hat{\Delta}\|_2 \lesssim \delta_W$,  $f: \mathbb{R}^{d\times d} \to \mathbb{R}^{(d-r_\tth\supe) \times r_\tth\supe}$ is a linear map, and $\|\hat{Q}\|_2 \lesssim 1$. This expansion yields the dimension-free concentration error bound $\|W_\tti^\top [{W}_\tth\supe]_\perp f(\hat{\Delta})\|_2 \allowbreak \lesssim \delta_W \sqrt{\tilde{r}\supe/d}$, which further gives $\| W_\tti^\top \hat{W}_\tth\supe\|_2 \lesssim \delta_W \left[\delta_W + \sqrt{\tilde{r}\supe/d}\right]$. 

The invariant factor analysis is more involved because it also requires controlling the ``partialling-out" step in \eqref{eq:method-partialing-out}. The proof maintains first-order approximations throughout this sequence of spectral and least-squares operations; see \aosversion{Appendix C.1}{\cref{appendix:factor-est}}.
\end{remark}

\subsection{Estimation errors for the prediction-invariant factors and invariant signals}

We next analyze the estimation of the prediction-invariant factors and the final prediction rule. The following condition contains standard GLM-type regularity assumptions. We require local, rather than global, strong convexity at the true coefficients $\xi^\star_k$; global strong convexity assumptions would simplify some technical details. The full conditional exogeneity $\mathbb{E}[\varepsilon_k\supe|F\supe, U\supe] = 0$ in (a) is imposed to simplify the proof; see further discussions in \aosversion{Appendix C.2}{\cref{appendix:factor-est-star}}. 

\begin{condition}
\label{cond:z-reg}
    There exists a constant $c_2\ge 1$ such that for any $e\in \mathcal{E}_z$:
    \begin{itemize}
        \item[(a)] We observe $\mathcal{D}_z\supe = \{(X_i\supe, Z_{i}\supe)\}_{i=2n_x+1}^{2n_x+n_z}$ from \eqref{eq:model-invariant-z} with $\mathbb{E}[\varepsilon_k\supe|F\supe, U\supe] = 0$. 
        \item[(b)] $\sigma_z$ has uniformly bounded first-order and second-order derivatives: $\|\sigma_z'\|_\infty \lor \|\sigma_z''\|_\infty \le c_2$. $\sigma'_z(\cdot) \ge 0$ such that the loss function is convex. 
        \item[(c)] For each $k$, the eigenvalues of the following matrices range from $c_2^{-1}$ to $c_2$:
        \begin{align*}
            H\supe_k = \mathbb{E} \left[ \sigma_z'\left((\xi^\star_k)^\top F_{S^\star}\supe\right) F\supe (F\supe)^\top\right]
        \end{align*}
        \item[(d)] The noise $(\varepsilon_1^{(e)},\ldots, \varepsilon_q^{(e)})|F^{(e)},U^{(e)}$ is $\sqrt{c_2}$ jointly sub-Gaussian almost surely.
    \end{itemize}
\end{condition}

The next result builds on \cref{thm:factor-est}. To keep the presentation simple, we state it under balanced error scales across environments $e\in \mathcal{E}$: 
\begin{align*}
    \sqrt{2}[\delta_{F_\tti}\supe + \delta_{F_\tth}\supe] =: \delta_{F}\supe \asymp \bar{\delta}_{F} := \sqrt{\frac{1}{|\mathcal{E}_z|}\sum_{e\in \mathcal{E}_z} (\delta_F\supe)^2}, \qquad r\supe \asymp \bar{r} := \frac{1}{|\mathcal{E}_z|} \sum_{e\in \mathcal{E}_z} r\supe .
\end{align*}  
For clarity, we also state the result in the regime where the coefficients for auxiliary labels are well-conditioned. The general result is given in \aosversion{Appendix C.2}{\cref{appendix:factor-est-star}}.  

\begin{theorem}
\label{thm:factor-s-est}
    Under the setting of \cref{thm:factor-est} and the event defined therein, suppose further that \cref{cond:z-reg} holds, $\sup_{k\in [q]} \|\xi^\star_k\|_2 \asymp 1$, and $\lambda_{\min}((\Xi^\star)^\top \Xi^\star) \asymp q$. Denote $\tilde{r} = \bar{r} \cdot \log(n_z) + \log(|\mathcal{E}_z| \cdot q)$. There exist constants $\tilde{C}_1, \tilde{C}_2, \tilde{C}_3=\poly(c_1, c_2)$ such that if 
    \begin{align}
    \label{eq:cond-z-text}
    n_z \ge \tilde{C}_1 \tilde{r} (|\mathcal{E}_z| + \sqrt{\tilde{r}}) \qquad \text{and} \qquad (\bar{\delta}_F)^{-1} \ge \tilde{C}_1 |\mathcal{E}_z|
    \end{align} then, by choosing of $\lambda_{\mathsf{sel}} \in (\tilde{C}_2 \lambda_{\mathsf{sel}, \star}, \sqrt{q} /\tilde{C}_2 - \tilde{C}_2 \lambda_{\mathsf{sel}, \star})$ with $\lambda_{\mathsf{sel}, \star} = \sqrt{\frac{q + \tilde{r}}{n_z}} + \bar{\delta}_F + \frac{\tilde{r}}{n_z} = o(1)$, there exist invertible maps $Q_{S_\tti}$ and $Q_{S_\tth}$ such that with probability at least $1-n_z^{-10}$: 
    \begin{align*}
        \frac{Q_{S_\tti} (\hat{\Phi}_{S_\tti})^\top (\hat{\Phi}_{\tti}\supe)^\top X\supe - F_{S_\tti^\star}\supe}{\delta_{S_\tti}} ~~~~\text{and}~~~~ \frac{Q_{S_\tth} (\hat{\Phi}^{(e)}_{S_\tth})^\top (\hat{\Phi}_{\tth}\supe)^\top X\supe - F_{S_\tth^\star}\supe}{\delta_{S_\tth}}
    \end{align*} are jointly sub-Gaussian with parameter $\tilde{C}_3$ for any $e\in \mathcal{E}$ and any $e\in \mathcal{E}_z$, respectively, where
    \begin{align}
        \delta_{S_\tti} = \sqrt\frac{r_\tti + \log(n_z)}{q \cdot n_z \cdot |\mathcal{E}_z|} + \bar{\delta}_F+ \frac{\tilde{r}}{n_z} ~~\text{and}~~
        \delta_{S_\tth} = \sqrt\frac{\bar{r} - r_\tti + \log(n_z)}{q \cdot n_z} + \bar{\delta}_F+ \frac{\tilde{r}}{n_z} \label{eq:fs-error}.
    \end{align}
\end{theorem}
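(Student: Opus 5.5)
The plan is to work conditionally on the event of \cref{thm:factor-est}, treating the first-stage proxies $\tilde F_\tti\supe=(\hat\Phi_\tti\supe)^\top X\supe$ and $\tilde F_\tth\supe=(\hat\Phi_\tth\supe)^\top X\supe$ as fixed maps of fresh i.i.d.\ data in $\mathcal D_z\supe$ (by sample splitting). On that event we write $\tilde F_\tti\supe = Q_\tti^{-1}(F_\tti\supe+\Delta_{F_\tti}\supe)$ and $\tilde F_\tth\supe=(Q_\tth\supe)^{-1}(F_\tth\supe+\Delta_{F_\tth}\supe)$, where the errors are sub-Gaussian with parameter at most $\delta_F\supe$. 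The first step identifies the population target of the pooled GLM \eqref{eq:est-psi}. If the proxies were exact, model \eqref{eq:model-invariant-z} would give a minimizer with $\Psi_\tti^\star = [\Xi_\tti^\star, 0]\,Q_\tti$ (after embedding $S_\tti^\star$ into $[r_\tti]$). This is shared across $\mathcal E_z$ because $Q_\tti$ is common. The heterogeneous coefficient would be $\Psi_\tth\supe{}^\star=[\Xi_\tth^\star,0]\,Q_\tth\supe$. We therefore compare $\hat\Psi$ with these targets. The analysis splits into a stochastic part, a proxy-bias part and a nonlinear remainder, which produce exactly the three terms of $\lambda_{\mathsf{sel},\star}$ and $\delta_{S_c}$.

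\emph{Step 1 (GLM error).} We treat each row $k$ separately, since $L$ decouples over $k$. A local strong convexity argument uses \cref{cond:z-reg}(b)(c) together with $\delta_F\lesssim 1$ to transfer the Hessian bound from $F$ to the proxies. We control the score by sub-Gaussian concentration from \cref{cond:z-reg}(d), and the proxy bias by a second-order Taylor expansion of $\sigma_z$ around $(\xi_k^\star)^\top F_{S^\star}$. Direction-wise, this should give the following for each unit direction $u$. The shared block incurs error $\sqrt{(1+\log)/ (n_z|\mathcal E_z|)}+\bar\delta_F+\tilde r/n_z$ per row; pooling over $|\mathcal E_z|$ environments applies only to $\Psi_\tti$. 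The environment-specific block has the same bound without the $|\mathcal E_z|$ factor. The coupling between the shared and environment-specific parameters is handled by a block Schur-complement argument, and the condition $n_z\gtrsim \tilde r(|\mathcal E_z|+\sqrt{\tilde r})$ ensures that the Hessian is invertible with an $\ell_2$ localization radius of $o(1)$. Stacking the $q$ rows with a union bound gives operator-norm errors of order $\sqrt{(q+\tilde r)/n_z}+\sqrt q\,\bar\delta_F+\ldots$. The condition $\bar\delta_F^{-1}\gtrsim|\mathcal E_z|$ keeps the bias, which accumulates when environments are pooled, below the eigengap.

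\emph{Step 2 (spectral selection and alignment).} Because $\lambda_{\min}(\Xi^{\star\top}\Xi^\star)\asymp q$, the nonzero singular values of the targets are of order $\sqrt q$ and the rest vanish, so the threshold $\lambda_{\mathsf{sel}}$ recovers $\hat s_c=|S_c^\star|$ by Weyl's inequality. For the invariant block, Wedin's theorem applied to $\hat\Psi_\tti$ shows that $V_{\hat\Psi_\tti}$ is close to the row space of $\Psi^\star_\tti$, namely $Q_\tti^\top$ applied to the $S_\tti^\star$ coordinates. Dividing by $\sqrt q$ yields the $1/\sqrt q$ factor. For the heterogeneous block, each $U_{\hat\Psi_\tth\supe}$ estimates $\col(\Xi_\tth^\star)$, and averaging in $M$ recovers $\hat\Xi_\tth\approx \col(\Xi^\star_\tth)O$. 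Then $(\hat\Phi_{S_\tth}\supe)^\top\tilde F_\tth\supe=\hat\Xi_\tth^\top\hat\Psi_\tth\supe\tilde F_\tth\supe\approx O^\top(\Xi^{\star}_\tth)^\top\Xi^\star_\tth F_{S_\tth^\star}\supe$ up to the projection of the $\Xi_\tti$ part. The resulting transform $Q_{S_\tth}=[(\Xi^{\star}_\tth)^\top\Xi^\star_\tth]^{-1}O$-type is common across $e$, which is the alignment claim.

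\emph{Step 3 (propagation).} We write the composite error as (subspace error)$\times\tilde F$ plus the projected first-stage error $\Delta_F$. Both are sub-Gaussian: the first because $\tilde F$ is sub-Gaussian with a bounded parameter, the second with parameter $\bar\delta_F$. This gives $\delta_{S_c}$. I expect the main obstacle to be Step 1, in particular obtaining \emph{direction-wise}, dimension-free bias control of order $\bar\delta_F$ rather than $\sqrt{\bar r}\,\bar\delta_F$, together with the $\tilde r/n_z$ remainder. The difficulty is that $\Delta_F$ is correlated with $F$ and enters the GLM nonlinearly. My first attempt will be to decompose the score into a centered term plus $\mathbb E[\sigma_z'\cdot F\Delta_F^\top]$. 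The latter I will bound using only the sub-Gaussian parameter of $u^\top\Delta_F$, via Cauchy–Schwarz, for each fixed direction $u$.
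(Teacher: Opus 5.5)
\textbf{There is a genuine gap in Step 2 (and the heterogeneous part of Step 3): the spectral argument cannot give the stated first-order rates.}

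The rest of your architecture matches the paper: conditioning on the first-stage event, the targets $[\Xi_c^\star,0]\,Q_c$, recovering $\hat s_c$ by Weyl, and getting a common $Q_{S_\tth}$ from the shared $\hat\Xi_\tth$.

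\emph{Where it breaks.} Wedin's theorem bounds the right-singular-subspace error by essentially $\|\hat\Psi_c-\Psi_c^\star\|_2/\sigma_{\min}(\Psi_c^\star)$. With your operator-norm bound $\sqrt{(q+\tilde r)/n_z}+\sqrt q\,\bar\delta_F$, this gives $\sqrt{1/n_z+\tilde r/(qn_z)}+\bar\delta_F$.
\begin{itemize}
\item The factor $1/\sqrt q$ attaches only to the $\tilde r$ part, so an extra $n_z^{-1/2}$ term survives. For the pooled block it is at best $(n_z|\mathcal{E}_z|)^{-1/2}$.
\item This term is absent from $\delta_{S_\tti}$ and $\delta_{S_\tth}$. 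It is not dominated by $\bar\delta_F+\tilde r/n_z$ when $n_x,d\gg n_z$ and $q\gg\bar r$.
\item You therefore lose the key feature that the first-order error scales like $(q n_z)^{-1/2}$.
\end{itemize}
Step 3 has the same defect for the heterogeneous block. By \eqref{eq:sh-xi}, $\sin\Theta(\hat\Xi_\tth,V_\tth^\star)$ is itself of order $(n_z|\mathcal{E}_z|)^{-1/2}$, so it cannot enter linearly as a ``(subspace error)$\times\tilde F$'' term. Likewise, bounding $\hat\Xi_\tth^\top(\hat\Psi_\tth\supe-\Psi_\tth^{\star,(e)})$ by the full operator norm again produces $n_z^{-1/2}$ instead of $\sqrt{(\bar r-r_\tti+\log n_z)/(qn_z)}$.

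\emph{What is missing.} You need a one-sided, first-order perturbation argument driven by a linear representation of the GLM error. This is the paper's route: a first-order approximation of $\hat\Psi_\tti,\hat\Psi_\tth\supe$ (which is where $n_z\gtrsim\tilde r^{3/2}$ is used), combined with \cref{thm:first-order-svd}.
\begin{itemize}
\item Let $E=\hat\Psi_\tti-\Psi_\tti^\star$ and let $U_\star\Sigma_\star V_\star^\top$ be the SVD of $\Psi_\tti^\star$. The right singular space moves by $V_\perp^\top E^\top U_\star\Sigma_\star^{-1}$ plus $O(\|E\|_2^2/q)$.
\item The remainder is $\lesssim (n_z|\mathcal{E}_z|)^{-1}+\bar\delta_F^2+\cdots$ and is absorbed. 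The leading term involves only the GLM error projected onto the $|S_\tti^\star|$-dimensional space $\mathrm{col}(\Xi_\tti^\star)$.
\item To show $\|E^\top U_\star\|_2\lesssim\sqrt{(r_\tti+\log n_z)/(n_z|\mathcal{E}_z|)}$ plus bias, write each row as $E_{k,:}\approx(\text{inverse Hessian})\times(\text{score in }\varepsilon_k)$ plus a uniformly small remainder. Then $\sum_k u_k E_{k,:}$ inherits cancellation from the dimension-free joint sub-Gaussianity of $(\varepsilon_1,\ldots,\varepsilon_q)$ in \cref{cond:z-reg}(d).
\item Your row-by-row bounds from Step 1 lose a factor $\sqrt q$ at exactly this point. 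Also, a union bound over rows only gives a Frobenius-type $\sqrt{q\tilde r/n_z}$, not the $\sqrt{(q+\tilde r)/n_z}$ you state.
\end{itemize}

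\emph{Fix for the heterogeneous block.} Let $V_\tth^\star$ be an orthonormal basis of $\mathrm{col}(\Xi_\tth^\star)$. Use $\mathrm{col}(\Psi_\tth^{\star,(e)})\subseteq\mathrm{col}(\Xi_\tth^\star)$ and set $Q_{S_\tth}=(\hat\Xi_\tth^\top\Xi_\tth^\star)^{-1}$ exactly.
\begin{itemize}
\item The signal is then reproduced with no error.
\item The factor error is the $S_\tth^\star$-part of $\Delta_{F_\tth}\supe$ plus $Q_{S_\tth}\hat\Xi_\tth^\top E\supe\tilde F_\tth\supe$, with $\|Q_{S_\tth}\|_2\lesssim q^{-1/2}$.
\item Use $\|\hat\Xi_\tth^\top E\supe\|_2\le\|(V_\tth^\star)^\top E\supe\|_2+\|\sin\Theta(\hat\Xi_\tth,V_\tth^\star)\|_2\,\|E\supe\|_2$.
\item The first piece gives the $\sqrt{(r_\tth\supe+\log n_z)/(qn_z)}$ rate. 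The cross term is second order; it is the $q/(\sqrt{|\mathcal{E}_z|}\,n_z\mathsf{s}_\tth^2)$ term in \cref{thm:z-step1}.
\end{itemize}
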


The errors in \cref{thm:factor-s-est} again have an isometric form, as in
\cref{thm:factor-est}. Under the pervasive-in-$Z$ regime, the sample-size condition
\eqref{eq:cond-z-text} is comparable to the usual requirement for consistently estimating the GLM coefficients and the factors $(F_\tti, F_\tth)$ when
$|\Ecal_z|\asymp 1$. Each rate has three components: a first-order GLM
estimation term of order $n_z^{-1/2}$, the inherited factor-estimation error
$\bar\delta_F$, and a second-order GLM term $\tilde r/n_z$. The latter two
terms affect both $F_{S_\tti^\star}$ and $F_{S_\tth^\star}$. The invariant
component has a faster first-order rate because it is already aligned across
environments, whereas the heterogeneous component must be aligned separately
in each environment since the ambiguity matrices for $F_\tth\supe$ differ
across environments. Importantly, the first-order rates depend only on the dimensions of the corresponding prediction-invariant components. Thus, when the
parametric term dominates, estimating $F_{S_\tti^\star}$ is not affected by a
large heterogeneous dimension $\bar{r}-r_\tth$, and estimating $F_{S_\tth^\star}$ is not affected
by a large invariant dimension $r_\tti$, vice versa. Moreover, using more auxiliary labels does not hurt efficiency: in the (up to poly-log factors) balanced regime
$\tilde r\asymp r_\tti\asymp \bar r-r_\tti$ and $|\Ecal_z|\asymp 1$, the errors
decrease with $q$ as long as $q=o(n_z/\bar r)$, after which the second-order term dominates.

We finally analyze the estimation error for the invariant parameter $\beta^*$. The next theorem establishes the $\ell_2$  error of $\hat\beta$ up to the shared ambiguity matrix $Q_S = \diag\{Q_{S_\tti}, Q_{S_\tth}\}$, and shows how the learned signal generalizes to other environments. The regularity condition is analogous to \cref{cond:z-reg} and is deferred to the \aosversion{Appendix C.2}{\cref{appendix:factor-est-star}}. 

\begin{theorem}
    Under the setting of \cref{thm:factor-s-est} and the event defined therein, suppose further that \aosversion{Condition C.1}{\cref{cond:y-reg}} with constant $c_3$ holds. There exists some constant $\tilde{C}_1, \tilde{C}_2, \tilde{C}_3 = \poly(c_1,c_2,c_3)$ such that if $|\mathcal{E}_y| \cdot n_y \ge \tilde{C}_1 (|S^\star| + \log(n_y))$ and $\delta_{S_\tti} \lor \delta_{S_\tth} \le \tilde{C}_1^{-1}$, then
    \begin{align*}
        \frac{\left\|Q_S^{-\top} \hat{\beta} - \beta^\star\right\|_2}{\tilde{C}_2} \le \sqrt{\frac{|S^\star| + \log(n_y)}{n_y \cdot |\mathcal{E}_y|}} + \delta_{S_\tti} + \delta_{S_\tth} := \delta_y \qquad \text{with} \qquad Q_S = \begin{bmatrix}
            Q_{S_\tti} & 0 \\
            0 & Q_{S_\tth} \\
        \end{bmatrix}
    \end{align*} occurs with probability at least $1-n_y^{-10}$, under which we have the following generalization of invariant signals
    \begin{align*}
        \forall e\in \mathcal{E}_z \setminus \mathcal{E}_y, \qquad &\sqrt{\mathbb{E}\left[\left| \hat{\beta}^\top \tilde{F}\supe_S(X\supe) -  (\beta^\star)^\top F_{S^\star}\supe \right|_2^2\right]} \le \tilde{C}_3 \delta_y \\
        \forall e\in \mathcal{E} \setminus \mathcal{E}_z, \qquad & \sqrt{\mathbb{E}\left[\left| \hat{\beta}_\tti^\top \tilde{F}\supe_{S_\tti}(X\supe) -  (\beta_\tti^\star)^\top F_{S^\star_\tti}\supe \right|_2^2\right]} \le \tilde{C}_3 \delta_y
    \end{align*}
\end{theorem}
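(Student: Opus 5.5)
The plan is to treat the final stage as a GLM with errors-in-variables and to condition throughout on the event of \cref{thm:factor-s-est}. Because $\mathcal{D}_y$ is independent of $\mathcal{D}_x,\mathcal{D}_z$, conditioning on the estimated maps $\hat\Phi_\tti\supe,\hat\Phi_\tth\supe,\hat\Phi_{S_\tti},\hat\Phi_{S_\tth}\supe$ makes the proxies $\tilde F_{S,i}\supe$ i.i.d. within each $e\in\mathcal{E}_y$. By \cref{thm:factor-s-est}, they can be written as $Q_S\tilde F_{S,i}\supe = F_{S^\star,i}\supe + \Delta_i\supe$, where $\Delta_i\supe$ is jointly sub-Gaussian with parameter $\lesssim \delta_{S_\tti}+\delta_{S_\tth}$. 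Reparametrize by $\gamma = Q_S^{-\top}\beta$, so that $\beta^\top\tilde F_S = \gamma^\top(F_{S^\star}+\Delta)$. The estimator $\hat\gamma = Q_S^{-\top}\hat\beta$ then minimizes the empirical GLM loss $\hat L(\gamma)$ in the perturbed covariates $\check F := F_{S^\star}+\Delta$, and the target becomes $\|\hat\gamma-\gamma^\star\|_2$ with $\gamma^\star=\beta^\star$.

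The error bound follows the standard localized M-estimation argument used for \cref{thm:factor-s-est}.

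\textbf{Step (i): score.} Write $\nabla\hat L(\beta^\star) = -\frac{1}{|\mathcal{E}_y|n_y}\sum \{Y_i - \sigma_y(\beta^{\star\top}\check F_i)\}\check F_i$ and split the residual as $\varepsilon_{y,i} + [\sigma_y(\beta^{\star\top}F_{S^\star,i}) - \sigma_y(\beta^{\star\top}\check F_i)]$.
\begin{itemize}
\item The noise part is mean zero because the exogeneity in \eqref{eq:model-invariant-y} conditions on $(F_\tti,F_{S_\tth^\star})$. Strictly, one also needs $\Delta$ measurable in $(F,U)$ with $\mathbb{E}[\varepsilon_y\mid F,U]=0$, which is where \cref{cond:y-reg} enters. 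Sub-Gaussian concentration pooled over $|\mathcal{E}_y|n_y$ samples, with a union bound over a net of dimension $|S^\star|$, gives $\sqrt{(|S^\star|+\log n_y)/(n_y|\mathcal{E}_y|)}$.
\item The bias part is at most $\|\sigma_y'\|_\infty|\beta^{\star\top}\Delta_i|$ times $\|\check F_i\|$. Its expectation is bounded by $\|\beta^\star\|_2(\delta_{S_\tti}+\delta_{S_\tth})$ in operator form, using the isometric (direction-wise) sub-Gaussian control and Cauchy–Schwarz in each direction. This avoids any $\sqrt{|S^\star|}$ loss.
\end{itemize}

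\textbf{Step (ii): curvature.} Show local restricted strong convexity of $\hat L$ on a ball of radius $O(1)$ around $\beta^\star$.
\begin{itemize}
\item The population Hessian $\mathbb{E}[\sigma_y'(\gamma^\top\check F)\check F\check F^\top]$ is within $O(\delta_{S_\tti}+\delta_{S_\tth})$ of the well-conditioned matrix in \cref{cond:y-reg}. Lipschitzness of $\sigma_y'$ together with the smallness condition $\delta_{S_\tti}\lor\delta_{S_\tth}\le\tilde C_1^{-1}$ gives this.
\item Matrix concentration with $|\mathcal{E}_y|n_y\gtrsim |S^\star|+\log n_y$ samples transfers it to the empirical Hessian.
\end{itemize}

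\textbf{Step (iii): localization.} Apply a convexity argument: on the sphere of radius $C\delta_y$, the first-order lower bound exceeds the score term, so by convexity of $\ell_y$ the minimizer lies inside the sphere. This gives $\|Q_S^{-\top}\hat\beta-\beta^\star\|_2\lesssim\delta_y$.

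\textbf{Generalization bounds.} For $e\in\mathcal{E}_z\setminus\mathcal{E}_y$, decompose
\[
\hat\beta^\top\tilde F_S\supe - \beta^{\star\top}F_{S^\star}\supe = (\hat\gamma-\beta^\star)^\top(F_{S^\star}\supe+\Delta\supe) + \beta^{\star\top}\Delta\supe.
\]
The second moment of the first term is at most $\|\hat\gamma-\beta^\star\|_2^2\,\lambda_{\max}(\Sigma_F)(1+o(1))\lesssim\delta_y^2$. The second term is at most $\|\beta^\star\|_2^2(\delta_{S_\tti}+\delta_{S_\tth})^2$ by direction-wise sub-Gaussianity. The bound is valid for every $e\in\mathcal{E}_z$ because \cref{thm:factor-s-est} covers all of $\mathcal{E}_z$. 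For $e\in\mathcal{E}\setminus\mathcal{E}_z$, use the block-diagonal form of $Q_S$: $\hat\beta_\tti^\top\tilde F_{S_\tti}\supe = \hat\gamma_\tti^\top(F_{S_\tti^\star}\supe+\Delta_\tti\supe)$ with $\|\hat\gamma_\tti-\beta_\tti^\star\|\le\|\hat\gamma-\beta^\star\|$. \cref{thm:factor-s-est} guarantees the invariant-block error for every $e\in\mathcal{E}$, so the same argument applies.

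\textbf{Main obstacle.} The hardest step is (i)'s bias term. The perturbation $\Delta$ is correlated with $F_{S^\star}$ and with the heterogeneous spurious factors, which may correlate with $\varepsilon_y$ because only partial exogeneity holds. I would handle this by conditioning on $(F,U)$ and using that $\Delta$ is a fixed linear function of $(F,U)$ given the first-stage data. This requires the strengthened exogeneity $\mathbb{E}[\varepsilon_y\mid F,U]$ restricted appropriately, which I expect \cref{cond:y-reg} to supply, mirroring \cref{cond:z-reg}(a). If it does not, a first-order bias of order $\delta_{S}$ remains, and it is still absorbed in $\delta_y$.
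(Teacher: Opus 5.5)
Your proposal is correct and follows essentially the paper's route: the $\ell_2$ bound is the errors-in-variables GLM result \cref{thm:glm-supp} (your steps (i)--(iii)), applied with $d_s=0$, $n=n_y|\mathcal{E}_y|$, $Q=Q_S$, $\delta_X\asymp\delta_{S_\tti}+\delta_{S_\tth}$ and $t=10\log n_y$, and the transfer bounds then follow from the same decomposition $\hat\beta^\top\tilde F_S\supe-(\beta^\star)^\top F_{S^\star}\supe=(Q_S^{-\top}\hat\beta-\beta^\star)^\top(F_{S^\star}\supe+\Delta\supe)+(\beta^\star)^\top\Delta\supe$. One correction: \cref{cond:y-reg} does not supply $\mathbb{E}[\varepsilon_y\mid F,U]=0$, and it cannot, because the model lets the spurious heterogeneous factors correlate with $Y$; so your fallback, bounding $|\mathbb{E}[\varepsilon_y\,u^\top\Delta]|\lesssim\delta_{S_\tti}+\delta_{S_\tth}$ by Cauchy--Schwarz, is the actual argument, and it is exactly what the ``$+1$'' in $\delta_X(\|\beta_\star\|_2+1)$ of \cref{thm:glm-supp} accounts for, since that result only assumes $\mathbb{E}[\varepsilon_iX_i]=0$.
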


When $\lambda_\star \asymp \lambda_{\min}((B\supe)^\top B\supe) / \|\Sigma_U^{(e)}\|_2 \gtrsim d$, $\epsilon_A \asymp 1$ and $\bar{r}$ grows, the second-order error in estimating $[F_\tti, F_\tth]$ is dominated by the leading terms and the final prediction error can be expanded (up to log factors) as
\begin{align}
\label{eq:error-pervasive}
    \delta_y \asymp \underbrace{\sqrt{\frac{|S_\star|}{n_y \cdot |\mathcal{E}_y|}}}_{\delta_\beta} + \underbrace{\sqrt{\frac{(\bar{r} - r_\tti) \lor (r_\tti / |\mathcal{E}_z|)}{n_z \cdot q}} + \frac{\bar{r}}{n_z}}_{\delta_{Z}} + \underbrace{\frac{1}{\sqrt{d}} + \sqrt{\frac{\bar{r}}{n_x}}}_{\delta_F}.
\end{align}
Here $\delta_\beta$ is the error from estimating the prediction parameter $\beta^\star$, $\delta_Z$ is the error from using the auxiliary labels to identify prediction-invariant factors $F_{S^\star}$, and $\delta_F$ is the error rate from estimating and disentangling the latent factors. The term $1/\sqrt{d}$ is the intrinsic factor recovery error, and $\sqrt{\bar{r}/n_x}$ comes from disentangling invariant factors from the heterogeneous ones. 

\begin{remark}[Benefits of auxiliary labels $Z$ in sample efficiency]
Auxiliary labels help not only by identifying the transferable heterogeneous signal  $(\beta_\tth^\star)^\top F_{S_\tth^\star}$, but also by reducing variance even when $|S_\tth^\star|=0$. In this case, the prediction error becomes
\begin{align*}
    \delta_y \asymp \sqrt{\frac{|S_\tti^\star|}{n_y \cdot |\mathcal{E}_y|}} + \sqrt{\frac{r_\tti}{n_z \cdot |\mathcal{E}_z| \cdot q}} + \frac{\bar{r}}{n_z} + \delta_F
\end{align*} 
which is much smaller than the error from directly regressing $Y$ against $F_\tti$, whose error scales as $\delta_F + \sqrt{r_\tti/(n_y \cdot |\mathcal{E}_y|)}$, when $n_z$ is large enough and $|S^\star| \ll r_\tti$. 
\end{remark}

\section{Simulation studies and real-world validation}
\label{sec:numerical}

\subsection{Simulation studies}
We set $\mathcal{E}_z = \mathcal{E}$ with $|\mathcal{E}| = 3$, $r_\tti=8$, $r_\tth\supe=8$, $|S_\tti^\star| = |S_\tth^\star| = 3$, with binary auxiliary labels $Z^{(e)} \in \{0, 1\}^{q}$ (logistic link) and continuous response  $Y^{(e)}$ (identity link). For simplicity, we assume $r_\tti, r\supe, |S_\tti^\star|, |S_\tth^\star|$ are pre-known. Detailed data-generating process and implementation details are given in \aosversion{Appendix E.1}{\cref{subsec:simulation}}. Below, we summarize key results from the simulation studies. 

\myparagraph{Alignment of invariant factors.} We first compare how different methods can estimate $F_\tti\supe$ across $e\in \mathcal{E}$ when $d=1024$. Given a latent factor map $\tilde{F}_{\tti}^{(e),\mathtt{m}}$ produced by method $\mathtt{m}$, we measure alignment quality by 
\begin{align} \label{eq:simu-f}
    \mathrm{MSE}_{F_\tti}(\mathtt{m}) = \inf_{Q\in \mathbb{R}^{r_\tti \times r_\tti}} \frac{1}{|\mathcal{E}|} \sum_{e\in \mathcal{E}} \frac{1}{n_{test}} \sum_{i=1}^{n_{test}} \left\|Q \tilde{F}_\tti^{(e), \mathtt{m}}(X_i\supe) - F_{\tti,i}\supe \right\|_2^2 / r_\tti
\end{align} 
with $n_{test}=30000$ held-out observations in each environment. The oracle estimator has access to $B\supe$, and hence its performance characterizes the intrinsic lower bound due to idiosyncratic errors. IHD uses $\widetilde{F}_I^{(e),\text{IHD}}(x) = (\widehat{\Phi}_I^{(e)})^\top x$ from \cref{algo:dp}. PoolPCA uses $\tilde{F}_{\tti}^{(e),\mathtt{PoolPCA}}(x) = \hat{W}^\top x$ where $\hat{W}$ represents the top-$r_\tti$ eigenvectors of the pooled covariance matrix of $X$. Note when $r_\tti$ is known, the dimension reduction map AJIVE \citep{feng2018angle, yang2025estimating} used matches $\tilde{F}_{\tti}^{(e),\mathtt{AJIVE}}(x) = \hat{W}_\tti^\top x$ with the intermediate $\hat{W}_\tti$ in \cref{algo:dp}.

\cref{fig:exp1} (a) shows how $\text{MSE}_{F_I}$ scales with $n_x \in \{2^l\}_{l=7}^{14}$. Both PoolPCA and AJIVE cannot consistently estimate the invariant factors $F_\tti\supe$, whereas IHD achieves errors decaying at a rate of $n_x^{-1}$ when $n_x$ is relatively small, as the slope of the \myblue{blue} curve is roughly $-1$ when $\log_2(n_x) \le 10$, after which it saturates near the oracle error. This matches the theoretical rate from \cref{thm:factor-est} 
\begin{align}
\delta_{F_\tti}^2 \asymp \frac{r_\tti}{n_x} + \frac{1}{\lambda_{\min}((B\supe)^\top B\supe)} \asymp n_x^{-1} + d^{-1}.
\end{align} 
The saturation at $d^{-1}$ reflects the intrinsic noise floor from idiosyncratic errors.

\myparagraph{Alignment of prediction-invariant factors.} Given $d=1024$ and $n_x = 2^{14}$, we next examine whether the algorithm described in \cref{sec:method2} can construct feature maps $\tilde{F}_{S_\tti}(x)$ and $\tilde{F}_{S_\tth}(x)$ that extract and align $F_{S_\tti}$ and $F_{S_\tth}$ across $e\in \mathcal{E}$ using the analogous alignment metric to \eqref{eq:simu-f}. 
\begin{figure}[!t]
\centering
\begin{tabular}{cc}
\subfigure[]{
\includegraphics[height=5cm]{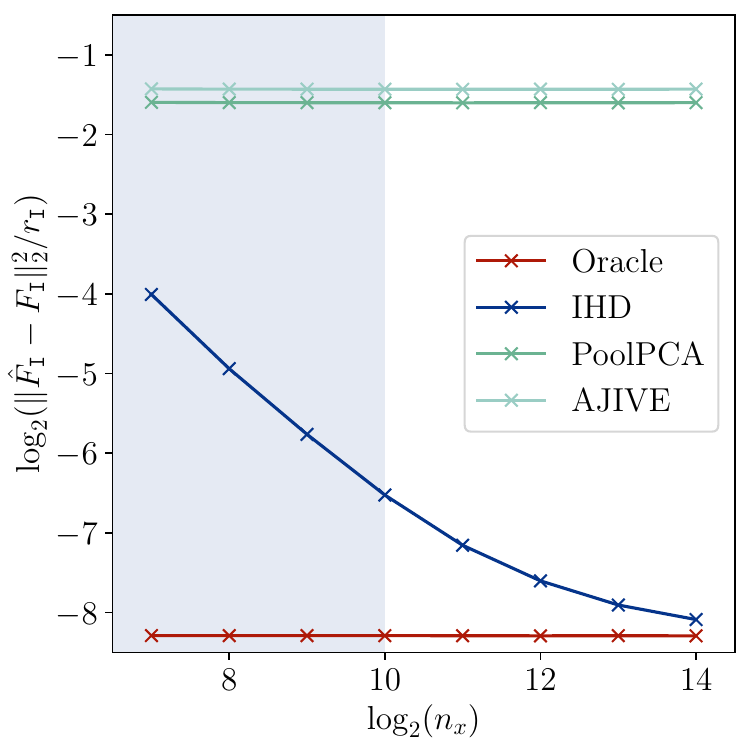}
}&\subfigure[]{
\includegraphics[height=5cm]{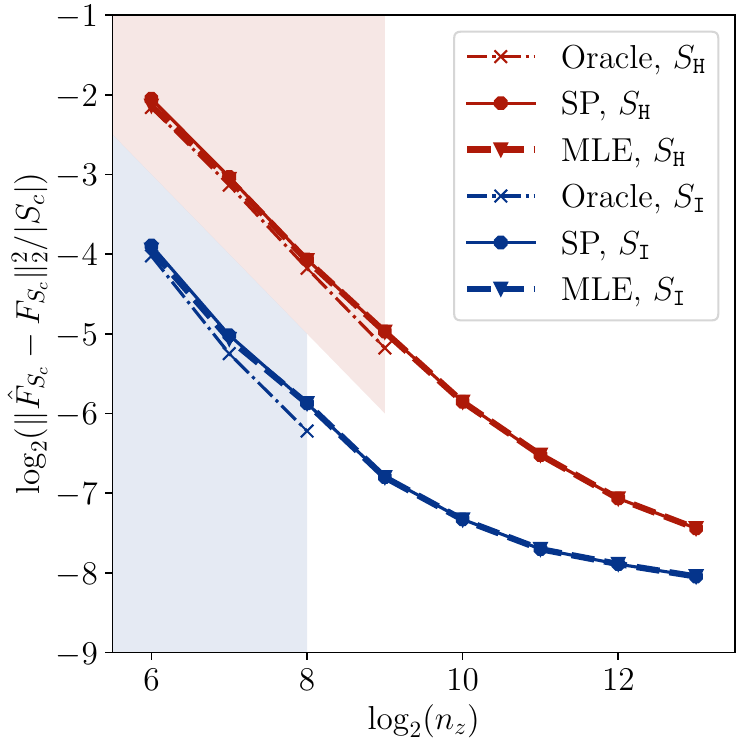}
}\\
\subfigure[]{
\includegraphics[height=5cm]{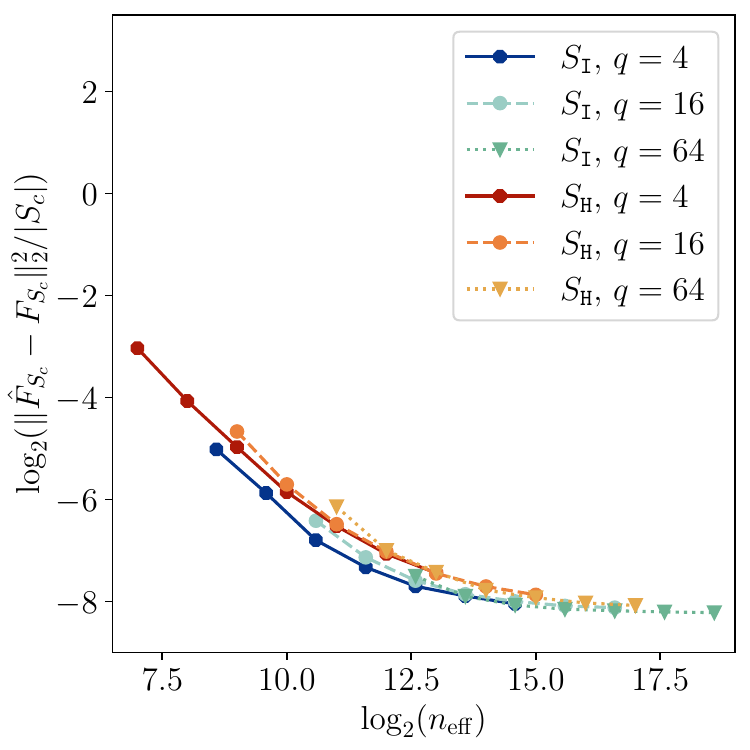}
}&
\subfigure[]{
\includegraphics[height=5cm]{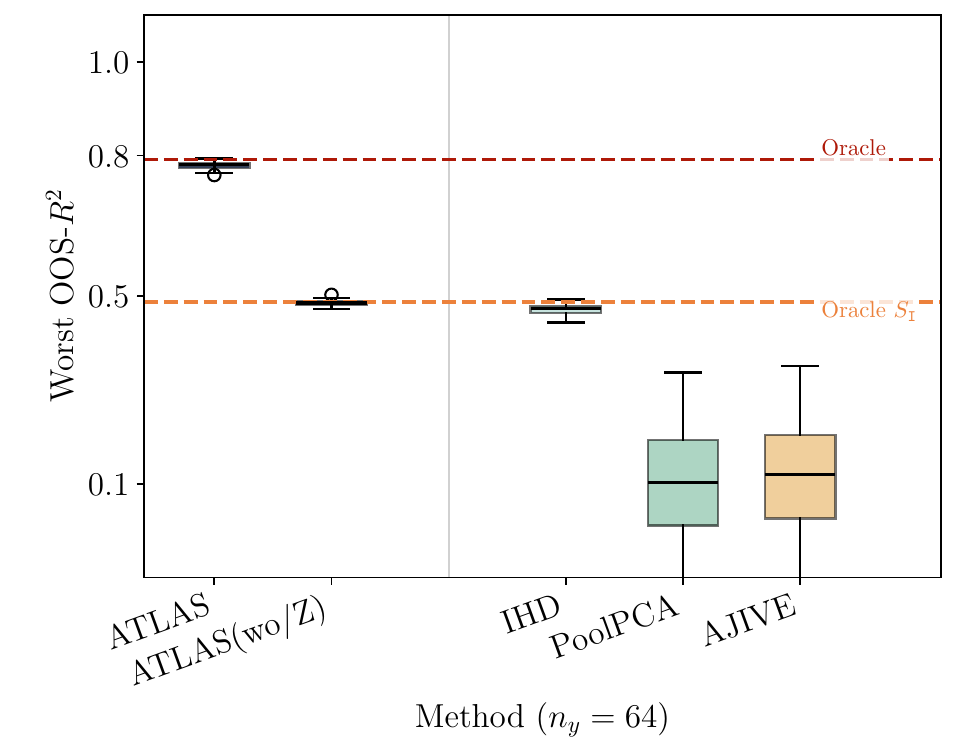}
}
\end{tabular}

\caption{Panel (a) (resp. (b)--(c)) depict the results on factor estimation of $F_\tti$ (resp. $F_{S_\tti}, F_{S_\tth}$): both $x$ and $y$ axes are $\log_2(\cdot)$ scaled. Different curves in the figures (a)--(c) show how the various methods estimate the corresponding factors under different settings. The shaded area represents the regime where the empirical error curve has slopes nearly $-1$; (d) depicts the worst-environment out-of-sample $R^2$: the oracle methods are represented by dashed lines, the methods with the supervision of $Z$ in $\mathcal{E}$ are in the left block, while the methods blind to $Z\supe$ in $\mathcal{E}$ are in the right block.}
\label{fig:exp1}
\end{figure}

\cref{fig:exp1} (b) shows results for $q=4$ and $n_z \in \{2^l\}_{l=6}^{13}$. For both $S_\tti$ (\myblue{blue curves}) and $S_\tth$ (\myred{red curves}), the proposed spectral methods (SP) depicted in solid lines achieve nearly identical performance to the non-convex MLE objective \eqref{eq:method-mle} (bold dashed lines): error decay at rate $n_z^{-1}$ in the shaded regime before saturating at the floor inherited from the first stage factor estimates, which confirms \cref{thm:factor-s-est}:
\begin{align*}
    \delta_{F_{S_\mathtt{c}}}^2 \asymp \frac{r}{n_z \cdot q \cdot L_{\mathtt{c}}} + \delta_F^2 \asymp \frac{1}{n_z \cdot q \cdot L_{\mathtt{c}}} + n_x^{-1} + d^{-1} ~~~\text{with}~~~ L_{\mathtt{c}} = 1 + \indicator\{\mathtt{c} = \tti\} (|\mathcal{E}_z| - 1).
\end{align*} 
\cref{fig:exp1} (c) further confirms this by showing that estimation error curves for different $q\in \{4, 16, 64\}$ and $\mathtt{c} \in \{\tti, \tth\}$ collapse when plotted against the effective sample size $n_{\text{eff}} = n_z \cdot q \cdot L_c$, illustrating that increasing $q$ is equivalent to increasing $n_z$ in the pervasive regime.

\myparagraph{Generalization in new environments. } Fixing $d=1024$, $n_x=2^{14}$, and $n_z=2^{10}$, we evaluate how well different methods transfer to three new environments $\mathcal{E}_o$ outside $\mathcal{E}$, using the worst-case out-of-sample $R^2$ defined on $\mathcal{E}_o$. All methods run pooled linear least squares on the extracted factors and make predictions using the learned predictor in new environments; see the descriptions for different methods and evaluation metrics in \aosversion{Appendix E.1}{\cref{subsec:simulation}}. 
\cref{fig:exp1} (d) shows that ATALS with auxiliary labels $Z\supe$ has performance similar to the oracle predictor built on the true $F_{S^\star}^{(e)}$, confirming \cref{prop:robust1}. Without $Z$ in the new environment, both {ATLAS(wo/Z)} and {IHD} have worst-case OOS $R^2$ similar to that of the \myorange{Oracle $F_{S_\tti}$} based on $F_{S_\tti^\star}^{(e)}$. Moreover, ATLAS(wo/Z) slightly outperforms IHD, confirming that the auxiliary labels also reduce variance beyond identifying $F_{S_\tth}\supe$. All the other methods generalize poorly in the new environments. 

\subsection{Real data application: temporally generalizable prediction of future disease activity}
\label{sec:realdata}

Rheumatoid arthritis (RA) is a chronic autoimmune disease affecting $\sim 1\%$ of the global population \citep{smolen2016eular}. Accurate assessment of disease activity is central to RA management, because persistent moderate or high disease activity may lead to progressive joint damage, impaired physical function, and the need for treatment escalation \citep{aletaha2018diagnosis}. Disease activity score (DAS) and its variants are widely used to summarize RA activity and guide clinical decisions, but disease activity information is often incompletely recorded in routine EHR data and may appear across both structured fields and unstructured clinical notes \citep{cheng2025inferring}. Reliable DAS labels often require manual chart review and are relatively scarce, while routine EHR covariates are broadly available. At the same time, unstructured clinical notes contain rich DAS-related information, including joint counts, inflammatory markers, and physician global assessments, which can be extracted by an LLM-based agent and used as auxiliary variables. This creates a natural setting for ATLAS: abundant historical EHR covariates $X$, limited manually curated target labels $Y$, and richer LLM-derived auxiliary variables $Z$ that can help identify prediction-relevant latent structure.

Building a future DAS prediction model from these data requires not only good predictive accuracy, but also stability over time. Treatment protocols, clinical practice, documentation patterns, and measurement conventions for inflammatory markers have evolved substantially over time, potentially shifting both the observed EHR features and their associations with disease activity. As a result, models trained on historical data may perform well during training but degrade when applied to current or future patients. This makes future DAS prediction a natural empirical setting for evaluating ATLAS, since the data have both partially observed labels and heterogeneous temporal environments.

We use the Mass General Brigham (MGB) EHR system to study 8,095 RA-positive patients who first initiated DMARDs after 2006-01-01, contributing 175,436 labeled patient visits with 174,532 $(X, Z)$ samples and 904 $(X, Y)$ samples. For each labeled visit at calendar date $t_0$, the prediction goal is to determine whether the patient's RA disease activity at $t_0$ is at least moderate using only EHR information recorded before $t_0-360$ days. To reflect temporal heterogeneity, we define environments by calendar-time partitions of these labeled visits. The first three environments, covering 2009--2018, are used for training, while the last three environments, covering 2019--2021, are held out to evaluate temporal generalizability. \aosversion{Table 2}{\cref{table:real_data}} in \aosversion{Appendix E.2}{\cref{subsec:real-data}} reports the sample sizes. 

We construct the historical EHR covariates from RA-related structured codes, including diagnosis codes, medication prescriptions, and laboratory tests, as well as clinical concepts extracted from narrative notes using natural language processing (NLP). We aggregate historical EHR information over the one-year covariate window $[t_0-720,t_0-360)$, which is split into two 180-day sub-windows, $[t_0-720,t_0-540)$ and $[t_0-540,t_0-360)$, referred to as the past and near windows, respectively; see the figure in \aosversion{Appendix E.2}{\cref{subsec:real-data}} for an illustration. We first select RA-related structured codes and NLP-derived clinical concepts using an existing knowledge graph \citep{xiong2023knowledge}. These base variables are then expanded across the two temporal windows and combined with demographic variables and derived feature representations, yielding the final $d=3539$ dimensional covariate vector. Details of the feature construction are provided in \aosversion{Appendix E.2}{\cref{subsec:real-data}}. For ATLAS, these historical EHR covariates serve as $X$, the LLM-derived DAS-related quantities serve as auxiliary variables $Z$, and the binary indicator of moderate-or-high disease activity at the future prediction time serves as the target outcome $Y$. We compare ATLAS with Lasso on the original covariates, PoolPCA, AJIVE, and IHD using only the invariant representation, and an ATLAS ablation with $r_\tti=0$, which uses auxiliary labels to align the latent factors without first separating invariant and heterogeneous components. For each method, we construct the corresponding representation using the training environments, fit the same downstream prediction model for $Y$, and evaluate AUC separately in each held-out test environment.

\begin{table}
\begin{center}
    \begin{tabular}{c|l|l|l|l|l|l}
        \hline
        \hline
        Year & Lasso & PoolPCA & AJIVE & IHD & ATLAS ($r_\tti=0$) & ATLAS \\
        \hline
        2019 & 0.473 & 0.495 & 0.502 & 0.575 & 0.612 & 0.601 \\
        2020 & 0.573 & 0.522 & 0.531 & 0.541 & 0.574 & 0.554 \\
        2021 & 0.445 & 0.459 & 0.483 & 0.682 & 0.649 & 0.736 \\
        \hline
        Average & 0.497 & 0.492 & 0.505 & 0.600 & 0.612 & 0.630 \\
        \hline
        \hline
    \end{tabular}
\end{center}
\caption{Out-of-sample prediction performance measured by AUC for different methods, including the one that directly regresses $Y$ on the full covariate $X$ with $L_1$ penalty (Lasso), and five methods that regress $Y$ on extracted latent factors similar to the simulation. The implementation details can be found in \aosversion{Appendix E.2}{\cref{subsec:real-data}}.}
\label{table:results}
\end{table}

\cref{table:results} reports the held-out AUCs for 2019--2021. Lasso, PoolPCA, and AJIVE achieve average AUCs close to 0.5, suggesting that directly using the original covariates or relying only on pooled/shared low-dimensional structure does not provide stable temporal transfer in this application. IHD improves the average AUC to 0.600, indicating that invariant factors extracted from the multi-environment covariance structure are more transportable. ATLAS achieves the best average AUC of 0.630, further improving over IHD by using LLM-derived auxiliary variables to align prediction-invariant heterogeneous factors. The ablation ATLAS with $r_I=0$ also improves over PoolPCA and AJIVE, but performs worse than the full ATLAS procedure, supporting the value of first disentangling invariant and heterogeneous factors before applying auxiliary supervision.

\def\ebreve{\breve{e}}
\section{Extensions and Discussion}
\label{sec:discuss}

In this paper, we consider the multi-environment factor model and factor regression. We propose ATLAS, which can uncover the invariant factors that have common loading effects on the covariates in an unsupervised manner, and further extract additional prediction-invariant factors for the response with the supervision of auxiliary labels. We provide a thorough statistical analysis of the proposed method. In particular, under the instance-level necessary assumption \eqref{eq:ident-intro}, together with some regularity conditions, we can efficiently estimate the latent prediction-invariant factors and the corresponding invariant associations, yielding robust transfer in unseen environments.

The main analysis assumes that all the environments are available at fitting time, the associations between the factors and labels are (generalized) linear models, and that auxiliary labels are observed at least in some environments. We next discuss extensions beyond these settings.

\myparagraph{Prediction in a new environment. }
We briefly describe how to make predictions in a new environment $\breve{e} \notin \mathcal{E}$ without re-running the full algorithm on $\Ecal\cup \{\ebreve\}$. Suppose that in environment $\ebreve$, we observe $2\breve{n}_x$ samples of $X$ and $\breve{n}_z$ samples of $(X, Z_{S_Z})$ observations with only a subset $S_Z$ of auxiliary labels with $S_Z \subseteq [q]$ and $|S_Z| \ge |S_\tth^\star|$. Let $\hat{W}^{(\breve{e})}$ be the matrix of the top-$r^{(\breve{e})}$ eigenvectors of the covariance matrix of $X$ in environment $\breve{e}$. Using the previously learned loading space $\hat{W}_\tti$ together with $\hat{W}^{(\breve{e})}$, we construct $\hat{W}_\tth^{(\breve{e})}$ and the DP matrices $\hat{\Phi}_\tti^{(\breve{e})}$ and $\hat{\Phi}_\tth^{(\breve{e})}$ as in line 12-15 and line 16-18 in \cref{algo:dp}. The shared projection 
 $\hat{\Phi}_{S_\tti}$ in \eqref{eq:est-si} can be reused to obtain $\tilde{F}_{S_\tti}^{(\breve{e})}$ from $\tilde{F}_\tti^{(\breve{e})}$. 
 
It remains to align the heterogeneous prediction-invariant factors in the new environment. Let  $\hat{\Psi}_\tth^{(\breve{e})}$ solve
\begin{align*} 
\hat{\Psi}_\tth^{(\breve{e})} = \argmin_{\Psi_\tth \in \mathbb{R}^{q \times \hat{r}_\tth^{(\breve{e})}}}  \sum_{i-2\breve{n}_x \in [\breve{n}_z]} \sum_{k\in S_Z} \ell_z\left([\hat{\Psi}_\tti]_{k,:} \tilde{F}_{\tti,i}^{(\breve{e})} + [\Psi_\tth]_{k, :} \tilde{F}_{\tth,i}^{(\breve{e})}, Z_{i, k}^{(\breve{e})}\right).
\end{align*} 
where $\hat{r}_\tth^{(\breve)} = r^{\breve{e}} - \hat{r}_\tti$, $\hat{\Psi}_\tti$ is defined in \eqref{eq:est-psi} and $\ell_z$ is the MLE loss for the mean function $\sigma_z(\cdot)$. We then set
$$\hat{\Phi}_{S_\tth}^{(\breve{e})} = ([\hat{\Psi}_\tth^{(\breve{e})}]_{S_Z,:})^\top [\hat{\Xi}_\tth]_{S_Z,:} ([\hat{\Xi}_\tth]_{S_Z,:}^\top [\hat{\Xi}_\tth]_{S_Z,:})^{-1},$$ 
where $\hat{\Xi}_\tth$ is defined in \eqref{eq:est-xi-h}. These projection matrices can then be used to recover the aligned factors $\tilde{F}_S^{(\breve{e})}$ and make predictions with the score 
$\hat{\beta}^\top\tilde{F}_S^{(\breve{e})}$. 
Under analogous regularity conditions and 
$\lambda_{\min}([\Xi^\star_{\tth}]_{S_Z,:}^\top [\Xi^\star_{\tth}]_{S_Z,:}) \asymp |S_Z|$, we expect that the $L_2$ prediction error of the resulting predictor satisfies
\begin{align*}
    \sqrt{\mathbb{E}\left[\left\|\hat{\beta}^\top \tilde{F}_S^{(\breve{e})} - (\beta^\star)^\top F_{S^\star}^{(\breve{e})} \right\|_2^2\right]} \lesssim \delta_y + \sqrt{\frac{r^{(\breve{e})} - r_\tti}{\breve{n}_z \cdot |S_Z|}} + \frac{r^{(\breve{e})}}{\breve{n}_z} + \underbrace{\frac{1}{\sqrt{d}} + \sqrt{\frac{r^{(\breve{e})}}{\breve{n}_x}}}_{\breve{\delta}_F},
\end{align*} 
in the pervasiveness regime considered by \eqref{eq:error-pervasive}. A rigorous analysis of this extension is left for future work.

\myparagraph{Nonlinear relationship between $Y$ and $F_{S^\star}$.}
Our method and the robust transfer-learning interpretations in \cref{prop:robust1} and \cref{prop:robust2} extend naturally to nonlinear additive GLMs. Consider
\begin{align}
\label{eq:model-y-nonlinear} 
    \mathbb{E}\left[Y\supe \mid F_{\tti}\supe, F_{S_\tth^\star}\supe\right] = \sigma_y\left( g_\tti (F_{S_\tti^\star}\supe) + g_\tth (F_{S_\tth^\star}\supe) \right)
\end{align} where $\sigma_y$ is the given mean function, and $g_\tti, g_\tth$ are both unknown nonparametric functions. This reduces to  \eqref{eq:model-invariant-y} when $g_\tti(v) = (\beta_\tti^\star)^\top v$ and $g_\tth(v) = (\beta_\tth^\star)^\top v$. 
When $g_\tti(\cdot)$ and $g_\tth(\cdot)$ are generic nonparametric functions, all earlier steps of the pipeline remain unchanged; only the last regression step \eqref{eq:est-y} is replaced by  empirical risk minimization over nonlinear hypothesis classes $\mathcal{G}_\tti$ and $\mathcal{G}_\tth$: 
\begin{align*}
    \hat{g}_\tti, \hat{g}_\tth \in \argmin_{g_\tti \in \mathcal{G}_\tti, g_\tth \in \mathcal{G}_\tth} \frac{1}{|\mathcal{E}_y| \cdot n_y} \sum_{\substack{e\in \mathcal{E}_y \\ i-2n_x-n_z \in [n_y]}} \ell_y\left(g_\tti(\tilde{F}_{S_\tti}\supe(X\supe_i)) + g_\tth(\tilde{F}_{S_\tth}\supe(X\supe_i)), Y\supe_i \right)
\end{align*} 
where $\tilde{F}_{S_\tti}^{(e)}(\cdot) = (\hat{\Phi}_{S_\tti})^\top (\hat{\Phi}_\tti\supe)^\top (\cdot)$ and $\tilde{F}_{S_\tth}^{(e)}(\cdot) = (\hat{\Phi}_{S_\tth}\supe)^\top (\hat{\Phi}_\tth\supe)^\top (\cdot)$. Taking $\mathcal{G}_\tti$ and $\mathcal{G}_\tth$ to be neural network classes gives a direct generalization the FAR-NN estimator in \citep{fan2024factor}. Following the proof strategy of \cite{fan2024factor}, when both $g_\tti(\cdot)$ and $g_\tth(\cdot)$ lie in the hierarchical composition model \citep{schmidt2020nonparametric, kohler2021rate}, which is a class of finite compositions of low-dimensional smooth functions, we can obtain that the estimated score satisfies
\begin{align*}
    \left[\frac{1}{|\mathcal{E}_y|} \sum_{e\in \mathcal{E}_y} \mathbb{E}\left[\left|\hat{g}^{(e)}(X^{(e)}) - g_\star(F\supe) \right|^2\right] \right]^{1/2} \lesssim \underbrace{\left(|\mathcal{E}_y| \cdot n_y \right)^{-\frac{\gamma^\star}{2\gamma^\star+1}}}_{\delta_g} + \delta_{S_\tti} + \delta_{S_\tth}
\end{align*} where $\hat{g}^{(e)}(x) = \hat{g}_\tti(\tilde{F}_{S_\tti}\supe(x)) + \hat{g}_\tth(\tilde{F}_{S_\tth}\supe(x))$, $g_\star(f) = g_\tti(f_{S_\tti^\star}) + g_\tth(f_{S_\tth^\star})$, $\gamma^\star$ is the effective smoothness, which is the smallest ratio of smoothness to dimension in the composition, and the factor estimation errors $(\delta_{S_\tti}, \delta_{S_\tth})$ are those defined in \eqref{eq:fs-error}. It replaces the parametric estimation error of estimating the $(|S_\tti^\star|+|S_\tth^\star|)$-dimensional parameter by a non-parametric rate $\delta_g$ with the same sample size $(n_y \cdot |\mathcal{E}_y|)$.

\myparagraph{Nonlinear relationship between $Z$ and $F_{S^\star}$.}
Our main analysis assumes a generalized linear association between $Z$ and $F_{S^\star}$ as in \eqref{eq:model-invariant-z}. Our algorithm continues to apply to the case where the conditional mean of $Z$ given $F_{S^\star}$ is nonparametric, provided that $Z\indep F| F_{S^\star}$ and the joint law of $(Z, F_{S^\star})$ is the same across $e\in \mathcal{E}_z$. In this case, the condition $q \ge |S_\tti^\star| \lor |S_\tth^\star|$ may be relaxed by constructing a feature map of $Z$. To see this, for any map $h: \mathbb{R}^q \to \mathbb{R}^{\bar{q}}$, the linear coefficient satisfies
\begin{align*}
    [\bar{\Xi}^\star]^{(e)} \equiv \mathbb{E}[h(Z) F_{S^\star}^\top ] \left\{ \mathbb{E} [F_{S^\star} F_{S^\star}^\top ] \right\}^{-1} . 
\end{align*} 
If the two cross-moment matrices
$\mathbb{E}[h(Z)F_{S_\tti^\star}^\top]$ and
$\mathbb{E}[h(Z)F_{S_\tth^\star}^\top]$ have ranks
$|S_\tti^\star|$ and $|S_\tth^\star|$, 
respectively, then the introduced feature map $h$ together with the invariance of the joint law of $(Z, F_{S^\star})$ reduces the nonlinear case to the linear alignment problem we studied.
For a fully nonparametric model $\mathbb{E}[Z^{(e)}|F^{(e)}] = m_Z(F_{S^\star}^{(e)})$, it may be possible to develop a multi-environment counterpart of the projection pursuit \citep{huber1985projection} or multi-index model \citep{xia2008multiple} to align $F_{S_\tth^\star}^{(e)}$. Such an analysis would require additional overlap conditions on the factor distributions across environments and is left for future work.

\myparagraph{No auxiliary labels $Z$. }
When no auxiliary labels are available but $Y$ is observed in some environments $\mathcal{E}_y \subseteq \mathcal{E}$, our method can still recover the invariant signals in the regression setting with $\sigma_y(v) = v$, even for a nonparametric regression function and $S_\tth^\star \neq \emptyset$.  Under the non-linear model and $F_\tti^{(e)} \indep F_\tth^{(e)}$, 
\begin{align*}
    \mathbb{E}[Y^{(e)}|F_\tti^{(e)}] = \mathbb{E} \left[ g_\tti(F_{S_\tti^\star}\supe) + g_\tth(F_{S_\tth^\star}\supe)|F_\tti^{(e)}\right] = g_\tti(F_{S_\tti^\star}\supe) + b^{(e)} ~~~\text{for}~~ b^{(e)} = \mathbb{E}\left[g_\tth(F_{S_\tth^\star}\supe)\right]
\end{align*} 
Then one can estimate the invariant signals $g_\tti(F_{S_\tti^\star}\supe)$ up to a constant by fitting environment-specific intercepts, i.e.,
\begin{align*}
    \hat{g}_\tti, \{\hat{b}^{(e)}\}_{e\in \mathcal{E}_y} \in \argmin_{g_\tti, \{b^{(e)}\}_{e\in \mathcal{E}_y}} \sum_{e\in \mathcal{E}_y, i\in [n_y]} \left|Y_i^{(e)} - b^{(e)} - g_\tti(\tilde{F}_{\tti, i}^{(e)})\right|^2.
\end{align*}


\appendix
\appendixrefs

\newpage 

\noindent{\LARGE \textbf{Appendix}}
\vspace{10pt}

\section{Organization and key notations}
\label{append:notation}

The supplement is organized as follows:
\begin{description}
    \item[\cref{append:notation}] presents a table of main notations used in the main paper.
    \item[\cref{append:ident-all}] presents further identification theorems and discussions omitted in \cref{sec:ident}.
    \item[\cref{append:factor}] presents the full the non-asymptotic results in \cref{sec:theory} and corresponding discussions. 
    \item[\cref{append:first-order-approx}] presents the auxiliary results on the first-order approximation for SVD, PCA, and GLM. 
    \item[\cref{append:numerical}] presents the implementation details for numerical studies. 
    \aosversion{\item[\cref{append:proof-ident}] collects the proof for identification results.
    \item[\cref{append:proof-f}] collects the proof for the IHD procedure, i.e., \cref{thm:factor-est-full} that is a general version of \cref{thm:factor-est}.
    \item[\cref{append:proof-fstar}] collects the proof for the other non-asymptotic results, i.e. \cref{thm:z-step1} and \cref{thm:beta-est}. 
    \item[\cref{append:proof-pca}] collects the proof for auxiliary results in \cref{append:first-order-approx}.}{} 
\end{description}

See the table of notations in \cref{tb:notation}.

\begin{table}[!htb]
\footnotesize
\begin{center}
\begin{tabular}{l|l|l|l}
\hline
\hline
Notation & Meaning & Type & Definition \\
\hline
$\mathcal{E}$, $\mathcal{E}_z$, $\mathcal{E}_y$ & Set of all/with-z-label/with-y-label environments & Set & ---  \\
$\mathcal{D}_x, \mathcal{D}_z, \mathcal{D}_y$ & Observed data in $\mathcal{E}$, $\mathcal{E}_z$, $\mathcal{E}_y$& Data & --- \\
$F^{(e)}$, $F_\tti^{(e)}$, $F_\tth^{(e)}$ & Full/invariant/heterogeneous factor $\in \mathbb{R}^{r^{(e)}}$ & Random variable & \eqref{eq:model-x} \\
$B^{(e)}$, $B$, $A^{(e)}$ & Full/invariant/heterogeneous loading & Unknown quantity & \eqref{eq:model-x} \\
$r^{(e)}$, $r_\tti$, $r_\tth^{(e)}$ & Dimension of full/invariant/heterogeneous factor & Unknown quantity & \eqref{eq:model-x} \\
$S_\tti^\star, S_\tth^\star$ & Index of subset of factors $\subseteq [r^{(e)}]$ & Unknown quantity & \eqref{eq:model-invariant-y} \\
$Y^{(e)}$ & Response label $\in \mathbb{R}$ & Random variable & \eqref{eq:model-invariant-y} \\
$\beta^\star$, $\beta_\tti^\star$, $\beta_\tth^\star$ & Invariant parameter for response $\in \mathbb{R}^{|S_\star|}$ & Unknown quantity & \eqref{eq:model-invariant-y} \\
$Z^{(e)}$ & Auxiliary label $\in \mathbb{R}^q$ & Random variable & \eqref{eq:model-invariant-z} \\
$\Xi^\star$ & Invariant parameter for auxiliary label $\in \mathbb{R}^{q\times |S_\star|}$ & Unknown quantity & \eqref{eq:model-invariant-z} \\
$\sigma_{c}(\cdot)$ & GLM mean function for $c\in \{y, z\}$ & Function & \eqref{eq:model-invariant-y}, \eqref{eq:model-invariant-z} \\
\hline
$\Phi_{\tti}^{(e)}$, $\Phi_\tth^{(e)}$ & The linear projection from $X^{(e)}$ to $F_\tti^{(e)}, F_\tth^{(e)}$ & Identification &  \eqref{eq:phi-ih} \\
$\Phi_{S_\tti}^{(e)}$, $\Phi_{S_\tth}^{(e)}$ & The linear projection from $F_c^{(e)}$ to $F_{S_c^\star}^{(e)}$ for $c\in \{\tti, \tth\}$ & Identification & \eqref{eq:phi-sish} \\
$Q_c$ & The ambiguity matrix for factors $c\in \{\tti, \tth, S_\tti^\star, S_\tth^\star\}$ & Identification & \eqref{eq:phi-sish} \\
\hline
$W^{(e)}$, $W_\tti$, $W^{(e)}_\tth$ & Orthogonal basis spanning the factor loading & Unknown quantity & \eqref{eq:loading}\\
$K^{(e)}$ & Weights of basis $W^{(e)}$ to the loading $B^{(e)}$  & Unknown quantity  & \eqref{eq:loading} \\
$\hat{\Phi}_{\tti}^{(e)}$, $\hat{\Phi}_\tth^{(e)}$ & Learned DP matrix for $F_\tti, F_\tth$ & Method & \cref{algo:dp}\\ 
$\tilde{F}_c(X)$ & Estimated factors $F_c$ for $c\in \{\tti, \tth\}$ & Method & --- \\
$\hat{\Xi}_\tth$ & Estimated coefficients of $\Theta_\tti^\star$ & Method & \eqref{eq:est-xi-h} \\
$\hat{\Phi}_{S_\tti}^{(e)}$, $\hat{\Phi}_{S_\tth}^{(e)}$ & Learned DP matrix for $F_{S_\tti^\star}, F_{S_\tth^\star}$ & Method & \eqref{eq:est-si}, \eqref{eq:est-sh}\\ 
$\tilde{F}_{S_c}(X)$ & Estimated factors $F_{S_c^\star}$ for $c\in \{\tti, \tth\}$ & Method & --- \\
$\hat{\beta}, \hat{\beta}_\tti, \hat{\beta}_\tth$ & Estimation of invariant coefficients $\beta^\star$ & Method & \eqref{eq:est-y} \\
\hline
\hline
\end{tabular}
\end{center}
\caption{The semantic meanings of the notations in this paper}
\label{tb:notation}
\end{table}

\section{Further results and discussions in Section \ref{sec:ident}}
\label{append:ident-all}

\subsection{Identification of invariant and heterogeneous factors}

\label{append:ident}

We first provide an intuitive explanation of the necessity of \cref{cond:ident}. If \cref{cond:ident} (a) does not hold, let $\bar{A} \in \mathbb{R}^{d\times r'}$ with $r'\ge 1$ such that $\col(\bar{A}) = \cap_{e\in \mathcal{E}} \col([B, A^{(e)}]) \cap (\col(B))^\perp$ and without loss of generality we write $A^{(e)} = [\bar{A}, \breve{A}^{(e)}]$. In this case, we can write \eqref{eq:model-x} as
\begin{align*}
   X^{(e)} = \left(B \cdot F_{\tti}^{(e)} + \bar{A} \cdot F_{\{r_\tti+1,\ldots,r+r'\}}^{(e)} \right) + \breve{A}^{(e)} \cdot  F_{\{r_\tti+r'+1,\ldots,r^{(e)}\}}^{(e)}.
\end{align*} This means the set of distributions $\{X^{(e)}\}_{e\in \mathcal{E}}$ admits two fundamentally different latent structure decompositions: one with invariant loadings $B$ and one with invariant loadings $[B, \bar{A}]$. 

If \cref{cond:ident} (b) fails, we can write \eqref{eq:model-x} as
\begin{align*}
    X^{(e)} = B \cdot \left(F_{\tti}^{(e)} + S \cdot F_{\tth}^{(e)} \right) + \left[A^{(e)} - (B \cdot S)\right] F_{\tth}^{(e)}
\end{align*} for almost arbitrary matrix $S \in \mathbb{R}^{r_{\tti} \times r_{\tth}^{(e)}}$.

The following theorem makes the necessity argument rigorous at an instance level by the equivalence between identification and \cref{cond:ident} for \emph{any} data-generating process $P$ satisfying standard regularity conditions in \cref{cond:ident-reg}. The first part of the following theorem provides the rigorous statement of the sufficiency and necessity. The first identification result of \cref{thm:ident-1} directly follows from the second part {\sc (Linear map recovery)}. In the first part, we use different ambiguity matrices $Q_\tti^{(e)}$ in the statement (b), which illustrates that the necessity of \cref{cond:ident} is not due to the need to align $F_\tti^{(e)}$ with the same ambiguity transform matrix $Q_\tti$. 

\begin{theorem}
\label{thm:ident}
    {\sc (Sufficiency and necessity)} Let $\mathcal{L}(W)$ denote the distribution of the random variable $W$. For any latent structure $P = \{\mathcal{L}(F_{\tti}^{(e)}, F_{\tth}^{(e)}), B, A^{(e)}\}_{e\in \mathcal{E}}$ satisfying \cref{cond:ident-reg}, the two statements are equivalent. 

    \noindent (a) \underline{Satisfaction of \cref{cond:ident}}: $P$ satisfies \cref{cond:ident}.
    
    \noindent (b) \underline{Identification of latent structures}: For any $\tilde{P} = \{\mathcal{L}(\tilde{F}_{\tti}^{(e)}, \tilde{F}_{\tth}^{(e)}), \tilde{B}, \tilde{A}^{(e)}\}_{e\in \mathcal{E}}$ satisfying \cref{cond:ident-reg} and \cref{cond:ident}. If $\tilde{X}^{(e)} \overset{d}{=} X^{(e)}$ where ${X}^{(e)}$ (resp. $\tilde{X}^{(e)}$) is generated by $P$ (resp. $\tilde{P}$) via \eqref{eq:model-x} with $U^{(e)} = 0$ (resp. $\tilde{U} = 0$), then we have $r_{\tti} = \tilde{r}_{\tti}$, and for each environment $e\in \mathcal{E}$, $r_{\tth}^{(e)} = \tilde{r}_{\tth}^{(e)}$, 
    \begin{align*}
        [\tilde{F}^{(e)}_{\tti}, \tilde{F}^{(e)}_\tth] \overset{d}{=} [Q_{\tti}^{(e)} \cdot F^{(e)}_{\tti},  Q_{\tth}^{(e)} \cdot F_{\tth}^{(e)}],
    \end{align*} where both $Q_{\tti}^{(e)} \in \mathbb{R}^{r_{\tti} \times r_{\tti}}$ and $Q_{\tth}^{(e)} \in \mathbb{R}^{r_{\tth}^{(e)} \times r_{\tth}^{(e)}}$ are invertible matrices.

    \noindent {\sc (Linear map recovery)} Moreover, if $P$ satisfies \cref{cond:ident}, then the latent structure can be recovered by a linear map on $X^{(e)}$ under the noiseless setting. Particularly, suppose $X^{(e)}$ is generated by $P$ via \eqref{eq:model-x} with $U^{(e)} = 0$ for any $e\in \mathcal{E}$, then there exists a series of matrices $\{\Phi^{(e)}_{\tti}, \Phi^{(e)}_{\tth}\}_{e\in \mathcal{E}}$ with $\Phi_{\tti}^{(e)} \in \mathbb{R}^{d\times r_{\tti}}$ and $\Phi_{\tth}^{(e)} \in \mathbb{R}^{d\times r_{\tth}^{(e)}}$ determined by the covariance matrix of $X$ such that
    \begin{align*}
        \forall e\in \mathcal{E} \qquad (\Phi^{(e)}_{\tti})^\top X^{(e)} = Q_{\tti} F^{(e)}_{\tti} \qquad \text{and} \qquad (\Phi^{(e)}_{\tth})^\top X^{(e)} = Q_{\tth}^{(e)} F^{(e)}_{\tth}.
    \end{align*} where $Q_{\tti} \in \mathbb{R}^{r_{\tti} \times r_{\tti}}$ is the invertible ambiguity transform matrix that is also invariant across different environments, and $Q_{\tth}^{(e)} \in \mathbb{R}^{r_{\tth}^{(e)} \times r_{\tth}^{(e)}}$ is the invertible ambiguity transform matrix that may be different.
\end{theorem}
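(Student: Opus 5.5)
The proof splits into three parts: (a)$\Rightarrow$(b), (b)$\Rightarrow$(a) by contraposition, and linear map recovery. Everything is read off from the noiseless covariances $\Sigma_X^{(e)} = B^{(e)}\Sigma_F^{(e)}(B^{(e)})^\top$. By \cref{cond:ident-reg}, $\col(\Sigma_X^{(e)}) = \col(B^{(e)})$ and $\rank(\Sigma_X^{(e)}) = r^{(e)}$. Hence $r^{(e)}$ and $\col([B,A^{(e)}])$ are determined by the law of $X^{(e)}$.

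\textbf{Linear map recovery.} Under \cref{cond:ident}(a), the subspace $\cap_e \col(\Sigma_X^{(e)}) = \col(B)$ is identified, and so is $r_\tti=\dim\col(B)$.
\begin{itemize}
\item Write $B^{(e)} = W^{(e)}K^{(e)}$ as in \eqref{eq:loading}, with $W_\tti$ a fixed orthonormal basis of $\col(B)$ (chosen once, common across $e$) and $W^{(e)}_\tth$ an orthonormal basis of $\col(\Sigma_X^{(e)})\ominus\col(B)$.
\item Set $\Phi^{(e)}_\tth = W^{(e)}_\tth$. Orthogonality of $W_\tth^{(e)}$ to $\col(B)$ gives $(W_\tth^{(e)})^\top X^{(e)} = K^{(e)}_{\tth\tth}F^{(e)}_\tth$, and $K^{(e)}_{\tth\tth}$ is invertible by the rank condition.
\item Set $\Phi^{(e)}_\tti = W_\tti - W^{(e)}_\tth R^{(e)}$, with $R^{(e)}$ the population least-squares coefficient \eqref{eq:method-partialing-out}. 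Since $W_\tti^\top X^{(e)} = K_{\tti\tti}F_\tti + K^{(e)}_{\tti\tth}F_\tth$, block-uncorrelatedness \cref{cond:ident}(b) gives
\begin{align*}
\mathrm{Cov}(W_\tti^\top X^{(e)}, (W_\tth^{(e)})^\top X^{(e)}) = K^{(e)}_{\tti\tth}\Sigma_{F_\tth}^{(e)}(K^{(e)}_{\tth\tth})^\top ,
\end{align*}
so $(R^{(e)})^\top = K^{(e)}_{\tti\tth}(K^{(e)}_{\tth\tth})^{-1}$.
\item Therefore $(\Phi_\tti^{(e)})^\top X^{(e)} = K_{\tti\tti}F_\tti^{(e)}$. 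The matrix $Q_\tti=K_{\tti\tti}$ is common across $e$ because $B = W_\tti K_{\tti\tti}$ does not depend on $e$.
\end{itemize}

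\textbf{(a)$\Rightarrow$(b).} Let $\tilde P$ satisfy the conditions with $\tilde X^{(e)}\overset{d}{=}X^{(e)}$. The covariances coincide, so the column spaces coincide, and both intersections equal $\col(B)=\col(\tilde B)$. This gives $r_\tti=\tilde r_\tti$ and $r^{(e)}_\tth=\tilde r^{(e)}_\tth$. The linear maps constructed above depend only on $\Sigma_X^{(e)}$, so they are the same for $P$ and $\tilde P$. Applying them to $X^{(e)}$ and to $\tilde X^{(e)}$ gives
\begin{align*}
(K_{\tti\tti}F_\tti, K_{\tth\tth}^{(e)}F_\tth) \overset{d}{=} (\tilde K_{\tti\tti}\tilde F_\tti, \tilde K_{\tth\tth}^{(e)}\tilde F_\tth) ,
\end{align*}
as a joint law, since both sides are the same linear map of equally distributed vectors. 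This yields (b) with $Q_\tti^{(e)} = \tilde K_{\tti\tti}^{-1}K_{\tti\tti}$ and $Q^{(e)}_\tth = (\tilde K^{(e)}_{\tth\tth})^{-1}K^{(e)}_{\tth\tth}$.

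\textbf{(b)$\Rightarrow$(a), by contraposition.} This is the step I expect to be the main obstacle. We must build a $\tilde P$ satisfying \cref{cond:ident-reg} and \cref{cond:ident}, with the same laws of $X^{(e)}$, whose latent structure is not an invertible block transform of $P$'s.
\begin{itemize}
\item \emph{If (a) fails.} Use the decomposition in \cref{append:ident}: set $\tilde B = [B,\bar A]$, whose column space is the true intersection, so $\tilde r_\tti > r_\tti$ and (b) fails already at the dimension level. The difficulty is that $\tilde P$ must satisfy block-uncorrelatedness. To ensure it, re-choose the factors: take $\tilde F^{(e)}_\tti$ to be the component of $X^{(e)}$ attributed to $\col(\tilde B)$ via a population regression that makes the residual heterogeneous part uncorrelated. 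Concretely, write $X^{(e)}$ in a basis of $\col(\tilde B)\oplus$ complement, and project out the correlated part of the complement coordinates onto the $\tilde B$-block with the loading adjustment $\tilde A^{(e)} = \breve A^{(e)} - \tilde B S^{(e)}$. Here $S^{(e)}$ solves a Gram–Schmidt (Cholesky) decorrelation of the factor covariance. Full rank of $[\tilde B,\tilde A^{(e)}]$ and positive definiteness are preserved because this is an invertible block-triangular change of variables.
\item \emph{If (a) holds but (b) of \cref{cond:ident} fails in some $e$.} Apply the same block-triangular change $F_\tti\mapsto F_\tti+S F_\tth$ with $S$ the regression coefficient that decorrelates the blocks. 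This produces $\tilde P$ with the same $B$ and with $\tilde A^{(e)}=A^{(e)}-BS$. Condition (a) still holds since the column spaces are unchanged. Then $\tilde F_\tti = F_\tti + SF_\tth$ with $S\neq0$ is not of the form $Q F_\tti$, because $\Sigma_F^{(e)}$ is positive definite. So (b) fails.
\end{itemize}
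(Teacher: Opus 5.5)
Your sufficiency direction and the linear-map recovery are correct. They follow the paper's own population construction: recover the invariant loading space as the intersection of the covariance column spaces (this is what the eigenvalue-one eigenspace of $\bar P$ in \cref{sec:method1} computes), then take $\Phi_\tth^{(e)}=W_\tth^{(e)}$ and the partialled-out $\Phi_\tti^{(e)}=W_\tti-W_\tth^{(e)}R^{(e)}$. Your two reparametrizations for necessity are the ones sketched in \cref{append:ident}, and you correctly add the decorrelation step so that $\tilde P$ satisfies \cref{cond:ident}(b). Case 1 is fine, because (b) already fails through $\tilde r_\tti\neq r_\tti$.

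The gap is the final sentence of Case 2. Statement (b) asserts an equality of joint laws, $[\tilde F_\tti^{(e)},\tilde F_\tth^{(e)}]\overset{d}{=}[Q_\tti^{(e)}F_\tti^{(e)},Q_\tth^{(e)}F_\tth^{(e)}]$. To refute it, you must show that no invertible $Q$'s make these two laws coincide. Your reason is that $F_\tti+SF_\tth$ is not almost surely equal to $QF_\tti$ because $\Sigma_F^{(e)}$ is positive definite. That is a pathwise statement and does not imply failure in law.

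In fact the marginal version of your claim is false in law. Take scalar jointly Gaussian $F_\tti,F_\tth$ with unit variances and correlation $\rho\neq0$. The decorrelated factor $F_\tti-\rho F_\tth\sim\mathcal N(0,1-\rho^2)$ has exactly the law of $\sqrt{1-\rho^2}\,F_\tti$.

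What actually fails is the cross moment. By construction $\mathbb{E}[\tilde F_\tti^{(e_0)}(\tilde F_\tth^{(e_0)})^\top]=0$. On the other side, $\mathbb{E}[Q_\tti F_\tti^{(e_0)}(Q_\tth F_\tth^{(e_0)})^\top]=Q_\tti\,\mathbb{E}[F_\tti^{(e_0)}(F_\tth^{(e_0)})^\top]\,Q_\tth^\top$, which is nonzero for every pair of invertible $Q_\tti,Q_\tth$ because the middle factor is nonzero. Second moments are finite (they enter \cref{cond:ident-reg}), so equality in law would force these cross-covariances to agree. Use this argument instead.

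Also make the shift environment-specific. Use $S^{(e)}=-\mathbb{E}[F_\tti^{(e)}(F_\tth^{(e)})^\top](\Sigma_{F_\tth}^{(e)})^{-1}$ in every $e$, which is zero where $P$ is already block-uncorrelated. Then $\tilde P$ satisfies \cref{cond:ident}(b) in all environments, not only in the violating one.
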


Here, we provide a counterexample showing that under \cref{cond:ident}, it is impossible to find a map $\Phi_\tti \in \mathbb{R}^{d\times r_\tti}$ such that $\Phi_\tti X^{(e)} \equiv Q^{(e)} F_\tti^{(e)}$ for any $e\in \mathcal{E}$ in the noiseless setting. This confirms the impossibility of ``invariant representation'' and further implies that one must use different projection matrices to obtain the aligned ``invariant representation''. 
\begin{example} Consider the setting with $\mathcal{E} = \{1, 2, 3, 4, 5\}$, $d=5$, let $w_1,\ldots, w_5$ be a set of orthogonal basis in $\mathbb{R}^5$, consider the model
\begin{align*}
    \forall e\in [5], \qquad X^{(e)} = b F_1^{(e)} + a^{(e)} F_2^{(e)} 
\end{align*} with
\begin{align*}
    \begin{bmatrix}
        a^{(1)} & \ldots & a^{(5)}
    \end{bmatrix} = \underbrace{\begin{bmatrix}
        1 & 1 & 1 & 0 & 0 \\
        1 & 0 & 1 & 1 & 0 \\
        1 & 0 & 0 & 1 & 1 \\
        0 & 1 & 0 & 0 & 1 \\
        0 & 0 & 1 & 0 & 1
    \end{bmatrix}}_M \underbrace{\begin{bmatrix}
        w_1 & \ldots & w_5
    \end{bmatrix}}_{W}
\end{align*} with invariant factor $F_1$ and heterogeneous factor $F_2$. It is easy to verify that 
\begin{align*}
    \bigcap_{e\in [5]} \mathrm{col}([b, a^{(e)}]) = \mathrm{span}\{b\}
\end{align*} and $M$ is invertible. 

We use the proof-by-contradiction argument. Suppose $\phi: \mathbb{R}^{5\times 1}$ satisfies
\begin{align*}
    \forall e\in \mathcal{E}, \qquad \phi^\top X^{(e)} = q^{(e)} F_1^{(e)}
\end{align*} with some $q^{(e)} \in \mathbb{R}$, then we must have $\phi^\top a^{(e)} = 0$ for any $e\in [5]$, this gives
\begin{align*}
    0 = \phi^\top \begin{bmatrix}
        a^{(1)} & \ldots & a^{(5)}
    \end{bmatrix} = \phi^\top M W,
\end{align*} which directly implies that $\phi = 0$ because $M$ and $W$ are both invertible matrices. So there doesn't exist some $\phi \in \mathbb{R}^{5\times 1}$ such that $\phi^\top X^{(e)} = q^{(e)} F_1^{(e)}$ for any $e\in \mathcal{E}$. 
\end{example}

\subsection{Identifying the prediction-invariant factors for response}
\label{appendix:ident-z}

We first present the regularity condition for further identifying (aligned) $F_{S_\tti^\star}^{(e)}$, $F_{S_\tth^\star}^{(e)}$ across $e\in \mathcal{E}$. The first condition imposes a standard regularity condition on the mean function $\sigma_z(\cdot)$. It is easy to check that many mean functions, like identity $\sigma(t) = t$ for regression and sigmoid $\sigma(t) = 1/(1+e^{-t})$ for classification, satisfy this. The second condition ensures the auxiliary labels $Z\supe$ contain sufficient information for identifying the prediction-invariant factors for $Y$. This condition is testable with the knowledge of $|S_\tti^\star|$ and $|S_\tth^\star|$.

\begin{condition} \label{cond:ident-z} The following holds:
\begin{itemize}
\item[(a)] \emph{(Regularity on GLM)} $\sigma_z(\cdot)$ is differentiable and $\sigma_z'(t) > 0$ for any $t\in \mathbb{R}$.
\item[(b)] \emph{(Exhaustiveness)} Recall the model \eqref{eq:model-invariant-z} with parameter $\Xi^\star = [\xi_1^\top,\ldots, \xi_q^\top]^\top \in \mathbb{R}^{q\times |S^\star|}$. Let $\Xi^\star_\tti$ (resp. $\Xi^\star_\tth$) be the first $|S^\star_\tti|$ (resp. last $|S^\star_\tth|$) columns of $\Xi^\star$, i.e., its $k$-th row represents the linear coefficients of $F_{S^\star_\tti}$ (resp. $F_{S^\star_\tth}$) of GLM on $Z_k$. We assume $\rank(\Xi^\star_\tti) = |S^\star_\tti|$ and $\rank(\Xi^\star_\tth) = |S^\star_\tth|$.
\end{itemize}
\end{condition}

Under the above conditions, we can find the map that can recover $F_{S_\tti^\star}\supe$ and $F_{S_\tth^\star}\supe$ up to the same transform matrix across environments $\mathcal{E}_z$ where the auxiliary labels are accessible. Here, we explicitly state it separately. 

\begin{theorem}
\label{thm:ident-pi}
    Under the setting of \cref{thm:ident}, assume \cref{cond:ident} holds, suppose further \cref{cond:ident-z} holds for the model \eqref{eq:model-invariant-z}, then there further exists a series of projection matrices $\Phi_{S_\tti^\star}\in \mathbb{R}^{r_\tti \times |S_\tti^\star|}$ and $\Phi_{S_\tth^\star}\supe \in \mathbb{R}^{r_\tth\supe \times |S_\tth^\star|}$ determined by the joint law of $X,Z$ such that 
    \begin{align*}
        \forall e\in \mathcal{E}_z \qquad (\Phi_{S_\tti^\star})^\top (\Phi_{\tti}\supe)^\top X\supe = Q_{S_\tti^\star} F_{S^\star_\tti}\supe ~~ \text{and} ~~ (\Phi_{S_\tth^\star}\supe)^\top (\Phi_{\tth}\supe)^\top X\supe = Q_{S_\tth^\star} F_{S^\star_\tth}\supe 
    \end{align*} where $Q_{S_\tti} \in \mathbb{R}^{|S_\tti^\star| \times |S_\tti^\star|}$ and $Q_{S_\tth} \in \mathbb{R}^{|S_\tth^\star| \times |S_\tth^\star|}$ are the same invertible matrices across $e\in \mathcal{E}_z$.
\end{theorem}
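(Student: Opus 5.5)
Fix the maps $\Phi_\tti\supe,\Phi_\tth\supe$ from the linear map recovery part of \cref{thm:ident}. They give $\bar F_\tti\supe:=(\Phi_\tti\supe)^\top X\supe = Q_\tti F_\tti\supe$ with a common invertible $Q_\tti$, and $\bar F_\tth\supe:=(\Phi_\tth\supe)^\top X\supe=Q_\tth\supe F_\tth\supe$. The plan is to identify, from the joint law of $(X\supe,Z\supe)$, the population GLM coefficients of $Z$ on $(\bar F_\tti\supe,\bar F_\tth\supe)$, and then read off the relevant subspaces through their row/column structure, mirroring \eqref{eq:est-psi}--\eqref{eq:est-sh}.

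\textbf{Step 1 (population GLM).} For each $e\in\mathcal E_z$ and $k\in[q]$, consider the population quasi-likelihood minimization of $\mathbb E[\ell_z(\psi_\tti^\top\bar F_\tti\supe+\psi_\tth^\top\bar F_\tth\supe, Z_k\supe)]$. Write $\xi_k^\star=[\xi_{k,\tti}^\star,\xi_{k,\tth}^\star]$ and embed these into $\mathbb R^{r_\tti}$ and $\mathbb R^{r_\tth\supe}$ by zero-padding outside $S_\tti^\star$ and $S_\tth^\star$. Then the model \eqref{eq:model-invariant-z}, with $\mathbb E[\varepsilon_k\supe F\supe]=0$ and $\sigma_z$ as the mean, shows that
\[
\psi_{k,\tti}\supe=Q_\tti^{-\top}\xi_{k,\tti}^\star,\qquad \psi_{k,\tth}\supe=(Q_\tth\supe)^{-\top}\xi_{k,\tth}^\star
\]
is a stationary point: the score is $\mathbb E[(\sigma_z((\xi_k^\star)^\top F_{S^\star}\supe)-Z_k\supe)F\supe]=0$. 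By convexity ($\sigma_z'>0$) and $\lambda_{\min}(\mathbb E[F\supe F\supe{}^\top])>0$ from \cref{cond:ident-reg}, the minimizer is unique. This is the step where strict monotonicity of $\sigma_z$ is needed; if the minimum is not attained one can instead define the coefficient as the unique root of the score equation, which exists by construction. Stacking over $k$ gives $\Psi_\tti\supe=\bar\Xi_\tti Q_\tti^{-1}$ and $\Psi_\tth\supe=\bar\Xi_\tth (Q_\tth\supe)^{-1}$, where $\bar\Xi_c\in\mathbb R^{q\times r_c}$ equals $\Xi_c^\star$ on the columns in $S_c^\star$ and zero elsewhere. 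In particular $\Psi_\tti\supe$ is the same matrix for every $e$.

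\textbf{Step 2 (invariant block).} Since $\mathrm{rank}(\Xi_\tti^\star)=|S_\tti^\star|$, the row space of $\Psi_\tti$ is $\mathrm{row}(\Xi_\tti^\star E_\tti^\top Q_\tti^{-1})$, where $E_\tti$ selects the $S_\tti^\star$ coordinates. Take $\Phi_{S_\tti^\star}:=\Psi_\tti^\top\in\mathbb R^{r_\tti\times q}$, or a right-singular basis of $\Psi_\tti$ of dimension $|S_\tti^\star|$, as in \eqref{eq:est-si}. Then
\[
\Phi_{S_\tti^\star}^\top\bar F_\tti\supe=\Xi_\tti^\star E_\tti^\top F_\tti\supe=\Xi_\tti^\star F_{S_\tti^\star}\supe .
\]
Since $\Xi_\tti^\star$ has full column rank, composing with $V^\top$, where $V$ is an orthonormal basis of $\mathrm{col}(\Xi_\tti^\star)$, yields $Q_{S_\tti^\star}F_{S_\tti^\star}\supe$ with $Q_{S_\tti^\star}:=V^\top\Xi_\tti^\star$ invertible and common across environments. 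More cleanly, one can set $\Phi_{S_\tti^\star}=\Psi_\tti^\top V$.

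\textbf{Step 3 (heterogeneous block, main point).} Here $\Psi_\tth\supe$ varies with $e$, but its column space equals $\mathrm{col}(\Xi_\tth^\star)$ for every $e$, because $(Q_\tth\supe)^{-1}$ is invertible and $\Xi_\tth^\star$ has full column rank. Hence the column space is identified from the law of any single environment; this is the population version of \eqref{eq:est-xi-h}. Let $V_\tth$ be an orthonormal basis of this space and define $\Phi_{S_\tth^\star}\supe:=(\Psi_\tth\supe)^\top V_\tth$, as in \eqref{eq:est-sh}. Then
\[
(\Phi_{S_\tth^\star}\supe)^\top\bar F_\tth\supe=V_\tth^\top\bar\Xi_\tth(Q_\tth\supe)^{-1}Q_\tth\supe F_\tth\supe=V_\tth^\top\Xi_\tth^\star F_{S_\tth^\star}\supe .
\]
The environment-specific ambiguity cancels exactly, giving $Q_{S_\tth^\star}:=V_\tth^\top\Xi_\tth^\star$, which is invertible and common across $e$.

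The only delicate points are the uniqueness and attainment in Step 1 and checking that the basis $V_\tth$ is chosen from a population functional independent of $e$. Everything else is linear algebra.
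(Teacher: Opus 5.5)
Your proposal is correct and follows essentially the same route as the paper. It is the population version of the spectral alignment in \eqref{eq:est-psi}--\eqref{eq:est-sh}: the GLM coefficients equal the zero-padded $\Xi^\star$ composed with the inverse ambiguity matrices, and the environment-specific $Q_{\tth}^{(e)}$ cancels once you fix a single, $e$-independent basis of $\mathrm{col}(\Xi_{\tth}^\star)$. The only difference is cosmetic: for the invariant block you project onto a basis of $\mathrm{col}(\Psi_{\tti})$ instead of taking the right singular vectors of $\Psi_{\tti}$ as in \eqref{eq:est-si}, and both choices give a common invertible $Q_{S_{\tti}^\star}$.
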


\subsection{Regularity conditions and discussions for Proposition \ref{prop:robust2}}
\label{append:worst2}

We first present the regularity condition.
\begin{condition}
\label{cond:fsi-worst-reg}
The following conditions hold
\begin{itemize}
    \item[(1)] $\nu$ has finite second-order moments, $\mathbb{E}_{\bar{F} \sim \nu}[\bar{F} \bar{F}^\top] < \infty$.
    \item[(2)] $\sigma_y(\cdot)$ has uniformly bounded first-order derivative $\|\sigma_y'\|_\infty < \infty$.
    \item[(3)] $c \ge \tilde{C}$ for some constant $\tilde{C}$ depending on $\nu, \sigma_y, \|\beta^\star\|_2$.
    \item[(4)] For any $\beta$ that is linearly independent of $\bar{\beta}$, i.e., $\beta \in \mathbb{R}^{r_\tti + r_\tth} \setminus \{t\cdot \bar{\beta}: t\neq 0\}$, we have
    \begin{align*}
        \mathbb{E}_\nu \left[\mathrm{Var}_\nu(\bar{\beta}^\top \bar{F}|\beta^\top \bar{F})\right] > 0.
    \end{align*}
\end{itemize}
\end{condition}

Here we comment on the conditions in \cref{cond:fsi-worst-reg}. The first two conditions are standard and hold for many canonical models like linear regression $\sigma_y(t) = t$ and logistic regression $\sigma_y(t) = e^t/(1+e^t)$. The condition (3) is imposed to make sure the uncertain set $\mathcal{U}$ has full ambiguity on the orthogonal transform, that is, for any $O\in \mathcal{O}_{r_\tth}$, we can find a potential distribution of $(F_\tti, F_\tth, Y) \sim \mu \in \mathcal{U}$ such that $(F_\tti, F_\tth) \overset{d}{=} (F_\tti, O \bar{F}_\tth)$.

The last condition (4) is for uniqueness: if it is violated, we can find a $\beta \in \mathbb{R}^d$ such that
\begin{align*}
    \bar{\beta}^\top \bar{F} = \beta^\top \bar{F} \qquad \nu\text{-a.s.}
\end{align*} such that $\beta$ also minimizes the loss because their predictions are the same. This condition can be easily satisfied, for example, when $\bar{F}$ is a continuous random vector such that the corresponding probabilistic density function exists. Without this condition, we can have the same result except for replacing ``$=$'' by ``$\subseteq$'' in \eqref{eq:worst2}; see the justifications in the proof.

\section{Main and intermediate results in Section \ref{sec:theory}}
\label{append:factor}

\subsection{Invariant and heterogeneous factors}
\label{appendix:factor-est}

We first define some notations. For a matrix $A\in \mathbb{R}^{d_1\times d_2}$ with full rank $d_2$, we denote $\psf{A} = A(A^\top A)^{-1}A^\top \in \mathbb{R}^{d_1 \times d_1}$ as the projection matrix that can project any vector $x$ to the column space of $A$. In particular, we have $\psf{A}= AA^\top$ for $A \in \mathcal{O}_{d_1 \times d_2}$. For a rank $r$ matrix $A\in \mathbb{R}^{d_1\times d_2}$, we let $A = U_A \Sigma_A V_A^\top$ be the $r$-truncated SVD decomposition of $A$ such that $U_A \in \mathbb{R}^{d_1\times r}$, $V_A\in \mathbb{R}^{d_2\times r}$, $\Sigma_A \in \mathbb{R}^{r\times r}$, we denote $\rsf{A} = U_A (\Sigma_A)^{-1} V_A^\top$. 

Denote
\begin{align}
\label{eq:snr-factor}
    \lambda^{(e)} = \frac{\lambda_{\min}\left((B^{(e)})^\top B^{(e)}\right)}{\|\Sigma_U^{(e)}\|_2} \qquad \text{and} \qquad \bar{\lambda} = \left[\frac{1}{|\mathcal{E}|} \sum_{e\in \mathcal{E}} \frac{1}{\lambda\supe} \right]^{-1}.
\end{align}

We are working in the case where $0<\lambda^{(e)} < \infty$, i.e., $B^{(e)}$ has full rank $r^{(e)}$. Recall that $W^{(e)} \in \mathcal{O}_{d\times r^{(e)}}$ satisfies $\col(W^{(e)}) = \col(B^{(e)})$, $W_\tti \in \mathcal{O}_{d\times r_\tti}$ satisfies $\col(W_\tti) = \cap_{e\in \mathcal{E}} \col(B^{(e)}) = \col(B)$ by \cref{cond:separate}, and $W_\tth^{(e)} \in \mathcal{O}_{d\times r_\tth^{(e)}}$ satisfies $\col(W_\tth^{(e)}) = \col(B^{(e)}) \cap \col(W_\tti)_{\perp}$. In this case, we can follow the decomposition in \eqref{eq:loading}:
\begin{align*}
    B\supe := \begin{bmatrix} B & A\supe \end{bmatrix} =  \begin{bmatrix}
        W_{\tti} & W_{\tth}\supe 
    \end{bmatrix} \begin{bmatrix}
        \kii & \khie \\
         0 & \khhe
    \end{bmatrix} = W\supe \cdot K\supe.
\end{align*}

We first define the factor loading estimation error in each environment $e\in \mathcal{E}$.
\begin{align}
\label{eq:delta-w-e-t}
    \delta_{W}^{(e)}(t) := \sqrt{\frac{d + \log(|\mathcal{E}|)+t}{n_x \cdot \snre}} + \frac{1}{\snre},
\end{align} and the factor loading estimation error averaged across different environments $e\in \mathcal{E}$,
\begin{align}
\label{eq:delta-wi}
    \delta_{W_{\tti}}(t) := \sqrt{\frac{d + t}{ |\mathcal{E}| \cdot n_x \cdot \snrbar}} + \frac{1}{\snrbar} \left[1 + \frac{d + \log(4|\mathcal{E}|) + t}{n_x}\right].
\end{align} 

\subsubsection{Step 1. Factor loading space estimation in each environment}

Throughout this step, we will adopt the notation
\begin{align*}
    \mbxe = \begin{bmatrix}
        X_1^{(e)} & \cdots  & X_{n_x}^{(e)}
    \end{bmatrix}^\top \qquad \mble = \begin{bmatrix}
        B^{(e)} F_1^{(e)} &\cdots &B^{(e)} F_{n_x}^{(e)}
    \end{bmatrix}^\top \qquad \mbue = \begin{bmatrix}
        U_1^{(e)} & \cdots & U_{n_x}^{(e)}\end{bmatrix}^\top
\end{align*} with $\mbxe, \mble, \mbue \in \mathbb{R}^{n_x \times d}$. We have the simple low-rank plus noise model $\mbxe = \mble + \mbue$, and will omit the dependence on $e$ if it is clear from the context. 

Denote the empirical covariance matrix for $F$ as $\hat{\Sigma}_F^{(e)} = \frac{1}{n_x} \sum_{i=1}^{n_x} F_i^{(e)} (F_{i}^{(e)})^\top$. The following lemma establishes two standard high-probability events.
\begin{lemma}
\label{lemma:concentrate-1}
    Under \cref{cond:x-reg}, there exists a constant $\tilde{C}_1=\mathrm{poly}(c_1)$ such that the following holds. For any $t\ge 1$ such that $n_x \ge (r^{(e)} + t)$, the following events:
    \begin{align}
        \label{eq:u-sigma-bound}
        \begin{split}
        \mathcal{A}_{1,F}^{(e)}(t) &:= \left\{\left\|\hat{\Sigma}_F^{(e)} - \Sigma_F^{(e)}\right\|_2 \le \tilde{C}_1 \sqrt{\frac{r^{(e)} + t}{n_x}} \right\}\\
        \mathcal{A}_{1,U}^{(e)}(t) &:= \left\{\|\mbu^{(e)}\|_2 \le \tilde{C}_1 \sqrt{\|\Sigma_U^{(e)}\|_2 }\sqrt{d + n_x + t} \right\}\\
        \end{split}
    \end{align}
    satisfy $\min\{\mathbb{P}[\mathcal{A}_{1,F}^{(e)}(t)], \mathbb{P}[\mathcal{A}_{1,U}^{(e)}(t)]\} \ge 1-0.25 e^{-t}$.
\end{lemma}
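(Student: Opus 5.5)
The two events are standard concentration statements for a sub-Gaussian sample covariance and a random matrix with sub-Gaussian rows. I will treat them separately, prove each with failure probability at most $0.25e^{-t}$, and absorb the universal constants into $\tilde{C}_1=\mathrm{poly}(c_1)$.

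\textbf{The event $\mathcal{A}_{1,F}^{(e)}(t)$.} By \cref{cond:x-reg}(b), $F^{(e)}$ is jointly sub-Gaussian with parameter $\sqrt{c_1}$ and $\Sigma_F^{(e)}$ has eigenvalues in $[c_1^{-1},c_1]$.

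1. Fix $u\in\mathbb{S}^{r^{(e)}-1}$. The variables $(u^\top F_i^{(e)})^2-u^\top\Sigma_F^{(e)}u$ are i.i.d., centered and sub-exponential with $\psi_1$-norm $\lesssim c_1$.
2. Bernstein's inequality gives
\[
\Big|u^\top(\hat{\Sigma}_F^{(e)}-\Sigma_F^{(e)})u\Big|\lesssim c_1\Big(\sqrt{s/n_x}+s/n_x\Big)
\]
with probability at least $1-2e^{-s}$.
3. Take a $1/4$-net of the sphere of cardinality $9^{r^{(e)}}$ and set $s=r^{(e)}\log 9+t+\log 8$. A union bound over the net, followed by the standard net argument for symmetric matrices, bounds the operator norm by twice the maximum over the net.
4. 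The assumption $n_x\ge r^{(e)}+t$ makes $s/n_x\lesssim1$, so the linear term is dominated by the square-root term. This yields the claimed bound $\tilde{C}_1\sqrt{(r^{(e)}+t)/n_x}$.

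\textbf{The event $\mathcal{A}_{1,U}^{(e)}(t)$.} Write $\mbue=\mathbf{Z}(\Sigma_U^{(e)})^{1/2}$, where the rows $Z_i=(\Sigma_U^{(e)})^{-1/2}U_i^{(e)}$ are independent. By \cref{cond:x-reg}(c), each $Z_i$ is conditionally jointly $\sqrt{c_1}$-sub-Gaussian given $F_i^{(e)}$. The rows are independent across $i$ (i.i.d. samples), but they need not be isotropic or identically distributed conditionally.

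1. Observe that $\|\mbue\|_2\le\|\mathbf{Z}\|_2\,\|\Sigma_U^{(e)}\|_2^{1/2}$, so it suffices to show $\|\mathbf{Z}\|_2\lesssim\sqrt{c_1}\sqrt{d+n_x+t}$.
2. Condition on $\{F_i^{(e)}\}$. For fixed unit vectors $a\in\mathbb{R}^{n_x}$ and $b\in\mathbb{R}^d$, the quantity $a^\top\mathbf{Z}b=\sum_i a_i Z_i^\top b$ is a sum of independent sub-Gaussian terms. It is therefore sub-Gaussian with parameter $\sqrt{c_1}$, since the joint sub-Gaussian MGF bound multiplies across the independent rows.
3. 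Take $1/4$-nets of both spheres, of sizes $9^{n_x}$ and $9^d$, and apply a union bound. This gives $\|\mathbf{Z}\|_2\le 2\max_{a,b\in\text{nets}}a^\top\mathbf{Z}b\lesssim\sqrt{c_1(n_x+d+t)}$ with conditional probability at least $1-0.25e^{-t}$.
4. The bound holds uniformly over the conditioning, so it also holds unconditionally.

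This route avoids needing isotropy or a Gram-matrix argument, because it uses only the MGF bound. It also explains why the rate is $\sqrt{d+n_x}$ rather than $\sqrt{d}+\sqrt{n_x}$ up to constants; the two are equivalent.

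\textbf{Main obstacle.} The only delicate point is the operator-norm bound for $\mbue$ when the rows are conditionally non-identically distributed. The bilinear-form net argument above handles this without extra assumptions, so I expect the proof to be routine.
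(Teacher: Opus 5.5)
Your proof is correct and follows the standard route that this kind of concentration lemma rests on. For the first event you use Bernstein's inequality plus a $1/4$-net for the sub-Gaussian sample covariance $\hat{\Sigma}_F^{(e)}$. For the second you write $\mathbf{U}^{(e)}=\mathbf{Z}(\Sigma_U^{(e)})^{1/2}$, condition on $\{F_i^{(e)}\}$ (under which the rows are independent and conditionally $\sqrt{c_1}$-sub-Gaussian), and run a bilinear-form net argument giving $\|\mathbf{Z}\|_2\lesssim\sqrt{c_1(n_x+d+t)}$; the union-bound constants are correctly tuned so each failure probability is at most $0.25e^{-t}$. The paper's own proof of this lemma is deferred and not reproduced in the text, so I cannot confirm it argues identically. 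An equally valid alternative for the second event is to note that the rows of $\mathbf{Z}$ are unconditionally isotropic and sub-Gaussian and invoke a textbook norm bound; your conditional version avoids needing isotropy.
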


\aosversion{\begin{proof}[Proof of \cref{lemma:concentrate-1}] See \cref{sec:proof-concentration}.\end{proof}}{}

Based on the high-probability events in \cref{lemma:concentrate-1}, we can establish \cref{prop:first-order-approx-e}, a first-order approximation of $\hat{W}^{(e)}$. The proof is based on a tight first-order approximation of the SVD solution \cref{thm:first-order-svd} with the low-rank component $M_\star \gets \mble$ and noise component $E \gets \mbue$. The result is stated for a given fixed $e\in \mathcal{E}$, and we omit the dependency on $\supe$ for ease of presentation.

\begin{proposition}
\label{prop:first-order-approx-e}
    Under \cref{cond:x-reg}, there exists a constant $\tilde{C}_1=\mathrm{poly}(c_1)$ such that the following holds. For each fixed environment $e$, let $\hat{V} = \hat{W}^{(e)}\in \mathbb{R}^{d\times r^{(e)}}$ be the top-$r^{(e)}$ eigenvectors of $[\mbxe]^\top \mbxe$, and $V_\star = W^{(e)} \in \mathcal{O}_{d\times r^{(e)}}$ such that $\col(V_\star) = \col(B^{(e)}) = \col(W^{(e)})$, $V_\perp \in \mathcal{O}_{d\times(d-r^{(e)})}$ be the orthogonal basis matrix for the complement of $\col(V_\star)$ in $\mathbb{R}^d$, i.e., $\col(V_\perp) = \col(V_\star)^\perp$. Let $t \ge 1$ be arbitrary such that $n_x \ge \tilde{C}_1 (r^{(e)} + t)$. Under the event $\mathcal{A}_{1,F}^{(e)}(t)$, we have $\nu_{r^{(e)}}(\mbl^{(e)}) \ge \sqrt{\snre \|\Sigma_U^{(e)}\|_2 \cdot n_x / 2c_1}$. Suppose further that $\delta_W^{(e)}(t) \le 1/\tilde{C}_1$ and  $\mathcal{A}_{1,U}^{(e)}(t)$ occurs, then
    \begin{align}
        \label{eq:first-order-appro-e}
        \left\|\hat{V} \hat{V}^\top - V_\star V_\star^\top - G - G^\top \right\|_2 &\le \tilde{C}_1 \cdot \frac{d + n_x + t}{n_x \cdot \snre} \qquad \text{where} \qquad G = \psf{V_{\perp}} [\mbue]^\top \rsf{\mble}
    \end{align}
\end{proposition}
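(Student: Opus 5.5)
The proof has two parts: a lower bound on $\nu_{r^{(e)}}(\mbl^{(e)})$ from $\mathcal{A}_{1,F}^{(e)}(t)$, and an application of the deterministic first-order SVD expansion \cref{thm:first-order-svd} with $M_\star \gets \mble$ and $E \gets \mbue$. I drop the superscript $(e)$ and write $\mathbf{F}\in\mathbb{R}^{n_x\times r}$ for the factor matrix, so that $\mbl = \mathbf{F} B^\top$ with $B$ the full loading $B^{(e)}$.

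\emph{Step 1: singular value lower bound.} Since $\mbl^\top\mbl = n_x B\hat{\Sigma}_F B^\top$ and $B$ has full column rank,
\begin{align*}
\nu_r(\mbl)^2 \ge n_x\,\lambda_{\min}(\hat\Sigma_F)\,\lambda_{\min}(B^\top B).
\end{align*}
On $\mathcal{A}_{1,F}(t)$ we have $\lambda_{\min}(\hat\Sigma_F)\ge c_1^{-1}-\tilde C_1\sqrt{(r+t)/n_x}\ge (2c_1)^{-1}$, once $n_x\ge \tilde C_1(r+t)$ with $\tilde C_1$ a large enough polynomial in $c_1$. Using $\lambda_{\min}(B^\top B)=\snre\|\Sigma_U\|_2$ then gives the first claim. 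The column space of $\mbl^\top$ is $\col(B)=\col(V_\star)$, so $V_\star$ spans the top right singular space of $M_\star$.

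\emph{Step 2: noise-to-signal ratio.} On $\mathcal{A}_{1,U}(t)$,
\begin{align*}
\varepsilon:=\frac{\|\mbu\|_2}{\nu_r(\mbl)}\le \tilde C_1\sqrt{2c_1}\sqrt{\frac{d+n_x+t}{n_x\snre}}
=\tilde C_1'\Big(\frac{d+t}{n_x\snre}+\frac1{\snre}\Big)^{1/2}.
\end{align*}
Under $\delta_W(t)\le 1/\tilde C_1$, both $(d+t)/(n_x\snre)$ and $1/\snre$ are at most $\tilde C_1^{-2}$ and $\tilde C_1^{-1}$ respectively. Hence $\varepsilon$ is below any prescribed absolute threshold once $\tilde C_1$ is enlarged. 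In particular, this yields the small-perturbation hypothesis $\|E\|_2\le c\,\nu_r(M_\star)$ needed in \cref{thm:first-order-svd}.

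\emph{Step 3: first-order expansion.} The top-$r$ eigenvectors of $\mbx^\top\mbx$ are the top-$r$ right singular vectors of $\mbx = M_\star+E$. I invoke \cref{thm:first-order-svd} for right singular subspaces. It gives $\hat V\hat V^\top - V_\star V_\star^\top = G+G^\top + R$ with $G=\psf{V_\perp}E^\top\rsf{M_\star}$ and $\|R\|_2\lesssim \|E\|_2^2/\nu_r(M_\star)^2=\varepsilon^2$. Substituting Step 2 gives exactly the bound $\tilde C_1 (d+n_x+t)/(n_x\snre)$.

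\emph{Main obstacle.} The key point is that the remainder is quadratic in $\|E\|_2/\nu_r(M_\star)$ with no factor of the condition number $\nu_1(M_\star)/\nu_r(M_\star)$. If \cref{thm:first-order-svd} is not available in exactly this form, I would derive it directly by writing $\mbx^\top\mbx = M_\star^\top M_\star + M_\star^\top E + E^\top M_\star + E^\top E$. The off-diagonal block $\psf{V_\perp}\,\mbx^\top\mbx\, V_\star$ equals $\psf{V_\perp} E^\top M_\star V_\star + \psf{V_\perp}E^\top E V_\star$. Solving the resulting Sylvester equation against $V_\star^\top M_\star^\top M_\star V_\star$ (minus the $V_\perp$ block, which is $O(\|E\|^2)$) reproduces $G$ at first order. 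The $E^\top E$ term contributes only $\varepsilon^2$ after division by $\nu_r^2$. Making the Sylvester inversion condition-number free is the delicate part: the term $E^\top M_\star (M_\star^\top M_\star)^{-1}$ must collapse to $E^\top\rsf{M_\star}$ rather than being bounded crudely.
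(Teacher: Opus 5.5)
Your proposal is correct and follows essentially the paper's route: bound $\nu_{r^{(e)}}(\mathbf{L})$ from below on $\mathcal{A}_{1,F}^{(e)}(t)$, use $\mathcal{A}_{1,U}^{(e)}(t)$ together with $\delta_W^{(e)}(t)\le 1/\tilde{C}_1$ to push $\nu_{r^{(e)}}(\mathbf{L})/\|\mathbf{U}\|_2$ above $10$, and then apply the condition-number-free expansion of \cref{thm:first-order-svd} with $M_\star \gets \mathbf{L}$ and $E \gets \mathbf{U}$; this makes your Sylvester-equation fallback unnecessary. Keep the concentration constant in the definitions of the two events distinct from the proposition's $\tilde{C}_1$, which must be chosen large relative to it: with a single symbol, as your Steps 1--2 literally write, the quantities $\tilde{C}_1\sqrt{(r+t)/n_x}$ and $\varepsilon$ would not become small as $\tilde{C}_1$ grows.
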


\aosversion{\begin{proof}[Proof of \cref{prop:first-order-approx-e}] see \cref{sec:proof-first-order-approx-e}. \end{proof}}{}

Now we are ready to prove \cref{lemma:preliminary}, we first state it with the explicit dependence on $t$. \cref{lemma:preliminary} directly follows from \cref{lemma:factor-loading-appendix1} by setting $t = 100\log(n_x)$. 

\begin{lemma}
\label{lemma:factor-loading-appendix1}
    Under \cref{cond:x-reg}, there exists some constant $\tilde{C}_1 = \poly(c_1)$ such that if $n_x\ge \tilde{C}_1 \cdot [\sup_{e\in \mathcal{E}} r^{(e)} + \log(|\mathcal{E}|)+ t]$, then for simultaneously all the environment $e\in \mathcal{E}$, 
    \begin{align}
    \label{eq:loading-error-append}
        \left\|\sin \Theta(\hat{W}^{(e)}, W^{(e)}) \right\| \big/ \tilde{C}_1 \le \sqrt{\frac{d + \log(|\mathcal{E}|) + t}{\snre \cdot n_x}} + (\snre)^{-1}
    \end{align} occurs with probability at least $1-e^{-t}$. Moreover, as long as the R.H.S. of \eqref{eq:loading-error-append} $\le 1/\tilde{C}_1^2$, under the same event, if we use $\hat{\Phi}^{(e)} = \frac{1}{\sqrt{d}} \cdot \hat{W}^{(e)}$ as the diversified projection matrix for $F^{(e)}$, then
    \begin{align}
    \label{eq:factor-error-append}
        \inf_{Q^{(e)} \in \mathbb{R}^{r^{(e)}\times r^{(e)}}, \text{invertible}} \mathbb{E} \left[ \left\| Q^{(e)} (\hat{\Phi}^{(e)})^\top X^{(e)} - F^{(e)} \right\|_2^2 \mid \mathcal{D}_{x,1}^{(e)} \right] \le \frac{4 r^{(e)}}{\snre}.
    \end{align} where $\mathcal{D}_{x,1}^{(e)} = \{X_{i}^{(e)}\}_{i=1}^{n_x}$.
\end{lemma}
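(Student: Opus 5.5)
We establish \cref{lemma:factor-loading-appendix1} in two parts: the loading bound comes from the first-order expansion in \cref{prop:first-order-approx-e}, and the factor bound from a direct computation with a well-chosen ambiguity matrix.

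\textbf{Loading bound.} Fix $e$ and apply \cref{lemma:concentrate-1} with $t$ replaced by $t+\log|\mathcal{E}|$ (adjusted by a constant). A union bound over $e\in\mathcal{E}$ and over the two events then gives probability at least $1-e^{-t}$ that $\mathcal{A}_{1,F}^{(e)}$ and $\mathcal{A}_{1,U}^{(e)}$ hold for every $e$. On this event, $\|\sin\Theta(\hat W^{(e)},W^{(e)})\|_2 = \|V_\perp^\top \hat V\|_2 \le \|\hat V\hat V^\top - V_\star V_\star^\top\|_2$.

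By \cref{prop:first-order-approx-e}, the right-hand side is at most $2\|G\|_2 + \tilde C_1 (d+n_x+t)/(n_x\lambda^{(e)})$. We bound the linear term by
\[
\|G\|_2\le \|\mbu^{(e)}\|_2/\nu_{r^{(e)}}(\mbl^{(e)}) \lesssim \sqrt{(d+n_x+t)/(n_x\lambda^{(e)})}.
\]
This crude bound has an $n_x$ in the numerator, which would give only a rate of order $(\lambda^{(e)})^{-1/2}$. We therefore sharpen it. Write $\rsf{\mble} = U_L\Sigma_L^{-1}V_L^\top$, so that $G = \psf{V_\perp}(\mbu^{(e)})^\top U_L\Sigma_L^{-1}V_L^\top$. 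Since $U_L$ is a function of $F$ with only $r^{(e)}$ columns, and $\mathbb{E}[U\mid F]=0$ with $\Sigma_U^{-1/2}U\mid F$ sub-Gaussian, the matrix $(\mbu^{(e)})^\top U_L$ is, conditionally on $F$, a $d\times r^{(e)}$ sub-Gaussian matrix. Its operator norm is $\lesssim \|\Sigma_U\|_2^{1/2}\sqrt{d+r^{(e)}+t}$. This yields $\|G\|_2\lesssim\sqrt{(d+t)/(n_x\lambda^{(e)})}$.

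The remainder term $(d+n_x+t)/(n_x\lambda^{(e)})$ splits as $1/\lambda^{(e)}$ plus a term bounded by the square of the first term. That square is no larger than the first term itself when it is $\le1$; when it exceeds $1$ the bound is trivial. This gives \eqref{eq:loading-error-append}. The main technical point is this sharpened bound on $\|G\|_2$. I would either add it as a third event, or check whether \cref{prop:first-order-approx-e} already encodes it, since its remainder is of order $\delta_W^2$.

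\textbf{Factor bound.} Write $X = W^{(e)}K^{(e)}F + U$. Then
\[
\hat W^\top X = (\hat W^\top W^{(e)}) K^{(e)}F + \hat W^\top U .
\]
On the event, $\nu_{\min}(\hat W^\top W^{(e)}) = \cos\Theta \ge 1/2$, because $\sin\Theta\le \tilde C_1^{-1}$ is small. So $H:=\hat W^\top W^{(e)}K^{(e)}$ is invertible with $\nu_{\min}(H)\ge \nu_{\min}(B^{(e)})/2$.

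Take $Q=\sqrt d\,H^{-1}$, the inverse of $H/\sqrt d$. The error is then $H^{-1}\hat W^\top U$. Since $\hat W$ is built from $\mathcal{D}_{x,1}^{(e)}$ and is independent of the new $X$, we get
\[
\mathbb{E}\big[\|H^{-1}\hat W^\top U\|_2^2 \mid \mathcal{D}_{x,1}^{(e)}\big] \le \|H^{-1}\|_2^2\,\mathrm{tr}(\hat W^\top\Sigma_U\hat W) \le \frac{4}{\lambda_{\min}(B^\top B)}\, r^{(e)}\|\Sigma_U\|_2 = \frac{4r^{(e)}}{\lambda^{(e)}} .
\]
The infimum over $Q$ is bounded by this particular choice, which completes the argument.
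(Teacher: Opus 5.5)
Your proposal is correct and follows the paper's proof essentially step for step. You use the first-order expansion of \cref{prop:first-order-approx-e}, a sharpened bound $\|G^{(e)}\|_2\lesssim\sqrt{(d+\log|\mathcal{E}|+t)/(n_x\lambda^{(e)})}$, and absorb the second-order remainder; for the factor bound you take $Q^{(e)}=[(1/\sqrt{d})(\hat W^{(e)})^\top B^{(e)}]^{-1}$ and get the trace bound $4r^{(e)}/\lambda^{(e)}$. On your open question: the sharpened bound on $\|G^{(e)}\|_2$ is not contained in \cref{prop:first-order-approx-e}. The paper obtains it as an additional high-probability event by invoking \cref{lemma:concentrate-2} with $\mathcal{E}=\{e\}$ (so that $\hat\Delta_{P,1}=G^{(e)}$), which is exactly the conditional-on-$F$ sub-Gaussian argument for the $d\times r^{(e)}$ noise matrix that you sketch.
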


\begin{proof}[Proof of \cref{lemma:factor-loading-appendix1}] 
It suffices to prove \eqref{eq:loading-error-append} and \eqref{eq:factor-error-append} for any given fixed $e \in \mathcal{E}$ and then apply the union bound. 

Now given fixed $e\in \mathcal{E}$, the proof proceeds conditioned on the events defined in \cref{lemma:concentrate-1} occurring, under which the first-order approximation in \cref{prop:first-order-approx-e} holds provided $\delta^{(e)}_W(t) \le \tilde{C}_1^{-1}$.  It follows from \cref{lemma:concentrate-2} (applying \cref{lemma:concentrate-2} with $\mathcal{E} = \{e\}$) that $\|G^{(e)}\|_2 \le C_1 \sqrt{(d + \log(|\mathcal{E}|) + t)/ (n_x \cdot \lambda^{(e)})}$ for some $C_1 = \mathrm{poly}(c_1)$ with probability at least $1-e^{-t}$. This concludes the proof of \eqref{eq:loading-error-append} by absorbing the second-order error $\frac{d+t+\log(|\mathcal{E}|)}{n_x\lambda^{(e)}}$ in \eqref{eq:first-order-appro-e} into the first-order error $\sqrt{\frac{d+t+\log(|\mathcal{E}|)}{n_x\lambda^{(e)}}}$. 

For \eqref{eq:factor-error-append}, we pick $Q^{(e)} = [(1/\sqrt{d}) [\hat{W}^{(e)}]^\top B^{(e)}]^{-1}$. As long as $\|\sin\Theta(\hat{W}^{(e)}, W^{(e)})\|_2 \le 1/2$, the smallest singular value of $(1/\sqrt{d}) [\hat{W}^{(e)}]^\top B^{(e)} = (1/\sqrt{d}) [\hat{W}^{(e)}]^\top W^{(e)} K^{(e)}$ is lower bounded by 
\begin{align*}
\frac{1}{2} \cdot \left\{\sqrt{\lambda_{\min}[(B^{(e)})^\top B^{(e)}]} / \sqrt{d} \right\},
\end{align*} thus its inverse is well defined and we have $\|Q^{(e)}\|_2 \le 2 \sqrt{d} / \sqrt{\lambda_{\min}[(B^{(e)})^\top B^{(e)}]}$. Further substituting the model $X^{(e)} = B^{(e)} F^{(e)} + U^{(e)}$, we have, for any new observation of $X^{(e)}_0, F^{(e)}_0$, conditioned on $\mathcal{D}_{x,1}^{(e)}$ 
\begin{align*}
    \mathbb{E}_{X^{(e)}_0, F^{(e)}_0} \left[ \left\| Q^{(e)} (\hat{\Phi}^{(e)})^\top X^{(e)} - F^{(e)} \right\|_2^2 \right] &= \mathrm{Tr}(\frac{1}{d} Q^{(e)} (\hat{W}^{(e)})^\top \Sigma_U^{(e)} \hat{W}^{(e)} (Q^{(e)})^\top) \\
    &\le r^{(e)} \cdot \left\|Q^{(e)} / \sqrt{d} \right\|_2^2 \cdot \left\|\Sigma_U^{(e)} \right\|_2 \\
    &\le \frac{4r^{(e)} \|\Sigma_U^{(e)}\|_2}{\lambda_{\min}[(B^{(e)})^\top B^{(e)}]} = \frac{4r^{(e)}}{\lambda^{(e)}}.
\end{align*}
\end{proof}

\subsubsection{Step 2. Invariant and heterogeneous loading estimation}

Denote $W_{\perp}^{(e)} := (W^{(e)})_{\perp}$, the first-order term $G^{(e)} := \psf{W_{\perp}^{(e)}} [\mbue]^\top \rsf{\mble}$ in \cref{prop:first-order-approx-e}, and the second-order residual of the projection matrix in each environment 
\begin{align*}
    \hat{\Delta}_{P,2}^{(e)} := \hat{W}^{(e)} (\hat{W}^{(e)})^\top - {W}^{(e)} ({W}^{(e)})^\top - G^{(e)} - (G^{(e)})^\top.
\end{align*} Also define $\hat{\Delta}_P := \frac{1}{|\mathcal{E}|} \sum_{e\in \mathcal{E}} \hat{W}^{(e)} (\hat{W}^{(e)})^\top - \frac{1}{|\mathcal{E}|} \sum_{e\in \mathcal{E}} W^{(e)} (W^{(e)})^\top$. We can decompose $\hat{\Delta}_P$ into 
\begin{align}
\label{eq:first-order-approx-p}
    \hat{\Delta}_P = \underbrace{\frac{1}{|\mathcal{E}|} \sum_{e\in \mathcal{E}} \hat{\Delta}_{P,2}^{(e)}}_{=:\hat{\Delta}_{P,2}} + \underbrace{\frac{1}{|\mathcal{E}|} \sum_{e\in \mathcal{E}} G^{(e)}}_{=:\hat{\Delta}_{P,1}} + \underbrace{\frac{1}{|\mathcal{E}|} \sum_{e\in \mathcal{E}} (G^{(e)})^\top}_{=:\hat{\Delta}_{P,1}^\top}.
\end{align}

The following lemma combines the concentration results in \cref{lemma:concentrate-1} and also establishes concentration on $\hat{\Delta}_{P, 1}$. 
\begin{lemma}\label{lemma:concentrate-2}
Under \cref{cond:x-reg}, there exists a large enough constant $\tilde{C}_1 = \poly(c_1)$ such that the following holds. Define the event
\begin{align}
\begin{split}
    \mathcal{A}_{2,\tti}(t) := & \left[\bigcap_{e\in \mathcal{E}} \mathcal{A}_{1,F}^{(e)}(t + \log(4|\mathcal{E}|)) \cap \mathcal{A}_{1,U}^{(e)}(t + \log(4|\mathcal{E}|)) \right] \\
    &\qquad \cap \left\{\left\|\hat{\Delta}_{P,1} \right\|_2 \le \tilde{C}_1 \sqrt{\frac{d + t}{|\mathcal{E}| \cdot n_x \cdot \snrbar}} \right\} \\
    &\qquad \cap \left\{\left\|\hat{\Delta}_{P,2}\right\|_2 \le \tilde{C}_1 \frac{1}{\snrbar} \left[1 + \frac{d + \log(4|\mathcal{E}|) + t}{n_x}\right] \right\}
\end{split}
\label{eq:event-2i}
\end{align}
If $\max\left\{\frac{t + \sup_{e\in \mathcal{E}}r^{(e)} + \log(|\mathcal{E}|)}{n_x}, \sup_{e\in \mathcal{E}} \delta_W^{(e)}(t + \log(4|\mathcal{E}|)) \right\}\le 1/ \tilde{C}_1$, then $\mathbb{P}\left[\mathcal{A}_{2,\tti}(t)\right] \ge 1-e^{-t}$. Under event $\mathcal{A}_{2,\tti}(t)$ in \eqref{eq:event-2i}, this immediately yields that
\begin{align}
\label{eq:first-order-approx-p-error}
    \|\hat{\Delta}_P\|_2 \le \tilde{C}_1 \cdot \delta_{W_\tti}(t)
\end{align} where $\delta_{W_\tti}(t)$ is defined in \eqref{eq:delta-wi}.
\end{lemma}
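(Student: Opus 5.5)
The plan is to prove the four components of $\mathcal{A}_{2,\tti}(t)$ separately and combine them with a union bound, splitting the failure probability $e^{-t}$ among them. For the first component, we apply \cref{lemma:concentrate-1} with $t$ replaced by $t+\log(4|\mathcal{E}|)$ in each environment. Each of $\mathcal{A}_{1,F}^{(e)}$ and $\mathcal{A}_{1,U}^{(e)}$ then fails with probability at most $0.25e^{-t}/(4|\mathcal{E}|)$, so the union over $e\in\mathcal{E}$ costs at most $e^{-t}/8$. The stated sample-size condition guarantees $n_x\ge \tilde C_1(r^{(e)}+t+\log(4|\mathcal{E}|))$ and $\delta_W^{(e)}(t+\log 4|\mathcal{E}|)\le 1/\tilde C_1$. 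Hence on this intersection \cref{prop:first-order-approx-e} applies simultaneously in every environment.

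The $\hat\Delta_{P,2}$ bound is then deterministic. By \eqref{eq:first-order-appro-e}, $\|\hat\Delta_{P,2}^{(e)}\|_2\le \tilde C_1 (d+n_x+t+\log 4|\mathcal{E}|)/(n_x\lambda^{(e)})$, which equals $\tilde C_1 (\lambda^{(e)})^{-1}[1+(d+\log(4|\mathcal{E}|)+t)/n_x]$. Averaging over $e$ and using the triangle inequality turns $|\mathcal{E}|^{-1}\sum_e 1/\lambda^{(e)}$ into exactly $1/\bar\lambda$, which is the claimed bound.

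The main obstacle is the concentration of $\hat\Delta_{P,1}=|\mathcal{E}|^{-1}\sum_e \mathsf{P}_{W_\perp^{(e)}}(\mathbb{U}^{(e)})^\top \mathsf{R}(\mathbb{L}^{(e)})$, because we need the $1/\sqrt{|\mathcal{E}|}$ averaging gain rather than just a union bound. We condition on all factors $\{F_i^{(e)}\}$. Then $\mathsf{R}(\mathbb{L}^{(e)})=U_L\Sigma_L^{-1}V_L^\top$ is fixed, with $\|\mathsf{R}(\mathbb{L}^{(e)})\|_2\le \nu_{r}(\mathbb{L}^{(e)})^{-1}\lesssim (\lambda^{(e)}\|\Sigma_U^{(e)}\|_2 n_x)^{-1/2}$ on $\mathcal{A}_{1,F}$, by \cref{prop:first-order-approx-e}. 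Conditionally on the factors, the rows $U_i^{(e)}$ are independent across $i$ and $e$ and are sub-Gaussian after whitening by $(\Sigma_U^{(e)})^{-1/2}$, by \cref{cond:x-reg}(c). Each term therefore has the form $\sum_i \mathsf{P}_{W_\perp^{(e)}} U_i^{(e)} a_i^{(e)\top}$ with fixed vectors $a_i^{(e)}$ (the rows of $\mathsf R$), and its range lies in the $r^{(e)}$-dimensional space $\col(V_L)=\col(W^{(e)})$. We bound the operator norm of the sum by an $\varepsilon$-net argument. For unit vectors $x\in\mathbb{R}^d$ and $y$, write $y=\sum_e y_e$ with $y_e\in\col(W^{(e)})$. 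The quantity $x^\top\hat\Delta_{P,1}y$ is a sum of independent sub-Gaussian variables with variance proxy at most
\begin{align*}
|\mathcal{E}|^{-2}\sum_e \|\Sigma_U^{(e)}\|_2\,\|\mathsf R(\mathbb L^{(e)})\|_2^2
\lesssim |\mathcal{E}|^{-2} n_x^{-1}\sum_e 1/\lambda^{(e)} = (|\mathcal{E}|\,n_x\,\bar\lambda)^{-1}.
\end{align*}
A net over the unit sphere of $\mathbb{R}^d$ in $x$ contributes the $d$ in the exponent. The delicate point is $y$: the joint range across environments can have dimension up to $\sum_e r^{(e)}$. To avoid an extra $|\mathcal{E}|\bar r$ term, we intend to bound $\|\hat\Delta_{P,1}\|_2=\|\hat\Delta_{P,1}^\top\|_2$ via a net on the $x$ side only, noting that $\hat\Delta_{P,1}^\top x=|\mathcal E|^{-1}\sum_e \mathsf R^\top \mathbb U^{(e)}\mathsf P x$ is a sum of independent vectors. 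We would then apply a vector Bernstein/Hanson–Wright bound for its norm, whose mean is of order $\sqrt{\sum r^{(e)}/(|\mathcal E|^2 n_x\bar\lambda)}$ and which is dominated by $\sqrt{d/(|\mathcal E|n_x\bar\lambda)}$ since $r^{(e)}\le d$. Taking the union over the net at level $t+O(d)$ gives the $\sqrt{(d+t)/(|\mathcal{E}|n_x\bar\lambda)}$ rate with failure probability at most $e^{-t}/2$.

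Finally, the triangle inequality applied to \eqref{eq:first-order-approx-p} gives $\|\hat\Delta_P\|_2\le 2\|\hat\Delta_{P,1}\|_2+\|\hat\Delta_{P,2}\|_2$. By the definition \eqref{eq:delta-wi}, this is at most $\tilde C_1\delta_{W_\tti}(t)$ after enlarging $\tilde C_1$.
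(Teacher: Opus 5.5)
Your proposal is correct and takes essentially the same route as the paper: a union bound over the per-environment events of \cref{lemma:concentrate-1} at level $t+\log(4|\mathcal{E}|)$, deterministic averaging of \eqref{eq:first-order-appro-e} to control $\hat{\Delta}_{P,2}$, and sub-Gaussian concentration conditional on the factors, with an $\varepsilon$-net, to control $\hat{\Delta}_{P,1}$. Your worry about the $y$-side is unnecessary, because $y$ ranges over $\mathbb{S}^{d-1}$ anyway: a two-sided net has log-cardinality $O(d)$, and combined with your bilinear variance proxy $(|\mathcal{E}| n_x \bar{\lambda})^{-1}$ it already gives the rate, so the vector Hanson--Wright detour is optional (and its Frobenius term should read $|\mathcal{E}|^{-1}(\sum_e r^{(e)}/(n_x\lambda^{(e)}))^{1/2}$, which is still at most $\sqrt{d/(|\mathcal{E}| n_x\bar{\lambda})}$).
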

\aosversion{\begin{proof}[Proof of \cref{lemma:concentrate-2}] See \cref{sec:proof-concentration}. \end{proof}}{}

We are ready to provide error bounds under the above event $\mathcal{A}_{2,\tti}(t)$ in \eqref{eq:event-2i}. \cref{prop:loading-est} immediately follows from the bounds \eqref{eq:wihat-wi}, \eqref{eq:whhat-wh} in \cref{prop:wi-wh} below by specifying $t = 10 \log(n_x)$. It is worth noting that the error bounds in \eqref{eq:whhat-wi} and \eqref{eq:wihat-wh} require additional high-probability events. We first introduce some quantities of interest: for any $t\ge 1$, denote
\begin{align}
    \delta_{\tti, 2} &= \frac{1}{\epsilon_A \cdot \snrbar} \left[1 + \frac{d + \log(|\mathcal{E}|) + t}{\{1 \land (\epsilon_A \cdot |\mathcal{E}|)\} \cdot n_x}\right] \asymp \left[\frac{\delta_{W_\tti}(t)}{\epsilon_A}\right]^2 + \frac{1}{\epsilon_A} \left[\frac{1}{|\mathcal{E}|} \sum_{e\in \mathcal{E}} \delta^{(e)}_W(t)\right]^2 \label{eq:deltai2}\\
    \delta_{\tti, 1}^{(e)} &= \frac{1}{\epsilon_A} \sqrt{\frac{r^{(e)} + \log(|\mathcal{E}|) + t}{|\mathcal{E}| \cdot n_x \cdot \snrbar}} \\
    \delta_{\tth, 2}^{(e)} &= \frac{1}{\snre} \left[1 + \frac{d + \log(|\mathcal{E}|) + t}{ n_x}\right] + \delta_{\tti, 2}\\
    \delta_{\tth, 1}^{(e)} &= \sqrt{\frac{r^{(e)} + \log(|\mathcal{E}|) + t}{\snre \cdot n_x}} + \delta_{\tti, 1}^{(e)} \label{eq:deltah1}
\end{align}
These errors are of high order. When $\epsilon_A \asymp 1$, $\lambda^{(e)} \asymp \lambda$ and $\delta_W^{(e)}(t) \asymp \delta_W(t)$, we have
\begin{align*}
    \delta_{\tti, 2} \asymp \delta_{\tth, 2}^{(e)} \asymp [\delta_W(t)]^2 \qquad \text{and} \qquad \delta_{\tti, 1}^{(e)} \asymp \delta_{\tth, 1}^{(e)} \asymp \delta_W(t) \cdot \sqrt\frac{r^{(e)}}{d}
\end{align*}

\begin{proposition} 
\label{prop:wi-wh}
    Suppose \cref{cond:x-reg} and \cref{cond:separate} hold, there exists a large enough constant $\tilde{C}_1 = \poly(c_1)$ such that the following holds. If
    \begin{align*}
        \max\left\{\frac{t + \sup_{e\in \mathcal{E}}r^{(e)} + \log(|\mathcal{E}|)}{n_x}, \sup_{e\in \mathcal{E}} \delta_W^{(e)}(t+\log(4|\mathcal{E}|)), \delta_{W_\tti}(t) / \epsilon_A \right\}\le 1/ (10\tilde{C}_1),
    \end{align*} and we pick $ \lambda_{\mathtt{ihd}} \in [\tilde{C}_1 \delta_{W_{\tti}}(t), \epsilon_A - \tilde{C}_1 \delta_{W_{\tti}}(t)]$, then the following event $\mathcal{A}_3(t)$ holds with probability at least $1-3e^{-t}$. Here $\mathcal{A}_3(t)$ is defined as the event where we have $\hat{r}_\tti = r_\tti, \hat{r}_\tth^{(e)} = r_\tth^{(e)}$, and all of these error bounds hold
    \begin{align}
        &\frac{1}{\tilde{C}_1}\left\|\sin\Theta(\hat{W}_{\tti}, W_{\tti})\right\|_2 \le \frac{\delta_{W_\tti}(t)}{\epsilon_A}, \label{eq:wihat-wi}\\
        \forall e\in \mathcal{E}, \qquad &\frac{1}{\tilde{C}_1}\left\|\sin\Theta(\hat{W}_{\tth}^{(e)}, {W}_{\tth}^{(e)})\right\|_2 \le \frac{\delta_{W_\tti}(t)}{\epsilon_A} + \delta_{W}^{(e)}(t + \log(4|\mathcal{E}|)), \label{eq:whhat-wh}\\
        \forall e\in \mathcal{E}, \qquad &\frac{1}{\tilde{C}_1}\left\| \hat{W}_{\tti}^\top {W}_{\tth}^{(e)}\right\|_2 \le \delta_{\tti, 1}^{(e)} + \delta_{\tti, 2}, \label{eq:wihat-wh}\\
        \forall e\in \mathcal{E}, \qquad &\frac{1}{\tilde{C}_1}\left\| {W}_{\tti}^\top \hat{W}_{\tth}^{(e)}\right\|_2 \le \delta_{\tth, 2}^{(e)} + \delta_{\tth, 1}^{(e)} 
        \label{eq:whhat-wi}, \\
        & \frac{1}{\tilde{C}_1}\left\|W_\tti^\top \hat{W}_\tti \hat{W}_\tti^\top W_\tti - I_{r_\tti} \right\|_2 \le \delta_{\tti, 2} \label{eq:wi-wi}\\
        \forall e\in \mathcal{E}, \qquad & \frac{1}{\tilde{C}_1}\left\|(W_\tth^{(e)})^\top \hat{W}_\tth^{(e)} (\hat{W}_\tth^{(e)})^\top W_\tth^{(e)} - I_{r_\tth^{(e)}} \right\|_2 \le \delta_{\tth, 2}^{(e)} \label{eq:wh-wh}
    \end{align} and the all the R.H.S. in \eqref{eq:wihat-wi} -- \eqref{eq:wh-wh} are smaller than $1/(4\tilde{C}_1)$.
\end{proposition}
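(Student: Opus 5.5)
We work on the event $\mathcal{A}_{2,\tti}(t)$ of \cref{lemma:concentrate-2}, which holds with probability at least $1-e^{-t}$ and gives $\|\hat{\Delta}_P\|_2\le\tilde C_1\delta_{W_\tti}(t)$. The remaining two tail events needed for \eqref{eq:wihat-wh}--\eqref{eq:whhat-wi} are intersected with it, so a union bound over three events gives probability $1-3e^{-t}$.

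\textbf{Rank selection and loading spaces.} The population matrix $\bar P=W_\tti W_\tti^\top+|\mathcal{E}|^{-1}\sum_e W_\tth^{(e)}(W_\tth^{(e)})^\top$ has eigenvalue $1$ on $\col(W_\tti)$. Because $W_\tth^{(e)}\perp W_\tti$ and \cref{cond:separate} holds, every other eigenvalue is at most $1-\epsilon_A$. Weyl's inequality then places the top $r_\tti$ eigenvalues of $\hat P$ in $[1-\tilde C_1\delta_{W_\tti},1+\tilde C_1\delta_{W_\tti}]$ and the rest below $1-\epsilon_A+\tilde C_1\delta_{W_\tti}$. The stated range for $\lambda_{\mathsf{ihd}}$ therefore forces $\hat r_\tti=r_\tti$, and $\hat r_\tth^{(e)}=r_\tth^{(e)}$ follows. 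Davis--Kahan with gap $\epsilon_A$ gives \eqref{eq:wihat-wi}. For \eqref{eq:whhat-wh}, write
\[
\hat S_\tth-W_\tth^{(e)}(W_\tth^{(e)})^\top=(\hat W^{(e)}\hat W^{(e)\top}-W^{(e)}W^{(e)\top})-(\hat W_\tti\hat W_\tti^\top-W_\tti W_\tti^\top).
\]
The first term is bounded through \cref{lemma:factor-loading-appendix1} by $\delta_W^{(e)}$, and the second by \eqref{eq:wihat-wi}. Since $W_\tth^{(e)}(W_\tth^{(e)})^\top$ is a projection, its eigengap is $1$, and Davis--Kahan concludes.

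\textbf{Second-order bounds \eqref{eq:wi-wi}, \eqref{eq:wh-wh}.} I would invoke the first-order expansion of the eigenprojection in \cref{thm:first-order-svd}, in its PCA form. For the projection $\Pi$ onto the top-$r_\tti$ eigenspace of $\bar P+\hat\Delta_P$, it gives $\Pi=W_\tti W_\tti^\top+L(\hat\Delta_P)+O(\|\hat\Delta_P\|^2/\epsilon_A^2)$. Here $L(\Delta)$ consists only of off-diagonal blocks between $\col(W_\tti)$ and its complement, weighted by resolvents with gap at least $\epsilon_A$. Compressing to $W_\tti^\top(\cdot)W_\tti$ kills $L$, leaving $\|\hat\Delta_P\|^2/\epsilon_A^2$. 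The diagonal block $W_\tti^\top\hat\Delta_{P,2}W_\tti$ is already second order, and it contributes $\epsilon_A^{-1}|\mathcal{E}|^{-1}\sum_e(\delta_W^{(e)})^2$. Together these give the order of $\delta_{\tti,2}$ in \eqref{eq:deltai2}. The same argument applied to $\hat S_\tth$ gives \eqref{eq:wh-wh}, with the extra $(\snre)^{-1}$-type term coming from $\hat\Delta^{(e)}_{P,2}$.

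\textbf{Cross bounds \eqref{eq:wihat-wh}, \eqref{eq:whhat-wi}; this is the main obstacle.} A crude bound gives only $\delta_W$. To improve it, expand $\hat W_\tti=[W_\tti+W_{\tti,\perp}f(\hat\Delta_P)]\hat Q+O(\delta_{\tti,2})$ with $f$ linear, so that $\hat W_\tti^\top W_\tth^{(e)}\approx\hat Q^\top f(\hat\Delta_P)^\top W_{\tti,\perp}^\top W_\tth^{(e)}$. The first-order part of $f(\hat\Delta_P)$ is built from $\hat\Delta_{P,1}$, that is, averages of $G^{(e')}=\psf{W_\perp^{(e')}}\mathbf U^{(e')\top}\rsf{\mathbf L^{(e')}}$. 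We must bound the projection of these objects onto the fixed $r^{(e)}$-dimensional subspace $\col(W_\tth^{(e)})$ rather than their full operator norm. Conditioning on the factors, $W_\tth^{(e)\top}\psf{W_\perp^{(e')}}\mathbf U^{(e')\top}$ is an $r^{(e)}\times n_x$ sub-Gaussian matrix, so a standard $\epsilon$-net bound replaces $d$ by $r^{(e)}+\log|\mathcal{E}|+t$. Averaging over $e'$ then yields $\delta^{(e)}_{\tti,1}$. The resolvent terms are harmless because gaps are at least $\epsilon_A$, but I expect some care is needed to keep the resolvent factors from re-introducing full-dimension norms.

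For \eqref{eq:whhat-wi} I would do the same with the expansion of $\hat W_\tth^{(e)}$ from $\hat S_\tth$. Its perturbation $\hat\Delta$ contains $G^{(e)}+G^{(e)\top}$, whose projection onto $\col(W_\tti)$ gives the $\sqrt{(r^{(e)}+\cdots)/(\snre n_x)}$ term. It also contains the $\hat W_\tti$ perturbation, handled by the previous bound, which gives $\delta_{\tti,1}^{(e)}$. The quadratic remainders produce $\delta^{(e)}_{\tth,2}$.

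Finally, the sample-size conditions make every right-hand side at most $1/(4\tilde C_1)$ after $\tilde C_1$ is enlarged.
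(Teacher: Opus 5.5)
Your proposal is correct in outline and follows essentially the route the paper indicates. Weyl plus Davis--Kahan (gap $\epsilon_A$ for $\hat P$, gap $1$ for $\hat S_\tth$) give $\hat r_\tti=r_\tti$ and the $\sin\Theta$ bounds, and the cross bounds come from the eigenvector-level expansion of \cref{thm:first-order-pca} combined with concentration of $\hat\Delta_{P,1}$ along a fixed low-dimensional subspace.

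The resolvent issue you flag is harmless. Every top eigenvalue of $\bar P$ equals one, so $\mathsf{Mut}_{\Lambda,r}$ reduces to left multiplication by the deterministic matrix $R=V_\perp(I-\Lambda_\perp)^{-1}V_\perp^\top$, where $V_\perp\Lambda_\perp V_\perp^\top$ is the restriction of $\bar P$ to $\mathrm{col}(W_\tti)^\perp$. The leading term of $(W_\tth^{(e)})^\top\hat W_\tti$ is then $(RW_\tth^{(e)})^\top\hat\Delta_{P,1}W_\tti$. Here $RW_\tth^{(e)}$ is fixed, has rank $r_\tth^{(e)}$ and norm at most $\epsilon_A^{-1}$, so the net is only over its column space.

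Two details in your concentration step need correcting. First, you should concentrate only after $\mathbf U^{(e')\top}$ is multiplied by the $n_x\times r^{(e')}$ left singular factor inside $G^{(e')}$; an $r^{(e)}\times n_x$ noise matrix alone has norm of order $\sqrt{n_x}$. Second, $W_\tti^\top(G^{(e)}+G^{(e)\top})W_\tth^{(e)}=0$, so that term contributes nothing to \eqref{eq:whhat-wi}, rather than the $\sqrt{r^{(e)}/(\lambda^{(e)}n_x)}$ you attribute to it. Finally, \eqref{eq:wi-wi} and \eqref{eq:wh-wh} follow at once by squaring the $\sin\Theta$ bounds, without a second expansion.
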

\aosversion{\begin{proof}[Proof of \cref{prop:wi-wh}] See \cref{sec:proof-wi-wh}.\end{proof}}{}

\subsubsection{Step 3. Invariant and heterogeneous factors estimation error}

We additionally define the following quantities,
\begin{align}
\label{eq:hatk-def}
\begin{split}
    \hatkii &= \hat{W}_\tti^\top B  \qquad\qquad \hatkihe = \hat{W}_\tti^\top A^{(e)} \\
    \hatkhie &= (\hat{W}_\tth^{(e)})^\top B  \qquad ~\hatkhhe = (\hat{W}_\tth^{(e)})^\top A^{(e)} \\
\end{split}
\end{align} 
The first lemma is deterministic and wraps the result in \cref{prop:wi-wh} for the proof of the main theorem: it casts the first-order error in \cref{prop:wi-wh} to the first-order approximation of compositions of the quantities in \eqref{eq:hatk-def}, with approximation errors that are of high order in \eqref{eq:deltai2} -- \eqref{eq:deltah1}.

\begin{lemma}
\label{lemma:hatk}
    Under the setting of \cref{prop:wi-wh} and the event $\mathcal{A}_{3}(t)$ defined therein, suppose further $8\sqrt{\kappa^{(e)}} \cdot \|{W}_\tti^\top \hat{W}_\tth^{(e)}\|_2 \le 1$, then $\hatkii$, $\hatkhhe$ are invertible, and there exist invertible matrices $\hat{Q}_{1,1}$ and $\hat{Q}_{1,2}^{(e)}$ with singular values in $[1/2, 2]$ such that 
    \begin{align}
    \label{eq:k-inverse}
        \hatkii = \hat{Q}_{1,1} K_\tti \qquad \text{and} \qquad  \hatkhhe = \hat{Q}_{1,2}^{(e)} \khhe 
    \end{align} and
    \begin{align}
    \label{eq:delta-i-i}
        \hatkii^{-1} \hatkii^{-\top} &= \kii^{-1} (I_{r_\tti} + \hat{\Delta}_{1,1}) \kii^{-\top } \qquad ~~~~~ \text{with} ~~ \|\hat{\Delta}_{1,1}\|_2 \le \tilde{C}_1 \delta_{\tti, 2} \\
    \label{eq:delta-h-h}
        (\hatkhhe)^{-1} (\hatkhhe)^{-\top} &= (\khhe)^{-1} (I_{r_{\tth}^{(e)}} + \hat{\Delta}_{1,2}^{(e)}) (\khhe)^{-\top }   ~ \text{with} ~~ \|\hat{\Delta}_{1,2}^{(e)}\|_2 \le \tilde{C}_1 \sqrt{\kappa^{(e)}} \delta_\tth^{(e)} \\
    \label{eq:delta-h-i}
        (\hatkhhe)^{-1} \hatkhie &= (\khhe)^{-1} \hat{\Delta}_{1,3}^{(e)} \kii \qquad\qquad\qquad ~~ \text{with} ~~ \|\hat{\Delta}_{1,3}^{(e)}\|_2 \le \tilde{C}_1 \delta_\tth^{(e)}\\
    \label{eq:delta-i-h}
        (\hatkii)^{-1} \hatkihe &= \kii^{-1} \khie + \kii^{-1} \hat{\Delta}_{1,4}^{(e)} \khhe \qquad   \text{with} ~~ \|\hat{\Delta}_{1,4}^{(e)}\|_2 \le \tilde{C}_1 (\delta_{\tti, 2} + \delta_{\tti, 1}^{(e)})
    \end{align} where $\delta_\tth^{(e)} = \delta_{\tth,1}^{(e)} + \delta_{\tth,2}^{(e)}$, the constant $\tilde{C}_1=8C_1$, with $C_1$ being the constant $\tilde{C}_1$ in \cref{prop:wi-wh}.
\end{lemma}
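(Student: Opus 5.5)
The proof is deterministic: we work on the event $\mathcal{A}_3(t)$ of \cref{prop:wi-wh} and substitute the block decomposition \eqref{eq:loading}, $B = W_\tti \kii$ and $A^{(e)} = W_\tti\khie + W_\tth^{(e)}\khhe$, into the definitions \eqref{eq:hatk-def}. This gives
\begin{align*}
\hatkii = (\hat W_\tti^\top W_\tti)\kii, \qquad \hatkhhe = (\hat W_\tth^{(e)})^\top W_\tth^{(e)}\khhe + (\hat W_\tth^{(e)})^\top W_\tti \khie,
\end{align*}
together with $\hatkhie = (\hat W_\tth^{(e)})^\top W_\tti\kii$ and $\hatkihe = (\hat W_\tti^\top W_\tti)\khie + \hat W_\tti^\top W_\tth^{(e)}\khhe$.

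\textbf{Invertibility and \eqref{eq:k-inverse}.} Set $\hat Q_{1,1} = \hat W_\tti^\top W_\tti$. Since $\hat r_\tti = r_\tti$ and $\|\sin\Theta(\hat W_\tti,W_\tti)\|_2\le 1/4$ by \eqref{eq:wihat-wi}, its singular values are the cosines of the principal angles and lie in $[1/2,1]$. For the heterogeneous block write
\begin{align*}
\hatkhhe = \big[(\hat W_\tth^{(e)})^\top W_\tth^{(e)} + (\hat W_\tth^{(e)})^\top W_\tti\khie(\khhe)^{-1}\big]\khhe =: \hat Q_{1,2}^{(e)}\khhe.
\end{align*}
The first summand has singular values in $[\sqrt{3}/2,1]$ by \eqref{eq:whhat-wh}. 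For the perturbation, $\|\khie(\khhe)^{-1}\|_2\le \nu_{\max}(K^{(e)})/\nu_{\min}(K^{(e)}) \le \sqrt{\kappa^{(e)}}$, because $K^{(e)}$ and $B^{(e)}$ share singular values and the lower-right block of $(K^{(e)})^{-1}$ is $(\khhe)^{-1}$. The assumption $8\sqrt{\kappa^{(e)}}\|W_\tti^\top\hat W_\tth^{(e)}\|_2\le1$ then makes the perturbation at most $1/8$, so Weyl's inequality keeps the singular values of $\hat Q_{1,2}^{(e)}$ in $[1/2,2]$.

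\textbf{\eqref{eq:delta-i-i} and \eqref{eq:delta-h-h}.} We have $\hatkii^{-1}\hatkii^{-\top} = \kii^{-1}(\hat Q_{1,1}^\top\hat Q_{1,1})^{-1}\kii^{-\top}$ with $\hat Q_{1,1}^\top\hat Q_{1,1} = W_\tti^\top\hat W_\tti\hat W_\tti^\top W_\tti$. By \eqref{eq:wi-wi} this is within $\tilde C_1\delta_{\tti,2}$ of $I$, and inverting a near-identity matrix gives $\hat\Delta_{1,1}$. For the heterogeneous block, $\hat Q_{1,2}^{(e)\top}\hat Q_{1,2}^{(e)} - I$ equals the \eqref{eq:wh-wh} term plus cross terms of size $\sqrt{\kappa^{(e)}}\|W_\tti^\top\hat W_\tth^{(e)}\|_2\le \sqrt{\kappa^{(e)}}(\delta_{\tth,1}^{(e)}+\delta_{\tth,2}^{(e)})$ by \eqref{eq:whhat-wi}. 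Inverting again yields $\|\hat\Delta_{1,2}^{(e)}\|_2\lesssim\sqrt{\kappa^{(e)}}\delta_\tth^{(e)}$.

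\textbf{\eqref{eq:delta-h-i} and \eqref{eq:delta-i-h}.} We have $(\hatkhhe)^{-1}\hatkhie = (\khhe)^{-1}(\hat Q_{1,2}^{(e)})^{-1}(\hat W_\tth^{(e)})^\top W_\tti\kii$, so $\hat\Delta_{1,3}^{(e)} = (\hat Q_{1,2}^{(e)})^{-1}(\hat W_\tth^{(e)})^\top W_\tti$ has norm at most $2\tilde C_1\delta_\tth^{(e)}$ by \eqref{eq:whhat-wi}. Similarly, $\hatkii^{-1}\hatkihe = \kii^{-1}\khie + \kii^{-1}\hat Q_{1,1}^{-1}\hat W_\tti^\top W_\tth^{(e)}\khhe$, so $\hat\Delta_{1,4}^{(e)} = \hat Q_{1,1}^{-1}\hat W_\tti^\top W_\tth^{(e)}$, which \eqref{eq:wihat-wh} bounds by $2\tilde C_1(\delta_{\tti,1}^{(e)}+\delta_{\tti,2})$.

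\textbf{Main obstacle.} The only delicate point is the heterogeneous block. There the cross term $(\hat W_\tth^{(e)})^\top W_\tti\khie$ must be normalized by $\khhe$ and so costs a factor $\sqrt{\kappa^{(e)}}$. The thing to check is that this is exactly why $\sqrt{\kappa^{(e)}}$ appears in \eqref{eq:delta-h-h} but not in \eqref{eq:delta-h-i}, and that it is absorbed by the hypothesis $8\sqrt{\kappa^{(e)}}\|W_\tti^\top\hat W_\tth^{(e)}\|_2\le1$. All constants combine into $8C_1$.
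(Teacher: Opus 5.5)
Your proposal is correct and is essentially the direct deterministic computation the lemma is built around. You substitute $B = W_{\mathtt{I}}K_{\mathtt{II}}$ and $A^{(e)} = W_{\mathtt{I}}K_{\mathtt{IH}}^{(e)} + W_{\mathtt{H}}^{(e)}K_{\mathtt{HH}}^{(e)}$ and factor out $K_{\mathtt{II}}$ and $K_{\mathtt{HH}}^{(e)}$, which makes $\hat Q_{1,2}^{(e)}$ a perturbation of $(\hat W_{\mathtt{H}}^{(e)})^\top W_{\mathtt{H}}^{(e)}$ of size at most $1/8$ via $\|K_{\mathtt{IH}}^{(e)}(K_{\mathtt{HH}}^{(e)})^{-1}\|_2 \le \sqrt{\kappa^{(e)}}$; each $\hat\Delta_{1,j}$ then follows from Proposition~\ref{prop:wi-wh} with constants comfortably below $8C_1$. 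The only thing left implicit is $\hat r_{\mathtt{H}}^{(e)} = r_{\mathtt{H}}^{(e)}$, which you need for $\hat Q_{1,2}^{(e)}$ to be square and which the event $\mathcal{A}_3(t)$ provides.
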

\aosversion{\begin{proof}[Proof of \cref{lemma:hatk}] See \cref{sec:proof-lemma:hatk}.\end{proof}}{}

Recall the sample split $\mathcal{D}_1 \cup \mathcal{D}_2$ in \cref{algo:dp}. We also need the following concentration result showing that for $\hat{W}_\tti$ and $\hat{W}_{\tth}^{(e)}$ estimated using $\mathcal{D}_1$, the spectral norm for the multiplication of $(\hat{W}_\tti, \hat{W}_{\tth}^{(e)})$ and the different covariance structures estimated from $\mathcal{D}_2$ are bounded. The concentration is done on the randomness of $\mathcal{D}_2$, thus are standard given $\hat{W}_\tti$ and $\hat{W}_\tth^{(e)}$ are regarded as fixed. For any random vector $Z, Z' \in \{F_\tti, F_\tth, U\}$, we use the notation 
\begin{align*}
    \hat{\Sigma}_{Z,Z'}^{\dagger,(e)} = \frac{1}{n_x} \sum_{i=n_x+1}^{2n_x} Z_{i}^{(e)} {({Z_i'}^{(e)})}^\top
\end{align*} with the superscript $\dagger$ to emphasize its dependency on $\mathcal{D}_2$, and abbreviate $Z'$ when $Z'=Z$. 

\begin{lemma}
\label{lemma:concentrate-3}
Under \cref{cond:x-reg} and \cref{cond:separate}, there exists some constant $\tilde{C}_1=\mathrm{poly}(c_1)$ such that assume further that $n_x \ge \tilde{C}_1 \left[r^{(e)} + \log(|\mathcal{E}|) + t\right]$ for any $e\in \mathcal{E}$, denote 
\begin{align}
\label{eq:delta-sigma}
\begin{split}
    \delta_{\Sigma}^{(e)} = \max\Bigg\{&\frac{\left\|(\hat{W}_\tth^{(e)})^\top (\hat{\Sigma}_U^{\dagger, (e)} - \Sigma_U^{(e)}) (\hat{W}_\tth^{(e)}) \right\|_2}{\|\Sigma_U^{(e)}\|_2},  \frac{\left\|(\hat{W}_\tth^{(e)})^\top (\hat{\Sigma}_U^{\dagger, (e)} - \Sigma_U^{(e)}) \hat{W}_\tti\right\|_2}{{\|\Sigma_U^{(e)}\|_2}}, \\ 
    &\frac{\max\left(\left\|(\hat{W}_{\tth}^{(e)})^\top \hat{\Sigma}_{U, F_{\tti}}^{\dagger, (e)}\right\|_2, \left\|(\hat{W}_{\tth}^{(e)})^\top \hat{\Sigma}_{U, F_{\tth}}^{\dagger, (e)}\right\|_2, \left\|\hat{W}_{\tti}^\top \hat{\Sigma}_{U, F_{\tti}}^{\dagger, (e)}\right\|_2, \left\|\hat{W}_{\tti}^\top \hat{\Sigma}_{U, F_{\tth}}^{\dagger, (e)}\right\|_2\right)}{(\|\Sigma_U^{(e)}\|_2)^{1/2}}, \\
    & \frac{\left\|\hat{W}_\tti^\top (\hat{\Sigma}_U^{\dagger, (e)} - \Sigma_U^{(e)}) \hat{W}_\tti\right\|_2}{{\|\Sigma_U^{(e)}\|_2}}, \left\|\hat{\Sigma}_{F_\tti, F_\tth}^{\dagger, (e)} \right\|_2, \left\|\hat{\Sigma}_{F_\tti}^{\dagger, (e)} - \Sigma_{F_\tti}^{(e)}\right\|_2, \left\|\hat{\Sigma}^{\dagger, (e)}_{F_\tth} - \Sigma_{F_\tth}^{(e)}\right\|_2\Bigg\}.
\end{split}
\end{align} The following event
\begin{align}
    \mathcal{A}_4(t):= \left\{\forall e\in \mathcal{E}, ~~ \delta_\Sigma^{(e)} \le \tilde{C}_1 \sqrt{\frac{r^{(e)} + \log(|\mathcal{E}|) + t}{n_x}} \le \frac{1}{5c_1^2} \right\}
\end{align} occurs with probability at least $1-e^{-t}$ for any $t\ge 1$. 
\end{lemma}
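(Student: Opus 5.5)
The plan is to condition on $\mathcal{D}_1$, so that $\hat{W}_\tti$ and $\hat{W}_\tth^{(e)}$ are fixed matrices with orthonormal columns, independent of the $\mathcal{D}_2$ samples $\{(F_i^{(e)},U_i^{(e)})\}_{i=n_x+1}^{2n_x}$. Each quantity in \eqref{eq:delta-sigma} then has one of three forms:
\begin{itemize}
\item[(i)] a quadratic form $P^\top(\hat{\Sigma}_U^{\dagger,(e)}-\Sigma_U^{(e)})P'$ with fixed $P,P'\in\{\hat{W}_\tti,\hat{W}_\tth^{(e)}\}$, each of at most $r^{(e)}$ columns;
\item[(ii)] a cross term $P^\top\hat{\Sigma}^{\dagger,(e)}_{U,F_c}$;
\item[(iii)] a factor covariance deviation $\hat{\Sigma}^{\dagger,(e)}_{F_c,F_{c'}}-\mathbb{E}[F_cF_{c'}^\top]$, where $\mathbb{E}[F_\tti F_\tth^\top]=0$ by \cref{cond:separate}.
\end{itemize}
I would prove a tail bound of order $\sqrt{(r^{(e)}+\log|\mathcal{E}|+t)/n_x}$ for each of the roughly nine terms, at level $e^{-t}/(9|\mathcal{E}|)$. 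A union bound over terms and environments then gives probability $1-e^{-t}$. The last inequality, $\le 1/(5c_1^2)$, follows from $n_x\ge\tilde{C}_1(r^{(e)}+\log|\mathcal{E}|+t)$ once $\tilde{C}_1$ is enlarged by a factor $\poly(c_1)$.

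For (iii), the vector $F^{(e)}$ is jointly sub-Gaussian with parameter $\sqrt{c_1}$, and its covariance has eigenvalues in $[c_1^{-1},c_1]$. The standard covariance concentration bound with an $\epsilon$-net over the unit sphere of $\mathbb{R}^{r^{(e)}}$ gives $\|\hat{\Sigma}_F^{\dagger,(e)}-\Sigma_F^{(e)}\|_2\lesssim c_1\left(\sqrt{(r^{(e)}+s)/n_x}+(r^{(e)}+s)/n_x\right)$ with probability $1-e^{-s}$; this is the same argument as $\mathcal{A}^{(e)}_{1,F}$ in \cref{lemma:concentrate-1}. Because $n_x\gtrsim r^{(e)}+s$, the linear term dominates. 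Each sub-block, including the off-diagonal block $\hat{\Sigma}^{\dagger}_{F_\tti,F_\tth}$, is bounded by the full deviation.

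For (i), write $U=(\Sigma_U^{(e)})^{1/2}\tilde U$, where $\tilde U\mid F$ is $\sqrt{c_1}$ sub-Gaussian. Then $P^\top U$ is a vector of dimension at most $r^{(e)}$ that is sub-Gaussian with parameter $\sqrt{c_1\|\Sigma_U^{(e)}\|_2}$, including conditionally on $F$. For a diagonal block, I would apply the same $\epsilon$-net covariance bound to $P^\top U/\|\Sigma_U^{(e)}\|_2^{1/2}$. Off-diagonal blocks follow from the stacked vector $[\hat{W}_\tti,\hat{W}_\tth^{(e)}]^\top U$ of dimension $r^{(e)}$. Note that $\hat{W}_\tti$ and $\hat{W}_\tth^{(e)}$ need not be exactly orthogonal, so the stacked matrix has operator norm at most $\sqrt2$, and this only changes constants.

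For (ii), I would use a net argument on the bilinear form $u^\top P^\top\hat{\Sigma}^{\dagger}_{U,F}v=n_x^{-1}\sum_i(u^\top P^\top U_i)(F_i^\top v)$, with unit vectors $u,v$ in spaces of dimension at most $r^{(e)}$. By \cref{cond:x-reg}(a) we have $\mathbb{E}[U\mid F]=0$, so each summand is mean zero. The product of two sub-Gaussians is sub-exponential with scale $c_1\|\Sigma_U^{(e)}\|_2^{1/2}$, and Bernstein's inequality gives deviation $\lesssim\|\Sigma_U^{(e)}\|_2^{1/2}\left(\sqrt{(r^{(e)}+s)/n_x}+(r^{(e)}+s)/n_x\right)$. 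A union over nets of size $9^{2r^{(e)}}$ finishes the term.

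The main point to check is that the conditional sub-Gaussianity of $U\mid F$ survives projection by the data-dependent $P$. This holds because $P$ is $\mathcal{D}_1$-measurable and the samples are independent. With that settled, the argument is routine.
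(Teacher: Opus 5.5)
Your proposal is correct and follows essentially the paper's route. You condition on $\mathcal{D}_1$ so that $\hat{W}_\tti,\hat{W}_\tth^{(e)}$ are fixed, reduce every entry of $\delta_\Sigma^{(e)}$ to a covariance or cross-covariance deviation of sub-Gaussian vectors of dimension $O(r^{(e)})$ (using $\mathbb{E}[U\mid F]=0$ and $\mathbb{E}[F_\tti F_\tth^\top]=0$), and union-bound over terms and environments.

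The one slip is your treatment of the final inequality. Since $\tilde{C}_1$ both multiplies the bound and sets the sample-size threshold, enlarging it cannot yield $\tilde{C}_1\sqrt{(r^{(e)}+\log|\mathcal{E}|+t)/n_x}\le 1/(5c_1^2)$. That inequality needs $n_x\ge 25c_1^4\tilde{C}_1^2\,(r^{(e)}+\log|\mathcal{E}|+t)$. So the statement itself has to be read with a separate, larger sample-size constant.
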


\aosversion{\begin{proof}[Proof of \cref{lemma:concentrate-3}] See \cref{sec:proof-concentration}.\end{proof}}{}

The following deterministic result casts the factor estimation error in a single environment $e\in \mathcal{E}$ to the loading estimation errors and the covariance estimation errors. Define
\begin{align}
    \lambda_{A\setminus B}^{(e)} &= \frac{\lambda_{\min}\left[(\khhe)^\top \khhe\right]}{\|\Sigma_U^{(e)}\|_2}, \qquad  \qquad \lambda_{B\setminus A}^{(e)}= \frac{\lambda_{\min}\left[B^\top(I - \mathsf{P}_{A^{(e)}})B\right]}{\|\Sigma_U^{(e)}\|_2} \label{eq:lda-ab} 
\end{align}

\begin{proposition}
\label{prop:loading-to-factor-i}
Under the assumptions in \cref{prop:wi-wh}, \cref{lemma:concentrate-3} and \cref{lemma:hatk}, and the events $\mathcal{A}_3(t)$ and $\mathcal{A}_{4}(t)$ defined by \cref{prop:wi-wh} and \cref{lemma:concentrate-3}. There exists some constant $\tilde{C}_1 = \poly(c_1)$ such that for any environment $e\in \mathcal{E}$, if we have $$\lambda^{(e)} \ge 100 \qquad \text{and} \qquad \|\hat{\Delta}_{1,3}^{(e)} \|_2 \lor \|\hat{\Delta}_{1,4}^{(e)} \|_2 
\lor [(\snrba^{(e)})^{-1/2} \delta_\Sigma]\le (5c_1 \sqrt{\kappa^{(e)}})^{-1},$$ then the following holds

\noindent (1) There exists some $Q_\tth^{(e)} \in \mathbb{R}^{r_\tth^{(e)} \times r_\tth^{(e)}}$ with $1/(2\sqrt{c_1}) \le \nu_{\min}[Q_\tth^{(e)}] \le \nu_{\max}[Q_\tth^{(e)}] \le 2\sqrt{c_1}$ such that $Q_\tth^{(e)} (\hat{\Phi}_\tth^{(e)})^\top X^{(e)} - F_\tth^{(e)}$ is jointly sub-Gaussian with parameter $$\tilde{C}_1 \left[\sqrt{\kappa^{(e)}} (\delta_{\tth,2}^{(e)} + \delta_{\tth, 1}^{(e)} ) + \sqrt{1 / \lambda_{A\setminus B}^{(e)}}\right].$$

\noindent (2) There exists a shared $Q_{\tti} \in \mathbb{R}^{r_\tti \times r_\tti}$ across $e\in \mathcal{E}$ such that
$Q_{\tti} (\hat{\Phi}_{\tti}^{(e)})^\top X^{(e)} - F_{\tti}^{(e)}$ is jointly sub-Gaussian with parameter
\begin{align}
\begin{split}
g := \tilde{C}_1 & \Bigg[\sqrt{\frac{1}{\snrba^{(e)}}} + \delta_\Sigma^{(e)} + \frac{1}{\lambda^{(e)}}  + \sum_{i=1}^4 \|\hat{\Delta}_{1,i}^{(e)}\|_2 + \sqrt{\kappa^{(e)}} \|\hat{\Delta}_{1,3}^{(e)}\|_2 \\
&\qquad \qquad \qquad \qquad + \delta_\Sigma^{(e)} \sqrt{\kappa^{(e)}} \left\{\frac{1}{\sqrt{\snrba^{(e)}}} + \sqrt{\kappa^{(e)}} \|\hat{\Delta}_{1,3}^{(e)}\|_2 \right\} \Bigg],
\end{split}\label{eq:factor-i-error}
\end{align} suppose further that $g \le 1/\tilde{C}_1$, then $1/(2\sqrt{c_1}) \le \nu_{\min}[Q_\tti] \le \nu_{\max}[Q_\tti] \le 2\sqrt{c_1}$.
\end{proposition}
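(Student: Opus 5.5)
We will treat both parts as deterministic consequences of the events $\mathcal{A}_3(t)$ and $\mathcal{A}_4(t)$, working in a fixed environment $e$. Everything is written in terms of the quantities $\hatkii,\hatkihe,\hatkhie,\hatkhhe$ from \eqref{eq:hatk-def}. The plan is to substitute $X^{(e)}=BF_\tti^{(e)}+A^{(e)}F_\tth^{(e)}+U^{(e)}$ into the projections built in \cref{algo:dp}. Then $\hat{W}_\tth^\top X=\hatkhie F_\tti+\hatkhhe F_\tth+\hat{W}_\tth^\top U$, and $\hat{W}_\tti^\top X=\hatkii F_\tti+\hatkihe F_\tth+\hat{W}_\tti^\top U$. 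Conditional on $\mathcal{D}_1$, each error vector is a fixed linear map applied to $(F^{(e)},U^{(e)})$. Under \cref{cond:x-reg} such a vector is jointly sub-Gaussian with parameter proportional to $\sqrt{c_1}$ times the operator norm of the map, where $U$ is measured in $\Sigma_U^{1/2}$ units. So it suffices to bound operator norms.

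\textbf{Part (1).} Write $\hat{\Sigma}_\dagger$-quadratic forms via $\hat{\Sigma}_\dagger=\hat{\Sigma}^\dagger_{F}$-terms $+$ cross terms $+\hat{\Sigma}^\dagger_U$. Then
$$D:=(\hat{W}_\tth)^\top\hat{\Sigma}_\dagger\hat{W}_\tth=\hatkhhe\Sigma_{F_\tth}(\hatkhhe)^\top+\hatkhie\Sigma_{F_\tti}(\hatkhie)^\top+R,$$
where $\|R\|_2\lesssim \delta_\Sigma\,(\|\Sigma_U\|_2+\nu_{\max}(\hatkhhe)\|\Sigma_U\|^{1/2}+\cdots)$. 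Take $Q_\tth=\Sigma_{F_\tth}^{1/2}\mathcal{O}\,[\,\cdot\,]$. Concretely, factor $D^{-1/2}=(\hatkhhe\Sigma_{F_\tth}^{1/2})^{-\top}(I+E)^{-1/2}\mathcal{O}$ and set $Q_\tth=(\hat\Phi_\tth^\top \hatkhhe)^{-1}$. The error then splits into three pieces, each controlled as follows.
\begin{itemize}
\item The leakage term $Q_\tth D^{-1/2}\hatkhie F_\tti=(\hatkhhe)^{-1}\hatkhie F_\tti=(\khhe)^{-1}\hat\Delta_{1,3}\kii F_\tti$, using \eqref{eq:delta-h-i}. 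Its norm is at most $\sqrt{\kappa}\,\|\hat\Delta_{1,3}\|\lesssim\sqrt{\kappa}\,\delta_\tth$.
\item The noise term $(\hatkhhe)^{-1}\hat{W}_\tth^\top U$, whose size is $\lesssim\|\Sigma_U\|^{1/2}/\nu_{\min}(\khhe)=(\lambda_{A\setminus B})^{-1/2}$, using \eqref{eq:k-inverse}.
\item The normalization of $Q_\tth$. Its singular values lie in $[1/(2\sqrt{c_1}),2\sqrt{c_1}]$ because $E$ is small, with $\|E\|\lesssim\sqrt{\kappa}\|\hat\Delta_{1,3}\|+\lambda_{A\setminus B}^{-1/2}\delta_\Sigma+\lambda^{-1}$. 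This uses the standing smallness assumptions and $\lambda\ge100$.
\end{itemize}

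\textbf{Part (2).} Write $M=\hat{W}_\tti-\hat{W}_\tth\hat R$ with $\hat R=D^{-1}(\hat{W}_\tth)^\top\hat\Sigma_\dagger\hat{W}_\tti$. Then
$$M^\top X=(\hatkii-\hat R^\top\hatkhie)F_\tti+(\hatkihe-\hat R^\top\hatkhhe)F_\tth+M^\top U.$$
The key step, and the main obstacle, is a first-order expansion of $\hat R$. We will show $\hat R^\top\hatkhhe=\hatkihe\,(I+\text{small})$, up to errors expressed through $\delta_\Sigma$ and $\hat\Delta_{1,i}$. This follows from $\Sigma_{F_\tti F_\tth}=0$ and $\hat\Sigma^\dagger_{F_\tti F_\tth}=O(\delta_\Sigma)$, which give
$$(\hat W_\tth)^\top\hat\Sigma_\dagger\hat W_\tti\approx\hatkhhe\Sigma_{F_\tth}(\hatkihe)^\top+\hatkhie\Sigma_{F_\tti}(\hatkii)^\top .$$
The last term is small after multiplication by $(\hatkhhe)^{-1}$, by \eqref{eq:delta-h-i}. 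The $F_\tth$ coefficient then becomes
$$\hatkii\big[(\hatkii)^{-1}\hatkihe\big]\times O(\text{errors}).$$
Using \eqref{eq:delta-i-h}, one gets $(\hatkii)^{-1}\hatkihe=\kii^{-1}\khie+\kii^{-1}\hat\Delta_{1,4}\khhe$. This produces the $\|\hat\Delta_{1,4}\|$ and $\delta_\Sigma\sqrt\kappa(\cdots)$ terms in $g$. The $F_\tti$ coefficient is $\hatkii(I+O(\sqrt{\kappa}\|\hat\Delta_{1,3}\|+\delta_\Sigma))$, and $M^\top U$ contributes $(\lambda_{B\setminus A})^{-1/2}$ after the partialling out.

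Finally, we normalize by $G^{-1/2}$ computed in $e_1$. The coefficient of $F_\tti$ is $\hatkii$ times an environment-dependent near-identity factor. Since $\hatkii=\hat{Q}_{1,1}\kii$ is shared across environments, we set $Q_\tti=(G^{-1/2\top}\hatkii)^{-1}$, which is common to all $e$. The environment-specific factor $(I+\text{small})$ is then pushed into the error; this is what produces $\sum_i\|\hat\Delta_{1,i}\|$, $\delta_\Sigma$ and $1/\lambda$ in $g$. The singular value bounds on $Q_\tti$ follow once $g\le1/\tilde C_1$, by the same argument as in part (1) applied in $e_1$.
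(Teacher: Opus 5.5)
Your Part (1) is fine. Take $Q_\tth^{(e)}=\bigl((\hat\Phi_\tth^{(e)})^\top A^{(e)}\bigr)^{-1}=(\hatkhhe)^{-1}D^{1/2}$; this is what you actually compute, since $(\hat\Phi_\tth^\top\hatkhhe)^{-1}$ as written has mismatched dimensions. The error is then $(\hatkhhe)^{-1}\hatkhie F_\tti+(\hatkhhe)^{-1}\hat W_\tth^\top U$, which does not involve $D$, and $Q_\tth^{(e)}(Q_\tth^{(e)})^\top$ is the $\mathcal D_2$ second moment of $(\hatkhhe)^{-1}\hat W_\tth^\top X$. You must condition on $\mathcal D_1\cup\mathcal D_2$, not only on $\mathcal D_1$. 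The skeleton of Part (2), including the shared $Q_\tti=(\hatkii)^{-1}G^{1/2}$, is also right.

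The gap is in the step you call the main obstacle, which is exactly where the $\kappa$-dependence of $g$ is decided. You replace $\hat\Sigma^\dagger_{F_\tth}$ by $\Sigma_{F_\tth}$ in both $D$ and $\hat W_\tth^\top\hat\Sigma_\dagger\hat W_\tti$, and then bound the $F_\tth$-coefficient as $[(\hatkii)^{-1}\hatkihe]\times O(\text{errors})$. But $\|(\hatkii)^{-1}\hatkihe\|_2\approx\|(\kii)^{-1}\khie\|_2$ can be of order $\sqrt{\kappa}$; for instance, take $K^{(e)}$ with rows $(1,s)$ and $(0,s)$, $s\gg 1$. Meanwhile your ``errors'' contain a bare $\delta_\Sigma$ (from $\hat\Sigma^\dagger_{F_\tth}-\Sigma_{F_\tth}$) and the errors-in-variables attenuation of order $1/\lambda_{A\setminus B}$. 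You would therefore get $\sqrt{\kappa}\,\delta_\Sigma+\sqrt{\kappa}/\lambda_{A\setminus B}$. This is not in $g$, and the hypotheses do not control it: they only control $\sqrt\kappa\,\delta_\Sigma(\lambda_{B\setminus A})^{-1/2}$. The first term would even turn the $\sqrt{\tilde r^{(e)}/n_x}$ term of \cref{thm:factor-est} into $\sqrt{\kappa^{(e)}\tilde r^{(e)}/n_x}$. Bounding the leakage $(\hatkii)^{-1}\hatkihe\,(\hatkhhe)^{-1}\hatkhie$ factor by factor likewise gives only $\kappa\|\hat\Delta_{1,3}^{(e)}\|_2$, not the $\sqrt\kappa\|\hat\Delta_{1,3}^{(e)}\|_2$ you assert. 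The $(\lambda_{B\setminus A})^{-1/2}$ rate for $M^\top U$ is stated without any mechanism.

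Two ingredients are missing. First, keep the empirical moments and use the exact least-squares identity. Write $\hat W_\tti^\top X=\hatkihe F_\tth+r$ and $\tilde F:=(\hatkhhe)^{-1}\hat W_\tth^\top X=F_\tth+s$, with $r=\hatkii F_\tti+\hat W_\tti^\top U$ and $s=(\hatkhhe)^{-1}(\hatkhie F_\tti+\hat W_\tth^\top U)$. Then
\[
\hatkihe-\hat R^\top\hatkhhe=\bigl[\hatkihe\,\hat\Sigma^{\dagger}_{s,\tilde F}-\hat\Sigma^{\dagger}_{r,\tilde F}\bigr]\bigl(\hat\Sigma^{\dagger}_{\tilde F}\bigr)^{-1},
\]
so $\hat\Sigma^\dagger_{F_\tth}$ cancels exactly and no bare $\delta_\Sigma$ ever multiplies $\hatkihe$.

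Second, never split $\hatkihe$ from $(\hatkhhe)^{-1}$. By \cref{lemma:hatk}, $(\hatkii)^{-1}\hatkihe(\hatkhhe)^{-1}=[(\kii)^{-1}\khie(\khhe)^{-1}+(\kii)^{-1}\hat\Delta_{1,4}^{(e)}](\hat Q_{1,2}^{(e)})^{-1}$. The Schur-complement identity
\[
(\kii)^{-1}(\kii)^{-\top}+(\kii)^{-1}\khie(\khhe)^{-1}(\khhe)^{-\top}(\khie)^{\top}(\kii)^{-\top}=\bigl(B^\top(I-\mathsf{P}_{A^{(e)}})B\bigr)^{-1}
\]
is the $F_\tti$-block of $((B^{(e)})^\top B^{(e)})^{-1}$. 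It shows that the row block $[(\kii)^{-1},\,-(\kii)^{-1}\khie(\khhe)^{-1}]$ has norm $(\lambda_{B\setminus A}\|\Sigma_U\|_2)^{-1/2}$. This is what yields, up to $\hat\Delta_{1,i}$ corrections:
\begin{itemize}
\item the noise term $(\lambda_{B\setminus A})^{-1/2}$;
\item the attenuation term $(\lambda_{A\setminus B}\lambda_{B\setminus A})^{-1/2}\le 1/\lambda$;
\item the leakage bound $\|(\kii)^{-1}\khie(\khhe)^{-1}\hat\Delta_{1,3}^{(e)}\kii\|_2\le\sqrt\kappa\|\hat\Delta_{1,3}^{(e)}\|_2$.
\end{itemize}
After that, the crude factor $\sqrt\kappa$ may only be paid on terms that already carry $\delta_\Sigma$ times a small factor. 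That is the origin of the $\delta_\Sigma\sqrt\kappa\{\cdots\}$ part of $g$.
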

\aosversion{\begin{proof}[Proof of \cref{prop:loading-to-factor-i}]
    See \cref{sec:proof-prop:loading-to-factor-i}.
\end{proof}}{}

We are ready to state our main theorem on factor estimation error. Denote
\begin{align}
\label{eq:lda-1-2-def}
    \lambda_{\star,1}^{(e)} &= \min\left\{\snre, \snrbar |\mathcal{E}| \cdot \epsilon_A^2\right\}, \qquad  \qquad \lambda_{\star, 2}^{(e)} = \min\left\{\snre, \snrbar \epsilon_A\right\}. 
\end{align}

\begin{theorem}
\label{thm:factor-est-full}
Suppose \cref{cond:x-reg} -- \cref{cond:separate} hold. There exists a large constant $\tilde{C}_1, \tilde{C}_2, \tilde{C}_3=\poly(c_1)$ such that for any $t\ge 1$, denote $\tilde{r}^{(e)} = r^{(e)} + \log(|\mathcal{E}|) + t$, if \begin{align}
\label{eq:cond-large-n-append}
        \max\left\{\frac{\sup_{e\in \mathcal{E}}\tilde{r}^{(e)}}{n_x}, \sup_{e\in \mathcal{E}} \delta_W^{(e)}(t+\log(4|\mathcal{E}|)), \delta_{W_\tti}(t) / \epsilon_A \right\}\le 1/\tilde{C}_1,
\end{align} and we pick $ \lambda_{\mathtt{ihd}} \in [\tilde{C}_2 \delta_{W_{\tti}}(t), \epsilon_A - \tilde{C}_2 \delta_{W_{\tti}}(t)]$, then the following event occurs with probability at least $1-4e^{-t}$: there exists $|\mathcal{E}| + 1$ invertible matrices $Q_\tti, \{Q_\tth^{(e)}\}_{e\in \mathcal{E}}$ such that their singular values all lie in $[1/(2\sqrt{c_1}), 2\sqrt{c_1}]$, and 
\begin{align*}
    \frac{Q_\tth^{(e)} (\hat{\Phi}_\tth^{(e)})^\top X^{(e)} - F_\tth^{(e)}}{\delta_{F_\tth}^{(e)}} \qquad\text{and}\qquad 
    \frac{Q_{\tti} (\hat{\Phi}_{\tti}^{(e)})^\top X^{(e)} - F_{\tti}^{(e)}}{\delta_{F_\tti}^{(e)}}
\end{align*} are both jointly sub-Gaussian with parameter $1$, where
\begin{align}
    \delta_{F_\tth}^{(e)} &= \tilde{C}_3 \left[\frac{1}{\sqrt{\snrab^{(e)}}} + \sqrt{\kappa^{(e)}} \delta_{F, 2}^{(e)} \right],\\
    \delta_{F_\tti}^{(e)} &= \tilde{C}_3 \Bigg[\frac{1}{\sqrt{\snrba^{(e)}}} + \sqrt{\frac{\tilde{r}^{(e)}}{n_x}} + \sqrt{\kappa^{(e)}} \delta_{F, 2}^{(e)} \left(1 + \sqrt{\kappa^{(e)}} \cdot \sqrt{\frac{\tilde{r}^{(e)}}{n_x}}\right)\Bigg],
\end{align} and where $\delta_{F, 2}^{(e)} = \sqrt{\frac{\tilde{r}^{(e)}}{n_x \cdot \lambda_{\star, 1}^{(e)}}} + \frac{1}{\lambda_{\star, 2}^{(e)}} + \frac{d + \log(|\mathcal{E}|) + t}{n_x \cdot (\lambda_{\star, 1}^{(e)} \land \lambda_{\star, 2}^{(e)})}$ can be interpreted as the second order error in estimating the factor loadings. 
\end{theorem}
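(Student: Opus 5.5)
The proof assembles the earlier intermediate results on a common high-probability event and then simplifies the resulting rates. Fix $t\ge 1$ and work on $\mathcal{A}_3(t)\cap\mathcal{A}_4(t)$. By \cref{prop:wi-wh} and \cref{lemma:concentrate-3}, with a union bound, this event has probability at least $1-4e^{-t}$: the failure probability is $3e^{-t}$ for $\mathcal{A}_3$ and $e^{-t}$ for $\mathcal{A}_4$. The hypotheses of both results are exactly \eqref{eq:cond-large-n-append} together with the prescribed range of $\lambda_{\mathtt{ihd}}$, provided $\tilde C_1$ is taken as the maximum of the constants appearing there. On this event we have $\hat r_\tti=r_\tti$, and the loading bounds \eqref{eq:wihat-wi}--\eqref{eq:wh-wh} hold. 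Everything that follows is deterministic on this event.

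The second step feeds these loading bounds into \cref{lemma:hatk} and then into \cref{prop:loading-to-factor-i}.

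\cref{lemma:hatk} requires $8\sqrt{\kappa^{(e)}}\|W_\tti^\top\hat W_\tth^{(e)}\|_2\le 1$. \cref{prop:loading-to-factor-i} requires $\lambda^{(e)}\ge 100$, together with smallness of $\|\hat\Delta_{1,3}^{(e)}\|_2$, $\|\hat\Delta_{1,4}^{(e)}\|_2$ and $\delta_\Sigma/\sqrt{\lambda_{B\setminus A}^{(e)}}$ relative to $(5c_1\sqrt{\kappa^{(e)}})^{-1}$.

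I would verify these conditions from the explicit bounds:
\begin{itemize}
\item $\|W_\tti^\top\hat W_\tth^{(e)}\|_2\lesssim \delta_\tth^{(e)}=\delta_{\tth,1}^{(e)}+\delta_{\tth,2}^{(e)}$;
\item $\|\hat\Delta_{1,3}^{(e)}\|_2\lesssim\delta_\tth^{(e)}$;
\item $\|\hat\Delta_{1,4}^{(e)}\|_2\lesssim\delta_{\tti,1}^{(e)}+\delta_{\tti,2}$;
\item $\delta_\Sigma\lesssim\sqrt{\tilde r^{(e)}/n_x}$.
\end{itemize}
Each of these is controlled by $\delta_{F,2}^{(e)}$, because $\delta_{\tth,1}^{(e)}+\delta_{\tth,2}^{(e)}\lesssim\delta_{F,2}^{(e)}$. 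To see this, compare the definitions \eqref{eq:deltai2}--\eqref{eq:deltah1} with \eqref{eq:lda-1-2-def}: the $\epsilon_A$ and $|\mathcal{E}|$ factors are exactly what converts $\bar\lambda$ into $\lambda_{\star,1}^{(e)}$ and $\lambda_{\star,2}^{(e)}$.

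If $\sqrt{\kappa^{(e)}}\,\delta_{F,2}^{(e)}$ is small, all the smallness conditions hold. If it is not small, the claimed bound exceeds a constant. In that case I would argue that the conclusion is trivially attainable by choosing the constant $\tilde C_3$ large, or else enforce smallness through \eqref{eq:cond-large-n-append}. This case split is a bookkeeping point I would need to handle carefully.

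\cref{prop:loading-to-factor-i}(1) then gives the heterogeneous bound directly: the error is sub-Gaussian with parameter $\sqrt{\kappa^{(e)}}\,\delta_\tth^{(e)}+(\lambda_{A\setminus B}^{(e)})^{-1/2}$, which is $\lesssim\delta_{F_\tth}^{(e)}$.

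For the invariant part, I would substitute the bounds above into the parameter $g$ in \eqref{eq:factor-i-error}, grouping terms as follows:
\begin{itemize}
\item $\delta_\Sigma$ contributes $\sqrt{\tilde r^{(e)}/n_x}$.
\item $1/\lambda^{(e)}$ and $\sum_i\|\hat\Delta_{1,i}^{(e)}\|_2+\sqrt{\kappa^{(e)}}\|\hat\Delta_{1,3}^{(e)}\|_2$ are at most $\sqrt{\kappa^{(e)}}\,\delta_{F,2}^{(e)}$, using $\kappa^{(e)}\ge1$. The term $\|\hat\Delta_{1,2}^{(e)}\|_2\lesssim\sqrt{\kappa^{(e)}}\,\delta_\tth^{(e)}$ fits here.
\item $\delta_\Sigma\sqrt{\kappa^{(e)}}/\sqrt{\lambda_{B\setminus A}^{(e)}}$ is absorbed into $1/\sqrt{\lambda_{B\setminus A}^{(e)}}$ once $\sqrt{\kappa^{(e)}}\,\delta_\Sigma\lesssim1$; otherwise it is dominated by the cross term $\kappa^{(e)}\,\delta_\Sigma\,\delta_{F,2}^{(e)}$.
\item $\delta_\Sigma\,\kappa^{(e)}\,\|\hat\Delta_{1,3}^{(e)}\|_2$ gives $\sqrt{\kappa^{(e)}}\,\delta_{F,2}^{(e)}\cdot\sqrt{\kappa^{(e)}\tilde r^{(e)}/n_x}$.
\end{itemize}
Adding these yields $\delta_{F_\tti}^{(e)}$. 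The singular values of $Q_\tti$ lie in the stated range once $g\le 1/\tilde C_1$, which holds by the same smallness argument.

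I expect the main obstacle to be the reconciliation of rates, namely showing $\delta_{\tti,1}^{(e)}+\delta_{\tti,2}+\delta_{\tth,1}^{(e)}+\delta_{\tth,2}^{(e)}\lesssim\delta_{F,2}^{(e)}$ uniformly in $e$. The $\delta_{\tti,2}$ term carries the factor $1\wedge(\epsilon_A|\mathcal{E}|)$ in its denominator. It has to be matched to $(d+\log|\mathcal{E}|+t)/(n_x(\lambda_{\star,1}^{(e)}\wedge\lambda_{\star,2}^{(e)}))$ by checking the two cases $\epsilon_A|\mathcal{E}|\ge1$ and $\epsilon_A|\mathcal{E}|<1$ separately. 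The $\delta_{\tti,1}^{(e)}$ term must likewise be matched to $\sqrt{\tilde r^{(e)}/(n_x\bar\lambda|\mathcal{E}|\epsilon_A^2)}$.
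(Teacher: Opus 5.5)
Your proposal is correct and follows essentially the same route as the paper's proof. Both work on $\mathcal{A}_3(t)\cap\mathcal{A}_4(t)$ and dominate all loading-error quantities by $\delta_{F,2}^{(e)}$. The paper only calls that step ``easy to verify''; your split into $\epsilon_A|\mathcal{E}|\ge 1$ and $\epsilon_A|\mathcal{E}|<1$ is the right check. Both then dismiss the regimes where $\sqrt{\kappa^{(e)}}\,\delta_{F,2}^{(e)}$ or $g$ fails to be small as trivial by enlarging $\tilde C_3$, and substitute into \cref{lemma:hatk} and \cref{prop:loading-to-factor-i}. Your alternative of enforcing smallness through \eqref{eq:cond-large-n-append} is not available, since that condition does not involve $\kappa^{(e)}$.

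One simplification: the term $\sqrt{\kappa^{(e)}}\,\delta_\Sigma/\sqrt{\lambda_{B\setminus A}^{(e)}}$ needs no case split. Since $\lambda_{B\setminus A}^{(e)}\ge\lambda^{(e)}$ (a Schur-complement bound) and $\sqrt{\tilde r^{(e)}/(n_x\lambda^{(e)})}\le\delta_{F,2}^{(e)}$, it is directly $\lesssim\sqrt{\kappa^{(e)}}\,\delta_{F,2}^{(e)}$. This is also the unstated fact behind your ``otherwise'' branch.
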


\begin{proof}[Proof of \cref{thm:factor-est-full}]
    Denote $t_\star = t + \log(|\mathcal{E}|) + 1$. Recall the definition of $\lambda_{\star, 1}^{(e)}$ and $\lambda_{\star, 2}^{(e)}$ in \eqref{eq:lda-1-2-def}. It is easy to verify that 
    \begin{align*}
        \max\left\{(\lambda^{(e)})^{-1},\delta_{\tti, 2}, \delta_{\tth, 2}^{(e)}, \delta_{\tti, 1}^{(e)}, \delta_{\tth, 1}^{(e)}\right\} \le \sqrt{\frac{r^{(e)} + t_\star}{n_x \cdot \lambda_{\star, 1}^{(e)}}} + \frac{1}{\lambda_{\star, 2}^{(e)}} + \frac{d + t}{n_x \cdot (\lambda_{\star, 1}^{(e)} \land \lambda_{\star, 2}^{(e)})} = \delta_{F, 2}^{(e)}
    \end{align*}

    We first specify $\tilde{C}_1,\tilde{C}_2$ and declare the high probability event under which the result would hold. Let $C_1$ be the constant $\tilde{C}_1$ in \cref{prop:wi-wh}, and $C_2$ be the constant $\tilde{C}_1$ in \cref{lemma:concentrate-3}, we set $\tilde{C}_1\gets C_1 \lor C_2$ and $\tilde{C}_2 \gets C_1$. Under \cref{cond:x-reg} and \cref{cond:separate}, and the condition in \eqref{eq:cond-large-n-append} holds with $\tilde{C}_1 = C_1 \lor C_2$, we can apply \cref{prop:wi-wh}, \cref{lemma:concentrate-3}. The rest of the proof proceeds conditioned on the event $\mathcal{A}_{3}(t) \cap \mathcal{A}_{4}(t)$, which occurs with probability at least $1-4e^{-t}$ by the union bound. 
    
    It follows from \cref{prop:wi-wh} that we have the upper bound for the errors in \eqref{eq:wihat-wi} -- \eqref{eq:wh-wh} that characterizes either $\|\sin \Theta(\cdot, \cdot)\|_2$ or $\|\cos \Theta(\cdot, \cdot)\|_2$ between $\hat{W}_c$ and $W_{c'}$ with $c,c'\in \{\tti, \tth\}$. We assume that $8C_1 \cdot \delta_{F,2}^{(e)} \cdot \sqrt{\kappa^{(e)}} \le (40c_1)^{-1}$ such that we can apply \cref{lemma:hatk} and further \cref{prop:loading-to-factor-i}, otherwise the upper bound is of constant order and thus is trivial.

    Let $C_3$ be the constant $\tilde{C}_1$ in \cref{prop:loading-to-factor-i}. The estimation error for the heterogeneous factor and the well-conditionedness of the ambiguity matrix $Q^{(e)}_\tth$ follows directly from the first claim (1) in \cref{prop:loading-to-factor-i}.
    
    For the invariant factor estimation error, we have the simple bound $\|\hat{\Delta}_{1,j}\|_2 \le 8C_1 \cdot (2\delta_{F,2}^{(e)})$ for $j\in \{1,3,4\}$ and $\|\hat{\Delta}_{1,2}\|_2 \le 8C_1 \cdot (2\sqrt{\kappa^{(e)}} \delta_{F,2}^{(e)})$ by the error bounds \eqref{eq:delta-i-i}--\eqref{eq:delta-h-h} and \eqref{eq:wihat-wh}--\eqref{eq:wh-wh}. At the same time, $\delta_\Sigma^{(e)} \le C_2 \sqrt{\tilde{r}^{(e)}/n_x}$ by \cref{lemma:concentrate-3}. Now we substitute these error bounds into the quantity $g$ in \eqref{eq:factor-i-error}, 
    \begin{align*}
        g&\le C_4 \Bigg\{\sqrt{\frac{1}{\snrba^{(e)}}} + \sqrt{\frac{r^{(e)} + t_\star}{n_x}} + \sqrt{\kappa^{(e)}} \left(\delta_{F,2}^{(e)}\right) \\
        &\qquad \qquad + \sqrt{\kappa^{(e)}} \sqrt{\frac{r^{(e)} + t_\star}{n_x}} \left[\sqrt{\frac{1}{\snrba^{(e)}}} + \sqrt{\kappa^{(e)}} \left(\delta_{F,2}^{(e)}\right)\right] \Bigg\}\\
        &\le 2C_4 \left[\sqrt{\frac{1}{\snrba^{(e)}}} + \sqrt{\frac{r^{(e)} + t_\star}{n_x}} + \sqrt{\kappa^{(e)}} \delta_{F,2}^{(e)} \left(1 + \sqrt{\kappa^{(e)}} \sqrt{\frac{r^{(e)} + t_\star}{n_x}} \right) \right]
    \end{align*} for some $C_4$ depending polynomially on $C_1,C_2,C_3$. Again, we can assume $g \le 1/C_3$ because otherwise we can pick the constant $\tilde{C}_3$ large enough such that the error bound trivially holds. This concludes the proof.
\end{proof}

\subsection{Prediction-invariant factors, invariant signals, and predictions}
\label{appendix:factor-est-star}

We first introduce some additional notations, recall the setting in \cref{thm:factor-est-full}, and further define the factor disentanglement errors
\begin{align*}
    \delta_{F}^{(e)} = \sqrt{2}(\delta_{F_\tti}^{(e)} + \delta_{F_\tth}^{(e)}) \qquad \text{and} \qquad \bar{\delta}_{F} = \sqrt{\frac{1}{|\mathcal{E}_z|}\sum_{e\in \mathcal{E}_z} (\delta_{F}^{(e)})^2} 
\end{align*}

Define $\vartheta = 1 + \sup_{k\in [q]} \|\xi^\star_k\|_2$ and signal strength $\mathsf{s}_\Xi = \nu_{\min}(\Xi_\tti^\star) \land \nu_{\min}(\Xi_\tth^\star)$. Recall that $\bar{r} = \frac{1}{|\mathcal{E}_z|} \sum_{e\in \mathcal{E}_z} r^{(e)}$. 
Our proof is based on the first-order approximation of the initial GLM solutions $\hat\Psi_{\tti}$ and $\hat\Psi_\tth^{(e)}$. For any $t\ge 1$, we denote the second-order errors as 
\begin{align}
\label{eq:delta-z}
\begin{split}
\bar{\delta}_{Z,2} &= \vartheta^2 \left[\frac{\bar{r} \log(n_z) + \log(q|\mathcal{E}_z|) + t}{n_z} + [\bar{\delta}_F]^2 \right] \\
\delta_{Z,2}^{(e)} &= \bar{\delta}_{Z,2} + \vartheta^2 \left[\frac{r^{(e)} \log(n_z)}{ n_z} + [{\delta}_F^{(e)}]^2\right].
\end{split}
\end{align} 

We briefly comment on the conditions in \cref{cond:z-reg}, especially about how to relax them:

\begin{itemize}
    \item The i.i.d. assumption on \cref{cond:z-reg} (a) is relatively standard. Here we assume $\mathbb{E}[\varepsilon_k |F, U] = 0$ to simplify the proof for the first-order approximation of GLM solution. One can replace it with $\mathbb{E}[\varepsilon_k^{(e)} F^{(e)}] = 0$ using a similar proof technique, but with a more complicated derivation.
    \item The conditions (b) and (c) are also standard. It is worth noticing that we do not require the Hessian to be well-conditioned from below and above when the covariate $F$ is bounded, (c) is actually minimal.
    \item We assume that the distribution of $(\varepsilon_1, \ldots, \varepsilon_q)$ are jointly sub-Gaussian and thus is dimension-free. We can relax it by dimension-dependent assumption $\Sigma_{\varepsilon}^{-1/2} \varepsilon$ is jointly sub-Gaussian for some positive definite covariance matrix $\Sigma_\varepsilon$ whose maximum eigenvalue may also grow. In this case, one should replace $q$ in the error rate by $q/\|\Sigma_\varepsilon\|_2$.
\end{itemize}

It is worth noticing that we require $n_z \gtrsim \tilde{r}^{3/2}$ in \eqref{eq:cond-z-text}, this is for deriving the first-order approximation for GLM. One can improve it to be $n_z \gtrsim \tilde{r}$ using leave-one-out analysis.

We are now ready to present a general version of \cref{thm:factor-s-est}. The condition \eqref{eq:cond-z} reduces to \eqref{eq:cond-z-text} when $\vartheta \asymp 1$ and $\mathsf{s}_\Xi \asymp \sqrt{q}$. It implicitly implies that $\mathsf{s}_{\Xi} > 0$ and thus \cref{cond:ident-z} holds. \cref{thm:factor-s-est} directly follows from \cref{thm:z-step1} below. 

\begin{theorem}
\label{thm:z-step1}
Under the setting of \cref{thm:factor-est-full} and the event defined therein, suppose further that  \cref{cond:z-reg} holds. There exist constants $\tilde{C}_1, \tilde{C}_2, \tilde{C}_3 = \poly(c_1, c_2)$ such that for any $t\ge 1$, if
\begin{align}
\label{eq:cond-z}
\begin{split}
    (\tilde{C}_1)^{-1} \ge \max\Bigg\{&|\mathcal{E}_z| \cdot \sup_{e\in \mathcal{E}_z} \delta_{Z,2}^{(e)}, \frac{\sup_{e\in \mathcal{E}_z}(r^{(e)} + \log(q |\mathcal{E}_z|) + t)^{3/2}}{n_z}, \\
    &~~~ \frac{\sqrt{q}}{\mathsf{s}_\Xi} \sup_{e\in \mathcal{E}_z} (\vartheta \delta_F^{(e)} + {\delta}_{Z,2}^{(e)}), \frac{\sqrt{\sup_{e\in \mathcal{E}_z} r_\tth^{(e)} + q + t + \log(|\mathcal{E}_z|) + (r_\tti/|\mathcal{E}_z|)}}{\sqrt{n_z} \cdot \mathsf{s}_\Xi} \Bigg\}
\end{split}
\end{align} by picking $\lambda_{\mathsf{sel}} \in [\tilde{C}_2 \lambda_{\mathsf{sel}, \star}, \mathsf{s}_{\Xi} / (2\sqrt{c_1}) - \tilde{C}_2 \lambda_{\mathsf{sel}, \star}]$ with $\lambda_{\mathsf{sel},\star} = \sqrt{q} \sup_{e\in \mathcal{E}_z}(\vartheta \delta_F^{(e)} + \delta_{Z,2}^{(e)}) + (\sup_{e\in \mathcal{E}_z} r_\tth^{(e)} + q + t + \log(|\mathcal{E}_z|) + r_\tti / |\mathcal{E}_z|)^{1/2} / \sqrt{n_z}$, conditioned on the event in \cref{thm:factor-est-full}, the following event occurs with probability at least $1-7e^{-t}$: there exist invertible matrices $Q_{S_\tti}$ and $Q_{S_\tth}$ such that
\begin{align*}
    \frac{Q_{S_\tti} (\hat{\Phi}_{S_\tti})^\top (\hat{\Phi}^{(e)}_{\tti})^\top X^{(e)} - F_{S_\tti^\star}^{(e)}}{\delta_{{S_\tti}}^{(e)}} \qquad \text{and} \qquad \frac{Q_{S_\tth} (\hat{\Phi}_{S_\tth}^{(e)})^\top (\hat{\Phi}^{(e)}_{\tth})^\top X^{(e)} - F_{S_\tth^\star}^{(e)}}{\delta_{{S_\tth}}^{(e)}} 
\end{align*} are jointly sub-Gaussian with parameter $1$ for any $e\in \mathcal{E}$ and any $e\in \mathcal{E}_z$, respectively, where
\begin{align}
\label{eq:si-sg}
    \delta_{S_\tti}^{(e)} &= \tilde{C}_3 \delta_F^{(e)} + \frac{\tilde{C}_3}{\mathsf{s}_\tti} \left[\sqrt{q} (\vartheta \bar{\delta}_F + \bar{\delta}_{Z,2}) + \sqrt{\frac{r_\tti + t}{n_z \cdot |\mathcal{E}_z|}} + \frac{q}{n_z \cdot |\mathcal{E}_z| \cdot \mathsf{s}_\tti}\right] \\
\label{eq:sh-sg}
\begin{split}
    \delta_{S_\tth}^{(e)} &= \tilde{C}_3 \delta_F^{(e)} + \frac{\tilde{C}_3}{\mathsf{s}_\tth} \Bigg[\sqrt{q} \left(\vartheta (\bar{\delta}_F + \delta_F^{(e)}) + {\delta}_{Z,2}^{(e)} \right) + \sqrt{\frac{r_\tth^{(e)} + \log(|\mathcal{E}_z|) + t}{n_z}} \\
    &\qquad \qquad \qquad \qquad + \frac{q^{1.5}}{n_z^{1.5} \mathsf{s}_\tth^2} + \frac{q }{\sqrt{|\mathcal{E}_z|} \cdot n_z \mathsf{s}_\tth}\Bigg]
\end{split}
\end{align} with $\mathsf{s}_\tti = \nu_{\min}(\Xi_{\tti}^\star)$ and $\mathsf{s}_\tth = \nu_{\min}(\Xi_{\tth}^\star)$. Moreover, $\hat{\Xi}_\tth$ in \eqref{eq:est-xi-h} satisfies, for $V_\tth^\star \in \mathcal{O}_{q \times |S_\tth^\star|}$ with $\col(V_\tth^\star) = \col(\Xi_\tth^\star)$
\begin{align}
\label{eq:sh-xi}
    \left\|\sin\Theta(\hat{\Xi}_\tth, V_\tth^\star) \right\|_2 \le \frac{\tilde{C}_3}{\nu_{\min}(\Xi_\tth^\star)} \left[\sqrt{q} (\vartheta \bar{\delta}_F + \bar{\delta}_{Z,2}) + \sqrt{\frac{\bar{r} - r_\tti + q + t}{n_z \cdot |\mathcal{E}_z|}} + \frac{q}{n_z \cdot \nu_{\min}(\Xi_{\tth}^\star)}\right].
\end{align} 
\end{theorem}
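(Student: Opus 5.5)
The proof runs in four stages: (i) reduce the feasible GLM \eqref{eq:est-psi} to an oracle GLM on the true factors, (ii) expand its solution to first order, (iii) push that expansion through the spectral steps \eqref{eq:est-si}--\eqref{eq:est-sh}, and (iv) read off the error of the composed factor maps. Everything is conditional on the event of \cref{thm:factor-est-full}.

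\textbf{Stage (i): oracle reparametrization.} Write $\tilde F_\tti^{(e)} = Q_\tti^{-1}(F_\tti^{(e)}+\Delta_{F_\tti}^{(e)})$ and $\tilde F_\tth^{(e)} = (Q_\tth^{(e)})^{-1}(F_\tth^{(e)}+\Delta_{F_\tth}^{(e)})$, where the errors are sub-Gaussian with parameters $\delta_{F_\tti}^{(e)}$ and $\delta_{F_\tth}^{(e)}$. The population target of \eqref{eq:est-psi} is then $\Psi_\tti^\star = [\Xi_\tti^\star,0]\,E_{S_\tti}^\top Q_\tti$ (shared across environments) and $\Psi_\tth^{\star(e)} = \Xi_\tth^\star E_{S_\tth}^\top Q_\tth^{(e)}$, where $E_S$ is the coordinate selector. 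Hence $\mathrm{rank}(\Psi_\tti^\star)=|S_\tti^\star|$, its right singular space is $\col(Q_\tti^\top E_{S_\tti})$, its left singular space of $\Psi_\tth^{\star(e)}$ is $\col(\Xi_\tth^\star)$ for every $e$, and its smallest nonzero singular value is at least $\mathsf{s}_\Xi/(2\sqrt{c_1})$.

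\textbf{Stage (ii): first-order GLM expansion.} I would write $\hat\Psi-\Psi^\star$ as $H^{-1}$ times the empirical score, plus a remainder. The score has two parts. One is the noise term $\frac{1}{n_z|\mathcal{E}_z|}\sum \varepsilon_i \tilde F_i^\top$, which is isometric sub-Gaussian. For the shared block $\Psi_\tti$ it pools $n_z|\mathcal{E}_z|$ samples; for the environment-specific block it uses only $n_z$. The other part is a bias, because $\sigma_z$ is evaluated at perturbed factors. It is of order $\sqrt{q}\,\vartheta\,\delta_F^{(e)}$ in operator norm, and the directionwise control from \cref{thm:factor-est} is what prevents $r$ from multiplying it. The Hessian is well conditioned by \cref{cond:z-reg}(c) together with $\|Q\|\asymp 1$.

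The remainder should be $\delta_{Z,2}$. To control it I would run a local strong convexity argument on a ball of radius $\asymp \sqrt{(r+q+t)/n_z}$ and use the uniform bound on $\sigma_z''$. The condition $n_z\gtrsim \tilde r^{3/2}$ is exactly what closes this argument. A block-inverse (Schur complement) computation then separates the $\tti$ rows from the $\tth$ rows. This works because the cross block is small: $\mathbb{E}[F_\tti F_\tth^\top]=0$ holds only up to factor error, and I would absorb the cross Hessian terms into the bias. I expect this block decoupling, which must still deliver the $|\mathcal{E}_z|$ pooling gain for the $\tti$ block, to be the \textbf{main obstacle}. The Hessians $H_k^{(e)}$ differ across environments, so I would first linearize and then average.

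\textbf{Stage (iii): spectral perturbation.} Given these bounds, apply Wedin's theorem to $\hat\Psi_\tti$ with the gap $\mathsf{s}_\tti$ to get $\sin\Theta(V_{\hat\Psi_\tti}, Q_\tti^\top E_{S_\tti})$. The threshold window for $\lambda_{\mathsf{sel}}$ guarantees $\hat s_\tti=|S_\tti^\star|$, and likewise $\hat s_\tth=|S_\tth^\star|$.

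For the $\tth$ block, each $U_{\hat\Psi_\tth^{(e)}}$ is within $\sin\Theta$ of $V_\tth^\star$. Averaging the projections gives $M$. I would use the first-order expansion of the projection (the analogue of \cref{prop:first-order-approx-e}, via \cref{thm:first-order-svd}) so that the noise parts average over $|\mathcal{E}_z|$, as in \cref{lemma:concentrate-2}. That yields \eqref{eq:sh-xi}.

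\textbf{Stage (iv): composed factor errors.} For the invariant block, $\hat\Phi_{S_\tti}^\top \tilde F_\tti = V^\top Q_\tti^{-1}(F_\tti+\Delta)$. Because the rotation is near $Q_\tti^\top E_{S_\tti}$, choosing $Q_{S_\tti}$ to invert the aligned part leaves leakage from $F_{(S_\tti^\star)^c}$ of order $\sin\Theta$ plus $\Delta$. Both are sub-Gaussian, which gives \eqref{eq:si-sg}.

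For the heterogeneous block, $\hat\Phi_{S_\tth}^{(e)\top}\tilde F_\tth = \hat\Xi_\tth^\top \hat\Psi_\tth^{(e)}\tilde F_\tth \approx \hat\Xi_\tth^\top \Xi_\tth^\star F_{S_\tth^\star}$. The matrix $Q_{S_\tth}=(\hat\Xi_\tth^\top\Xi_\tth^\star)^{-1}$ is common across $e$ because $\hat\Xi_\tth$ is shared. The error consists of $\|\hat\Psi_\tth^{(e)}-\Psi_\tth^{\star(e)}\|/\mathsf{s}_\tth$ plus $\Delta$, which gives \eqref{eq:sh-sg}.

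Finally, substituting $\vartheta\asymp 1$ and $\mathsf{s}_\Xi\asymp\sqrt q$, and taking $t=10\log n_z$, yields \cref{thm:factor-s-est}.
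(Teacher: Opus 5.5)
Your structural steps are correct: the targets $\Psi_\tti^\star$ and $\Psi_\tth^{\star(e)}$, the left singular space $\mathrm{col}(\Xi_\tth^\star)$ shared across environments, the averaged first-order projection expansion behind \eqref{eq:sh-xi}, and the shared $Q_{S_\tth}=(\hat\Xi_\tth^\top\Xi_\tth^\star)^{-1}$. The gap is in Stages (iii)--(iv), which as written cannot produce \eqref{eq:si-sg}--\eqref{eq:sh-sg}.

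The first-order noise terms there do not depend on $q$. They are $\mathsf{s}_\tti^{-1}\sqrt{(r_\tti+t)/(n_z|\mathcal{E}_z|)}$ and $\mathsf{s}_\tth^{-1}\sqrt{(r_\tth^{(e)}+\log|\mathcal{E}_z|+t)/n_z}$. The dimension $q$ enters only through the bracketed higher-order terms $q/(n_z|\mathcal{E}_z|\mathsf{s}_\tti)$, $q^{1.5}/(n_z^{1.5}\mathsf{s}_\tth^2)$ and $q/(\sqrt{|\mathcal{E}_z|}\,n_z\mathsf{s}_\tth)$, which your proposal never generates.

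Plain Wedin gives $\|\sin\Theta\|_2\lesssim\|\hat\Psi_\tti-\Psi_\tti^\star\|_2/\mathsf{s}_\tti$. The $q\times r_\tti$ GLM noise matrix has operator norm of order $\sqrt{(q+r_\tti+t)/N}$ with $N=n_z|\mathcal{E}_z|$. Likewise, your Stage (iv) quantity $\|\hat\Psi_\tth^{(e)}-\Psi_\tth^{\star(e)}\|_2/\mathsf{s}_\tth$ contains $\mathsf{s}_\tth^{-1}\sqrt{q/n_z}$.

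Condition \eqref{eq:cond-z} forces $\sqrt q/(\sqrt{n_z}\,\mathsf{s}_\Xi)\le\tilde C_1^{-1}$, so $q/(N\mathsf{s}_\tti)\ll\sqrt{q/N}$. Hence when $q\gg r$ and $\delta_F$ is negligible, your bounds exceed the claimed ones by an unbounded factor. In the pervasive regime $\mathsf{s}_\Xi\asymp\sqrt q$, your first-order rate would be of order $N^{-1/2}$ and would not improve with $q$. Then \cref{thm:factor-s-est} and its ``more labels help'' conclusion would not follow.

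What is missing is a one-sided perturbation argument that sees the noise only along the fixed low-dimensional space $\mathrm{col}(\Xi^\star)$.

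\emph{Invariant block.} You already use \cref{thm:first-order-svd} for \eqref{eq:sh-xi}; apply it also to the right singular space of $\hat\Psi_\tti=\Psi_\tti^\star+E$. Its leading term is $V_\perp^\top E^\top U_\star\Sigma_\star^{-1}$, where $U_\star$ is a basis of the deterministic $|S_\tti^\star|$-dimensional space $\mathrm{col}(\Xi_\tti^\star)$. Because the label noise is jointly sub-Gaussian with a dimension-free parameter, $E^\top U_\star$ concentrates at $\sqrt{(r_\tti+t)/N}$, plus the bias. The remainder, at most of order $(\|E\|_2/\mathsf{s}_\tti)^2$, is exactly the $q/(N\mathsf{s}_\tti^2)$ term.

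\emph{Heterogeneous block.} Keep the projection: the error is $Q_{S_\tth}\hat\Xi_\tth^\top(\hat\Psi_\tth^{(e)}-\Psi_\tth^{\star(e)})\tilde F_\tth^{(e)}$. Since $\hat\Xi_\tth$ depends on the same data, split $\hat\Xi_\tth=V_\tth^\star O+(\hat\Xi_\tth-V_\tth^\star O)$. The first piece gives the $q$-free rate. The second is bounded by \eqref{eq:sh-xi} times $\sqrt{q/n_z}$, which produces exactly the two $q$-dependent terms in \eqref{eq:sh-sg}.

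Two smaller corrections to Stage (ii):
\begin{itemize}
\item The cross-Hessian $\mathbb{E}[\sigma_z'((\xi_k^\star)^\top F_{S^\star})F_\tti F_\tth^\top]$ is $O(1)$ for nonlinear links such as the logistic, even though $\mathbb{E}[F_\tti F_\tth^\top]=0$. So it cannot be absorbed into the bias. It need not be: eliminating the $\psi_\tth^{(e)}$ blocks of the arrowhead Hessian leaves the Schur complement $|\mathcal{E}_z|^{-1}\sum_e\bigl(H_{\tti\tti}^{(e)}-H_{\tti\tth}^{(e)}(H_{\tth\tth}^{(e)})^{-1}H_{\tth\tti}^{(e)}\bigr)$. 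This is well conditioned by \cref{cond:z-reg}(c) and keeps the $|\mathcal{E}_z|$ pooling.
\item $L$ separates over the $q$ labels, so localize row by row and union-bound over $k$ and $e$. A localization radius containing $q$ would not give the $\log(q|\mathcal{E}_z|)$ dependence in $\delta_{Z,2}$.
\end{itemize}
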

\aosversion{\begin{proof}[Proof of \cref{thm:z-step1}] See \cref{sec:proof-thm-z-step1}.
\end{proof}}{}

We also need a regularity condition for the GLM on the response variable. 

\begin{condition}
\label{cond:y-reg}
    There exists a constant $c_3$ such that the following condition holds for $e\in \mathcal{E}_y$:
    \begin{itemize}
        \item[(a)] We observe i.i.d. $\mathcal{D}_y\supe=\{(X_i\supe, Y_i\supe)\}_{i=1}^{n_y}$ from the model \eqref{eq:model-invariant-y}, $\mathcal{D}_y\supe$ are independent of $\mathcal{D}_x\supe, \mathcal{D}_z\supe$, data across different environments are also independent, $\|\beta^\star\|_2 \le c_3$.
        \item[(b)] $\|\sigma_y'\|_\infty \lor \|\sigma_y''\|_\infty \le c_3$, and Hessian matrix is well-conditioned at $\beta^\star$, $$H_y\supe:=\mathbb{E}[\sigma_y'((\beta^\star)^\top F_{S^\star}\supe) F_{S^\star}\supe (F\supe_{S^\star})^\top]$$ has eigenvalues from $c_3^{-1}$ to $c_3$. $\sigma_y'(\cdot) \ge 0$. 
        \item[(c)] $\varepsilon_y\supe$ is sub-Gaussian with parameter $\sqrt{c_3}$ almost surely. 
    \end{itemize}
\end{condition}

Let $\delta_{S^\star} = \sup_{e\in \mathcal{E}_y} (\delta_{S_\tti}^{(e)} + \delta_{S_\tth}^{(e)})$ be the worst-case factor estimation error in environments $e\in \mathcal{E}_y$. The following theorem is a general version of \cref{thm:beta-est}. 

\begin{theorem}
\label{thm:beta-est}
    Under the setting of \cref{thm:z-step1} and the event defined therein, suppose further that \cref{cond:y-reg} holds. There exists constants $\tilde{C}_1, \tilde{C}_2, \tilde{C}_3=\poly(c_1,c_2, c_3)$ such that if
    \begin{align*}
        |\mathcal{E}_y| \cdot n_y \ge \tilde{C}_1 \cdot (t + |S_\star|) \qquad \text{and} \qquad \delta_{S^\star}^{-1} \ge \tilde{C}_1, 
    \end{align*} then conditioned on the event in \cref{thm:z-step1}, the solution $\hat{\beta}$ in \eqref{eq:est-y} satisfies
    \begin{align*}
        \left\|Q^{-\top} \hat{\beta} - \beta^\star \right\|_2 \le \tilde{C}_2 \underbrace{\left[\sqrt{\frac{|S_\star| + t}{n_y \cdot |\mathcal{E}_y|}} + \delta_{S_\star} \right]}_{\delta_y}  \qquad \text{with} \qquad Q = \begin{bmatrix}
        Q_{S_\tti} & 0\\
        0 & Q_{S_\tth} 
        \end{bmatrix}
    \end{align*} with probability at least $1-e^{-t}$. Moreover, we further have
    \begin{align*}
        \forall e\in \mathcal{E}_z \setminus \mathcal{E}_y,  ~~~&\sqrt{\mathbb{E}\left[\left\| \hat{\beta}^\top \tilde{F}\supe_S(X\supe) -  (\beta^\star)^\top F_{S^\star}\supe \right\|_2^2\right]} \le \tilde{C}_3 \left[\delta_y + (\delta^{(e)}_{S_\tti} + \delta^{(e)}_{S_\tth}) \right],\\
        \forall e\in \mathcal{E} \setminus \mathcal{E}_z,  ~~~& \sqrt{\mathbb{E}\left[\left\| \hat{\beta}_\tti^\top \tilde{F}\supe_{S_\tti}(X\supe) -  (\beta_\tti^\star)^\top F_{S^\star_\tti}\supe \right\|_2^2\right]} \le \tilde{C}_3 \left[\delta_y + \delta^{(e)}_{S_\tti}\right].
    \end{align*}
\end{theorem}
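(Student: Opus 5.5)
The proof has two parts: first bound the pooled GLM estimate $\hat\beta$, then transfer that bound to prediction error in new environments.

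\textbf{Reduction to a GLM with errors-in-variables.} Condition on $\mathcal{D}_x,\mathcal{D}_z$ and on the event of \cref{thm:z-step1}. The maps $\tilde F_S^{(e)}(\cdot)$ are then fixed, and the samples in $\mathcal{D}_y$ are i.i.d. within each environment and independent across environments. Write
\[
\tilde F_S^{(e)} = Q F_{S^\star}^{(e)} + Q\Delta^{(e)},
\]
where $\Delta^{(e)}$ is jointly sub-Gaussian with parameter at most $\delta_{S^\star}$. Reparametrize by $\gamma = Q^\top\beta$, so that $\beta^\top\tilde F_S^{(e)} = \gamma^\top(F_{S^\star}^{(e)}+\Delta^{(e)})$. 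The target is then $\|\hat\gamma-\beta^\star\|_2 \lesssim \delta_y$. Since the singular values of $Q$ are bounded, $\hat\gamma = Q^{\top}\hat\beta$ is a well-conditioned transformation of $\hat\beta$.

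\textbf{Localized GLM analysis.} Define the pooled population risk
\[
\bar L(\gamma)=\frac{1}{|\mathcal{E}_y|}\sum_{e}\mathbb{E}\,\ell_y\bigl(\gamma^\top(F_{S^\star}^{(e)}+\Delta^{(e)}),Y^{(e)}\bigr).
\]
The argument has three steps.

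First, control the gradient at $\beta^\star$. We have
\[
\nabla \bar L(\beta^\star) = \text{avg}_e\, \mathbb{E}\bigl[\{\sigma_y(\beta^{\star\top}(F+\Delta)) - Y\}(F+\Delta)\bigr].
\]
Use $\mathbb{E}[\varepsilon_y\mid F_\tti,F_{S_\tth^\star}]=0$, plus independence of the new $\varepsilon_y$ from the first-stage data. Together with the Lipschitz property of $\sigma_y$ and the sub-Gaussian bounds, this gives $\|\nabla\bar L(\beta^\star)\|_2\lesssim \delta_{S^\star}$. One caveat: $\Delta$ may correlate with $\varepsilon_y$ through $U$. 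This is handled because $\Delta$ is a function of $(F,U)$ only, so I need $\mathbb{E}[\varepsilon_y \Delta]=0$. I expect this to follow from the conditional mean-zero structure; if not, I will bound the term by Cauchy--Schwarz, which gives order $\delta_{S^\star}$ anyway.

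Second, establish local strong convexity. The Hessian $\text{avg}_e\,\mathbb{E}[\sigma_y'(\cdot)(F+\Delta)(F+\Delta)^\top]$ is within $O(\delta_{S^\star})$ of $H_y^{(e)}$, using $\|\sigma_y''\|_\infty\le c_3$. So for $\delta_{S^\star}\le \tilde C_1^{-1}$ it is well conditioned on a constant-radius ball around $\beta^\star$.

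Third, bound the empirical fluctuation of the gradient. Standard sub-Gaussian concentration over $|\mathcal{E}_y| n_y$ independent samples gives a deviation of order $\sqrt{(|S^\star|+t)/(n_y|\mathcal{E}_y|)}$. This uses a net argument on the sphere in $\mathbb{R}^{|S^\star|}$, together with the uniform Hessian concentration that the sample-size condition permits.

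Convexity of $\ell_y$ (from $\sigma_y'\ge0$) lets the local argument extend globally: the minimizer lies in the ball, and the gradient and curvature bounds combine to give $\|\hat\gamma-\beta^\star\|_2\lesssim\delta_y$.

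\textbf{Transfer to new environments.} For $e\in\mathcal{E}_z\setminus\mathcal{E}_y$ write
\[
\hat\beta^\top\tilde F_S^{(e)}-\beta^{\star\top}F_{S^\star}^{(e)} = (\hat\gamma-\beta^\star)^\top F_{S^\star}^{(e)} + \hat\gamma^\top\Delta^{(e)}.
\]
The first term is bounded using $\|\Sigma_F\|\le c_1$; the second by the sub-Gaussian parameter $\delta^{(e)}_{S_\tti}+\delta^{(e)}_{S_\tth}$ together with $\|\hat\gamma\|\lesssim 1$. For $e\in\mathcal{E}\setminus\mathcal{E}_z$, apply the same decomposition restricted to the invariant block, noting that $Q$ is block diagonal and that \cref{thm:z-step1} covers all $e\in\mathcal{E}$ for $S_\tti$.

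\textbf{Main obstacle.} I expect the hardest step to be the gradient bias term, in particular keeping it linear in $\delta_{S^\star}$ without picking up $\|\beta^\star\|$-dependent or dimension-dependent factors. I will handle it through the first-order expansion of GLM solutions given in Appendix D (the auxiliary GLM approximation result).
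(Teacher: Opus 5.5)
Your proposal is correct and follows essentially the paper's route: the parameter bound is the errors-in-variables GLM result \cref{thm:glm-supp} applied with $d_s=0$, $n=n_y|\mathcal{E}_y|$ and $\delta_X\asymp\delta_{S^\star}$, whose proof is the localized-convexity argument you sketch. The transfer bounds then come from the decomposition $(\hat\gamma-\beta^\star)^\top F_{S^\star}^{(e)}+\hat\gamma^\top\Delta^{(e)}$, as you wrote.

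Two small corrections. First, the paper's convention is $Q\tilde F_S^{(e)}-F_{S^\star}^{(e)}=\Delta^{(e)}$, so the reparametrization is $\gamma=Q^{-\top}\beta$; your $Q$ is the paper's $Q^{-1}$. No conditioning of $Q$ is needed, since $\hat\gamma$ exactly minimizes the reparametrized loss. Second, $\mathbb{E}[\varepsilon_y\Delta^{(e)}]$ generally does not vanish. $\Delta^{(e)}$ contains $U^{(e)}$ and leaked spurious heterogeneous factors, and the model allows both to be correlated with $\varepsilon_y$. So your Cauchy--Schwarz fallback, of order $\sqrt{c_3}\,\delta_{S^\star}$, is the step you actually need.
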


\section{Auxiliary results}
\label{append:first-order-approx}
\subsection{Auxiliary results on matrix denoising}

This section offers first-order approximations of the SVD/PCA solutions when the signal-to-noise ratio is larger than a constant. We establish deterministic results on $\|\cdot\|_2$ error bounds for the residuals. This section is self-contained and may use notations $U, V, E, \Sigma, n, r$ that differ from the semantic meaning defined in the other sections. 

\myparagraph{First-order approximation for SVD.} Let $M_\star$ be an $n_1\times n_2$ low-rank matrix with $\rank(M_\star) = r$, and $M = M_\star + E$ where $E$ is a small perturbation sharing the same shape. Denote $M_\star = U_\star \Sigma_\star (V_\star)^\top$, where $\Sigma_\star \in \mathbb{R}^{r\times r}$ is a diagonal matrix with $\{[\Sigma_{\star}]_{i,i} = \sigma_i\}_{i=1}^r$ representing the singular values of $M_\star$ in descending order $\sigma_1\ge \ldots \ge \sigma_r$, $U_\star \in \mathbb{R}^{n_1\times r}$ represents the left singular vectors of $M_\star$ and $V_\star \in \mathbb{R}^{n_2 \times r}$ represents the right singular vectors of $M_\star$. The following deterministic result offers the first-order approximation of the projection matrix $VV^\top$, where $V \in \mathcal{O}_{n_2\times r}$ is any orthonormal basis for the top-$r$ right singular subspaces of $M$.

\begin{theorem}
\label{thm:first-order-svd}
    Under the above setting, suppose the signal-to-noise ratio $\lambda =  \sigma_r / \|E\|_2 \allowbreak > 10$, we have that
    \begin{align}
    \label{eq:first-order-svd-x}
    V = (V_\star + V_\perp X) (I + X^\top X)^{-1/2} O \qquad \text{with} \qquad X = V_{\perp}^\top E^\top U_\star \Sigma_\star^{-1} + R_X
    \end{align} for $O\in \mathcal{O}_r$ and $\|R_X\|_2 \le 5\lambda^{-2}$, and 
    \begin{align}
    \label{eq:first-order-svd}
        \left\| VV^\top - V_\star (V_\star)^\top - G - G^\top \right\|_2 \le 16 \lambda^{-2} ~~~\text{with}~~~  G=V_{\perp} (V_{\perp})^\top E^\top U_\star \Sigma_\star^{-1} (V_\star)^\top, 
    \end{align} where $V_{\perp} = (V_{\star})_{\perp}$ such that $[V_\star, V_\perp] \in \mathcal{O}_{n_2}$.
\end{theorem}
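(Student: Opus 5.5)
We follow the classical graph/Sylvester-equation route for perturbed invariant subspaces. Since $\lambda>10$, Weyl's inequality gives $\nu_r(M)\ge \sigma_r-\|E\|_2\ge 0.9\sigma_r$ and $\nu_{r+1}(M)\le\|E\|_2$. Hence the top-$r$ right singular subspace of $M$ is well separated. Wedin's $\sin\Theta$ theorem then gives $\|\sin\Theta(V,V_\star)\|_2\le 2\|E\|_2/\sigma_r<1$. In particular $V_\star^\top V$ is invertible, so we may write $V=(V_\star+V_\perp X)C$ with $X:=V_\perp^\top V(V_\star^\top V)^{-1}$ and $C=V_\star^\top V$. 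Orthonormality forces $C^\top(I+X^\top X)C=I$, so $C=(I+X^\top X)^{-1/2}O$ for some $O\in\mathcal O_r$. This gives the form \eqref{eq:first-order-svd-x}, and it remains to identify $X$ to first order. We also record the a priori bound $\|X\|_2\le \|\tan\Theta\|_2\lesssim \lambda^{-1}$.

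\textbf{Equation for $X$.} Consider the top-$r$ left singular subspace of $M$, with basis $\tilde U$. Then $M^\top \tilde U=V\Sigma$, and $\tilde U$ is close to $U_\star$. It is cleanest to work with the invariant-subspace equation for $M^\top M$: since $\col(V)$ is invariant under $M^\top M$, we have $V_\perp^\top M^\top M(V_\star+V_\perp X)=X\,V_\star^\top M^\top M(V_\star+V_\perp X)$. Expanding $M=U_\star\Sigma_\star V_\star^\top+E$, and using $M_\star V_\perp=0$, the left side equals
\[
V_\perp^\top E^\top U_\star\Sigma_\star^2+V_\perp^\top E^\top U_\star\Sigma_\star V_\star^\top\!\cdot 0+V_\perp^\top E^\top E(\cdots)+V_\perp^\top E^\top M_\star V_\perp X .
\]
The right side equals $X\Sigma_\star^2+X\cdot(\text{terms of size }\sigma_1\|E\|)$. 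The Sylvester-type relation is therefore
\[
X\Sigma_\star^2=V_\perp^\top E^\top U_\star\Sigma_\star+\mathcal R(X),
\]
where each term of $\mathcal R$ carries either a factor $\|E\|^2$ or a factor $\|X\|\,\|E\|\,\sigma_1$.

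\textbf{Main obstacle.} The obstacle is controlling $\mathcal R(X)\Sigma_\star^{-2}$ by $O(\lambda^{-2})$ independently of the condition number $\sigma_1/\sigma_r$. The naive bound picks up $\sigma_1/\sigma_r$ from terms like $X V_\star^\top E^\top U_\star\Sigma_\star\Sigma_\star^{-2}$. To avoid this, we would instead use the SVD pair equations directly, namely $MV=\tilde U\Sigma$ and $M^\top\tilde U=V\Sigma$, and parametrize $\tilde U$ as $(U_\star+U_\perp Y)(\cdot)$ similarly. Projecting onto $V_\perp$ and $U_\perp$ gives
\[
X=V_\perp^\top E^\top(U_\star+U_\perp Y)\,\tilde\Sigma^{-1}\ \text{type relations},\qquad Y=U_\perp^\top E(V_\star+V_\perp X)\,\tilde\Sigma^{-1},
\]
where $\tilde\Sigma$ is an $r\times r$ matrix whose inverse composes with $\Sigma_\star$ only through $\Sigma_\star\tilde\Sigma^{-1}\approx I$. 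Each equation is contracted by $\|E\|/\sigma_r$ only, and this is the step where we would expect to fight with conditioning. First, a fixed-point/contraction argument with $\|X\|,\|Y\|\le 2\lambda^{-1}$ shows that $R_X:=X-V_\perp^\top E^\top U_\star\Sigma_\star^{-1}$ is bounded by $\|E\|\|Y\|/\sigma_r+\|X\|\cdot\|\Sigma_\star\tilde\Sigma^{-1}-I\|\lesssim\lambda^{-2}$. Tracking the constants should then give $5\lambda^{-2}$.

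\textbf{Projection expansion.} With $P:=V V^\top=(V_\star+V_\perp X)(I+X^\top X)^{-1}(V_\star+V_\perp X)^\top$, we expand $(I+X^\top X)^{-1}=I+O(\|X\|^2)$. This yields $P=V_\star V_\star^\top+V_\perp XV_\star^\top+V_\star X^\top V_\perp^\top+O(\|X\|_2^2)$. Substituting $X=V_\perp^\top E^\top U_\star\Sigma_\star^{-1}+R_X$ identifies $V_\perp XV_\star^\top=G+V_\perp R_XV_\star^\top$. Combining $\|X\|_2\le 2\lambda^{-1}$ with $\|R_X\|_2\le5\lambda^{-2}$, the total residual is at most $2\cdot 5\lambda^{-2}+4\lambda^{-2}+\text{small}\le16\lambda^{-2}$, which is \eqref{eq:first-order-svd}.
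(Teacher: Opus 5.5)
Your final route is correct and is essentially the paper's argument. With graph coordinates $X$ and $Y$ for the right and left singular subspaces, combining the $V_\perp$- and $V_\star$-projections of $M^\top\hat U=\hat V\hat\Sigma$ cancels $\hat\Sigma$ and gives the exact identity $X\Sigma_\star=V_\perp^\top E^\top U_\star+V_\perp^\top E^\top U_\perp Y-XV_\star^\top E^\top(U_\star+U_\perp Y)$, in which $\Sigma_\star^{-1}$ only ever acts on the right. Hence $\|R_X\|_2\le\lambda^{-1}\{\|Y\|_2+\|X\|_2(1+\|Y\|_2)\}\le 4.4\lambda^{-2}$ once $\|X\|_2,\|Y\|_2\le 2\lambda^{-1}$, and your tally $2\cdot 5+4+\text{small}\le 16$ then gives the projection bound.

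The step you leave as ``type relations'' is precisely this $V_\star$-projection and should be written out. Your $\tilde\Sigma$ is $\Sigma_\star+V_\star^\top E^\top(U_\star+U_\perp Y)$, and multiplying by $\Sigma_\star^{-1}$ on the right avoids inverting it. No fixed-point argument is needed: the identity holds exactly for the actual SVD, and the a priori bounds follow from $V_\perp^\top\hat V\hat\Sigma=V_\perp^\top E^\top\hat U$ and its left analogue.
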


The main technical difficulty here is to get rid of the dependency on the condition number $\sigma_1/\sigma_r$ on the approximation error $1/\lambda^2$. This is the pre-factor that cannot be removed by directly establishing the first-order approximation for standard PCA. 

\myparagraph{First-order approximation for PCA.} We next consider the case where $M, E$ and $M^\star$ are symmetric matrices such that $M=M_\star + E$, while $M_\star$ is only approximately low-rank (with $\lambda_r(M_\star) - \lambda_{r+1}(M_\star) \gg \|E\|_2$). In particular, we write $M_\star$ as 
\begin{align*}
M_\star = \begin{bmatrix} V_\star & V_\perp \end{bmatrix} \begin{bmatrix} \Lambda_\star & 0 \\ 0 & \Lambda_{\perp} \end{bmatrix} \begin{bmatrix} (V_\star)^\top \\ V_\perp^\top \end{bmatrix}.
\end{align*} Here $\Lambda = \begin{bmatrix} \Lambda_\star & 0 \\ 0 & \Lambda_{\perp} \end{bmatrix} = \mathrm{diag}\{\lambda_1, \ldots, \lambda_d\}$ represents the eigenvalues of $M_\star$ in a descending order (i.e., $\lambda_1\ge \lambda_2 \ge \cdots \lambda_d$), $V_\star$ is a $d$ by $r$ matrix whose $j$-th column is the eigenvectors corresponding to $\lambda_j$, and $V_\perp$ is a $d$ by $(d-r)$ matrix whose $j$-th column is the eigenvectors corresponding to $\lambda_{j + r}$. 

This is also the regime considered by \cite{fan2019distributed} Lemma 2. Here we provide the result on $\|\cdot\|_2$ norm rather than $\|\cdot\|_F$ norm in \cite{fan2019distributed}. The following deterministic result offers the first-order approximation of $V$, the top-$r$ eigenvectors of $M$, in \eqref{eq:pca-first-order-x}. The first-order approximation of the projection matrix $VV^\top$, i.e., \eqref{eq:first-order-pca}, is identical to Lemma 2 of \cite{fan2019distributed}. However, only \eqref{eq:first-order-pca}, which is the direct corollary of \eqref{eq:pca-first-order-x}, cannot yield a faster rate for $W_\tti^\top \hat{W}_\tth\supe$ -- we need a slightly stronger result \eqref{eq:pca-first-order-x} here. 

The following theorem offers a first-order approximation for $V \in \mathcal{O}_{d\times r}$, which is any orthonormal basis for the top-$r$ eigenspaces of $M$.

\begin{theorem}
\label{thm:first-order-pca}
    Under the above setting, suppose the signal-to-noise ratio $\lambda = (\lambda_r - \lambda_{r+1}) / \|E\|_2 > 10$, then
    \begin{align}
    \label{eq:pca-first-order-x}
        V = (V^\star + V_\perp X) (I_r + X^\top X)^{-1/2} O \qquad \text{with} \qquad X = G_X + R_X \in \mathbb{R}^{(d - r) \times r}
    \end{align} where
    \begin{align}
        O \in \mathcal{O}_{r}, \qquad \|R_X\|_2 \le 3\lambda^{-2} \qquad \text{and} \qquad G_X = \mathsf{Mut}_{\Lambda,r} \left[(V_{\perp})^\top E V_\star \right]
    \end{align} and where $\mathsf{Mut}_{\Lambda,r}: \mathbb{R}^{(d-r)\times r} \to \mathbb{R}^{(d-r)\times r}$ is a map such that $[\mathsf{Mut}_{\Lambda, r}(A)]_{i,j} = \frac{A_{i,j}}{\lambda_j - \lambda_{r + i}}$. Furthermore,
    \begin{align}
    \label{eq:first-order-pca}
        \left\| VV^\top - V_\star (V_\star)^\top - G - G^\top \right\|_2 \le 10 \lambda^{-2} ~~~\text{with}~~~  G=V_{\perp} G_X (V_\star)^\top.
    \end{align}
\end{theorem}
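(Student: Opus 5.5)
The plan is to write $V$ as a graph over $\col(V_\star)$, derive a Riccati/Sylvester equation for the graph matrix $X$, and read off the first-order term $G_X=\mathsf{Mut}_{\Lambda,r}[(V_\perp)^\top E V_\star]$ by inverting the Sylvester operator.

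\emph{Step 1: a priori control and the parametrization.} Weyl gives $\lambda_r(M)\ge\lambda_r-\|E\|_2$ and $\lambda_{r+1}(M)\le\lambda_{r+1}+\|E\|_2$. Davis--Kahan, applied with the gap between the top-$r$ spectrum of $M$ and the bottom spectrum of $M_\star$, then yields
\[
\|\sin\Theta(V,V_\star)\|_2\le \|E\|_2/(\lambda_r-\lambda_{r+1}-\|E\|_2)\le 1/(\lambda-1)\le 1/9 .
\]
Hence $V_\star^\top V$ is invertible. Define $X:=V_\perp^\top V(V_\star^\top V)^{-1}$, so that $V(V_\star^\top V)^{-1}=V_\star+V_\perp X$. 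Orthonormalizing this matrix gives $V=(V_\star+V_\perp X)(I_r+X^\top X)^{-1/2}O$ for some $O\in\mathcal{O}_r$. Moreover $\|X\|_2=\|\tan\Theta\|_2\le (1/9)/\sqrt{1-1/81}$, and a sharper version of the same bound gives $\|X\|_2\le c/\lambda$ with $c$ slightly above $1$ (e.g. $\le 10/(9\lambda)$).

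\emph{Step 2: the Sylvester equation.} Invariance of $\col(V)$ under $M$ means $M(V_\star+V_\perp X)=(V_\star+V_\perp X)L$ for some $r\times r$ matrix $L$. Write $E_{ij}$ for the blocks of $E$ in the basis $[V_\star,V_\perp]$, with $E_{21}=V_\perp^\top EV_\star$. Projecting onto $V_\star$ gives $L=\Lambda_\star+E_{11}+E_{12}X$. Projecting onto $V_\perp$ and substituting $L$ gives
\[
X\Lambda_\star-\Lambda_\perp X \;=\; E_{21}+E_{22}X-XE_{11}-XE_{12}X .
\]
The operator $\mathcal{S}(X)=X\Lambda_\star-\Lambda_\perp X$ is diagonal in the eigenbases, and its inverse is exactly $\mathsf{Mut}_{\Lambda,r}$. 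Because $\Lambda_\star\succeq\lambda_r I$ and $\Lambda_\perp\preceq\lambda_{r+1}I$ (the two spectra are separated by an interval, not merely disjoint), the representation $\mathcal{S}^{-1}(A)=\int_0^\infty e^{-t(\Lambda_\perp-\lambda_{r+1})}\,A\,e^{-t(\Lambda_\star-\lambda_{r+1})}\,dt$, after shifting, gives the spectral-norm bound $\|\mathsf{Mut}_{\Lambda,r}(A)\|_2\le \|A\|_2/(\lambda_r-\lambda_{r+1})$. Therefore $X=G_X+R_X$ with $R_X=\mathsf{Mut}_{\Lambda,r}(E_{22}X-XE_{11}-XE_{12}X)$, and
\[
\|R_X\|_2\le \lambda^{-1}\big(2\|X\|_2+\|X\|_2^2\big)\le 3\lambda^{-2}
\]
using Step 1.

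\emph{Step 3: the projection expansion.} Expand $VV^\top=(V_\star+V_\perp X)(I+X^\top X)^{-1}(V_\star+V_\perp X)^\top$. The linear part is $V_\perp XV_\star^\top+V_\star X^\top V_\perp^\top$, which equals $G+G^\top$ plus $V_\perp R_XV_\star^\top$ and its transpose; these cost at most $6\lambda^{-2}$. All remaining terms are $O(\|X\|_2^2)$: they come from $(I+X^\top X)^{-1}-I$, of norm at most $\|X\|_2^2$, and from $V_\perp XX^\top V_\perp^\top$. Their total is about $\|X\|_2^2\le(10/9)^2\lambda^{-2}$, well within the remaining budget of $4\lambda^{-2}$, so the total stays below $10\lambda^{-2}$.

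\emph{Main obstacle.} The delicate points are the dimension-free spectral-norm bound on $\mathsf{Mut}_{\Lambda,r}$, which requires the interval-separation argument above rather than entrywise division, and getting the a priori bound on $\|X\|_2$ sharp enough to land the stated constants $3$ and $10$. If the direct Davis--Kahan tan bound is too loose, I would instead obtain $\|X\|_2\le 1.2/\lambda$ by a fixed-point/contraction argument on the Riccati equation inside the ball $\{\|X\|_2\le 2/\lambda\}$, using uniqueness of the small solution.
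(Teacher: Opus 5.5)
Your proposal is correct and follows essentially the paper's route: the graph parametrization $V(V_\star^\top V)^{-1}=V_\star+V_\perp X$, the Riccati equation $X\Lambda_\star-\Lambda_\perp X=E_{21}+E_{22}X-XE_{11}-XE_{12}X$ inverted by $\mathsf{Mut}_{\Lambda,r}$ with the interval-separation bound $\|\mathsf{Mut}_{\Lambda,r}(A)\|_2\le\|A\|_2/(\lambda_r-\lambda_{r+1})$, and then a direct expansion of $VV^\top$ from the representation of $V$. Two harmless slips: the integral should read $\int_0^\infty e^{t(\Lambda_\perp-\lambda_{r+1})}A\,e^{-t(\Lambda_\star-\lambda_{r+1})}\,dt$, since with your sign the left factor grows and the entries are wrong; and the Davis--Kahan route gives $\|X\|_2\le 1/\sqrt{\lambda(\lambda-2)}\le\sqrt{5}/(2\lambda)$ rather than $10/(9\lambda)$, which still yields $\|R_X\|_2\le 2.4\lambda^{-2}$ and a total comfortably below $10\lambda^{-2}$ even with your triangle-inequality bookkeeping.
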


\subsection{Auxiliary results on generalized linear model}

This section offers $L_2$ parameter estimation error on GLM solutions. We first state the setup. We will use notations $(X,Y) \in \mathbb{R}^d\times \mathbb{R}$ whose semantic meaning is different from the main text. 

\paragraph{Setup and estimation} We assume $X \in \mathbb{R}^d$ and $Y$ follows from GLM with known mean function $\sigma(\cdot)$ and ground-truth parameter $\beta_\star \in \mathbb{R}^d$, we observe a transform of the covariate $X$ with certain measurement error, $\tilde{X}$ such that $\Delta_X := Q \tilde{X} - X$ satisfies $\sup_{\|u\|_2 = 1} \mathbb{E}[|u^\top \Delta_X|_2^2] = o(1)$, where $Q$ is fixed but unknown ambiguity matrix admits the form of 
\begin{align*}
Q = \begin{bmatrix}
    Q_s & 0 \\
    0 & Q_h
\end{bmatrix} \qquad \text{with} \qquad Q_s \in \mathbb{R}^{d_s\times d_s}, Q_h \in \mathbb{R}^{(d-d_s) \times(d - d_s)} ~\text{both invertible}
\end{align*} Here $d_s \in \{0, \ldots, d\}$ is pre-known integer. For any $v\in \mathbb{R}^d$, we also write
\begin{align*}
    v = (v_s^\top, v_h^\top) \in \mathbb{R}^d \qquad \text{with} \qquad v_s \in \mathbb{R}^{d_s} \text{ and } v_h \in \mathbb{R}^{d-d_s}.
\end{align*} When $d_s = 0$, we have $v = v_h$. 

In this case, the $n$ observations can be written as
\begin{align}
\begin{split}
    \tilde{X}_i &= Q^{-1} (X_i + \Delta_{X,i}) \\
    Y_i &= \sigma(\beta_\star^\top X_i) + \varepsilon_i \qquad \text{with} \qquad \mathbb{E}[\varepsilon_i X_i] = 0
\end{split} \label{eq:glm-model-supp}
\end{align} for $i\in [n]$. Note that here we observe independent (not necessarily identically distributed) data $\{(\tilde{X}_i, Y_i)\}_{i=1}^n$ from the above model (the distribution of $(X_i,\tilde{X}_i, Y_i)$ can be different, but they share the same $\beta_\star$, $Q$ with the exogeneity on $\varepsilon$) and run MLE estimation with fixed $\bar{\beta}_s$, that is,
\begin{align}
\label{eq:glm-est-supp}
    \hat{\beta} = \argmin_{\beta = [\bar{\beta}_s, \beta_h]} \frac{1}{n} \sum_{i=1}^n [b(\beta^\top \tilde{X}_i) - \beta^\top \tilde{X}_i \cdot Y_i]
\end{align} where $b(v)$ is the function satisfying $b'(v) = \sigma(v)$. We first impose some regularity condition on the above data-generating process.

\begin{condition}
\label{cond:glm-supp}
    Conditioned on fixed $\bar{\beta}_s$ and $Q$, there exists a universal constant $c_4$ such that the following condition holds

    \noindent (a) $\{(\tilde{X}_i, Y_i)\}_{i=1}^n$ are independent observations from the model in \eqref{eq:glm-model-supp}.

    \noindent (b) For each $i\in [n]$, $X_i$ is jointly sub-Gaussian with parameter $c_4$, $\mathbb{E}[X_i X_i^\top]$ has eigenvalues from $c_4^{-1}$ to $c_4$, $\varepsilon_i$ is sub-Gaussian with parameter $c_4$. 
    
    \noindent (c) For each $i\in [n]$, $\Delta_{X,i}$ is jointly sub-Gaussian with parameter $\delta_{X} = o(1)$.

    \noindent (d) The Hessian matrix is well-conditioned at $\beta_\star$, $H_i = \mathbb{E}[\sigma'(\beta_\star^\top X_i) X_iX_i^\top]$ has eigenvalues from $1/c_4$ to $c_4$ for any $i\in [n]$. $\|\sigma'\|_\infty \lor \|\sigma''\|_\infty \le c_4$ and $\sigma'(\cdot) \ge 0$ such that the loss is convex. 
\end{condition}

The next theorem provides the $L_2$ error bound of $\hat{\beta}$ on $\beta_\star$. 

\begin{theorem}
\label{thm:glm-supp}
    Under \cref{cond:glm-supp}, there exists some constant $\tilde{C} = \poly(c_4)$ such that conditioned on given $\bar{\beta}_s$ and $Q$, the following holds for any $t\ge 1$. If $n \ge \tilde{C} [(d-d_s) + t]$, $\|Q_s^{-\top} \bar{\beta}_s - \beta_{\star, s}\|_2 \lor \delta_X \cdot (\|\beta_\star\|_2+1) \le \tilde{C}^{-1}$, then with probability at least $1-e^{-t}$,
    \begin{align*}
        \|Q_h^{-\top} \hat{\beta}_h - \beta_{\star,h}\|_2 \le \tilde{C} \left[\sqrt{\frac{(d - d_s) + t}{n}} + \delta_X \cdot (\|\beta_\star\|_2+1) + \|Q_s^{-\top} \bar{\beta}_s - \beta_{\star, s}\|_2\right]. 
    \end{align*} 
\end{theorem}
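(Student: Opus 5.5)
We treat $\hat\beta_h$ as an M-estimator whose population target is shifted by two perturbations: the covariate error $\Delta_X$ and the plugged-in error in $\bar\beta_s$. The plan is to reparametrize to the true coordinates, establish local strong convexity in a neighborhood of $\beta_{\star,h}$, and then bound the gradient of the loss at $\beta_{\star,h}$. Write $\gamma = Q^{-\top}\beta$. Then $\beta^\top\tilde X_i = \gamma^\top(X_i+\Delta_{X,i})$, so the problem becomes a GLM in $\gamma_h$ with observed covariates $\check X_i := X_i+\Delta_{X,i}$ and the fixed offset $\bar\gamma_s^\top \check X_{i,s}$, where $\bar\gamma_s = Q_s^{-\top}\bar\beta_s$. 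Set $\hat\gamma_h = Q_h^{-\top}\hat\beta_h$; the goal is to bound $\|\hat\gamma_h-\beta_{\star,h}\|_2$. The ambiguity matrix $Q$ then plays no further role.

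Define the empirical loss $\hat L(\gamma_h)$ and its gradient at $\gamma_h=\beta_{\star,h}$:
\[
\nabla\hat L=\frac1n\sum_i\bigl[\sigma(\check\gamma^\top\check X_i)-Y_i\bigr]\check X_{i,h},\qquad \check\gamma=(\bar\gamma_s,\beta_{\star,h}).
\]
Decompose the residual as $\sigma(\check\gamma^\top\check X_i)-Y_i=-\varepsilon_i+[\sigma(\check\gamma^\top\check X_i)-\sigma(\beta_\star^\top X_i)]$. The bracketed term is bounded by $c_4|(\bar\gamma_s-\beta_{\star,s})^\top\check X_{i,s}+\beta_\star^\top\Delta_{X,i}|$.

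\emph{Noise term.} The term $\frac1n\sum\varepsilon_i\check X_{i,h}$ consists of independent mean-zero summands: $\mathbb{E}[\varepsilon_iX_i]=0$, and I would assume $\varepsilon_i$ is uncorrelated with $\Delta_{X,i}$. If the latter is not available, the cross term is at most $\delta_X$ in size and can be absorbed. A sub-exponential Bernstein bound with an $\epsilon$-net over the $(d-d_s)$-sphere gives $\sqrt{((d-d_s)+t)/n}$.

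\emph{Bias term.} Test against a unit $u$. By Cauchy–Schwarz and the sub-Gaussianity of $X_i$ and $\Delta_{X,i}$, the population mean is $O\bigl(\|\bar\gamma_s-\beta_{\star,s}\|_2+\delta_X(\|\beta_\star\|_2+1)\bigr)$. Its fluctuation is a product of sub-Gaussians, so it is of smaller order $\sqrt{(d-d_s+t)/n}$ times that same quantity.

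\emph{Curvature.} For the Hessian $\frac1n\sum\sigma'(\gamma^\top\check X_i)\check X_{i,h}\check X_{i,h}^\top$, I would show its minimum eigenvalue is at least $1/(2c_4)$ uniformly over a ball $\|\gamma_h-\beta_{\star,h}\|_2\le r_0$ with $r_0$ a small constant. The argument has three steps:
\begin{itemize}
\item The population Hessian at $\beta_\star$ with the true $X$ equals $H_i\succeq c_4^{-1}$.
\item Replacing $X$ by $\check X$ and $\beta_\star$ by $\gamma$ changes it by $O(\delta_X+r_0+\|\bar\gamma_s-\beta_{\star,s}\|_2)$. This uses $\|\sigma''\|_\infty\le c_4$ together with fourth moments of sub-Gaussians.
\item Empirical concentration over the ball follows from a truncation/lower-tail argument. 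Since $\sigma'\ge0$ and only a lower bound is needed, a small-ball or Bernstein bound on nonnegative quadratic forms suffices once $n\gtrsim(d-d_s)+t$.
\end{itemize}

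\emph{Conclusion.} The loss is convex because $\sigma'\ge0$. If the minimizer lay outside the ball, convexity would let us move to the boundary of the ball, where strong convexity gives
\[
\hat L(\gamma)-\hat L(\beta_{\star,h})\ge -\|\nabla\hat L\|_2\,r_0+r_0^2/(4c_4)>0
\]
whenever $\|\nabla\hat L\|_2\le r_0/(8c_4)$, which is a contradiction. The smallness assumptions in the statement make the gradient bound this small. Inside the ball, strong convexity then gives $\|\hat\gamma_h-\beta_{\star,h}\|_2\le 4c_4\|\nabla\hat L\|_2$, which is the claimed bound.

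The main obstacle is the uniform lower bound on the Hessian over the ball with only $n\gtrsim(d-d_s)+t$ samples and non-i.i.d. data. There the delicate point is controlling $\sup_{\gamma}$ of the change in $\sigma'$ via its Lipschitz property without incurring extra dimension factors. I would handle this through a Lipschitz bound $|\sigma'(a)-\sigma'(b)|\le c_4|a-b|$ combined with a bound on $\sup_{\|v\|\le1}\frac1n\sum|v^\top\check X_i|^3$, which is $O(1)$ under the sample-size condition.
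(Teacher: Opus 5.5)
Your reparametrization $\gamma=Q^{-\top}\beta$, your bound on $\nabla\hat L(\beta_{\star,h})$, and your convexity-based localization are sound. This includes absorbing a possible correlation between $\varepsilon_i$ and $\Delta_{X,i}$ into the $\delta_X(\|\beta_\star\|_2+1)$ term. The gap is in the curvature step, exactly where you place the main obstacle.

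The claim that $\sup_{\|v\|_2\le 1}\frac1n\sum_i|v^\top\check X_{i,h}|^3=O(1)$ once $n\gtrsim(d-d_s)+t$ is false. Taking $v=\check X_{1,h}/\|\check X_{1,h}\|_2$ shows the supremum is at least $\|\check X_{1,h}\|_2^3/n$. For a Gaussian design this is of order $(d-d_s)^{3/2}/n\asymp\sqrt{d-d_s}$ when $n\asymp d-d_s$; the sharp order for Gaussian design is $1+(d-d_s)^{3/2}/n$.

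Hence your bound $c_4r_0\sup_v\frac1n\sum_i|v^\top\check X_{i,h}|^3$ on the change of the Hessian over the ball is not small for a constant $r_0$. Shrinking $r_0$ does not help. The localization needs $\|\nabla\hat L\|_2\le r_0/(8c_4)$, while $\|\nabla\hat L\|_2$ is in general of order $\sqrt{(d-d_s+t)/n}$, which is only a small constant at the stated sample size. Balancing the two constraints needs about $n\gtrsim(d-d_s)^{4/3}$, so the argument as written does not reach $n\ge\tilde C[(d-d_s)+t]$. Separately, the supremum must be restricted to the $h$-block, with the fixed direction $\bar\gamma_s-\beta_{\star,s}$ handled on its own; otherwise $d$ rather than $d-d_s$ enters.

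What is needed is the truncation you mention only in passing, applied \emph{before} taking any supremum, so that only second empirical moments appear. Work with the Bregman remainder $D(\Delta)=\hat L(\beta_{\star,h}+\Delta)-\hat L(\beta_{\star,h})-\langle\nabla\hat L(\beta_{\star,h}),\Delta\rangle$ rather than a uniform Hessian bound. Set $w_i=(\bar\gamma_s-\beta_{\star,s})^\top\check X_{i,s}+\beta_\star^\top\Delta_{X,i}$, which does not depend on $\Delta$. Using $\sigma'\ge 0$ and $\|\sigma''\|_\infty\le c_4$, for small constants $\tau,\tau'$ you get
\begin{align*}
D(\Delta)\ge\frac{1}{2n}\sum_{i}\sigma'(\beta_\star^\top X_i)\,\mathbf{1}\{|w_i|\le\tau'\}\,(\Delta^\top\check X_{i,h})^2\,\mathbf{1}\{|\Delta^\top\check X_{i,h}|\le\tau\}-\frac{c_4(\tau+\tau')}{2n}\sum_i(\Delta^\top\check X_{i,h})^2 .
\end{align*}
Take $\|\Delta\|_2\le r_0$ and $v=\Delta/\|\Delta\|_2$. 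The first sum is at least $\|\Delta\|_2^2$ times an average of a bounded, Lipschitz-in-$v$ minorant of $(v^\top\check X_{i,h})^2\mathbf{1}\{|v^\top\check X_{i,h}|\le\tau/r_0\}$.

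Uniform concentration of this average over the $(d-d_s)$-sphere costs only $n\gtrsim(d-d_s)+t$. You can get it by contraction, or by a constant-scale net that uses only $\sup_v\frac1n\sum_i(v^\top\check X_{i,h})^2=O(1)$. Its mean is at least $c_4^{-1}-O(\delta_X)$ minus truncation losses. These losses are small once $\tau/r_0$ is a large constant and the sub-Gaussian parameter of $w_i$ is small relative to $\tau'$, which your smallness hypotheses ensure.

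This gives $D(\Delta)\ge\|\Delta\|_2^2/(4c_4)$ simultaneously for all $\|\Delta\|_2\le r_0$. Your localization and the final bound $\|\hat\gamma_h-\beta_{\star,h}\|_2\le 4c_4\|\nabla\hat L(\beta_{\star,h})\|_2$ then go through unchanged.
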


\section{Implementation details for experiments}
\label{append:numerical}

\subsection{Model and implementation details for simulations}
\label{subsec:simulation}

\myparagraph{Data generating process. }
Recall the linear factor model $X\supe = B F_\tti\supe + A\supe F_\tth\supe + U\supe$ with shared loading $B$, environment-specific loading $A\supe$, invariant factors $F_\tti\supe$ and heterogeneous factors $F_\tth\supe$. We will adopt $[F_\tti\supe, F_\tth\supe] \sim \mathcal{N}(0, I_{r\supe})$, and in each trial, the loadings are assigned as
\begin{align*}
    B = \sqrt{d} \cdot W_\tti \qquad \text{and} \qquad A\supe = \sqrt{d} \left[ W_{\mathtt{I}} R_1 + W_{\tth}\supe R_2 \right]
\end{align*} where $W_\tti, W_\tth\supe$ are randomly generated matrices with orthonormal columns, $W_\tti^\top W_\tti = I_{r_\tti}$, $(W_\tth\supe)^\top W_\tth\supe = I_{r_\tth\supe}$, such that $W_\tti^\top W_\tth\supe = 0$ for any $e\in \mathcal{E}$, $R_1$ and $R_2$ are random generated transform matrix with singular values ranging from 0.5 to 2. 

For the auxiliary response $Z\supe \in \mathbb{R}^q$, we consider the binary outcome with logistic $\sigma_z(t) = 1/(1+e^{-t})$ such that
\begin{align*}
    \mathbb{E}[Z\supe | F_\tti\supe, F_\tth\supe] = \overline{\sigma}_z\left(\Xi_\tti^\star F_{S_\tti}\supe + \Xi_\tth^\star F_{S_\tth}\supe\right)
\end{align*} where $\overline{\sigma}_z: \mathbb{R}^q\to \mathbb{R}^q$ applies the logistic function $\sigma_z(\cdot)$ to each coordinate, and the ground truth coefficient parameters $\Xi^\star_\tti \in \mathbb{R}^{q\times |S_\tti|}$, $\Xi^\star_\tth \in \mathbb{R}^{q\times |S_\tth|}$ are generated via
\begin{align*}
    \Xi_\tti^\star = \sqrt{q}\cdot V_\tti \qquad \text{and} \qquad \Xi_\tth^\star = \sqrt{q} \cdot V_\tth
\end{align*} for randomly generated matrices with orthonormal columns, $V_\tti^\top V_\tti = I_{|S_\tti|}$ and $V_\tth^\top V_\tth = I_{|S_\tth|}$. 

We consider the continuous $Y\supe$ with spurious signals from $F_{[r\supe] \setminus [r_\tti] \setminus S_\tth}\supe$ as 
\begin{align}
    Y\supe = \beta_\tti^\top F_{S_\tti}\supe + \beta_\tth^\top F_{S_\tth}\supe + (\beta_{\mathrm{spur}}\supe)^\top F_{[r\supe] \setminus [r_\tti] \setminus S_\tth}\supe + \varepsilon\supe 
\end{align} where the coefficients are also generated randomly in each trial with $\|\beta_\tti\|_2^2=0.5$, $\|\beta_\tth\|_2^2=0.3$, $\|\beta_{\mathrm{spur}}\supe\|_2^2=0.15$, and $\varepsilon\supe \sim \mathcal{N}(0, 0.05)$ is the noise that is independent of $U\supe$ and $F\supe$. In this case, the model on top of $F_\tti$ would have $R^2$ approximately $0.5$, the model on top of $F_S$ will have $R^2$ approximately $0.8$. 

\myparagraph{Implementation for ``Alignment of invariant factors''. }  For the oracle map based on $B^{(e)}$, we use $$\tilde{F}_{\tti}^{(e),\mathtt{oracle}}(x) = \left\{B - (I-\mathsf{P}_B) A\supe [A\supe (I - \mathsf{P}_B) A\supe]^{-1/2} (A\supe)^\top B\right\}^\top x$$ with the project matrix $\mathsf{P}_B = B(B^\top B)^{-1} B^\top$.

\myparagraph{Implementation for ``Alignment of prediction-invariant factors''.} For a method $\mathtt{m}$ that estimate ${F}_{\mathtt{c}}\supe$ from $X\supe$ by the map $\tilde{F}_{S_{\mathtt{c}}}^{(e), \mathtt{m}}: \mathbb{R}^d \to \mathbb{R}^{|S_{\mathtt{c}}|}$ with $\mathtt{c} \in \{\tti, \tth\}$, we use the metric 
\begin{align} \label{eq:simu-fs}
    \mathrm{MSE}_{F_{S_{\mathtt{c}}}}(\mathtt{m}) = \inf_{Q\in \mathbb{R}^{|S_{\mathtt{c}}| \times |S_{\mathtt{c}}|}} \frac{1}{|\mathcal{E}|} \sum_{e\in \mathcal{E}} \frac{1}{n_{test}} \sum_{i=1}^{n_{test}} \left\|Q \tilde{F}_{S_{\mathtt{c}}}^{(e), \mathtt{m}}(X_i\supe) - F_{S_\tti,i}\supe \right\|_2^2
\end{align} with another $n_{test}=30000$ data to evaluate the accuracy in aligning $F\supe_{S_{\mathtt{c}}}$ across $e\in \mathcal{E}$. We consider the three methods:
\begin{itemize}
    \item The oracle estimator: the estimate builds a map on top of $M_\tti F_\tti\supe$ and $M_\tth\supe F_\tth\supe$ with oracle $F_\tti\supe$ and $F_\tth\supe$ instead of the estimated surrogates $\tilde{F}_\tti\supe, \tilde{F}_\tth\supe$.
    \item The spectral method (SP): our estimator described in \cref{sec:method2} with $\tilde{F}_{S_\tti}(x) = \hat{\Phi}_{S_\tti}^\top (\hat{\Phi}\supe_\tti)^\top x$ and $\tilde{F}_{S_\tti}(x) = (\hat{\Phi}_{S_\tth}\supe)^\top (\hat{\Phi}\supe_\tti)^\top x$ with $\hat{\Phi}_{S_\tti}, \hat{\Phi}_{S_\tth}\supe$ being both estimated via spectral methods as described in \cref{sec:method2}.
    \item The MLE method: our estimator described in \cref{sec:method2} with $\tilde{F}_{S_\tti}(x) = (\hat{\Phi}_{S_\tti}^{\mathrm{MLE}})^\top (\hat{\Phi}\supe_\tti)^\top x$ and $\tilde{F}_{S_\tti}(x) = (\hat{\Phi}_{S_\tth}^{(e), \mathrm{MLE}})^\top (\hat{\Phi}\supe_\tti)^\top x$ with $\hat{\Phi}_{S_\tti}^{\mathrm{MLE}}, \hat{\Phi}_{S_\tth}^{(e), \mathrm{MLE}}$ being both estimated via nonconvex MLE objective in \eqref{eq:method-mle}.
\end{itemize} 

\myparagraph{Implementation for ``Generalization in new environments''. } For out-of-sample environments, we generate three additional environments $\mathcal{E}_o$ via the same procedure of generating environments in $\mathcal{E}$. The number of i.i.d. $X$ observations and $(X,Z)$ observations in new environments $\mathcal{E}_o$ that are used to construct corresponding diversified projection matrices are also $n_x$ and $n_z$, respectively. We evaluate the prediction performance of the prediction $\hat{g}^{(e),\mathtt{m}}(x)$ via the worst-case out-of-sample $R^2$ in $\mathcal{E}_o$, defined as 
\begin{align}
    R^2_{\mathtt{oos}, \mathtt{m}} = \min_{e\in \mathcal{E}_o} \left[1-\frac{\frac{1}{n_{test}} \sum_{i=1}^{n_{test}} \left(\hat{g}^{(e),\mathtt{m}}(X_i\supe) - Y_i\supe \right)^2}{\frac{1}{n} \sum_{i=1}^{n_{test}} |Y_i\supe - \hat{\mu}\supe|^2}\right],
\end{align} where $\hat{\mu}\supe = \frac{1}{n_{test}} \sum_{i=1}^{n_{test}} Y_i\supe$ and $n_{test}=30000$. We consider the following predictors,
\begin{itemize}
    \item \myred{Oracle}: It directly regress $Y\supe$ on $F_S\supe$ using all the data across $e\in \mathcal{E}$, and use the estimated $\hat{\beta}_{oracle}$ to make predictions in new environments when the corresponding $F_S\supe$ is also available. It gives the best worst-case OOS performance by \cref{prop:robust1}.
    \item \myorange{Oracle $F_{S_\tti}$ }: It shares a similar spirit to {\it Oracle} when only $F_{S_\tti}\supe$ is available. It gives the best worst-case OOS performance when only $F_{S_\tti}$ is available according to \cref{prop:robust2}.
    \item \myblue{ATLAS}: It regresses $Y\supe$ on $[\tilde{F}_{S_\tti}\supe(X\supe), \tilde{F}_{S_\tth}\supe(X\supe)]$ using all the data in $e\in \mathcal{E}$ and get estimated $\hat{\beta}_\tti$, $\hat{\beta}_\tth$, then make predictions in environments $t\in \mathcal{E}_o$ by $\hat{g}^{(t)}(x) = \hat{\beta}_\tti^\top \tilde{F}_{S_\tti}^{(t)}(x) + \hat{\beta}_\tth^\top \tilde{F}_{S_\tth}^{(t)}(x)$. We also consider another version \mylightblue{ ATLAS(wo/Z)} without the knowledge of $Z$: in this case, the prediction are made by $\hat{g}^{(t)}(x) = \hat{\beta}^\top \tilde{F}_{S_\tti}^{(t)}(x) = \hat{\beta}^\top \hat{\Phi}_{S_\tti}^\top (\hat{\Phi}_{\tti}\supe)^\top x$.
    \item \mylightblue{IHD}: It regress $Y\supe$ on $\tilde{F}_\tti\supe$ using all the data in $e\in \mathcal{E}$ and get $\hat{\beta}$, then make predictions in new environment $t\in \mathcal{E}_o$ by $\hat{g}^{(t)}(x) = \hat{\beta}^\top (\hat{\Phi}_\tti^{(t)})^\top x$.
    \item \mygreen{PoolPCA}: It regress $Y\supe$ on the $\hat{W}^\top X\supe$ using all the data in $e\in \mathcal{E}$ and predict in the environment by $\hat{g}(x) = \hat{\beta}^\top \hat{W}^\top x$. 
    \item \myyellow{AJIVE}: It regress $Y\supe$ on the $\hat{W}_\tti X\supe$ using all the data in $e\in \mathcal{E}$ and predict in the environment by $\hat{g}(x) = \hat{\beta}^\top \hat{W}_\tti^\top x$. 
\end{itemize}

\subsection{Implementation details real data application} 
\label{subsec:real-data}

\begin{figure}[!t]
\begin{center}
\begin{tikzpicture}
    \node[anchor=west] at (10.3, 0) {time};
    \draw[line width=1.34pt, ->] (0, 0) -- (10, 0);
    \draw[line width=1.34pt, -] (9, -0.134) -- (9, 0.134);
    \node[anchor=south] at (9, 0.5) {$Y$/$Z$ Label};
    \node[anchor=south] at (9, 0.1) {$t_0$};
    \node[anchor=north west] at (8.5, -0.4) {$Y$: DAS category $\in\{$Moderate, High$\}$  };
    \node[anchor=north west] at (8.5, -0.9) {$Z_{1:3}$: Extracted $\sqrt{\text{TJC28}}$,  $\sqrt{\text{SJC28}}$, $\log($CRP$)$};
    \node[anchor=north west] at (8.5, -1.4) {$Z_{4:6}$: Inferred DAS category};
    \node[anchor=north west] at (8.5, -1.9) {$Z_{7:19}$: Indicator basis for $Z_{1:3}$};
    \draw[myblue!50, fill=myblue!50] (5, -0.3) rectangle (3, 0.1);
    \draw[myred!50, fill=myred!50] (1, -0.3) rectangle (3, 0.1);
    \draw[line width=1.34pt, -] (1, -0.134) -- (1, 0.134);
    \draw[line width=1.34pt, -] (3, -0.134) -- (3, 0.134);
    \draw[line width=1.34pt, -] (5, -0.134) -- (5, 0.134);
    \node[myblue] at (4, -0.1) {Near};
    \node[myred] at (2, -0.1) {Past};
    \node[myred] at (2, 0.34) {180d};
    \node[myblue] at (4, 0.34) {180d};
    \draw[line width=1.34pt,color=gray,-] (5, 0) -- (9, 0);
    \node[gray] at (7, 0.34) {360d};
    \node[myred] at (2, -0.9) {$X_{1:p}$};
    \node at (2, -1.3) {tfidf(\myred{$c_{past}$})};
    \node[myblue] at (4, -0.9) {$X_{p+1:2p}$};
    \node at (4, -1.3) {tfidf(\myblue{$c_{near}$})};
    \node[myorange] at (3, -2) {$X_{2p+1:2p+3}$};
    \node at (3, -2.4) {sex, race, age\_first\_RA};
\end{tikzpicture}
\end{center}
\caption{Illustration of covariate/label construction.}
\label{fig:das-data}
\end{figure}
\myparagraph{Covariate construction.} Each observation corresponds to one labeled patient visit, the covariates are constructed from the 360-day interval $[t_0-720,t_0-360)$. We divide this covariate window into two 180-day sub-windows $[t_0-720,t_0-540)$ and $[t_0-540,t_0-360)$, which we refer to as the past and near windows, respectively. The construction is illustrated in \cref{fig:das-data}. 

We will use the counts of $m_0=278$ medical codes and $m_1=484$ clinical concepts (CUI) in the two time windows to construct the covariate vector, we use the indices $\{1, \ldots, m_0\}$ to represent the codes and indices $\{m_0+1, \ldots, m_0+m_1\}$ to represent the CUIs. For any time interval $[t_l,t_r)$, let $c^{[t_l,t_r)} \in \mathbb{N}^{m_0+m_1}$ with $c_j^{[t_l,t_r)}$ being code/CUI counts for the patient in that interval. Let $w_1,\ldots,w_{m_0}\in\mathbb{R}^p$ be the code embeddings and $w_{m_0+1},\ldots,w_{m_0+m_1}\in\mathbb{R}^p$ be the CUI embeddings with $p=1768$. The embedding $w_1$ corresponds to the RA code ``PheCode:714.1'', and $w_{m_0+1}$ corresponds to the RA CUI ``C0003873''. Let $n_j$ be the number of patients in the full cohort who have code/CUI $j$ at least once. We transform the count vector by the embedding-weighted log-count map
\begin{align*}
    \mathrm{tfidf}(c^{[t_l, t_r)}) = \sum_{j=1}^{m_0} \frac{\log(c_j^{[t_l, t_r)} + 1)}{\log(n_j + 1)} (w_j^\top w_1) \cdot w_j + \sum_{j=m_0+1}^{m_0+m_1} \frac{\log(c_j^{[t_l, t_r)} + 1)}{\log(n_j + 1)} (w_j^\top w_{m_0+1}) \cdot w_j \in \mathbb{R}^p
\end{align*}
The final covariate vector is
\begin{align*}
    X
    =
    \Big[
    \mathrm{tfidf}\!\left(c^{[t_0-720,t_0-540)}\right),
    \mathrm{tfidf}\!\left(c^{[t_0-540,t_0-360)}\right),
    X_{\mathrm{sex}},
    X_{\mathrm{race}},
    X_{\mathrm{age}}
    \Big]
    \in \mathbb{R}^{2p+3},
\end{align*}
where $X_{\mathrm{sex}}\in\{0,1\}$ is a binary indicator for sex, $X_{\mathrm{race}}\in\{0,1\}$ indicates whether the race of the patient is white or non-white, and $X_{\mathrm{age}}$ is the age at which the patient first got a RA code.

\myparagraph{Response and auxiliary labels.} For the patient visit at time $t_0$, the response label $Y$ is
\[
    Y_l = 1\{\mathrm{DAS~category~at~} t_0~\mathrm{in}~\{ \text{Moderate, High}\}\}.
\]
The $(X, Y)$ pairs in the training environments are from the chart review of the clinical notes, where the DAS category is labeled by the human expert following a certain procedure. The $(X, Y)$ pairs in the testing environments are from the cohort of Brigham and Women's Hospital Rheumatoid Arthritis Sequential Study (BRASS), whereas the DAS category is determined by the cut-off of the three-component DAS28-CRP score,
\begin{align*}
    \mathrm{DAS}
    =
    [0.56 \sqrt{\mathrm{TJC28}}
    + 0.28 \sqrt{\mathrm{SJC28}}
    + 0.36 \log(\mathrm{CRP})] \cdot 1.1
    + 1.15.
\end{align*}
Here, $\mathrm{TJC28}$ and $\mathrm{SJC28}$ are the tender and swollen joint counts over 28 joints, $\mathrm{CRP}$ is a blood marker of systemic inflammation. The cutoffs for category ``High'' and ``Moderate'' are $\mathrm{DAS}>5.1$ and ($3.2<\mathrm{DAS}\le 5.1$, respectively.

The auxiliary label $Z_l \in\mathbb{R}^{19}$ is constructed from the structured output extracted from the clinical notes at time $t_0$ by the LLM agent. Its first three coordinates are continuous DAS-related quantities, $Z_{l,1:3}=\left(\sqrt{\mathrm{TJC28}_l}, \sqrt{\mathrm{SJC28}_l}, \ln(\mathrm{CRP}_l)\right)$. The next three coordinates encode the ordinal DAS category inferred by the agent, i.e., $Z_4, Z_5, Z_6$ are binary indicators of whether the inferred DAS category lies in $\{\text{High}\}$, $\{\text{High, Moderate}\}$, $\{\text{High, Moderate, Low}\}$. The remaining coordinates are indicator bases for the three continuous auxiliary variables. For each $k\in\{1,2,3\}$, we compute empirical quantile $\{\tau_{k, j}\}_{j=1}^5$ cutoffs of $Z_k$ and include indicators of the form $1\{Z_{l,k}\ge \tau_{k,j}\}$. Specifically, $Z_1$ and $Z_2$ each use four distinct cutoffs, giving $Z_{7:10}$ and $Z_{11:14}$, while $Z_3$ uses five cutoffs, giving $Z_{15:19}$. The smaller number of cutoffs for $Z_1$ and $Z_2$ is due to repeated lower empirical quantile values at zero.

\begin{table}[!t]
\begin{center}
\begin{tabular}{c|l|l|r|r|l}
\hline
\hline
Env ID & Time begin & Time end & $\#(X,Z)$ & $\#(X,Y)$ & Type\\
\hline
1 & 2009-01-01 & 2012-12-31 & 37849 & 118 & Train\\
2 & 2013-01-01 & 2015-12-31 & 36534 & 153 & Train\\
3 & 2016-01-01 & 2018-12-31 & 52157 & 208 & Train\\
4 & 2019-01-01 & 2019-12-31 & 19056 & 251 & Test\\
5 & 2020-01-01 & 2020-12-31 & 13014 & 89 & Test\\
6 & 2021-01-01 & 2021-12-31 & 15922 & 85 & Test\\
\hline
\hline
\end{tabular}
\end{center}
\caption{Sample sizes by temporal environment. Each observation is a labeled patient visit.}
\label{table:real_data}
\end{table}

\myparagraph{Implement details for different methods.} The implementations for dimension reduction methods and downstream prediction of $Y$ are similar to those in the simulation studies. PoolPCA pools the training environments and extracts principal components. AJIVE estimates a shared low-dimensional structure across environments. IHD is our invariance-heterogeneity decomposition and uses only the invariant representation for prediction. ATLAS w/$r_\tti=0$ is an ablation that uses auxiliary labels to align the latent factors without first separating invariant and heterogeneous components. ATLAS is the full proposed method. After each method produces its low-dimensional representation, we fit the same downstream prediction model for $Y$ using the training environments and use the fitted score to evaluate test-year AUC.

The dimension of the low-dimensional projection for PoolPCA and the $L_1$ hyper-parameter for Lasso are all selected by a validation set. For AJIVE and our methods, we set the environment-specific rank to $r^{(e)}\equiv 64$. For AJIVE, the dimension of the shared subspace is selected by the same validation criterion. For IHD and ATLAS, we set $\lambda_{\mathtt{IHD}}=0.01$ in the invariance-heterogeneity decomposition in \cref{algo:dp}. 

\bibliographystyle{apalike2}    
\bibliography{main.bbl}




\end{document}